\definecolor{Chocolat}{rgb}{0.36, 0.2, 0.09}
\definecolor{BleuTresFonce}{rgb}{0.215, 0.215, 0.36}
\tikzset{>=latex}
\providecommand\@dotsep{5}
\renewcommand{\listoftodos}[1][\@todonotes@todolistname]{%
  \@starttoc{tdo}{#1}}
\newcommand{\on}{\operatorname}
\newcommand{\kk}{\mathbf{k}}
\newcommand{\Z}{{\mathbb Z}}
\newcommand{\Q}{{\mathbb Q}}
\newcommand{\C}{{\mathbb C}}
\newcommand{\KK}{\mathbf{k}}
\newcommand{\0}{\mathbf{0}}
\newcommand{\Pa}{\mathbf{Pa}}
\newcommand{\PaB}{\mathbf{PaB}}
\newcommand{\PaCD}{\mathbf{PaCD}}
\renewcommand{\CD}{\mathbf{CD}}
\renewcommand{\t}{{\mathfrak{t}}}
\newcommand{\h}{{\mathfrak{h}}}
\newcommand{\SG}{{\mathfrak{S}}}
\newcommand{\f}{{\mathfrak{f}}}
\renewcommand{\i}{\on{i}}
\newcommand{\PB}{\on{PB}}
\newcommand{\zz}{\mathbf{z}}
\newcommand{\Assoc}{\mathbf{Assoc}}
\newcommand{\Ell}{\mathbf{Ell}}
\newcommand{\GT}{\mathbf{GT}}
\newcommand{\GRT}{\mathbf{GRT}}
\tikzstyle cross=[preaction={draw=white, -, line width=4pt}, thick]
\tikzstyle normal=[thick]
\tikzstyle chord=[densely dotted, thick]
\tikzstyle zero=[ultra thick, gray]
\tikzstyle zell=[ultra thick, white]
\tikzstyle zerocross=[preaction={draw=white, -, line width=4pt}, ultra thick, gray]
\tikzstyle point=[draw,circle,inner sep=1,fill=black]
\tikzstyle diam=[draw,diamond,inner sep=1,fill=black]
\tikzstyle petitpoint=[draw,circle,inner sep=0.3,fill=black]
\newcommand{\straight}[3][-]{\draw[normal,#1] (#2,-#3) -- (#2,-#3-1);}
\newcommand{\hori}[4][-]{\draw[normal,#1] (#2,-#3)--(#2,-#3-1);\draw[normal,#1] (#2+#4,-#3)--(#2+#4,-#3-1);\draw[chord] (#2,-#3-0.5)--(#2+#4,-#3-0.5);}
\newcommand{\tell}[4][-]{\draw[zell] (0,-#2)--(0,-#2-1);\draw[normal,#1] (#3,-#2)--(#3,-#2-1);\draw[chord] (0,-#2-0.5)--(#3,-#2-0.5); \node[point,label=above:$#4$] at (0,-#2-0.5) {};}
\newcommand{\tik}[1]{\begin{tikzpicture}[baseline=(current bounding box.center)] #1 \end{tikzpicture} }
\newcommand{\tmmathbf}[1]{\ensuremath{\boldsymbol{#1}}}
\newcommand{\tmop}[1]{\ensuremath{\operatorname{#1}}}
\newcommand{\assign}{:=}
\newcommand*{\doublerightarrow}[2]{
\begin{matrix}
\overset{\!#1}{\longrightarrow} \\
\underset{\!#2}{\longrightarrow}
\end{matrix}}
\newtheorem*{theorem*}{Theorem}
\newtheorem{theorem}{Theorem}[chapter]
\newtheorem{lemma}[theorem]{Lemma}
\newtheorem{proposition}[theorem]{Proposition}
\newtheorem{notation}[theorem]{Notation}
\theoremstyle{definition}
\newtheorem{definition}[theorem]{Definition}
\newtheorem{example}[theorem]{Example}
\theoremstyle{remark}
\newtheorem{remark}[theorem]{Remark}
\numberwithin{section}{chapter}
\numberwithin{equation}{chapter}
\begin{document}

\frontmatter

\title{Ellipsitomic associators}

%    author one information
\author{Damien Calaque}
\address{IMAG, Univ Montpellier, CNRS, Montpellier, France}
\email{damien.calaque@umontpellier.fr}

%    author two information
\author{Martin Gonzalez}
\address{Institut de Recherche Technologique SystemX, Palaiseau, France}
\email{martin.gonzalez@irt-systemx.fr}

\date{}

% classification 2020 : 
\subjclass{18M60, 14H52, 20F36, 57K20, 32G34, 11F11}
% La classification 2010 est : 18D50, 14H52, 20F36, 32G34, 11F11

% keywords :
\keywords{Associators, Operads, Groupoids, Configuration spaces, Braids, Chord diagrams, Elliptic curves, Grothendieck--Teichmüller, Eisenstein series}
% Les mots clefs en français sont : Associateurs, Opérades, Groupoïdes, Espaces de configurations, Tresses, Diagrammes de cordes, 
% Courbes eliptiques, Grothendieck--Teichmüller, Séries d'Eisenstein

\begin{abstract}
We develop a notion of ellipsitomic associators by means of operad theory. 
We take this opportunity to review the operadic point-of-view on Drinfeld associators and to provide such an operadic 
approach for elliptic associators too. 
We then show that ellipsitomic associators do exist, using the monodromy of the universal ellipsitomic KZB connection, 
that we introduced in a previous work. 
We finally relate the KZB ellipsitomic associators to certain Eisenstein series associated with congruence subgroups of 
$\mathrm{SL}_2(\mathbb{Z})$, and to twisted elliptic multiple zeta values. 

\bigskip

\noindent\textit{\textbf{R\'esum\'e (Associateurs ellipsitomiques).}} --- Nous développons la notion d'associateur ellipsitomique 
au moyen de la théorie des opérades. Nous saisissons cette opportunité pour revoir le point de vue opéradique sur les associateurs
de Drinfeld, et pour fournir également une telle approche opéradique pour les associateurs elliptiques. 
Nous montrons ensuite que les associateurs ellipsitomiques existent, en utilisant la monodromie de la connexion KZB ellipsitomique 
universelle, que nous avions introduite dans un travail précédent. 
Nous relions pour finir les associateurs ellipsitomiques KZB à certaines séries d'Eisenstein associées aux sous-groupes de congruence 
de $\mathrm{SL}_2(\mathbb{Z})$, et aux valeurs zêta multiples elliptiques tordues. 
\end{abstract}

\maketitle

\tableofcontents

%    Include unnumbered chapters (preface, acknowledgments, etc.) here.
\chapter*{Introduction}

The torsor of associators was introduced by Drinfeld \cite{DrGal} in the early nineties, 
in the context of quantum groups and prounipotent Grothendieck--Teichmuller theory. 
Since then, it has proven to have deep connections with several areas of mathematics (and physics): 
number theory \cite{LeMu}, deformation quantization \cite{EK,Konts-op,Tam1}, Chern--Simons theory and low-dimensional 
topology \cite{Konts-knots}, algebraic topology and the little disks operad \cite{Tam2}, Lie theory and the 
Kashiwara--Vergne conjecture \cite{AET,AT} etc... In this paper we are mostly 
interested in the operadic and also number theoretic aspects. For instance, 
\begin{itemize}
\item[(a)] The torsor of associators can be seen as the torsor of isomorphisms between two operads in (prounipotent)
groupoids related to the little disks operad, denoted $\PaB$ and $\PaCD$ (for 
\textit{parenthesized braids} and \textit{parenthesized chord diagrams}). These can be understood as 
the Betti and de Rham fundamental groupoids of an operad of suitably compactified configuration spaces 
of points in the plane. See chapter \ref{Section2} for more details, and accurate references. 
\item[(b)] It is expected that associators can be seen as generating series for (variations on 
motivic) multiple zeta values (MZVs), as was observed for the KZ associator \cite{LeMu} and the Deligne 
associator \cite{Brown-on-Deligne}. 
\end{itemize}

\medskip

The first example of an associator was produced by Drinfeld as the renormalized holonomy of a universal 
version of the so-called Knizhnik--Zamolodchikov (KZ) connection \cite{DrGal}, which is defined on a trivial 
principal bundle over the configuration space of points in the plane. The defining equations 
of an associator can be deduced from intuitive geometric 
reasonings about paths on configuration spaces, and they lead to representations of braid groups. 

\medskip

Enriquez, Etingof and the first author \cite{CEE} 
introduced a universal version of an elliptic variation on the KZ connection (known as 
Knizhnik--Zamolodchikov--Bernard, or KZB, connection, as the extension to higher genus is due to Bernard 
\cite{B1,B2}). It is a holomorphic connection defined on a non trivial principal bundle over configuration spaces 
of points on an elliptic curve. They showed that 
\begin{itemize}
\item The holonomy of the universal KZB connection along fundamental cycles of an ellitpic curve 
satisfy relations which lead to representations of braid groups on the ($2$-)torus. 
\item They also satisfy a modularity property, that is a consequence of the fact that the (universal) KZB 
connection extends from configuration spaces of points on an elliptic curve to moduli spaces of marked 
elliptic curves (see also \cite{LR} for when there are at most $2$ marked points). 
\end{itemize}
Enriquez later introduced the notion of an elliptic associator \cite{En2}, and proved that the holonomy 
of the universal elliptic KZB connection does produce, for every elliptic curve, an example of elliptic 
associator. The class of elliptic associators that are obtained \textit{via} this procedure are called 
\textit{KZB associators}. In another work \cite{En3}, Enriquez defined and studied an 
elliptic version of MZVs; he showed that KZB associators are generating series for elliptic MZVs (eMZVs). 

\medskip

In a recent paper \cite{CaGo} we introduced a generalization of the universal elliptic KZB connection: 
the universal \textit{ellipsitomic} KZB connection. It is defined over twisted configuration spaces, where 
the twisting is by a finite quotient $\Gamma$ of the fundamental group of the elliptic curve. 
When $\Gamma=1$ is trivial, one recovers the universal elliptic KZB connection. 

\medskip

The aim of the present paper is two-fold. 
\begin{itemize}
\item[(a)] First we provide an operadic interpretation of elliptic associators. 
We extend this approach to the ellipsitomic case, use the language of operads to define ellipsitomic associators, and 
sketch the rudiments of an ellipsitomic Grothendieck--Teichm\"uller theory. 
\item[(b)] Then we show that holonomies of the universal ellipsitomic KZB connection along suitable paths produce 
examples of ellipsitomic associators, and are generating series for elliptic multiple polylogarithms at 
$\Gamma$-torsion points, that are similar to the twisted elliptic MZVs (teMZVs) studied in \cite{M2} by 
Broedel--Matthes--Richter--Schlotterer. 
\end{itemize}

\medskip

Our work fits in a more general program that aims at studying associators for an oriented  surface together with a finite 
group acting on it. We summarize in the following table the contributions to this program 
that we are aware of: 

\begin{center}
\begin{tabular}{|c|c|p{2.1cm}|p{1.5cm}|p{3.6cm}|p{1.7cm}|}
\hline
gen. & group & associators	& operadic approach & Universal connection / existence proof & coefficients	
\\ \hline 
$0$ & trivial & \cite{DrGal} & \cite{BN,Fresse}	& rational KZ \cite{DrGal} / \textit{ibid.} & MZVs \cite{LeMu}	
\\ \hline 
$0$ & $\Z/N\Z$ & cyclotomic associators \cite{En} & \cite{CG-cyclo}	& trigonometric KZ \cite{En} / \textit{ibid.} & colored MZVs \cite{En}
\\ \hline 
$0$ & fin.~$\subset PSU_2(\mathbb{C})$ & unknown & unknown & \cite{Maa} / unknown & unknown	
\\ \hline 
$1$ & trivial & elliptic associators \cite{En2} & this paper (Sec.~3) & elliptic KZB \cite{CEE} / \cite{En2} & eMZVs \cite{En3}  
\\ \hline 
$1$ & $\Z/M\Z\times \Z/N\Z$ & ellipsitomic associators (this paper) & this paper (Sec.~4 \& 5) & ellipsitomic KZB \cite{CaGo} / this paper (Sec.~6) & this paper (Sec.~7)
\\ \hline 
$>1$ & trivial & \cite{GAssoc} & \cite{GAssoc} & KZB \cite{En4} / conj.~in \cite{GAssoc} & maybe \cite{Go} 
\\ \hline
\end{tabular}
\end{center}

\medskip

\section*{Description of the paper}
The first chapter is devoted to some recollection on operads and operadic modules, 
with some emphasis on specific features when the underlying category is the one of groupoids. 
Chapter 2 also recollects known results, about the operadic approach to (genuine) associators and 
to various Grothendieck--Teichm\"uller groups. The main results we state are taken from the recent book 
\cite{Fresse}. 

The main goal of chapter 3 is to provide a similar treatment of elliptic associators, using 
operadic modules in place of sole operads. We show in particular that (a variant of) the universal 
elliptic structure $\PaB_{e\ell\ell}$ (resp.~its graded/de Rham counterpart $G\PaCD_{e\ell\ell}$) 
from \cite{En2} carries the structure of an operadic module in groupoids over the operad in groupoid 
$\PaB$ (resp.~$G\PaCD$). 
We provide a generators and relations presentation for $\PaB_{e\ell\ell}$ (Theorem \ref{PaBell}), 
and deduce from it the following 
\begin{theorem*}[Theorem \ref{thm-intro1}] 
The torsor of elliptic associators from \cite{En2} coincides with the torsor of isomorphisms from (a variant of)
$\PaB_{e\ell\ell}$ to $G\PaCD_{e\ell\ell}$ that are the identity on objects. Similarly, the elliptic 
Grothendieck--Teichm\"uller group (resp.~its graded version) is isomorphic to the group of automorphisms 
of $\PaB_{e\ell\ell}$ (resp.~of $G\PaCD_{e\ell\ell}$) that are the identity on objects. 
\end{theorem*}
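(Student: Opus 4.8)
The plan is to reduce everything to the generators-and-relations presentation of $\PaB_{e\ell\ell}$ furnished by Theorem \ref{PaBell}, mimicking the treatment of genuine associators recalled in Section 2. The starting point is the formal principle that, since $\PaB_{e\ell\ell}$ is presented as an operadic module over $\PaB$ by finitely many generating morphisms subject to finitely many relations, a morphism of operadic modules $\PaB_{e\ell\ell}\to G\PaCD_{e\ell\ell}$ that is the identity on objects and covers a given morphism $\PaB\to G\PaCD$ is the same datum as a choice of images for the elliptic generators (the generating morphisms not already coming from $\PaB$), subject to the relations of Theorem \ref{PaBell}. Because $G\PaCD_{e\ell\ell}$ is a module in prounipotent (resp.\ graded) groupoids, those images are group-like elements of the relevant elliptic braid Lie algebras, which one may picture as lifts of the two fundamental cycles of the elliptic curve.

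Next I would combine this with the classical statement from Section 2 that an isomorphism $\PaB\xrightarrow{\sim}G\PaCD$ which is the identity on objects is exactly a Drinfeld associator $(\mu,\varphi)$. Plugging a Drinfeld associator into the previous principle, an identity-on-objects isomorphism $\PaB_{e\ell\ell}\xrightarrow{\sim}G\PaCD_{e\ell\ell}$ becomes the datum of a Drinfeld associator together with a tuple of group-like series; unwinding the operadic insertion maps, the symmetric-group actions and the compatibility with the underlying operad morphism, the relations of Theorem \ref{PaBell} become precisely Enriquez's defining equations for an elliptic associator \cite{En2}. Running this identification in reverse sends every elliptic associator to such an isomorphism, which yields the claimed bijection. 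Equivariance for the torsor actions is then formal: on both sides the actions are implemented by the same operadic and module composition operations, so the bijection automatically intertwines them --- provided one uses the precise variant of $\PaB_{e\ell\ell}$ (and of $G\PaCD_{e\ell\ell}$) whose extra decoration matches the normalization built into Enriquez's torsor, which is the point of the parenthetical in the statement.

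The assertions about Grothendieck--Teichm\"uller groups follow by specializing the same argument with $G\PaCD_{e\ell\ell}$ (resp.\ $\PaB_{e\ell\ell}$) as target: an identity-on-objects automorphism of $\PaB_{e\ell\ell}$ (resp.\ of $G\PaCD_{e\ell\ell}$) consists of an element of $\GT$ (resp.\ $\GRT$), i.e.\ an identity-on-objects automorphism of the underlying operad, together with compatible elliptic data, and the relations of Theorem \ref{PaBell} cut out exactly the elliptic Grothendieck--Teichm\"uller group of \cite{En2} (resp.\ its graded analogue); the group law matches composition of operadic module automorphisms by construction.

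The hard part will be the translation in the second step: verifying that the several relations in the presentation of Theorem \ref{PaBell}, once evaluated through a general Drinfeld associator, reproduce Enriquez's elliptic associator equations on the nose, with no relation lost and none double-counted. Each such check is a routine but delicate computation with the operadic insertion maps and with the images of the braid generators under an arbitrary associator. A second, subtler point is pinning down the exact variant of $\PaB_{e\ell\ell}$ and $G\PaCD_{e\ell\ell}$ --- whether one adjoins a scaling generator, or passes to a based/reduced version --- so that the two torsors literally coincide rather than being merely isomorphic.
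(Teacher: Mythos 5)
Your treatment of the Betti side is essentially the paper's: the bijection $\Ell(\kk)\to\on{Ell}(\kk)$ and the isomorphism $\widehat{\GT}_{e\ell\ell}(\kk)\simeq\widehat{\on{GT}}_{e\ell\ell}(\kk)$ are obtained, exactly as you propose, by feeding the presentation of $\PaB_{e\ell\ell}$ from Theorem \ref{PaBell} into the known dictionary between identity-on-objects morphisms out of $\widehat{\PaB}(\kk)$ and Drinfeld associators, and the equivariance checks are the formal computations you describe.

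The gap is on the graded side. You claim the $\GRT$ statement ``follows by specializing the same argument'' to identity-on-objects automorphisms of $G\PaCD_{e\ell\ell}(\kk)$, with ``the relations of Theorem \ref{PaBell}'' cutting out the graded elliptic Grothendieck--Teichm\"uller group. This implicitly uses a generators-and-relations presentation of $\PaCD_{e\ell\ell}(\kk)$ parallel to that of $\PaB_{e\ell\ell}$, and no such presentation is available: already $\PaCD(\kk)$ is \emph{not} presented by $H^{1,2}$, $X^{1,2}$, $a^{1,2,3}$ and the listed relations (it only satisfies a weaker universal property), and the same caveat applies to the module $\PaCD_{e\ell\ell}(\kk)$ with its generators $X^{1,2}_{e\ell\ell}$, $Y^{1,2}_{e\ell\ell}$. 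Consequently the argument you sketch only shows that every automorphism $(G,U)$ produces a triple $(\lambda,g,u_\pm)$ satisfying Enriquez's equations \eqref{def:grt:ell:1}--\eqref{def:grt:ell:3}, i.e.\ an \emph{injective} group morphism $\GRT_{e\ell\ell}(\kk)\to\on{GRT}_{e\ell\ell}(\kk)$; it does not show that every such triple defines an automorphism of the pair $\big(G\PaCD(\kk),G\PaCD_{e\ell\ell}(\kk)\big)$. The paper closes this by a torsor argument: both groups act freely and transitively on $\Ell(\kk)\simeq\on{Ell}(\kk)$, and since that set is non-empty --- by the analytic construction of the elliptic KZB associator, Example \ref{example-KZB-assoc} --- an injective equivariant morphism between the acting groups is forced to be surjective. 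Your proposal is missing both this reduction and its essential external input, the existence of at least one elliptic associator.
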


The fourth chapter introduces a generalization of $\PaB_{e\ell\ell}$, with an additional labelling/twisting by 
elements of $\Gamma$ (recall that $\Gamma$ is the group of deck transformations of a finite cover of the 
torus by another torus). We give a geometric definition of the operadic module $\PaB_{e\ell\ell}^\Gamma$ 
of parenthesized ellipsitomic braids, and then provide a presentation by generators and relations for it 
(Theorem \ref{PaB:ell:G}). In the fifth chapter we define an operadic module of ellipsitomic chord diagrams, 
that mixes features of $\PaCD_{e\ell\ell}$ from chapter 3, and of the moperad of cyclotomic chord diagrams 
from \cite{CG-cyclo}. This allows us to identify ellipsitomic associators, which we define in purely operadic 
terms, with series satisfying certain algebraic equations (Theorem \ref{th:ass:tell}). 

Chapter 6 is devoted to the proof of the following
\begin{theorem*}[Theorem \ref{theorem:twistedKZBass}]
The set of ellipsitomic associators over $\C$ is non-empty. 
\end{theorem*}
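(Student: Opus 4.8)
The plan is to follow the strategy used by Drinfeld in genus zero, and by Enriquez in the elliptic \cite{En2} and cyclotomic \cite{En} cases: one would produce an explicit ellipsitomic associator over $\C$ as a suitably renormalized holonomy (monodromy) of the universal ellipsitomic KZB connection constructed in \cite{CaGo}. Recall from \cite{CaGo} that this connection is flat, is defined on a principal bundle over the twisted configuration spaces of points on an elliptic curve, extends to the moduli space of $\Gamma$-structured elliptic curves with marked points, and is $\Gamma$-equivariant by construction. Having fixed a base elliptic curve $E_\tau$ together with appropriate tangential base points, I expect the holonomy of the connection along a carefully chosen system of paths to assemble into a collection of group-like series satisfying exactly the algebraic equations characterising ellipsitomic associators in Theorem \ref{th:ass:tell}; exhibiting such a collection proves the theorem.

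\textbf{The candidate.} I would define the components of the candidate associator as renormalized holonomies: \emph{(i)} the genus-zero part, obtained by degenerating $E_\tau$ to the nodal cubic, which is the Drinfeld KZ associator $\Phi_{\mathrm{KZ}}$ and takes care of the underlying morphism of operads $\PaB\to G\PaCD$; \emph{(ii)} the elliptic part, namely the renormalized holonomies $A=A(\tau)$ and $B=B(\tau)$ along the two fundamental cycles of $E_\tau$, which take care of the elliptic generators in the presentation of $\PaB_{e\ell\ell}$ (Theorem \ref{PaBell}); and \emph{(iii)} the genuinely ellipsitomic part, namely the holonomies along paths joining the base point to its $\Gamma$-translates (equivalently, around the $\Gamma$-torsion points), which encode the $\Gamma$-twisting and play the role of the cyclotomic element of \cite{CG-cyclo}. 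Since the ellipsitomic KZB connection takes values in the completed Lie algebra $\t_{e\ell\ell}^\Gamma$ of infinitesimal ellipsitomic braids underlying $G\PaCD_{e\ell\ell}^\Gamma$, these holonomies are automatically group-like and compatible with the relevant degree filtration.

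\textbf{Verification of the defining equations.} Each algebraic relation characterising an ellipsitomic associator in Theorem \ref{th:ass:tell} is the image, under the holonomy functor, of a relation among paths (or of a commuting square) in the operadic module $\PaB_{e\ell\ell}^\Gamma$, whose generators-and-relations presentation is Theorem \ref{PaB:ell:G}. Thus the pentagon equation would be inherited from the genus-zero part; the hexagon-type and mixed relations would follow from the compatibility of the local monodromies at collisions of marked points; the relations satisfied by the elliptic generators $A,B$ (together with the modular relation) and by the ellipsitomic generators would follow from the presentation of Theorem \ref{PaB:ell:G}, using that the connection extends over the moduli space so that the monodromy around the modular loop is controlled; and the $\Gamma$-equivariance relations would follow from the $\Gamma$-equivariance of the connection. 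Each such check reduces to a local computation near a boundary divisor of the compactified twisted configuration space or of the moduli space, where the connection has regular singularities whose residues are explicitly the generators of $\t_{e\ell\ell}^\Gamma$, so that the combinatorics matches by design.

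\textbf{Main obstacle.} The delicate point will be the renormalization. The KZB connection has logarithmic singularities both along the diagonals (collisions of marked points) and along the boundary of the moduli space ($q=e^{2\pi i\tau}\to 0$), so the holonomies above only make sense after a tangential-base-point regularization, and one must check that the regularized holonomies are well-defined and remain group-like. Equally delicate is matching the combinatorics of the $\Gamma$-labels carried by the geometric paths with the operadic generators of Theorems \ref{PaB:ell:G} and \ref{th:ass:tell}: this is where the interaction between the elliptic and the cyclotomic structures is most intricate, and it is really the heart of Section 6. By contrast, flatness, $\Gamma$-equivariance and the extension to the moduli space are already supplied by \cite{CaGo}, so the new content is precisely the identification of the renormalized monodromy with an operadic isomorphism, which then \emph{is} an ellipsitomic associator by Theorem \ref{th:ass:tell}.
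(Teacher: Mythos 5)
Your proposal follows essentially the same route as the paper: Section 6 constructs, for each $\tau\in\mathfrak{h}$, the pair $e^\Gamma(\tau)=(A^\Gamma(\tau),B^\Gamma(\tau))$ as renormalized holonomies of the reduced two-point ellipsitomic KZB equation along the paths from $0$ to $1/M$ and from $0$ to $\tau/N$, and verifies the defining relations of Theorem \ref{th:ass:tell} (in the alternative form of Remark \ref{rem:ass:tell}) by pushing the braid relations of $\overline{\on{B}}_{1,3}$ through the monodromy morphism built from a normalized solution $F_\Gamma^{(n)}$, exactly as you outline. The one structural difference is that there is no separate ``cyclotomic element'' in the data: the holonomies along the fractional periods $1/M$ and $\tau/N$ (i.e.\ along the paths joining the base point to its $\Gamma$-translates) already constitute the entire module-level datum $(A,B)\in\on{exp}(\hat{\bar\t}_{1,2}^\Gamma)^{\times 2}$, with $\Phi_{\on{KZ}}$ supplying the operad level.
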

The proof makes crucial use of the ellipsitomic KZB connection, introduced in our previous work \cite{CaGo}, 
and relies on a careful analysis of its monodromy. We actually prove that one can associate an ellipsitomic 
associator with every element of the upper half-plane (Theorem \ref{theorem:twistedKZBass}). 
In the last chapter we quickly explore some number theoretic and modular aspects of the coefficients of 
the ``KZB produced'' ellipsitomic associators from the previous chapter. 

Finally, in an appendix we provide an alternative presentation for $\PaB_{e\ell\ell}^{\mathbf{\Gamma}}$. 

\section*{Acknowledgements}
Both authors are grateful to Adrien Brochier, Benjamin Enriquez, and Pierre Lochak, as well as Richard Hain, Nils Matthes, and Eric Hopper, 
for numerous conversations and suggestions. 
We also thank the referee for their comments and careful reading, that helped a lot improving this paper. 

The first author has received funding from the Institut Universitaire de France, and from the European Research Council 
(ERC) under the European Union's Horizon 2020 research and innovation programme (Grant Agreement No. 768679). 

This paper is extracted from the second author's PhD thesis \cite{G1} at Sorbonne Universit\'e, and part of this 
work has been done while the second author was visiting the Institut Montpelli\'erain Alexander Grothendieck,
thanks to the financial support of the Institut Universitaire de France. The second author warmly thanks the 
Max-Planck Institute for Mathematics in Bonn and Universit\'e d'Aix-Marseille, for their hospitality and excellent 
working conditions.

\mainmatter
%    Include main chapters here.
\chapter{Background material on operads and groupoids}
\label{Section1}

In this chapter we fix a symmetric monoidal category $(\mathcal C,\otimes,\mathbf{1})$ 
having small colimits. Let us assume for simplicity of exposition that $\otimes$ commutes with these\footnote{This latter assumption is not necessary 
(and we will have to get rid of it when considering the monoidal structure given by the direct sum of Lie algebras): if the monoidal product does not commute 
with colimits, the category of $\mathfrak{S}$-module still has enough structure so that one can define monoids and modules in it. Characterizations in terms of 
partial compositions remain unchanged. We refer to \cite{Ching} for more details. }. 

%1.1

\section{$\mathfrak{S}$-modules}

An \textit{$\mathfrak{S}$-module} (in $\mathcal C$) is a 
functor $S:\mathbf{Bij}\to \mathcal C$, where $\mathbf{Bij}$ denotes the category of 
finite sets with bijections as morphisms. It can also be defined as a collection 
$\left(S(n)\right)_{n\geq0}$ of objects of $\mathcal C$ such that $S(n)$ is endowed 
with a right action of the symmetric group $\mathfrak{S}_n$ for every $n$; 
one has $S(n):=S(\{1,\dots,n\})$. 
A \textit{morphism} of $\mathfrak{S}$-modules $\varphi:S\to T$ is a natural transformation. 
It is determined by the data of a collection $\varphi(n):S(n)\to T(n)$ of 
$\mathfrak S_n$-equivariant morphisms in $\mathcal C$. 

\medskip

The category $\mathfrak S$-mod of $\mathfrak S$-modules is naturally endowed with a 
symmetric monoidal product $\otimes$ defined as follows: 
\[
(S\otimes T)(n):=\coprod_{p+q=n}\left(S(p)\otimes T(q)\right)_{\mathfrak S_p\times\mathfrak S_q}^{\mathfrak S_n}\,.
\]
Here, if $H\subset G$ is a group inclusion, then $(-)_H^G$ is left adjoint to the restriction functor 
from the category of objects carrying a $G$-action to the category of objects carrying an $H$-action. 

The symmetric sequence $\mathbf{1}_{\otimes}$ defined by 
\[
\mathbf{1}_{\otimes}(n):=
\begin{cases} 
\mathbf{1} & \mathrm{if}~n=0 \\
\emptyset & \mathrm{else}
\end{cases}
\]
is a monoidal unit for $\otimes$. 

\medskip

There is another (non-symmetric) monoidal product $\circ$ on $\mathfrak S$-mod, defined as follows: 
\[
(S\circ T)(n):=\coprod_{k\geq0}T(k)\underset{\mathfrak S_k}{\otimes}\left(S^{\otimes k}(n)\right)\,.
\]
Here, if $H$ is a group and $X,Y$ are objects carrying an $H$-action, then 
\[
X\underset{H}{\otimes}Y:={\rm coeq}\left(
\underset{h\in H}{\coprod}X\otimes Y\doublerightarrow{h\otimes{\rm id}}{{\rm id}\otimes h} X\otimes Y\right)\,.
\] 

The symmetric sequence $\mathbf{1}_{\circ}$ defined by 
\[
\mathbf{1}_{\circ}(n):=
\begin{cases} 
\mathbf{1} & \mathrm{if}~n=1 \\
\emptyset & \mathrm{else}
\end{cases}
\]
is a monoidal unit for $\circ$. 

%1.2

\section{Operads}

An \textit{operad} (in $\mathcal C$) is a unital monoid in $(\mathfrak S\textrm{-mod},\circ,\mathbf{1}_{\circ})$. 
The category of operads in $\mathcal C$ will be denoted $\tmop{Op}\mathcal C$. 

More explicitly, an operad structure on a $\mathfrak S$-module $\mathcal O$ is the data:
\begin{itemize}
\item of a unit map $e:\mathbf{1}\to\mathcal O(1)$. 
\item for every sets $I,J$ and any element $i\in I$, of a \textit{partial composition} 
\[
\circ_i:\mathcal O(I)\otimes\mathcal O(J)\longrightarrow \mathcal O\left(J\sqcup I-\{i\}\right)
\]
\end{itemize}
satisfying the following constraints: 
\begin{itemize}
\item for every sets $I,J,K$, with elements $i\in I$, $j\in J$, the following diagram commutes: 
\[
\xymatrix{
\mathcal O(I)\otimes\mathcal O(J)\otimes\mathcal O(K) \ar[d]^{{\rm id}\otimes\circ_j}\ar[rr]^{\circ_i\otimes{\rm id}} &~& 
\mathcal O\left(J\sqcup I-\{i\}\right)\otimes\mathcal O(K) \ar[d]^{\circ_j} \\
\mathcal O(I)\otimes\mathcal O\left(K\sqcup J-\{j\}\right) \ar[rr]^{\circ_i} &~& \mathcal O\left(K\sqcup J\sqcup I-\{i,j\}\right)
}
\]
\item for every sets $I,J_1,J_2$, with elements $i_1,i_2\in I$, the following diagram commutes: 
\[
\xymatrix{
\mathcal O(I)\otimes\mathcal O(J_1)\otimes\mathcal O(J_2) \ar[d]^{(\circ_{i_2}\otimes{\rm id})(23)}\ar[rr]^{\circ_{i_1}\otimes{\rm id}} &~& 
\mathcal O\left(J_1\sqcup I-\{i_1\}\right)\otimes\mathcal O(J_2) \ar[d]^{\circ_{i_2}} \\
\mathcal O\left(J_2\sqcup I-\{i_2\}\right)\otimes\mathcal O(J_1) \ar[rr]^{\circ_{i_1}} &~& \mathcal O\left(J_2\sqcup J_1\sqcup I-\{i_1,i_2\}\right)
}
\]
\item for every sets $I,I',J$, $i\in I$, with a bijection $\sigma:I\to I'$, the following diagram commutes: 
\[
\xymatrix{
\mathcal O(I)\otimes\mathcal O(J) \ar[d]^{\circ_i}\ar[rr]^{\mathcal O(\sigma)} &~& \mathcal O(I')\otimes\mathcal O(J)\ar[d]^{\circ_{\sigma(i)}}
 \\\mathcal O\left(J\sqcup I-\{i\}\right) \ar[rr]^{\mathcal O({\rm id}\sqcup\sigma_{|I-\{i\}})} &~& \mathcal O\left(J\sqcup I'-\{\sigma(i)\}\right)
}
\]
\item for every set $I$,with $i\in I$, the following diagrams commute: 
\[
\xymatrix{
\mathbf{1}\otimes\mathcal O(I)\ar[rd]_{\simeq}\ar[r]^{e\otimes\mathrm{id}} & \mathcal O(\{1\})\otimes\mathcal O(I) \ar[d]^{\circ_{1}} \\
 & \mathcal O(I)
}
\quad
\xymatrix{
\mathcal O(I)\otimes\mathbf{1}\ar[d]_{\simeq}\ar[r]^{\mathrm{id}\otimes e} & \mathcal O(I)\otimes \mathcal O(\{1\}) \ar[d]^{\circ_i} \\
\mathcal O(I) \ar[r]_-{i\mapsto 1}^-{\simeq} & \mathcal O\left(I\sqcup \{1\}-\{i\}\right)
}
\]
\end{itemize}
\begin{example}
Let $X$ be an object of $\mathcal C$. Then we define, for any finite set $I$, 
the set $\underline{\rm End}(X)(I):={\rm Hom}_{\mathcal C}(X^{\otimes I},X)$. Composition of tensor products of maps provide 
$\underline{\rm End}(X)$ with the structure of an operad in sets. 
\end{example}
Given an operad in sets $\mathcal O$, an \textit{$\mathcal O$-algebra in $\mathcal C$} is an object $X$ of $\mathcal C$ together 
with a morphism of operads $\mathcal O\to \underline{\rm End}(X)$. 

%1.3

\section{Example of an operad: Stasheff polytopes}

To any finite set $I$ we associate the configuration space 
$\textrm{Conf}(\mathbb{R},I)=\{\mathbf{x}=(x_i)_{i\in I}\in \mathbb{R}^I|x_i\neq x_j\textrm{ if }i\neq j\}$
and its reduced version 
\[
\textrm{C}(\mathbb{R},I):=\textrm{Conf}(\mathbb{R},I)/\mathbb{R}\rtimes\mathbb{R}_{>0}\,.
\]
The Axelrod--Singer--Fulton--MacPherson compactification\footnote{We are using the differential geometric 
compactification from \cite{AS}, which is an analog of the algebro-geometric one from \cite{FMP}. } 
$\overline{\textrm{C}}(\mathbb{R},I)$ of $\textrm{C}(\mathbb{R},I)$ is a disjoint union of $|I|$-th Stasheff 
polytopes \cite{Sta}, indexed by $\mathfrak S_I$.  
The boundary $\partial\overline{\textrm{C}}(\mathbb{R},I):=\overline{\textrm{C}}(\mathbb{R},I)-\textrm{C}(\mathbb{R},I)$ 
is the union, over all partitions $I=J_1\coprod\cdots\coprod J_k$, of 
\[
\partial_{J_1,\cdots,J_k}\overline{\textrm{C}}(\mathbb{R},I):=\prod_{i=1}^k\overline{\rm C}(\mathbb{R},J_i)\times\overline{\rm C}(\mathbb{R},k)\,.
\]
The inclusion of boundary components provides $\overline{\rm C}(\mathbb{R},-)$ with the structure of an 
operad in topological spaces (where the monoidal structure is given by the cartesian product). 

One can see that $\overline{\textrm{C}}(\mathbb{R},I)$ is actually a manifold with corners, and that, 
considering only zero-dimensional strata of our configuration spaces, we get a suboperad 
$\mathbf{Pa}\subset\overline{\textrm{C}}(\mathbb{R},-)$ that can be shortly described as follows: 
\begin{itemize}
\item $\mathbf{Pa}(I)$ is the set of pairs $(\sigma,p)$ with $\sigma$ is a linear order on $I$ and $p$ 
a maximal parenthesization of $\underbrace{\bullet\cdots\bullet}_{|I|~{\rm times}}$,
\item the operad structure is given by substitution. 
\end{itemize}
Notice that $\mathbf{Pa}$ is actually an operad in sets, and that $\mathbf{Pa}$-algebras are nothing else than \textit{magmas}. 

%1.4

\section{Modules over an operad: Bott-Taubes polytopes}

A \textit{module} over an operad $\mathcal O$ (in $\mathcal C$) is a right $\mathcal O$-module in 
$(\mathfrak S\textrm{-mod},\circ,\mathbf{1}_{\circ})$. 
Notice that any operad is a module over itself. We let the reader find the very explicit description 
of a module in terms of partial compositions, as for operads. 

\medskip

To any finite set $I$ we associate the configuration space 
$\textrm{Conf}(\mathbb{S}^1,I)=\{\mathbf{x}=(x_i)_{i\in I}\in (\mathbb{S}^1)^I|x_i\neq x_j\textrm{ if }i\neq j\}$
and its reduced version 
\[
\textrm{C}(\mathbb{S}^1,I):=\textrm{Conf}(\mathbb{S}^1,I)/\mathbb{S}^1\,.
\]
The Axelrod--Singer--Fulton--MacPherson compactification $\overline{\textrm{C}}(\mathbb{S}^1,I)$ of 
$\textrm{C}(\mathbb{S}^1,I)$ is a disjoint union of $|I|$-th Bott--Taubes polytopes \cite{BT}, indexed by $\mathfrak S_I$.  
The boundary $\partial\overline{\textrm{C}}(\mathbb{S}^1,I):=\overline{\textrm{C}}(\mathbb{S}^1,I)-\textrm{C}(\mathbb{S}^1,I)$ 
is the union, over all partitions $I=J_1\coprod\cdots\coprod J_k$, of 
\[
\partial_{J_1,\cdots,J_k}\overline{\textrm{C}}(\mathbb{S}^1,I):=\prod_{i=1}^k\overline{\rm C}(\mathbb{R},J_i)\times\overline{\rm C}(\mathbb{S}^1,k)\,.
\]
The inclusion of boundary components provides $\overline{\rm C}(\mathbb{S}^1,-)$ with the structure of a module over the operad 
$\overline{\rm C}(\mathbb{R},-)$ in topological spaces. 

One can see that $\overline{\textrm{C}}(\mathbb{S}^1,I)$ is actually a manifold with corners, and that, considering 
only zero-dimensional strata of our configuration spaces, we get $\mathbf{Pa}\subset\overline{\textrm{C}}(\mathbb{S}^1,-)$, 
which is a module over $\mathbf{Pa}\subset\overline{\textrm{C}}(\mathbb{R},-)$. 

% 1.5

\section{Convention: pointed versions}\label{sec-pointings}

Observe that there is an operad $Unit$ defined by 
\[
Unit(n)=\begin{cases}
\mathbf{1} &\mathrm{if}~n=0,1 \\
\emptyset &\mathrm{else}
\end{cases}
\]
By convention, all our operads $\mathcal O$ will be $Unit$-pointed and reduced, in the sense that they will come equipped 
with a specific operad morphism $Unit\to\mathcal O$ that is an isomorphism in arity $\leq 1$: $\mathcal O(n)\simeq\mathbf{1}$ 
if $n=0,1$. 
Morphisms of operads are required to be compatible with this pointing. 

\medskip

Now, if $\mathcal P$ is an $\mathcal O$-module, then it naturally becomes a $Unit$-module as well, by restriction. 
By convention, all our modules will be pointed as well, in the sense that they will come equipped with a specific 
$Unit$-module morphism $Unit\to\mathcal P$ that is an isomorphism in arity $\leq 1$. 
Morphisms of modules are also required to be compatible with the pointing. 

\medskip

The main reason for this convention is that we need the following features, that we have in the case of compactified 
configuration spaces: 
\begin{itemize}
\item For operads and modules, we want to have ``deleting operations'' $\mathcal O(n)\to\mathcal O(n-1)$ that decrease arity. 
\item For modules, we want to be able to see the operad ``inside'' them, i.e.~we want to have distinguished morphism 
$\mathcal O\to\mathcal P$ of $\mathfrak{S}$-modules. 
\end{itemize}

% 1.6

\section{Group actions}\label{sec-gpaction}

Let $G$ be a $*$-module in group, where $*$ is the terminal operad: the partial composition $\circ_i$ 
is a group morphism $G(n)\to G(n+m-1)$. 
\begin{example}
Let $\Gamma$ be a group, we consider the $\mathfrak{S}$-module in groups 
$\overline{\Gamma}:=\{\Gamma^n/\Gamma^{diag}\}_{n\geq0}$, 
where $\Gamma^{diag}$ denotes the normal closure of the diagonal subgroup in each $\Gamma^n$. 
It is equipped with the following $*$-module structure: the $i$-th partial composition is given by the partial diagonal morphism
\begin{eqnarray*}
\Gamma^n/\Gamma & \longrightarrow & \Gamma^{n+m-1}/\Gamma \\
~[\gamma_1,\dots,\gamma_n] & \longmapsto & 
[\gamma_1,\dots,\gamma_{i-1},\underbrace{\gamma_i,\dots,\gamma_i}_{m~\mathrm{times}},\gamma_{i+1},\dots,\gamma_n]
\end{eqnarray*}
\end{example}
Given an operad $\mathcal O$ in $\mathcal C$, we say that an $\mathcal O$-module $\mathcal P$ carries a \textit{$G$-action} if 
\begin{itemize}
\item for every $n\geq0$, there is an $\mathfrak{S}_n$-equivariant left action $G(n)\times\mathcal P(n)\to\mathcal P(n)$. 
\item for every $m\geq0$, $n\geq0$, and $1\leq i\leq n$, the partial composition 
\[
\circ_i:\mathcal P(n)\otimes\mathcal O(m)\longrightarrow \mathcal P(n+m-1)
\]
is equivariant along the above group morphism $G(n)\to G(n+m-1)$. 
\end{itemize}
A morphism $\mathcal P\to\mathcal Q$ of $\mathcal O$-modules with $G$-action 
is said \textit{$G$-equivariant} if, for every $n\geq0$, the map $\mathcal P(n)\to\mathcal Q(n)$ is $G(n)$-equivariant. 

\medskip

Given a group $\Gamma$, we say that an $\mathcal O$-module $\mathcal P$ carries a \textit{diagonally trivial action of} 
$\Gamma$ if it carries a $\overline{\Gamma}$-action. 

\medskip

The quotient $G\backslash\mathcal P$ of an $\mathcal O$-module $\mathcal P$ with a $G$-action is defined as follows: 
\begin{itemize}
\item For every $n\geq 0$, $\big(G(n)\backslash\mathcal P\big)(n):=G(n)\backslash\mathcal P(n)$; 
\item The equivariance of the partial composition $\circ_i$ tels us that it descends to the quotient
\[
\big(G(n)\backslash\mathcal P(n)\big)\otimes\mathcal O(m)\longrightarrow G(n+m-1)\backslash\mathcal P(n+m-1)\,.
\] 
\end{itemize}

% 1.7

\section{Semi-direct products and fake pull-backs}\label{sec-semifake}

Let $\mathbf{Grpd}$ be the category of groupoids. 
For a group $G$, we denote $G-\mathbf{Grpd}$ the category of groupoids equipped with a $G$-action. 
There is a \textit{semi-direct product} functor 
\begin{eqnarray*}
G-\mathbf{Grpd} & \longrightarrow	& \mathbf{Grpd}_{/G}	\\
\mathcal P 		& \longmapsto		& \mathcal P\rtimes G
\end{eqnarray*}
where the group $G$ is viewed as a groupoid with a single object, and where $\mathcal P\rtimes G$ is defined as follows: 
\begin{itemize}
\item Objects of $\mathcal P\rtimes G$ are just objects of $\mathcal P$; 
\item In addition to the arrows of $\mathcal P$, for every $g\in G$, and for every object $\mathbf{p}$ of $P$, 
there is an arrow $g\cdot\mathbf{p}\overset{g}{\to}\mathbf{p}$; 
\item These new arrows multiply together via the group multiplication of $G$;
\item For every morphism $f$ in $P$, and every $g\in G$, the relation $gfg^{-1}=g\cdot f$ holds. 
\end{itemize}

\begin{notation}
We warn the reader that we use all along the paper the following 
rather unusual convention for arrows in a groupoid, and more generally in a category: 
we often concatenate arrows rather than composing them. 
In other words, $f_1 f_2 = f_2 \circ f_1$. 
\end{notation}

\medskip

There is also a functor $\mathcal G$ going in the other direction
\begin{eqnarray*}
\mathbf{Grpd}_{/G}						& \longrightarrow	& G-\mathbf{Grpd}		\\
(\mathcal Q\overset{\varphi}{\to} G)	& \longmapsto		& \mathcal G(\varphi)
\end{eqnarray*}
that one can describe as follows: 
\begin{itemize}
\item The $G$-set of objects of $\mathcal G(\varphi)$ is the free $G$-set generated by $\on{Ob}(\mathcal Q)$; 
\item A morphism $(g,x)\to (h,y)$ in $\mathcal G(\varphi)$ is a morphism $x\overset{f}{\to} y$ in $\mathcal Q$ 
such that $g\varphi(f)=h$. 
\end{itemize}
\begin{example}\label{Ex:cobraid}
The groupoid $\mathcal G(\on{B}_n\to \SG_n)$ is the colored braid groupoid 
$\mathbf{CoB}(n)$ from \cite[\S5.2.8]{Fresse}. 
\end{example}
\begin{remark}
Given an object $q$ of $\mathcal Q$, $\on{Aut}_{\mathcal G(\varphi)}(g,q)$ is the kernel of the morphism 
$\on{Aut}_{\mathcal Q}(q)\to G$ for every $g\in G$. 
\end{remark}
\medskip

These constructions still make sense for modules over a given operad $\mathcal O$ whenever $G$ is an operadic $*$-module in groups. 
 
\bigskip

Let $\mathcal P, \mathcal Q$ be two operads (resp.~modules) in groupoids. If we are given a morphism 
$f:\on{Ob}(\mathcal P)\to\on{Ob}(\mathcal Q)$ between the operads (resp. operad modules) of objects of $\mathcal P$ 
and $\mathcal Q$, then (following \cite{Fresse}) we can define an operad (resp. operad module) $ f^\star \mathcal Q$ 
in the following way:
\begin{itemize}
\item $\on{Ob}(f^\star \mathcal Q) := \on{Ob}(\mathcal P)$,
\item $\on{Hom}_{(f^\star \mathcal Q)(n)}(p,q):=\on{Hom}_{\mathcal Q(n)}(f(p),f(q))$.
\end{itemize}
In particular, $f^\star \mathcal Q$, which we call the \textit{fake pull-back} of $\mathcal Q$ along $f$, inherits 
the operad structure of $\mathcal P$ for its operad of objects and that of $\mathcal Q$ for the morphisms. 
\begin{remark}
Notice that this is not a pull-back in the category of operads in groupoids.
\end{remark}

% 1.8

\section{Prounipotent completion}\label{sec-procomp}

Let $\kk$ be a $\Q$-ring. We denote by $\mathbf{CoAlg_{\KK}}$ the symmetric monoidal category of complete filtered topological 
coassociative cocommutative counital $\KK$-coalgebras, where the monoidal product is given by the completed tensor product 
$\hat{\otimes}_{\KK}$ over $\KK$. 

Let $\mathbf{Cat(CoAlg_{\kk})}$ be the category of small $\mathbf{CoAlg_{\KK}}$-enriched categories. It is symmetric monoidal as well, 
with monoidal product $\otimes$ defined as follows: 
\begin{itemize}
\item $\on{Ob}(C \otimes C'):=\on{Ob}(C) \times \on{Ob}(C')$. 
\item $\on{Hom}_{C \otimes C'}\big((c,c'),(d,d')\big):=\on{Hom}_{C}(c,d) \hat\otimes_{\KK} \on{Hom}_{C'}(c',d')$. 
\end{itemize}

All the constructions of the previous section still make sense, at the cost of replacing the group $G$ with 
its completed group algebra $\widehat{\kk G}$ (which is a Hopf algebra) in the semi-direct product construction. 

\medskip

Considering the cartesian symmetric monoidal structure on $\mathbf{Grpd}$, there is a symmetric monoidal functor 
\begin{eqnarray*}
\mathbf{Grpd} & \longrightarrow & \mathbf{Cat(CoAlg_{\kk})} \\
\mathcal G & \longmapsto & \mathcal G(\KK)
\end{eqnarray*}
defined as follows: 
\begin{itemize}
\item Objects of $\mathcal P(\KK)$ are objects of $\mathcal P$. 
\item For $a,b \in Ob(\mathcal P)$, 
\[
\on{Hom}_{\mathcal P(\KK)}(a,b)=\widehat{\KK\cdot\on{Hom}_{\mathcal P}(a,b)}\,.
\]
Here $\KK\cdot\on{Hom}_{\mathcal P}(a,b)$ is equipped with the unique coalgebra structure such that the elements 
of $\on{Hom}_{\mathcal P}(a,b)$ are grouplike (meaning that they are diagonal for the coproduct and that their 
counit is $1$), and the ``$~\widehat{~}$~'' refers to the completion with respect to the topology defined by the
 sequence $(\on{Hom}_{\mathcal I^k}(a,b)\big)_{k\geq0}$, where $\mathcal I^k$ is the category having the same objects 
 as $\mathcal P$ and morphisms lying in the $k$-th power (for the composition of morphisms) of kernels of the counits 
 of $\KK\cdot\on{Hom}_{\mathcal P}(a,b)$'s. 
\item For a functor $F:\mathcal P\to \mathcal Q$, $F(\KK):\mathcal P(\KK)\to \mathcal Q(\KK)$ is the functor given by $F$ 
on objects and by $\KK$-linearly extending $F$ on morphisms. 
\end{itemize}
Being symmetric monoidal, this functor sends operads in groupoids to operads in $\mathbf{Cat(CoAlg_{\kk})}$. 
\begin{example}
For instance, viewing $\Pa$ as an operad in groupoid (with only identities as morphisms), then $\Pa(\KK)$ is the operad 
in $\mathbf{Cat(CoAlg_{\kk})}$ with same objects as $\Pa$, and whose morphisms are 
\[
\on{Hom}_{\Pa(\KK)(n)}(a,b)=\begin{cases}
\KK & \mathrm{if}~a=b \\
0 & \mathrm{else}
\end{cases}
\]
with $\KK$ being equipped with the coproduct $\Delta(1)=1 \otimes 1$ and counit $\epsilon(1)=1$. 
\end{example}

\medskip

The functor we have just defined has a right adjoint 
\[
G:\mathbf{Cat(CoAlg_{\kk})}\longrightarrow \mathbf{Grpd}\,,
\]
that we can describe as follows: 
\begin{itemize}
\item For $C$ in $\mathbf{Cat(CoAlg_{\kk})}$, objects of $G(C)$ are objects of $C$. 
\item For $a,b \in Ob(\mathcal G)$, $\on{Hom}_{G(C)}(a,b)$ is the subset of grouplike elements in $\on{Hom}_{C}(a,b)$. 
\end{itemize}
Being right adjoint to a symmetric monoidal functor, it is lax symmetric monoidal, and thus it sends operads (resp.~modules) 
to operads (resp.~modules). 

We thus get a \textit{$\KK$-prounipotent completion} functor 
$\mathcal G\mapsto \hat{\mathcal G}(\KK):=G\big(\mathcal G(\KK)\big)$ for (operads and modules in) groupoids. 

\begin{remark}
Let $\varphi:G\to S$ be a surjective group morphism, and assume that $S$ is finite. 
One can prove that the prounipotent completion $\hat{\mathcal G}(\varphi)(\kk)$ of the construction from the previous 
section is isomorphic to $\mathcal G(\varphi(\kk))$, where $\varphi(\kk):G(\varphi,\kk)\to S$ is Hain's relative 
completion \cite{HainMalcev}. 
This essentially follows from that, when $S$ is finite, the kernel of the relative completion is the completion of the kernel. 
\end{remark}

\chapter{Operads associated with configuration spaces (associators)}
\label{Section2}

% 2.1

\section{Compactified configuration space of the plane}

To any finite set $I$ we associate a configuration space 
$$
\textrm{Conf}(\mathbb{C},I)=\{\mathbf{z}=(z_i)_{i\in I}\in \mathbb{C}^I|z_i\neq z_j\textrm{ if }i\neq j\}\,.
$$
We also consider its reduced version 
$$
\text{\gls{CCn}}:=\textrm{Conf}(\mathbb{C},I)/\mathbb{C}\rtimes\mathbb{R}_{>0}.
$$
We then consider the Axelrod--Singer--Fulton--MacPherson compactification \gls{FMCn} of $\textrm{C}(\mathbb{C},I)$. 
The boundary $\partial\overline{\textrm{C}}(\mathbb{C},I)=\overline{\textrm{C}}(\mathbb{C},I)-\textrm{C}(\mathbb{C},I)$ is made 
of the following irreducible components: for any partition 
$I=J_1\coprod\cdots\coprod J_k$ there is a component 
$$
\partial_{J_1,\cdots,J_k}\overline{\textrm{C}}(\mathbb{C},I)\cong \overline{\rm C}(\mathbb{C},k)\times
\prod_{i=1}^k\overline{\rm C}(\mathbb{C},J_i)\,.
$$

The inclusion of boundary components provides $\overline{\rm C}(\mathbb{C},-)$ with the structure of an operad 
in topological spaces. One can picture the partial operadic composition morphisms as follows:
\begin{center}
\includegraphics[width=143mm]{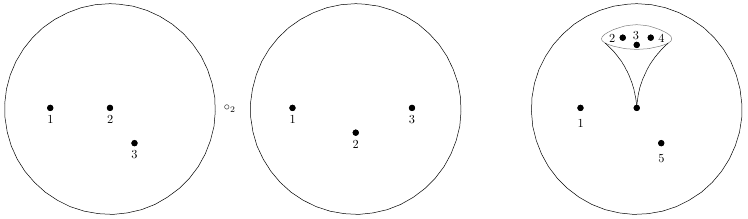}
\end{center}

% 2.2

\section{A presentation for the pure braid group}\label{paragraphe2.2}

The pure braid group \gls{PBn} is generated by elementary pure braids $P_{ij}$, $1\leq i<j\leq n$, 
which satisfy the following relations: 
\begin{flalign}
& (P_{ij},P_{kl})=1\quad\textrm{if }\{i,j\}\textrm{ and }\{k,l\}~\textrm{are non crossing}\,,  	\tag{PB1}\label{eqn:PB1} \\
& (P_{kj}P_{ij}P_{kj}^{-1},P_{kl})=1\quad\textrm{if }i<k<j<l\,, 																\tag{PB2}\label{eqn:PB2} \\
& (P_{ij},P_{ik}P_{jk})=(P_{jk},P_{ij}P_{ik})=(P_{ik},P_{jk}P_{ij})=1\quad\textrm{if }i<j<k\,. &\tag{PB3}\label{eqn:PB3}  
\end{flalign}
In this article we will represent the generator $P_{ij}$ in the following two equivalent ways: 
\begin{center}
\begin{tikzpicture}[baseline=(current bounding box.center)]
\tikzstyle point=[circle, fill=black, inner sep=0.05cm]
 \node[point, label=above:$1$] at (0,1) {};
 \node[point, label=below:$1$] at (0,-1.5) {};
 \draw[->,thick, postaction={decorate}] (0,1) -- (0,-1.45);
 \node[point, label=above:$i$] at (1,1) {};
 \node[point, label=below:$i$] at (1,-1.5) {};
  \node[point, label=above:$...$] at (2,1) {};
 \node[point, label=below:$...$] at (2,-1.5) {};
 \draw[thick] (1,1) .. controls (1,0.25) and (3.5,0.5).. (3.5,-0.25);
 \node[point, ,white] at (3,0.2) {};
 \node[point, label=above:$j$] at (3,1) {};
 \node[point, label=below:$j$] at (3,-1.5) {};
   \draw[->,thick, postaction={decorate}] (3,1) -- (3,-1.45);
   \draw[->,thick, postaction={decorate}] (3,0) -- (3,-1.45);
  \node[point, ,white] at (3,-0.7) {};
 \draw[->,thick, postaction={decorate}] (3.5,-0.25) .. controls (3.5,-1) and (1,-0.75).. (1,-1.45);
 \node[point, label=above:$n$] at (4,1) {};
 \node[point, label=below:$n$] at (4,-1.5) {};
 \draw[->,thick, postaction={decorate}] (4,1) -- (4,-1.45);
   \node[point, ,white] at (2,0.4) {};
    \node[point, ,white] at (2,-0.9) {};
   \draw[->,thick, postaction={decorate}] (2,1) -- (2,-1.45);
\end{tikzpicture}
$\qquad\longleftrightarrow\quad\sphericalangle\quad$
\begin{tikzpicture}[baseline=(current bounding box.center)] % pour avoir des fleches au milieu des courbes
\tikzstyle point=[circle, fill=black, inner sep=0.05cm]% style des points
 \draw[loosely dotted] (1.5,-1.5) -- (-1.5,1.5); % diagonale en pointilles
 \node[point, label=above:$n$] at (1.5,-1.5) {}; % les 4 points, avec un label en dessous, on peut y mettre du mode maths c'est pratique
  \node[point, label=left:$i$] at (-0.5,0.5) {};
 \node[point, label=below:$j$] at (0.5,-0.5) {};
 \node[point, label=below:$1$] at (-1.5,1.5) {};
 \draw[->, thick, postaction={decorate}] (-0.5,0.5) .. controls (-0.5,0.5) and (0.65,0.7).. (0.65,-0.5); 
 \draw[thick] (0.65,-0.49) .. controls (0.5,-0.7)  .. (0.4,-0.5) ;
 \draw[thick, postaction={decorate}] (0.4,-0.5) .. controls (0.5,0.25) .. (-0.5,0.5);
\end{tikzpicture}
\end{center}
There is another elementary braid $\reflectbox{P}_{i,j}$ conjugated to $P_{i,j}$.  We can represent 
two incarnations of the generator $\reflectbox{P}_{i,j}$ in the following way
\begin{center}
\begin{tikzpicture}[baseline=(current bounding box.center)]
\tikzstyle point=[circle, fill=black, inner sep=0.05cm]
	\node[point, label=above:$1$] at (0,1) {};
	\node[point, label=below:$1$] at (0,-1.5) {};
	\draw[->,thick, postaction={decorate}] (0,1) -- (0,-1.45);
	\node[point, label=above:$i$] at (1,1) {};
	\node[point, label=below:$i$] at (1,-1.5) {};
	\node[point, label=above:$...$] at (2,1) {};
	\node[point, label=below:$...$] at (2,-1.5) {};
	\draw[->,thick, postaction={decorate}] (2,1) -- (2,-1.45);
	\node[point, ,white] at (2,0.4) {};
	\node[point, ,white] at (2,-0.9) {};
	\node[point, label=above:$j$] at (3,1) {};
	\node[point, label=below:$j$] at (3,-1.5) {};
	\draw[thick] (1,1) .. controls (1,0.25) and (3.5,0.5).. (3.5,-0.25);
	\node[point, ,white] at (3,0.2) {};
	\draw[->,thick, postaction={decorate}] (3,1) -- (3,-1.45);
	\draw[->,thick, postaction={decorate}] (3,0) -- (3,-1.45);
	\node[point, ,white] at (3,-0.7) {};
	\draw[->,thick, postaction={decorate}] (3.5,-0.25) .. controls (3.5,-1) and (1,-0.75).. (1,-1.45);
	\node[point, label=above:$n$] at (4,1) {};
	\node[point, label=below:$n$] at (4,-1.5) {};
	\draw[->,thick, postaction={decorate}] (4,1) -- (4,-1.45);
\end{tikzpicture}
$\qquad\longleftrightarrow\quad\sphericalangle\quad$
\begin{tikzpicture}[baseline=(current bounding box.center)] 
\tikzstyle point=[circle, fill=black, inner sep=0.05cm]
	\draw[loosely dotted] (-1.5,1.5) -- (1.5,-1.5); 
	\node[point, label=below:1] at (-1.5,1.5) {}; 
	\node[point, label=above:$i$] at (-0.5,0.5) {};
	\node[point, label=above:$j$] at (0.5,-0.5) {};
	\node[point, label=above:$n$] at (1.5,-1.5) {};
	\draw[thick, postaction={decorate}] (-0.5,0.5) .. controls (0,-0.5) and (-0.25,-0.2).. (0.6,-0.4); 
	\draw[->,thick] (0.6,-0.4) .. controls (0.7,-0.5)  .. (0.6,-0.6) ;
	\draw[thick, postaction={decorate}] (0.6,-0.6) .. controls (0,-0.75) .. (-0.5,0.5);
\end{tikzpicture}
\end{center}

Indeed, one can define $O_{ij}:=P_{ij}\cdots P_{i(i+2)}P_{i(i+1)}$. 
In other words, $P_{ij}=O_{ij}O_{i(j-1)}^{-1}$. 
And we define $\reflectbox{P}_{ij}:=O_{i(j-1)}^{-1}O_{ij}= O_{i(j-1)}^{-1}P_{ij} O_{i(j-1)}$. 

% 2.3

\section{The operad of parenthesized braids}\label{sec-pab}

There are inclusions of topological operads 
$$
\mathbf{Pa}\,\subset\,\overline{\textrm{C}}(\mathbb{R},-)\,\subset\,\overline{\textrm{C}}(\mathbb{C},-)\,.
$$
Then it makes sense to define 
$$
\text{\gls{PaB}}:=\pi_1\left(\overline{\textrm{C}}(\mathbb{C},-),\mathbf{Pa}\right)\,,
$$
which is an operad in groupoids.

\begin{example}[Description of $\mathbf{PaB}(2)$]\label{ex2.1}
Let us first recall that $\mathbf{Pa}(2)=\mathfrak{S}_2$, and that $\overline{\textrm{C}}(\mathbb{C},2)\simeq \mathbb{S}^1$. 
Besides the identity morphism in $\mathbf{PaB}(2)$ going from $(12)$ to $(12)$, there is an arrow 
$R^{1,2}$ in $\mathbf{PaB}(2)$ going from $(12)$ to $(21)$ which can be depicted as 
follows\footnote{We actually have another arrow, that can be obtained from the first one 
as $(R^{2,1})^{-1}$ according to the notation that is explained after Theorem \ref{thm2.3}, 
and which can be depicted as an undercrossing braid. }: 
\begin{center}
\begin{tikzpicture}[baseline=(current bounding box.center)]
\tikzstyle point=[circle, fill=black, inner sep=0.05cm]
 \node[point, label=above:$1$] at (1,1) {};
 \node[point, label=below:$2$] at (1,-0.25) {};
 \node[point, label=above:$2$] at (2,1) {};
 \node[point, label=below:$1$] at (2,-0.25) {};
  \draw[->,thick] (1,1) .. controls (1,0.25) and (2,0.5).. (2,-0.20);
 \node[point, ,white] at (1.5,0.4) {};
 \draw[->,thick] (2,1) .. controls (2,0.25) and (1,0.5).. (1,-0.20); 
\end{tikzpicture}
\qquad\qquad\qquad
\begin{tikzpicture}[baseline=(current bounding box.center)]% pour avoir des fleches au milieu des courbes
\tikzstyle point=[circle, fill=black, inner sep=0.05cm]% style des points
 \node[point, label=right:$2$] at (0.5,-0.5) {};
 \node[point, label=left:$1$] at (-0.5,0.5) {};
 \draw[->,thick] (0.5,-0.5) .. controls (-0.25,-0.25) .. (-0.45,0.45); 
 \draw[->,thick] (-0.5,0.5) .. controls (0.25,0.25) .. (0.45,-0.45); 
\end{tikzpicture}
\\ \text{Two incarnations of $R^{1,2}$}
\end{center}
We will denote $\tilde R^{1,2}:=(R^{2,1})^{-1}$.
\end{example}
\begin{example}[Notable arrows in $\mathbf{PaB}(3)$]\label{ex2.1bis}
Let us first recall that $\mathbf{Pa}(3)=\mathfrak{S}_3\times\{(\bullet\bullet)\bullet,\bullet(\bullet\bullet)\}$ 
and that $\overline{\textrm{C}}(\mathbb{R},3)\cong\mathfrak{S}_3\times[0,1]$. 
Therefore,there is an arrow $\Phi^{1,2,3}$ (the identity path in $[0,1]$) from $(12)3$ to $1(23)$ in $\PaB(3)$. 
It can be depicted as follows: 
\begin{center}
\begin{tikzpicture}[baseline=(current bounding box.center)]
\tikzstyle point=[circle, fill=black, inner sep=0.05cm]
 \node[point, label=above:$(1$] at (1,1) {};
 \node[point, label=below:$1$] at (1,-0.25) {};
 \node[point, label=above:$2)$] at (1.5,1) {};
 \node[point, label=below:$(2$] at (3.5,-0.25) {};
 \node[point, label=above:$3$] at (4,1) {};
 \node[point, label=below:$3)$] at (4,-0.25) {};
 \draw[->,thick] (1,1) .. controls (1,0) and (1,0).. (1,-0.20); 
 \draw[->,thick] (1.5,1) .. controls (1.5,0.25) and (3.5,0.5).. (3.5,-0.20);
 \draw[->,thick] (4,1) .. controls (4,0) and (4,0).. (4,-0.20);
\end{tikzpicture}
\qquad\qquad\qquad
\begin{tikzpicture}[baseline=(current bounding box.center)] % pour avoir des fleches au milieu des courbes
\tikzstyle point=[circle, fill=black, inner sep=0.05cm]% style des points
 \node[point, label=below:$1$] at (0.5,0.5) {};
 \node[point, label=below:$2$] at (1,0.5) {};
 \node[point, label=below:$3$] at (2.5,0.5) {};
 \draw[->,thick] (1,0.5) .. controls (1,0.5) .. (2,0.5); 
\end{tikzpicture}
\\ \text{Two incarnations of $\Phi^{1,2,3}$}
\end{center}
\end{example}
The following result is borrowed from \cite[Theorem 6.2.4]{Fresse}, even though it perhaps 
already appeared in \cite{BN} in a different form. 
\begin{theorem}\label{thm2.3}
As an operad in groupoids having $\mathbf{Pa}$ as operad of objects, $\mathbf{PaB}$ is 
freely generated by $R:=R^{1,2}$ and 
$\Phi:=\Phi^{1,2,3}$ together with the following relations: 
\begin{flalign}
& \Phi^{\emptyset,1,2}=\Phi^{1,\emptyset,2}=\Phi^{1,2,\emptyset}=\on{Id}_{1,2}									\quad 
\Big(\mathrm{in}~\mathrm{Hom}_{\mathbf{PaB}(2)}\big(12,12\big)\Big)\,, 				\tag{U1}\label{eqn:U} \\
& R^{1,2}\Phi^{2,1,3}R^{1,3}=\Phi^{1,2,3}R^{1,23}\Phi^{2,3,1} 											\quad 
\Big(\mathrm{in}~\mathrm{Hom}_{\mathbf{PaB}(3)}\big((12)3,2(31)\big)\Big)\,, 				\tag{H1}\label{eqn:H1} \\
& \tilde R^{1,2}\Phi^{2,1,3}\tilde R^{1,3}=\Phi^{1,2,3}\tilde R^{1,23}\Phi^{2,3,1} 											\quad 
\Big(\mathrm{in}~\mathrm{Hom}_{\mathbf{PaB}(3)}\big((12)3,2(31)\big)\Big)\,, 					\tag{H2}\label{eqn:H2} \\
& \Phi^{12,3,4}\Phi^{1,2,34}=\Phi^{1,2,3}\Phi^{1,23,4}\Phi^{2,3,4} 									\quad
\Big(\mathrm{in}~\mathrm{Hom}_{\mathbf{PaB}(4)}\big(((12)3)4,1(2(34))\big)\Big)\,. &	\tag{P}\label{eqn:P}
\end{flalign}
\end{theorem}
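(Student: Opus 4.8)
Here is how I would organize a proof of Theorem~\ref{thm2.3}; it is \cite[Theorem~6.2.4]{Fresse} (and is close to what appears in \cite{BN}). The plan is to introduce the operad in groupoids $\mathcal{F}$ freely generated over the operad of objects $\Pa$ by one arrow $R\colon(12)\to(21)$ in arity~$2$ and one arrow $\Phi\colon(12)3\to1(23)$ in arity~$3$, to form its quotient $\mathcal{Q}$ by the operadic congruence generated by all instances of \eqref{eqn:U}, \eqref{eqn:H1}, \eqref{eqn:H2} and \eqref{eqn:P}, and to prove that the comparison morphism $\mathcal{Q}\to\PaB$ — which exists and is the identity on objects because $R^{1,2}$ and $\Phi^{1,2,3}$ are honest arrows of $\PaB$ by Examples~\ref{ex2.1} and~\ref{ex2.1bis} — is an isomorphism. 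The first thing to settle is that these four families of relations really do hold in $\PaB$; I would check this geometrically, realizing \eqref{eqn:P} as the boundary of the $2$-dimensional Stasheff cell inside $\overline{\mathrm{C}}(\C,4)$, realizing \eqref{eqn:H1} and \eqref{eqn:H2} as the two hexagonal $2$-cells of $\overline{\mathrm{C}}(\C,3)$ that record the two ways of rotating three points past one another, and \eqref{eqn:U} as a degeneracy relation — equivalently, by exhibiting explicit homotopies of paths in these compactified configuration spaces.

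Next I would prove surjectivity. On objects $\mathcal{Q}\to\PaB$ is the identity. On morphisms, first record that $\PaB(n)$ is a connected groupoid with vertex groups isomorphic to $\PB_n$: the space $\overline{\mathrm{C}}(\C,n)$ is connected, the inclusion of the open stratum $\mathrm{C}(\C,n)=\mathrm{Conf}(\C,n)/(\C\rtimes\RR_{>0})$ is a homotopy equivalence, and $\pi_1$ of the latter is $\PB_n$. Then one checks $\mathcal{Q}(n)$ is connected — the $\Phi$-type arrows connect all parenthesizations of a fixed linear order (Mac~Lane's coherence theorem for associativity), and the $R$-type arrows realize every permutation on the underlying orders — and that the vertex group of $\mathcal{Q}(n)$ surjects onto $\PB_n$, by expressing each pure-braid generator as an operadic composite of $R$ and $\Phi$. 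Concretely, in arity~$2$ one has $P_{12}=\tilde R^{1,2}R^{1,2}$; partial-composing this into a left comb and conjugating by suitable products of $\Phi^{\bullet,\bullet,\bullet}$'s yields all the $P_{ij}$, equivalently the conjugate generators $\reflectbox{P}_{ij}=O_{i(j-1)}P_{ij}O_{i(j-1)}^{-1}$ introduced earlier. Hence $\mathcal{Q}(n)\to\PaB(n)$ is full, and $\mathcal{Q}\to\PaB$ is surjective.

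The remaining and genuinely delicate step is \textbf{faithfulness}, i.e.\ that the four families of relations already suffice. My plan is an induction on $n$ proving that the vertex group $G_n$ of $\mathcal{Q}(n)$ at the left comb maps \emph{injectively} onto $\PB_n$. The inductive step should mirror the split Fadell--Neuwirth fibration $\mathrm{Conf}(\C,n)\to\mathrm{Conf}(\C,n-1)$, whose fibre is a plane with $n-1$ punctures, so that $\PB_n\cong F_{n-1}\rtimes\PB_{n-1}$: I would show that the strand-deleting morphism $\mathcal{Q}(n)\to\mathcal{Q}(n-1)$ is surjective with kernel \emph{freely} generated by $P_{1n},\dots,P_{(n-1)n}$, the conjugation action of $G_{n-1}$ on this kernel being dictated exactly by operadic instances of the hexagons \eqref{eqn:H1}, \eqref{eqn:H2} and the pentagon \eqref{eqn:P} (with \eqref{eqn:U} governing deletion), and matching the action that defines $F_{n-1}\rtimes\PB_{n-1}$. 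Concretely this amounts to deducing the relations \eqref{eqn:PB1}, \eqref{eqn:PB2}, \eqref{eqn:PB3} inside $G_n$ from those operadic instances, and then checking that no further relation is forced. Establishing that the kernel of $\mathcal{Q}(n)\to\mathcal{Q}(n-1)$ is \emph{free} of rank $n-1$ using only the listed relations is the crux of the whole argument; granting this, combining it with the surjectivity established above yields $\mathcal{Q}\cong\PaB$, which is the theorem.
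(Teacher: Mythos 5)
The paper does not actually prove Theorem~\ref{thm2.3}: it is quoted verbatim from \cite[Theorem 6.2.4]{Fresse} (with a nod to \cite{BN}), so there is no in-text argument to compare yours against. Your outline is the standard strategy of the cited proof --- map the free operad modulo \eqref{eqn:U}, \eqref{eqn:H1}, \eqref{eqn:H2}, \eqref{eqn:P} to $\PaB$, verify the relations geometrically, get surjectivity from connectedness plus generation of $\PB_n$ by operadic composites of $R$ and $\Phi$, and get injectivity by induction along the Fadell--Neuwirth decomposition $\PB_n\cong F_{n-1}\rtimes\PB_{n-1}$ --- so it is consistent with what the paper relies on; just be aware that the step you flag yourself (freeness of the kernel of $\mathcal{Q}(n)\to\mathcal{Q}(n-1)$ on the $P_{in}$, using only the listed relations) is indeed the entire content of the theorem and is left unproved in your sketch.
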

We now briefly explain the notation we have been using in the above statement, which is quite standard. 

\begin{notation}
In this article, we write the composition of paths from left to right (and we draw the braids from top to bottom). 
If $X$ is an arrow from $\mathbf{p}$ to $\mathbf{q}$ in $\mathbf{PaB}(n)$, then 
\begin{itemize}
\item for any $\mathbf{r}\in \mathbf{Pa}(k)$, the identity of $\mathbf{r}$ in $\mathbf{PaB}(k)$ is also denoted $\mathbf{r}$,
\item for any $\mathbf{r}\in \mathbf{Pa}(k)$, we write $X^{1,\dots,n}$ for $\mathbf{r}\circ_1X\in \mathbf{PaB}(n+k-1)$,
\item we write $X^{\emptyset,2,\dots,n}\in \mathbf{PaB}(n+k-2)$ for the image of $X^{1,\dots,n}$ by the first braid deleting operation,
\item for any $\sigma\in\mathfrak{S}_{n+k-1}$ we define $X^{\sigma_1,\dots,\sigma_n}:=(X^{1,\dots,n})\cdot\sigma$,
\item for any $\mathbf{r}\in \mathbf{Pa}(k)$, $X^{\mathbf{r},k+1,\dots,k+n-1}:=X\circ_1\mathbf{r}\in \mathbf{PaB}(n+k-1)$,
\item we allow ourselves to combine these in an obvious way. %E.g.:
\end{itemize}
This notation is unambiguous as soon as we specify the starting object of our arrows. 
\end{notation}
For example, the pentagon \eqref{eqn:P} and the first hexagon \eqref{eqn:H1} relations can be respectively depicted as  
\begin{center}
\begin{align}\tag{\ref{eqn:P}}
\begin{tikzpicture}[baseline=(current bounding box.center)] % pour avoir des fleches au milieu des courbes
\tikzstyle point=[circle, fill=black, inner sep=0.05cm]% style des points
\tikzstyle point2=[circle, fill=black, inner sep=0.08cm]% style des points
\node[point, label=above:$((1$] at (1,2) {};
\node[point, label=above:$2)$] at (1.5,2) {};
\node[point, label=above:$3)$] at (2.5,2) {};
\node[point, label=above:$4$] at (5,2) {};
\node[point, label=below:$1$] at (1,0) {};
\node[point, label=below:$(2$] at (3.5,0) {};
\node[point, label=below:$(3$] at (4.5,0) {};
\node[point, label=below:$4))$] at (5,0) {};
 \draw[->,thick] (1,2) .. controls (1,0.5) .. (1,0.05);
 \draw[-,thick] (1.5,2) .. controls (1.5,1.5) .. (1.5,1);
 \draw[-,thick] (2.5,2) .. controls (2.5,1.5) and (4.5,1.5).. (4.5,1);
 \draw[->,thick] (1.5,1) .. controls (1.5,0.5) and (3.5,0.5).. (3.5,0.05);
 \draw[->,thick] (4.5,1) .. controls (4.5,0.5) .. (4.5,0.05);
 \draw[->,thick] (5,2) .. controls (5,0.5) .. (5,0.05);
\end{tikzpicture}
=
\begin{tikzpicture}[baseline=(current bounding box.center)] % pour avoir des fleches au milieu des courbes
\tikzstyle point=[circle, fill=black, inner sep=0.05cm]% style des points
\tikzstyle point2=[circle, fill=black, inner sep=0.08cm]% style des points
\node[point, label=above:$((1$] at (1,2) {};
\node[point, label=above:$2)$] at (1.5,2) {};
\node[point, label=above:$3)$] at (2.5,2) {};
\node[point, label=above:$4$] at (5,2) {};
\node[point, label=below:$1$] at (1,0) {};
\node[point, label=below:$(2$] at (3.5,0) {};
\node[point, label=below:$(3$] at (4.5,0) {};
\node[point, label=below:$4))$] at (5,0) {};
 \draw[->,thick] (1,2) .. controls (1,0.5) .. (1,0.05);
 \draw[-,thick] (2.5,2) .. controls (2.5,1.5) .. (2.5,1.3);
 \draw[-,thick] (1.5,2) .. controls (1.5,1.7) and (2,1.7).. (2,1.3);
 \draw[-,thick] (2,1.3) .. controls (2,0.9) and (3.5,0.9).. (3.5,0.65);
 \draw[-,thick] (2.5,1.3) .. controls (2.5,0.9) and (4,0.9).. (4,0.65);
 \draw[->,thick] (4,0.65) .. controls (4,0.4) and (4.5,0.4).. (4.5,0.05);
 \draw[->,thick] (3.5,0.65) .. controls (3.5,0.5) .. (3.5,0.05);
 \draw[->,thick] (5,2) .. controls (5,0.5) .. (5,0.05);
\end{tikzpicture}
\end{align}
\end{center}
and
\begin{center}
\begin{align}\tag{\ref{eqn:H2}}
\begin{tikzpicture}[baseline=(current bounding box.center)]
\tikzstyle point=[circle, fill=black, inner sep=0.05cm]
 \node[point, label=above:$(1$] at (1,2) {};
 \node[point, label=below:$2$] at (1,-1) {};
 \node[point, label=above:$2)$] at (1.5,2) {};
 \node[point, label=below:$(3$] at (3.5,-1) {};
 \node[point, label=above:$3$] at (4,2) {};
 \node[point, label=below:$1)$] at (4,-1) {};
\draw[-,thick] (1.5,2) .. controls (1.5,1.5) and (1,1.5).. (1,1); 
 \node[point, ,white] at (1.25,1.5) {};
 \draw[-,thick] (1,2) .. controls (1,1.5) and (1.5,1.5).. (1.5,1);
 \draw[->,thick] (1,1) .. controls (1,0) and (1,0).. (1,-0.95); 
 \draw[-,thick] (1.5,1) .. controls (1.5,0.25) and (3.5,0.5).. (3.5,0);
 \draw[-,thick] (4,2) .. controls (4,0) and (4,0).. (4,0);
\draw[->,thick] (4,0) .. controls (4,-0.5) and (3.5,-0.5).. (3.5,-0.95); 
 \node[point, ,white] at (3.75,-0.5) {};
 \draw[->,thick] (3.5,0) .. controls (3.5,-0.5) and (4,-0.5).. (4,-0.95);
\end{tikzpicture}
 = 
\begin{tikzpicture}[baseline=(current bounding box.center)] % pour avoir des fleches au milieu des courbes
\tikzstyle point=[circle, fill=black, inner sep=0.05cm]% style des points
\tikzstyle point2=[circle, fill=black, inner sep=0.08cm]% style des points
 \node[point, label=above:$(1$] at (1,2) {};
 \node[point, label=below:$2$] at (1,-1) {};
 \node[point, label=above:$2)$] at (1.5,2) {};
 \node[point, label=below:$(3$] at (3.5,-1) {};
 \node[point, label=above:$3$] at (4,2) {};
 \node[point, label=below:$1)$] at (4,-1) {};
 \draw[-,thick] (1,2) .. controls (1,1) and (1,1).. (1,1); 
 \draw[-,thick] (1.5,2) .. controls (1.5,1.25) and (3.5,1.5).. (3.5,1);
 \draw[-,thick] (4,2) .. controls (4,1) and (4,1).. (4,1);
\draw[-,thick] (4,1) .. controls (4,0.5) and (1.5,0.5).. (1.5,0); 
\draw[-,thick] (3.5,1) .. controls (3.5,0.5) and (1,0.5).. (1,0); 
 \node[point2, ,white] at (2.37,0.55) {};
 \node[point2, ,white] at (2.64,0.45) {};
 \draw[-,thick] (1,1) .. controls (1,0.5) and (4,0.5).. (4,0);
 \draw[->,thick] (1,0) .. controls (1,-0.5) and (1,-0.5).. (1,-0.95); 
 \draw[->,thick] (1.5,0) .. controls (1.5,-0.5) and (3.5,-0.5).. (3.5,-0.95);
 \draw[->,thick] (4,0) .. controls (4,-0.5) and (4,-0.5).. (4,-0.95);
\end{tikzpicture}
\end{align}
\end{center}
or, as commuting diagrams (giving the name of the relations)	
$$
\xymatrix@!0 @R=3pc @C=4.5pc{
& (1 2)(34) \ar[dl]_{\Phi^{1,2,34}}   & & & & (12)3 \ar[r]^{\Phi^{1,2,3}} 
\ar[dl]_{(R^{2,1})^{-1}} & 1(23) \ar[dr]^{(R^{23,1})^{-1}} & \\
1(2(34))  & & ((12)3)4 \ar[ul]_{\Phi^{12,3,4}} \ar[d]_{\Phi^{1,2,3}}& 
\text{and} & (21)3 \ar[dr]^{\Phi^{2,1,3}} & & & (23)1 \ar[dl]_{\Phi^{2,3,1}}  \\
1((23)4) \ar[u]_{\Phi^{2,3,4}}  & & (1(23))4 \ar[ll]_{\Phi^{1,23,4}} 
& & & 2(13) \ar[r]^{(R^{3,1})^{-1}} & 2(31)  &
} 
$$

% 2.4

\section{The operad of chord diagrams}\label{sec-cd}

The holonomy Lie algebra of the configuration space 
\[
\text{\gls{ConfCn}}:=\{\zz=(z_1,\dots,z_n)\in \C^n|z_i\neq z_j\textrm{ if }i\neq j\}
\]
of $n$ points on the complex line is isomorphic to the graded Lie $\mathbb{C}$-algebra \gls{tn} 
generated by $t_{ij}$, $1\leq i\neq j\leq n$, with relations
\begin{flalign}
& t_{ij}=t_{ji}\,,                			                 			\tag{S} \label{eqn:S} \\
& [t_{ij},t_{kl}]=0\quad\textrm{if }\#\{i,j,k,l\}=4 \,,				\tag{L} \label{eqn:L} \\
& [t_{ij},t_{ik}+t_{jk}]=0\quad\textrm{if }\#\{i,j,k\}=3\,. &	\tag{4T} \label{eqn:4T} 
\end{flalign}
It is known as the Kohno--Drinfled Lie algebra. 

In \cite{BN,Fresse} it is shown that the collection of Lie $\kk$-algebras $\t_n(\KK)$ 
is provided with the structure of an operad in the category $grLie_\KK$ 
of positively graded finite dimensional Lie algebras over $\kk$, with symmetric monoidal 
strucure given by the direct sum $\oplus$. 
Partial compositions are described as follows: 
$$
\begin{array}{cccc}
\circ_k : & \t_I(\KK) \oplus \t_J(\KK)  & \longrightarrow & \t_{J\sqcup I-\{k\}}(\KK) \\
    & (0,t_{\alpha \beta}) & \longmapsto & t_{\alpha\beta} \\
 & (t_{ij},0) & \longmapsto & 
 \begin{cases}
  \begin{tabular}{ccccc}
  $t_{ij}$ & if & $ k\notin\{i,j\} $ \\
  $\sum\limits_{p\in J} t_{pj}$ & if & $k=i$ \\
  $\sum\limits_{p\in J} t_{ip}$ & if & $k=j$ 
  \end{tabular}
  \end{cases}
\end{array}
$$
%for $1 \leq i,j,k \leq m$ with $i<j$ and $1 \leq \alpha, \beta \leq n$. 

\medskip

Observe that there is a lax symmetric monoidal functor 
$$
\hat{\mathcal{U}}:grLie_\kk\longrightarrow \mathbf{Cat(CoAlg_\KK)}
$$
sending a positively graded Lie algebra to the degree completion of its universal envelopping 
algebra, which is a complete filtered cocommutative Hopf algebra, 
viewed as a $\mathbf{CoAlg_\KK}$-enriched category with only one object. 

We then consider the operad of {\it chord diagrams} 
$\CD(\KK) := \hat{\mathcal{U}}(\t({\kk}))$ in $\mathbf{Cat(CoAlg_\KK)}$. 
\begin{remark}
This denomination comes from the fact that morphisms in $\CD(\KK)(n)$ can be represented 
as linear combinations of diagrams of chords on $n$ vertical strands, 
where the chord diagram corresponding to $t_{ij}$ can be represented as 
\begin{align*}
\tik{ \hori[->]{0}{1}{1} \node[point, label=above:$i$] at (0,-1) {}; \node[point, label=above:$j$] at (1,-1) {};
\node[point, label=above:$1$] at (-1,-1) {}; \node[point, label=above:$n$] at (2,-1) {};
\node[point, label=below:$1$] at (-1,-2) {}; \node[point, label=below:$n$] at (2,-2) {};
\node[point, label=below:$i$] at (0,-2) {}; \node[point, label=below:$j$] at (1,-2) {};
 \draw[->,thick] (-1,-1) to (-1,-2);  \draw[->,thick] (2,-1) to (2,-2);  }
\end{align*}
and the composition is given by vertical concatenation of diagrams. Partial compositions can 
easily be understood as ``cabling and removal operations'' on strands (see \cite{BN,Fresse}). 
Relations \eqref{eqn:L} and \eqref{eqn:4T} defining each $\t_n(\KK)$ can be represented as follows:
\begin{align}\tag{\ref{eqn:L}}
\tik{ \hori{0}{0}{1}\straight[->]{0}{1}; \hori[->]{2}{1}{1}\straight[->]{1}{1}\straight{2}{0}
\straight{3}{0}\node[point, label=above:$j$] at (1,0) {}; \node[point, label=above:$k$] at (2,0) {};
\node[point, label=above:$i$] at (0,0) {}; \node[point, label=above:$l$] at (3,0) {};
\node[point, label=below:$i$] at (0,-2) {}; \node[point, label=below:$l$] at (3,-2) {};
\node[point, label=below:$j$] at (1,-2) {}; \node[point, label=below:$k$] at (2,-2) {};}
=\tik{\hori[->]{0}{1}{1}\straight{0}{0}; \hori{2}{0}{1}\straight{1}{0}\straight[->]{2}{1}
\straight[->]{3}{1}\node[point, label=above:$j$] at (1,0) {}; \node[point, label=above:$k$] at (2,0) {};
\node[point, label=above:$i$] at (0,0) {}; \node[point, label=above:$l$] at (3,0) {};
\node[point, label=below:$i$] at (0,-2) {}; \node[point, label=below:$l$] at (3,-2) {};
\node[point, label=below:$j$] at (1,-2) {}; \node[point, label=below:$k$] at (2,-2) {};}
\end{align}

\begin{align}\tag{\ref{eqn:4T}}
\tik{\hori{0}{0}{1} \straight{2}{0}\hori[->]{0}{1}{2}\straight[->]{1}{1}
\node[point, label=above:$i$] at (0,0) {};\node[point, label=above:$j$] at (1,0) {}; 
\node[point, label=above:$k$] at (2,0) {};\node[point, label=below:$i$] at (0,-2) {}; 
\node[point, label=below:$j$] at (1,-2) {}; \node[point, label=below:$k$] at (2,-2) {};}
+\tik{\hori{0}{0}{1} \straight{2}{0}\hori[->]{1}{1}{1}\straight[->]{0}{1}
\node[point, label=above:$i$] at (0,0) {};\node[point, label=above:$j$] at (1,0) {}; 
\node[point, label=above:$k$] at (2,0) {};\node[point, label=below:$i$] at (0,-2) {}; 
\node[point, label=below:$j$] at (1,-2) {}; \node[point, label=below:$k$] at (2,-2) {};}
=\tik{\hori[->]{0}{1}{1}\straight[->]{2}{1}\hori{0}{0}{2}\straight{1}{0}
\node[point, label=above:$i$] at (0,0) {};\node[point, label=above:$j$] at (1,0) {}; 
\node[point, label=above:$k$] at (2,0) {};\node[point, label=below:$i$] at (0,-2) {}; 
\node[point, label=below:$j$] at (1,-2) {}; \node[point, label=below:$k$] at (2,-2) {};}
+\tik{\hori[->]{0}{1}{1} \straight[->]{2}{1}\hori{1}{0}{1}\straight{0}{0}
\node[point, label=above:$i$] at (0,0) {};\node[point, label=above:$j$] at (1,0) {};
 \node[point, label=above:$k$] at (2,0) {};\node[point, label=below:$i$] at (0,-2) {}; 
\node[point, label=below:$j$] at (1,-2) {}; \node[point, label=below:$k$] at (2,-2) {};}
\end{align}
\end{remark}

% 2.5

\section{The operad of parenthesized chord diagrams}\label{sec-pacd}

Recall that the operad $\CD(\KK)$ has only one object in each arity. 
Hence we can define the operad
\[
\text{\gls{PaCD}}:=\omega_1^\star \CD(\kk)
\]
of \textit{parenthesized chord diagrams}, where 
$\omega_1:\Pa=\on{Ob}(\Pa(\KK))\to\on{Ob}(\CD(\kk))$ is the terminal morphism. 
Here is a self-explanatory example of how to depict a morphism in $\mathbf{PaCD}(\kk)(3)$: 
\[
f \cdot \tik{\node[point, label=above:$(i$] at (0,0) {};
\node[point, label=above:$j)$] at (0.5,0) {}; \node[point, label=above:$k$] at (2.5,0) {};
\node[point, label=below:$i$] at (0,-2) {}; 
\node[point, label=below:$(k$] at (2,-2) {}; \node[point, label=below:$j)$] at (2.5,-2) {}; 
\straight{0}{0}\straight[->]{0}{1}  \draw[->,thick] (0.5,0) to (2.5,-2);\draw[->,thick] (2.5,0) to (2,-2);}
\]
where $f\in\CD(\kk)(3)$.
\begin{example}[Notable arrows of $\mathbf{PaCD}(\kk)$]
There are the following arrows in $\mathbf{PaCD}(\kk)(2)$:
\begin{center}
$H^{1,2}:=t_{12} \cdot \on{Id}_{1,2} = t_{12} \cdot $
\begin{tikzpicture}[baseline=(current bounding box.center)]
\tikzstyle point=[circle, fill=black, inner sep=0.05cm]
 \node[point, label=above:$1$] at (1,1) {};
 \node[point, label=below:$1$] at (1,-0.25) {};
 \node[point, label=above:$2$] at (2,1) {};
 \node[point, label=below:$2$] at (2,-0.25) {};
 \draw[->,thick] (2,1) to (2,-0.20); 
 \draw[->,thick] (1,1) to (1,-0.20);
\end{tikzpicture} =:
\begin{tikzpicture}[baseline=(current bounding box.center)]
\tikzstyle point=[circle, fill=black, inner sep=0.05cm]
 \node[point, label=above:$1$] at (1,1) {};
 \node[point, label=below:$1$] at (1,-0.25) {};
 \node[point, label=above:$2$] at (2,1) {};
 \node[point, label=below:$2$] at (2,-0.25) {};
 \draw[->,thick] (2,1) to (2,-0.20); 
  \draw[densely dotted, thick] (1,0.5) to (2,0.5); 
 \draw[->,thick] (1,1) to (1,-0.20);
\end{tikzpicture}
\qquad
$X^{1,2}=1 \cdot$ 
\begin{tikzpicture}[baseline=(current bounding box.center)]
\tikzstyle point=[circle, fill=black, inner sep=0.05cm]
 \node[point, label=above:$1$] at (1,1) {};
 \node[point, label=below:$2$] at (1,-0.25) {};
 \node[point, label=above:$2$] at (2,1) {};
 \node[point, label=below:$1$] at (2,-0.25) {};
 \draw[->,thick] (2,1) to (1,-0.20); 
 \draw[->,thick] (1,1) to (2,-0.20);
\end{tikzpicture}
\end{center} 
We also have the following arrow in $\mathbf{PaCD}(\kk)(3)$:
\begin{center}
$a^{1,2,3}=1 \cdot$
\begin{tikzpicture}[baseline=(current bounding box.center)]
\tikzstyle point=[circle, fill=black, inner sep=0.05cm]
 \node[point, label=above:$(1$] at (1,1) {};
 \node[point, label=below:$1$] at (1,-0.25) {};
 \node[point, label=above:$2)$] at (1.5,1) {};
 \node[point, label=below:$(2$] at (3.5,-0.25) {};
 \node[point, label=above:$3$] at (4,1) {};
 \node[point, label=below:$3)$] at (4,-0.25) {};
 \draw[->,thick] (1,1) to (1,-0.20); 
 \draw[->,thick] (1.5,1) to (3.5,-0.20);
 \draw[->,thick] (4,1) to (4,-0.20);
\end{tikzpicture}
\end{center}
\end{example}
\begin{remark}
The elements $H^{1,2}, X^{1,2}$ and $ a^{1,2,3}$ are generators of the operad ${\PaCD}(\kk)$ and satisfy the 
following relations:
\begin{itemize}
\item $X^{2,1}=(X^{1,2})^{-1}$,
\item $a^{12,3,4}a^{1,2,34} = a^{1,2,3}a^{1,23,4}a^{2,3,4}$,
\item $X^{12,3}=a^{1,2,3}X^{2,3}(a^{1,3,2})^{-1}X^{1,3}a^{3,1,2}$,
\item $H^{1,2}=X^{1,2}H^{2,1}(X^{1,2})^{-1}$,
\item $H^{12,3}=a^{1,2,3}\big(H^{2,3}+X^{2,3}(a^{1,3,2})^{-1}H^{1,3}a^{1,3,2}X^{3,2}\big)(a^{1,2,3})^{-1}$.
\end{itemize}
In particular, even if ${\PaCD}(\kk)$ does not have a presentation in terms of generators and relations 
(as is the case for $\PaB$), one can show that ${\PaCD}(\kk)$ has a universal property with respect to 
the generators $H^{1,2}, X^{1,2}$ and $ a^{1,2,3}$ and the above relations (see \cite[Theorem 10.3.4]{Fresse} for details).
\end{remark}

% 2.6

\section{Drinfeld associators}\label{sec:2.8assoc}

Let us first introduce some terminology that we use in this paragraph, as well as later in the paper: 
\begin{itemize}
\item Let $\mathbf{Grpd}_\kk$ denote the (symmetric monoidal) category of $\kk$-prounipotent 
groupoids (which is the image of the completion functor $\mathcal G\mapsto \hat{\mathcal G}(\kk)$); 
\item For $\mathcal C$ being $\mathbf{Grpd}$, $\mathbf{Grpd}_\kk$, or $\mathbf{Cat(CoAlg_{\kk})}$, the notation 
\[
\on{Aut}_{\tmop{Op}\mathcal C}^+\quad(\mathrm{resp.}~\on{Iso}_{\tmop{Op}\mathcal C}^+)
\] 
refers to those automorphisms (resp.~isomorphisms) which are the identity on objects.
\end{itemize}

\medskip

In the remainder of this section we recall some well known results on the operadic point of view on associators and 
Grothendieck-Teichm\"uller groups, which will be useful later on. 
Even though the statements and proofs of all the results in this section can be found in \cite{Fresse}, it is worth 
mentionning that a ``pre-operadic'' approach was initiated by Bar-Natan in \cite{BN}. 

\medskip

\begin{definition}
A \textit{Drinfeld $\KK$-associator} is an isomorphism between the operads $\widehat{\PaB}(\KK)$ and $G \PaCD(\KK)$ in 
$\mathbf{Grpd}_{\kk}$, which is the identity on objects. We denote by
\[
\text{\gls{Ass}}:=\on{Iso}^+_{\tmop{Op}\mathbf{Grpd}_{\kk}}(\widehat{\PaB}(\KK),G \PaCD(\KK))
\]
the set of $\KK$-associators.
\end{definition}

\begin{theorem}[Drinfeld \cite{DrGal}, Bar-Natan \cite{BN}, Fresse \cite{Fresse}]\label{Th:Ass}
There is a one-to-one correspondence between the set of Drinfeld $\kk$-associators and the set $\text{\gls{bAss}}$ 
of couples $(\mu,\varphi)\in\kk^\times \times \on{exp}(\hat\f_2(\kk))$, such that 
\begin{itemize}
\item $\varphi^{3,2,1}=(\varphi^{1,2,3})^{-1} \quad $ in $\on{exp}(\hat\t_{3}(\kk))$,
\item $\varphi^{1,2,3}e^{\mu t_{23}/2}\varphi^{2,3,1}e^{\mu t_{31}/2}\varphi^{3,1,2}e^{\mu t_{12}/2}
=e^{\mu(t_{12}+t_{13}+t_{23})/2} \quad $ in $\on{exp}(\hat\t_{3}(\kk))$,
\item $\varphi^{1,2,3}\varphi^{1,23,4}\varphi^{2,3,4}=
\varphi^{12,3,4}\varphi^{1,2,34} \quad$ in $\on{exp}(\hat\t_{4}(\kk))$,
\end{itemize}
where $\varphi^{1,2,3}=\varphi(t_{12},t_{23})$ is viewed as an element of $\on{exp}(\hat\t_{3}(\kk))$ via the 
inclusion $\hat\f_2(\kk)\subset \hat\t_3(\kk)$ sending $x$ to $t_{12}$ and $y$ to $t_{23}$. 
\end{theorem}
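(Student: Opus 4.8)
The plan is to unwind the definition of a $\kk$-associator as an operad isomorphism $\alpha\colon\widehat{\PaB}(\KK)\xrightarrow{\sim}G\PaCD(\KK)$ that is the identity on objects, and to match the data of such an $\alpha$ against the generators-and-relations presentation of $\PaB$ from Theorem \ref{thm2.3} together with the universal property of $\PaCD(\kk)$ recorded in the last Remark of \S\ref{sec-pacd}. The key observation is that $\widehat{\PaB}(\KK)$ is freely generated as an operad in $\kk$-prounipotent groupoids by $R=R^{1,2}$ and $\Phi=\Phi^{1,2,3}$ subject to (U1), (H1), (H2), (P); hence giving an operad morphism $\alpha$ out of it is the same as giving the images $\alpha(R)\in\Hom_{G\PaCD(\KK)(2)}(12,21)$ and $\alpha(\Phi)\in\Hom_{G\PaCD(\KK)(3)}((12)3,1(23))$, subject to the images of those four relations. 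So the correspondence will send $\alpha$ to the pair $(\mu,\varphi)$ defined by $\alpha(R)=X^{1,2}e^{\mu t_{12}/2}$ (exploiting that $\Hom_{\PaCD(\KK)(2)}(12,21)=X^{1,2}\cdot\exp(\hat\t_2(\kk))$ and $\hat\t_2(\kk)=\kk t_{12}$, so every grouplike element there is $X^{1,2}e^{\mu t_{12}/2}$ for a unique $\mu$) and $\alpha(\Phi)=a^{1,2,3}\varphi(t_{12},t_{23})$, using that $\Hom_{\PaCD(\KK)(3)}((12)3,1(23))=a^{1,2,3}\cdot\exp(\hat\t_3(\kk))$ and that the value on objects is fixed, so $\varphi$ is determined as a grouplike element of $\exp(\hat\t_3(\kk))$; one then checks $\varphi$ actually lies in the subgroup $\exp(\hat\f_2(\kk))$ via the two strand-deleting/unitality consequences of (U1).

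Next I would translate the relations one by one. First, (U1) forces $\varphi$ to be ``non-degenerate'': applying the arity-lowering deleting operations $\Phi^{\emptyset,2,3}=\Phi^{1,\emptyset,3}=\Phi^{1,2,\emptyset}=\mathrm{Id}$ and using that $\alpha$ commutes with them shows $\varphi(0,t_{23})=\varphi(t_{12},0)=1$, which combined with grouplikeness is exactly the statement that $\varphi\in\exp(\hat\f_2(\kk))$ with no linear-or-constant term obstructing membership — i.e. it is an honest associator series in two free variables. Second, the two hexagons (H1) and (H2): applying $\alpha$ to (H1), substituting $\alpha(R)=X^{1,2}e^{\mu t_{12}/2}$ and $\alpha(\Phi)=a^{1,2,3}\varphi$, and then conjugating everything into the one-object category $\CD(\KK)(3)$ (i.e. killing the $X$'s and the associativity arrows $a$ using the relations listed in the Remark of \S\ref{sec-pacd}, which express how $X$ and $a$ interact with $H$) produces precisely the mixed pentagon/hexagon identity $\varphi^{1,2,3}e^{\mu t_{23}/2}\varphi^{2,3,1}e^{\mu t_{31}/2}\varphi^{3,1,2}e^{\mu t_{12}/2}=e^{\mu(t_{12}+t_{13}+t_{23})/2}$ in $\exp(\hat\t_3(\kk))$; relation (H2), obtained from (H1) by replacing $R^{i,j}$ with $\tilde R^{i,j}=(R^{j,i})^{-1}$, unwinds under $\alpha$ to the duality relation $\varphi^{3,2,1}=(\varphi^{1,2,3})^{-1}$ (the $\mu$-dependence drops because the two inverse-braid versions of the hexagon, multiplied together, isolate the antipode relation). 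Third, (P) is the easiest: applying $\alpha$, the braiding generators don't appear at all, $a^{1,2,3}$ is a fixed associativity arrow, and the identity collapses directly to the pentagon $\varphi^{1,2,3}\varphi^{1,23,4}\varphi^{2,3,4}=\varphi^{12,3,4}\varphi^{1,2,34}$ in $\exp(\hat\t_4(\kk))$.

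For the reverse direction I would observe that given $(\mu,\varphi)$ satisfying the three displayed conditions, the assignments $R\mapsto X^{1,2}e^{\mu t_{12}/2}$, $\Phi\mapsto a^{1,2,3}\varphi(t_{12},t_{23})$ extend, by the freeness part of Theorem \ref{thm2.3}, to an operad morphism $\alpha\colon\widehat{\PaB}(\KK)\to G\PaCD(\KK)$ that is the identity on objects, precisely because the four defining relations are mapped to identities — this is the above computation read backwards, using the universal property of $\PaCD(\kk)$ from the Remark in \S\ref{sec-pacd} to guarantee that those relations in $\PaB$ really do hold after substitution in $\PaCD(\kk)$. Finally I would argue $\alpha$ is an isomorphism: both sides are operads in $\kk$-prounipotent groupoids with the same (discrete) object set, and on Hom-groups $\alpha$ is a filtered map whose associated graded is an isomorphism — indeed $\widehat{\PaB}(\KK)$ and $G\PaCD(\KK)$ are, arity by arity, (semidirect products of symmetric groups with) prounipotent completions of pure braid groups, with $\gr$ identified with $\U\hat\t_n(\kk)$ via the Kohno–Drinfeld theorem, and the leading term $\mu t_{12}/2$ of $\alpha(R)$ together with the non-degeneracy $\varphi=1+(\text{higher order})$ makes $\gr(\alpha)$ the identity up to an invertible rescaling by $\mu\in\kk^\times$; a filtered morphism inducing an isomorphism on the associated graded of complete filtered objects is an isomorphism.

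The main obstacle I expect is the bookkeeping in the hexagon step: correctly conjugating relation (H1) from $G\PaCD(\KK)(3)$, where the arrows genuinely move between the objects $(12)3$, $(21)3$, $2(31)$, etc., down into the single-object category $\CD(\KK)(3)$, which requires careful and consistent use of the relations $X^{12,3}=a^{1,2,3}X^{2,3}(a^{1,3,2})^{-1}X^{1,3}a^{3,1,2}$ and $H^{12,3}=a^{1,2,3}\big(H^{2,3}+X^{2,3}(a^{1,3,2})^{-1}H^{1,3}a^{1,3,2}X^{3,2}\big)(a^{1,2,3})^{-1}$ and their analogues, all the while keeping the superscript/permutation conventions $X^{\sigma_1,\dots,\sigma_n}$ straight — it is easy to get a $t_{13}$ where a $t_{31}$ should be, or to drop a factor of $e^{\mu t_{ij}/2}$. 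The rest (unitality $\Rightarrow$ $\varphi\in\exp\hat\f_2$, the pentagon, and the gr-isomorphism argument) is essentially formal once the dictionary $\alpha(R)=X^{1,2}e^{\mu t_{12}/2}$, $\alpha(\Phi)=a^{1,2,3}\varphi$ is in place.
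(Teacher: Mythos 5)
Your proposal is correct and follows essentially the same route as the paper, which simply records the dictionary $F(R^{1,2})=e^{\mu t_{12}/2}\cdot X^{1,2}$, $F(\Phi^{1,2,3})=\varphi(t_{12},t_{23})\cdot a^{1,2,3}$ and defers the details to \cite[Theorem 10.2.9]{Fresse}, whose argument is exactly the one you outline: the universal property of $\PaB$ from Theorem \ref{thm2.3} reduces everything to translating \eqref{eqn:U}, \eqref{eqn:H1}, \eqref{eqn:H2}, \eqref{eqn:P} into the three displayed equations, with invertibility handled on the associated graded.
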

Three observations are in order: 
\begin{itemize}
\item The free Lie $\kk$-algebra $\f_2(\kk)$ in two generators $x,y$ is graded, with generators having degree $1$, 
and its degree completion is denoted by $\hat{\f}_2(\kk)$; 
\item The $\kk$-prounipotent group $\on{exp}(\hat\f_2(\kk))$ is thus isomorphic to the $\kk$-prounipotent completion 
$\widehat{\on {F}}_2(\KK)$ of the free group ${\on {F}}_2$ on two generators; 
\item The quotient $\hat{\bar{\t}}_{3}(\kk)$of the Lie algebra $\hat\t_{3}(\kk)$ by its center, generated by 
$t_{12}+t_{13}+t_{23}$, is isomorphic to $\hat{\f}_2(\kk)$. Thus, the second relation in the above theorem is equivalent to
\[
\varphi^{1,2,3}e^{\mu y/2}\varphi^{2,3,1}e^{\mu z/2}\varphi^{3,1,2}e^{\mu x/2}=1
\]
in $\on{exp}(\hat\f_2(\kk))$, where $x,y,z$ are variables subject to relation $ x+y+z=0$.
\end{itemize}

This Theorem was originally proven by Drinfeld in \cite{DrGal}, though it was phrased without the operadic language. 
As stated, it can be found in \cite[Theorem 10.2.9]{Fresse}, and its proof relies on the universal property 
of $\PaB$ from Theorem \ref{thm2.3}. 
In particular, a morphism $F:\widehat{\PaB}(\KK) \longrightarrow G \PaCD(\KK)$ is uniquely determined by 
a scalar parameter $\mu\in\kk$ and $\varphi\in\on{exp}(\hat\f_2(\kk))$: 
\begin{itemize}
\item $F(R^{1,2})=e^{\mu t_{12}/2} \cdot X^{1,2}$,
\item $F(\Phi^{1,2,3})=\varphi(t_{12},t_{23}) \cdot a^{1,2,3}$\,,
\end{itemize}
where $R$ and $\Phi$ are the ones from Examples \ref{ex2.1} and \ref{ex2.1bis}.

\begin{example}[The KZ Associator]\label{exampleKZassoc}
The first associator was constructed by Drinfeld using the 
KZ connection and is known as the KZ associator $\Phi_{\on{KZ}}$.
It is defined as the the renormalized holonomy
from $0$ to $1$ of $G'(z) = (\frac{t_{12}}{z} + \frac{t_{23}}{z-1})G(z)$, 
i.e., $\text{\gls{KZAss}} := G_{0^{+}}^{-1}G_{1^{-}}\in\on{exp}(\hat\t_{3}(\C))$, where 
$G_{0^{+}},G_{1^{-}}$ are the solutions such that $G_{0^{+}}(z)\sim z^{t_{12}}$ 
when $z\to 0^{+}$ and $G_{1^{-}}(z)\sim (1-z)^{t_{23}}$ when $z\to 1^{-}$. 
We have
\[
\Phi_{\on{KZ}}(V,U)=\Phi_{\on{KZ}}(U,V)^{-1}, \; 
\Phi_{\on{KZ}}(U,V)e^{\pi\i V}\Phi_{\on{KZ}}(V,W)e^{\pi\i W}\Phi_{\on{KZ}}(W,U)e^{\pi\i U}=1, 
\]
where $U=\overline{t}_{12}\in \f_2(\C)\simeq \bar\t_3(\C) := \t_3(\C)/(t_{12}+t_{13}+t_{23})$, 
$V=\overline{t}_{23}\in \bar\t_3(\C)$ and $U+V+W=0$, and  
\[
\Phi^{12,3,4}_{\on{KZ}}\Phi^{1,2,34}_{\on{KZ}}=\Phi^{1,2,3}_{\on{KZ}} \Phi^{1,23,4}_{\on{KZ}}\Phi^{2,3,4}_{\on{KZ}}\,,
\]
hence $(2\pi\i,\Phi_{\on{KZ}})$ is an element of $\on{Ass}(\C)$.
\end{example}

% 2.7

\section{Grothendieck--Teichmuller group}\label{sec2-gt}

\begin{definition}
The \textit{Grothendieck--Teichm\"uller group} is defined as the group 
\[
\text{\gls{GT}}:= \on{Aut}_{\on{Op} \mathbf{Grpd}}^{+}(\PaB)
\]
of automorphisms of the operad in groupoids $\PaB$ which are the identity of objects. 
One defines similarly its $\kk$-pro-unipotent version 
\[
\widehat{\GT}(\kk):= \on{Aut}_{\on{Op}\mathbf{Grpd}_\kk}^{+}\big(\widehat{\PaB}(\kk)\big)\,.
\]
There are also pro-$\ell$ and profinite versions, denoted $\GT_{\ell}$ and $\widehat \GT$, 
that we do not consider in this paper. 
\end{definition}
We can also characterize elements of $\GT$ and $\widehat\GT(\KK)$ as solutions of certain explicit 
algebraic equations. 
This characterization proves that the above operadic definition of $\GT$ coincides with the one given 
by Drinfeld in his original paper \cite{DrGal}. 
In this article we will focus on the $\kk$-pro-unipotent version of this group in genus $0$ and $1$, 
and twisted situations. 
\begin{definition}\label{Def-GT}
Drinfeld's Grothendieck--Teichm\"uller group $\text{\gls{bGT}}$ consists of pairs 
\[
(\lambda,f) \in \KK^{\times} \times\widehat{\on{F}}_2(\KK)
\]
which satisfy the following equations:
\begin{itemize}
\item $f(x,y)=f(y,x)^{-1}$ in $\widehat{\on{F}}_2(\KK)$, 
\item $x_1^{\nu}f(x_1,x_2)x_2^{\nu}f(x_2,x_3)x_3^{\nu}f(x_3,x_1)=1$ in $\widehat{\on{F}}_2(\KK)$, 
\item $f(x_{13}x_{23}, x_{34})f(x_{12}, x_{23}x_{24}) =f(x_{12}, x_{23}) 
f(x_{12}x_{13}, x_{24}x_{34})f(x_{23}, x_{34})$ in $\widehat{\on{PB}}_4(\KK)$,
\end{itemize}
where $x_1,x_2,x_3$ are 3 variables subject only to $x_1x_2x_3=1$, $\nu=\frac{\lambda-1}{2}$, and $x_{ij}$ 
is the elementary pure braid $\reflectbox{P}_{ij}$ from \S\ref{paragraphe2.2}. 
The multiplication law is given by
\begin{equation*}\label{GT:LCI}
(\lambda_1, f_1)(\lambda_2, f_2)=
(\lambda_1\lambda_2, 
 f_1(x^{\lambda_2},f_2(x, y)y^{\lambda_2} f_2(x, y)^{-1}) f_2(x, y)) .
\end{equation*}
\end{definition}
\begin{theorem}\label{GTGT}
There is an isomorphism between the groups $\widehat{\GT}(\KK)$ and $\widehat{\on{GT}}(\KK)$. 
\end{theorem}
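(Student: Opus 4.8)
The plan is to construct the isomorphism $\widehat{\GT}(\KK)\xrightarrow{\ \sim\ }\widehat{\on{GT}}(\KK)$ by exploiting the universal property of $\widehat{\PaB}(\KK)$ coming from Theorem \ref{thm2.3}. First I would recall that, by that presentation, an automorphism $F\in\on{Aut}^+_{\on{Op}\mathbf{Grpd}_\kk}(\widehat{\PaB}(\kk))$ is entirely determined by the images $F(R)$ and $F(\Phi)$, which must again be arrows of $\widehat{\PaB}(\kk)$ with the same source and target as $R$ and $\Phi$ respectively, i.e.\ $F(R)\in\on{Hom}_{\widehat{\PaB}(2)}(12,21)$ and $F(\Phi)\in\on{Hom}_{\widehat{\PaB}(3)}((12)3,1(23))$, subject only to the requirement that \eqref{eqn:U}, \eqref{eqn:H1}, \eqref{eqn:H2} and \eqref{eqn:P} still hold after applying $F$. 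Writing $F(R)=\exp\big(\tfrac{\lambda-1}{2}\log(R\tilde R)\big)\cdot R$ — equivalently $F(R)^{1,2}=(R^{1,2}\widetilde R^{1,2})^{\nu}R^{1,2}$ with $\nu=\tfrac{\lambda-1}{2}$ — singles out a scalar $\lambda\in\kk^\times$ (the ``$\GL_1$-part'', detected on the abelianization), and $F(\Phi)=f\cdot\Phi$ for a unique $f=f(x_{12},x_{23})$ in the prounipotent completion $\widehat{\on F}_2(\kk)$ of the free group on the two pure-braid generators $P_{12},P_{23}$. So $F\mapsto(\lambda,f)$ is a well-defined injection of sets.

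Next I would show the image of this map is exactly the set of pairs satisfying Drinfeld's three defining equations in Definition \ref{Def-GT}. This is the crux of the matter and goes by translating each structural relation through $F$: relation \eqref{eqn:U} (together with the compatibility of $F$ with the basepoint/unit morphisms and the ``deleting operations'' of Section \ref{sec-pointings}) forces the degeneracy condition $f(x,1)=f(1,y)=1$; the two hexagons \eqref{eqn:H1}--\eqref{eqn:H2} become, after passing to $\widehat{\on{PB}}_3(\kk)$ and using $P_{ij}=x_{ij}$, the duality relation $f(x,y)=f(y,x)^{-1}$ and the pentagon-free hexagon $x_1^\nu f(x_1,x_2)x_2^\nu f(x_2,x_3)x_3^\nu f(x_3,x_1)=1$ with $x_1x_2x_3=1$; and the pentagon \eqref{eqn:P} becomes precisely the mixed pentagon identity $f(x_{13}x_{23},x_{34})f(x_{12},x_{23}x_{24})=f(x_{12},x_{23})f(x_{12}x_{13},x_{23}x_{34})f(x_{23},x_{34})$ in $\widehat{\on{PB}}_4(\kk)$. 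Conversely, given $(\lambda,f)$ solving these equations, the assignment $R\mapsto (R\widetilde R)^\nu R$, $\Phi\mapsto f\cdot\Phi$ respects all four relations and hence, by the freeness statement of Theorem \ref{thm2.3}, extends uniquely to an operad endomorphism of $\widehat{\PaB}(\kk)$ fixing objects; that it is invertible follows because the induced map on each $\on{Hom}$-torsor (a torsor under a prounipotent group, with associated graded the obvious thing) is unipotent plus the invertible scalar $\lambda$, so it is bijective. This gives the bijection on underlying sets.

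Finally I would check that $F\mapsto(\lambda,f)$ is a group homomorphism, i.e.\ that composition of automorphisms of $\widehat{\PaB}(\kk)$ corresponds to the twisted multiplication law $(\lambda_1,f_1)(\lambda_2,f_2)=(\lambda_1\lambda_2,\,f_1(x^{\lambda_2},f_2(x,y)y^{\lambda_2}f_2(x,y)^{-1})f_2(x,y))$ recorded in Definition \ref{Def-GT}. This is a direct computation: apply $F_1$ to $F_2(R)$ and $F_2(\Phi)$, using that $F_1$ is an operad morphism fixing objects (so it is compatible with the cabling operations that produce $x_{ij}$ from $x=x_{12}$ and $y=x_{23}$), and read off the new scalar and the new series; the nonabelian correction in the second slot comes exactly from $F_1$ acting on the $y$-variable, which after the substitution $R\mapsto(R\widetilde R)^{\nu_1}R$ conjugates $y^{\lambda_2}$ by $f_2$. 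The main obstacle is the bookkeeping in this second step — matching the hexagon and pentagon relations in $\widehat{\PaB}$ with Drinfeld's equations written in $\widehat{\on F}_2(\kk)$ and $\widehat{\on{PB}}_4(\kk)$ requires carefully tracking the source/target objects of each arrow and the conventions of the $X^{\sigma_1,\dots,\sigma_n}$ notation — but this is precisely what is carried out in \cite[\S11]{Fresse}, to which I would refer for the detailed verification, our contribution here being only to assemble the pieces.
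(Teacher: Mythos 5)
Your proposal is correct and follows essentially the same route as the paper, which likewise reads off $(\lambda,f)$ from $F(R)$ and $F(\Phi)$ via the presentation of Theorem \ref{thm2.3}, translates the defining relations of $\PaB$ into Drinfeld's equations, verifies the composition law by the same direct computation, and defers the detailed bookkeeping to \cite[Theorem 11.1.7]{Fresse}. The only quibble is notational: the full twist you raise to the power $\nu$ should be $R^{1,2}R^{2,1}$ rather than $R^{1,2}\tilde R^{1,2}$ (the latter does not even compose, since $\tilde R^{1,2}=(R^{2,1})^{-1}$ has source $(12)$, the target of $R^{1,2}$ being $(21)$).
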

This was first implicitely shown by Drinfeld in \cite{DrGal}. An explicit proof of this theorem 
can be found for example in \cite[Theorem 11.1.7]{Fresse}. In particular, one obtains the 
couple $(\lambda,f)$ from an automorphism $F\in \widehat{\GT}(\KK)$ as follows. We have
\begin{align}\label{FR}
F\left(\begin{tikzpicture}[baseline=(current bounding box.center)]
\tikzstyle point=[circle, fill=black, inner sep=0.05cm]
 \node[point, label=above:$1$] at (1,1) {};
 \node[point, label=below:$2$] at (1,-0.25) {};
 \node[point, label=above:$2$] at (2,1) {};
 \node[point, label=below:$1$] at (2,-0.25) {};
 \draw[->,thick] (1,1) .. controls (1,0.25) and (2,0.5).. (2,-0.20);
 \node[point, ,white] at (1.5,0.4) {};
  \draw[->,thick] (2,1) .. controls (2,0.25) and (1,0.5).. (1,-0.20); 
\end{tikzpicture}\right) 
= 
\left(\begin{tikzpicture}[baseline=(current bounding box.center)]
\tikzstyle point=[circle, fill=black, inner sep=0.05cm]
 \node[point, label=above:$1$] at (1,1) {};
 \node[point, label=below:$1$] at (1,-0.25) {};
 \node[point, label=above:$2$] at (2,1) {};
 \node[point, label=below:$2$] at (2,-0.25) {};
  \draw[->,thick] (1,1) .. controls (2,0.25) and (2,0.5).. (1,-0.20);
  \node[point, ,white] at (1.5,0.66) {};
            \draw[->,thick] (2,1) .. controls (1,0.25) and (1,0.5).. (2,-0.20); 
       \node[point, ,white] at (1.5,0.15) {};
            \draw[thick] (1.59,0.2) .. controls (1.59,0.2) and (1.59,0.2).. (1,-0.20);
\end{tikzpicture}
\right)^{\nu} \cdot 
\begin{tikzpicture}[baseline=(current bounding box.center)]
\tikzstyle point=[circle, fill=black, inner sep=0.05cm]
 \node[point, label=above:$1$] at (1,1) {};
 \node[point, label=below:$2$] at (1,-0.25) {};
 \node[point, label=above:$2$] at (2,1) {};
 \node[point, label=below:$1$] at (2,-0.25) {};
 \draw[->,thick] (1,1) .. controls (1,0.25) and (2,0.5).. (2,-0.20);
 \node[point, ,white] at (1.5,0.4) {};
  \draw[->,thick] (2,1) .. controls (2,0.25) and (1,0.5).. (1,-0.20); 
\end{tikzpicture} 
= 
\left(\begin{tikzpicture}[baseline=(current bounding box.center)]
\tikzstyle point=[circle, fill=black, inner sep=0.05cm]
 \node[point, label=above:$1$] at (1,1) {};
 \node[point, label=below:$2$] at (1,-0.25) {};
 \node[point, label=above:$2$] at (2,1) {};
 \node[point, label=below:$1$] at (2,-0.25) {};
 \draw[->,thick] (1,1) .. controls (1,0.25) and (2,0.5).. (2,-0.20);
 \node[point, ,white] at (1.5,0.4) {};
  \draw[->,thick] (2,1) .. controls (2,0.25) and (1,0.5).. (1,-0.20); 
\end{tikzpicture}\right)^{2\nu+1}
\end{align}
\begin{align}\label{FPhi}
F\left(\begin{tikzpicture}[baseline=(current bounding box.center)]
\tikzstyle point=[circle, fill=black, inner sep=0.05cm]
 \node[point, label=above:$(1$] at (1,1) {};
 \node[point, label=below:$1$] at (1,-0.25) {};
 \node[point, label=above:$2)$] at (1.5,1) {};
 \node[point, label=below:$(2$] at (3.5,-0.25) {};
 \node[point, label=above:$3$] at (4,1) {};
 \node[point, label=below:$3)$] at (4,-0.25) {};
 \draw[->,thick] (1,1) .. controls (1,0) and (1,0).. (1,-0.20); 
 \draw[->,thick] (1.5,1) .. controls (1.5,0.25) and (3.5,0.5).. (3.5,-0.20);
 \draw[->,thick] (4,1) .. controls (4,0) and (4,0).. (4,-0.20);
\end{tikzpicture}\right) 
= 
f\left(\begin{tikzpicture}[baseline=(current bounding box.center)]
\tikzstyle point=[circle, fill=black, inner sep=0.05cm]
 \node[point, label=above:$(1$] at (1,1) {};
 \node[point, label=below:$(1$] at (1,-0.25) {};
 \node[point, label=above:$2)$] at (2,1) {};
 \node[point, label=below:$2)$] at (2,-0.25) {};
   \node[point, label=above:$3$] at (3,1) {};
 \node[point, label=below:$3$] at (3,-0.25) {};
 \draw[->,thick] (3,1) .. controls (3,0) and (3,0).. (3,-0.20);
  \draw[->,thick] (1,1) .. controls (2,0.25) and (2,0.5).. (1,-0.20);
  \node[point, ,white] at (1.5,0.66) {};
            \draw[->,thick] (2,1) .. controls (1,0.25) and (1,0.5).. (2,-0.20); 
       \node[point, ,white] at (1.5,0.15) {};
            \draw[thick] (1.59,0.2) .. controls (1.59,0.2) and (1.59,0.2).. (1,-0.20);
\end{tikzpicture},\begin{tikzpicture}[baseline=(current bounding box.center)]
\tikzstyle point=[circle, fill=black, inner sep=0.05cm]
 \node[point, label=above:$(1$] at (1,1) {};
 \node[point, label=below:$(1$] at (1,-0.25) {};
 \node[point, label=above:$2)$] at (2,1) {};
 \node[point, label=below:$2)$] at (2,-0.25) {};
  \node[point, label=above:$3$] at (3,1) {};
 \node[point, label=below:$3$] at (3,-0.25) {};
 \draw[->,thick] (1+1,1) .. controls (2+1,0.25) and (2+1,0.5).. (1+1,-0.20);
  \node[point, ,white] at (1.5+1,0.66) {};
            \draw[->,thick] (2+1,1) .. controls (1+1,0.25) and (1+1,0.5).. (2+1,-0.20); 
       \node[point, ,white] at (1.5+1,0.15) {};
            \draw[thick] (1.59+1,0.2) .. controls (1.59+1,0.2) and (1.59+1,0.2).. (1+1,-0.20);
    \draw[->,thick] (1,1) .. controls (1,0) and (1,0).. (1,-0.20);
\end{tikzpicture}\right)
\cdot 
\begin{tikzpicture}[baseline=(current bounding box.center)]
\tikzstyle point=[circle, fill=black, inner sep=0.05cm]
 \node[point, label=above:$(1$] at (1,1) {};
 \node[point, label=below:$1$] at (1,-0.25) {};
 \node[point, label=above:$2)$] at (1.5,1) {};
 \node[point, label=below:$(2$] at (3.5,-0.25) {};
 \node[point, label=above:$3$] at (4,1) {};
 \node[point, label=below:$3)$] at (4,-0.25) {};
 \draw[->,thick] (1,1) .. controls (1,0) and (1,0).. (1,-0.20); 
 \draw[->,thick] (1.5,1) .. controls (1.5,0.25) and (3.5,0.5).. (3.5,-0.20);
 \draw[->,thick] (4,1) .. controls (4,0) and (4,0).. (4,-0.20);
\end{tikzpicture}
\end{align}
In other words, if we set $\lambda=2\nu+1$, we get the assignment 
\begin{itemize}
\item $F(R^{1,2})=(R^{1,2})^\lambda $,
\item $F(\Phi^{1,2,3})=  f(x_{12},x_{23}) \cdot \Phi^{1,2,3}$.
\end{itemize}

Next, one obtains the composition law of $\widehat{\on{GT}}(\KK)$ from the composition of automorphisms 
$F_1$ and $F_2$ in $\on{Aut}_{\on{Op} \mathbf{Grpd}_\kk}^+(\widehat{\PaB}(\KK))$
as follows: the associated couples $(\lambda_1,f_1)$ and $(\lambda_2,f_2)$ in $\kk^\times \times \hat{F}_2(\kk)$ satisfy 
$(F_1 F_2)(R^{1,2})=(F_2\circ F_1)(R^{1,2})=(R^{1,2})^{\lambda_1\lambda_2}$, and 
\begin{eqnarray*}
(F_1 F_2)(\Phi^{1,2,3}) & = &(F_2\circ F_1)(\Phi^{1,2,3})= F_2( f_1(x_{12},x_{23})\Phi^{1,2,3} )  \\
& = & F_2( f_1(x_{12},x_{23}))  F_2(\Phi^{1,2,3} )  \\
& = & f_1( F_2(x_{12}), F_2(x_{23}))f_2(x_{12},x_{23})\Phi^{1,2,3} \\
& = & f_1(x_{12}^{\lambda_2},f_2(x_{12},x_{23})x_{23}^{\lambda_2} f_2(x_{12},x_{23})^{-1})f_2(x_{12},x_{23})\Phi^{1,2,3}.
\end{eqnarray*}

\begin{remark}
There are also profinite and pro-${\ell}$ versions of the Grothendieck--Teichm\"uller group, denoted $\widehat{\GT}$ and 
$\GT_{\ell}$, respectively. There are morphisms
\[
\GT \longrightarrow \widehat{\GT} \twoheadrightarrow \GT_{\ell} \hookrightarrow \widehat\GT(\Q_{\ell})\quad\mathrm{and}\quad 
\GT \longrightarrow \widehat\GT(\KK)\,.
\] 
It is important to keep in mind that the profinite, pro-${\ell}$, $\kk$-pro-unipotent versions of the 
Grothendieck--Teichm\"uller group do not coincide with the profinite, pro-${\ell}$, $\kk$-pro-unipotent 
completions of the ``thin'' Grothendieck--Teichm\"uller group $\GT$ which only consists of the pairs $(1,1)$ and $(-1,1)$.
\end{remark}

% 2.8

\section{Graded Grothendieck--Teichmuller group}

\begin{definition}
The graded Grothendieck--Teichm\"uller group is the group 
\[
\text{\gls{GRT}}:=\on{Aut}_{\on{Op} \mathbf{Grpd}_{\kk}}^+(G\PaCD(\KK))
\] 
of automorphisms of $G\PaCD(\KK)$ that are the identity on objects.
\end{definition}
\begin{remark}
When restricted to the full subcategory $\mathbf{Cat(CoAlg^{conn}_{\kk})}$ of $\mathbf{CoAlg_{\kk}}$-enriched categories for 
which the hom-coalgebras are connected, the functor $G$ leads to an equivalence between $\mathbf{Cat(CoAlg^{conn}_{\kk})}$ 
and $\mathbf{Grpd}_\kk$. Hence there is an isomorphism 
\[
\GRT(\kk)\simeq \on{Aut}_{\on{Op} \mathbf{Cat(CoAlg_{\kk})}}^+(\PaCD(\KK))\,.
\]
\end{remark}
Again, the operadic definition of $\GRT(\kk)$ happens to coincide with the one originally given by Drinfeld. 
\begin{definition}
Let $\on{GRT}_1$ be the set of elements in $g\in\on{exp}(\hat\f_2(\kk))\subset \on{exp}(\hat\t_3(\kk))$ such that 
\begin{itemize}
\item $g^{3,2,1}=g^{-1}$ and $g^{1,2,3}g^{2,3,1}g^{3,1,2}=1$, in $\on{exp}(\hat\t_{3}(\kk))$,
%\item $t_{12}+\on{Ad}(g^{1,2,3})(t_{23})+ \on{Ad}(g^{2,1,3})(t_{13}) = t_{12}+t_{13}+t_{23}$, in $\hat\t_{3}(\kk)$, 
\item $g^{1,2,3}g^{1,23,4}g^{2,3,4}=g^{12,3,4}g^{1,2,34}$, in $\on{exp}(\hat\t_{4}(\kk))$,
\end{itemize}
One has the following multiplication law on $\on{GRT}_1$: 
\[
(g_1*g_2)(t_{12},t_{23}):= g_1(t_{12},\on{Ad}(g_2(t_{12},t_{23}))(t_{23}))g_2(t_{12},t_{23})\,.
\]
\end{definition}
Drinfeld showed in \cite{DrGal} that $\on{GRT}_1$ is stable under $*$, which defines a group 
structure on it, and that rescaling transformations $g(x,y)\mapsto \lambda\cdot g(x,y)=g(\lambda x,\lambda y)$ 
define an action of $\kk^\times$ of $\on{\GRT}_1$ by automorphisms. 
\begin{theorem}\label{GRTGRT}
There is a group isomorphism $\GRT ({\kk}) \cong {\kk}^{\times}\rtimes \on{GRT}_1=:\text{\gls{bGRT}}$. 
\end{theorem}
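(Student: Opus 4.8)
The plan is to run the graded (de Rham) analogue of the proof of Theorem~\ref{GTGT}, reading off the pair $(\lambda,g)$ from an automorphism of $G\PaCD(\kk)$ by means of the universal property of $\PaCD(\kk)$. First I would use the Remark preceding the statement to replace $\GRT(\kk)=\on{Aut}^+_{\on{Op}\mathbf{Grpd}_\kk}\big(G\PaCD(\kk)\big)$ by the isomorphic group $\on{Aut}^+_{\on{Op}\mathbf{Cat(CoAlg_\kk)}}\big(\PaCD(\kk)\big)$, and then invoke the universal property of $\PaCD(\kk)$ with respect to the generators $H^{1,2},X^{1,2},a^{1,2,3}$ and the five relations recalled right after its definition (see \cite[Theorem~10.3.4]{Fresse}): an automorphism $F$ that is the identity on objects is determined by the images of those three generators. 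Since $F$ acts on each hom-coalgebra by a coalgebra automorphism, it preserves grouplike and (relative to the identities) primitive elements; hence $F(H^{1,2})$ is a primitive endomorphism of the object $(12)$, i.e.\ an element of $\hat\t_2(\kk)=\kk\,t_{12}$, so $F(H^{1,2})=\lambda\, t_{12}$ with $\lambda\in\kk$, in fact $\lambda\in\kk^\times$ because $F$ is invertible; $F(X^{1,2})$ is a grouplike arrow $(12)\to(21)$, which the relation $X^{2,1}=(X^{1,2})^{-1}$ (together with $\mathfrak{S}_2$-equivariance of $F$ and $\on{Ad}(X^{1,2})(t_{12})=t_{12}$) forces to equal $X^{1,2}$; and $F(a^{1,2,3})=g\cdot a^{1,2,3}$ for a grouplike $g\in\exp\big(\hat\t_3(\kk)\big)$.

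Next I would feed these values into the five defining relations of $\PaCD(\kk)$. The two relations involving only $H$ and $X$ impose nothing, while $X^{2,1}=(X^{1,2})^{-1}$, $X^{12,3}=a^{1,2,3}X^{2,3}(a^{1,3,2})^{-1}X^{1,3}a^{3,1,2}$ and $H^{1,2}=X^{1,2}H^{2,1}(X^{1,2})^{-1}$ force $g$ to lie in the subgroup $\exp(\hat\f_2(\kk))\subset\exp(\hat\t_3(\kk))$ and to satisfy $g^{3,2,1}=g^{-1}$ together with $g^{1,2,3}g^{2,3,1}g^{3,1,2}=1$; the pentagon for $a$ becomes the pentagon $g^{1,2,3}g^{1,23,4}g^{2,3,4}=g^{12,3,4}g^{1,2,34}$; and the remaining relation $H^{12,3}=a^{1,2,3}\big(H^{2,3}+X^{2,3}(a^{1,3,2})^{-1}H^{1,3}a^{1,3,2}X^{3,2}\big)(a^{1,2,3})^{-1}$, after cancelling the overall factor $\lambda$, becomes the linear ``hexagon'' equation $t_{12}+\on{Ad}(g^{1,2,3})(t_{23})+\on{Ad}(g^{2,1,3})(t_{13})=t_{12}+t_{13}+t_{23}$. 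No relation constrains $\lambda$, so $F\mapsto(\lambda,g)$ is a well-defined map $\GRT(\kk)\to\kk^\times\times\on{GRT}_1$; it is injective since $F$ is determined by its values on the generators, and surjective because, conversely, the universal property turns any $(\lambda,g)\in\kk^\times\times\on{GRT}_1$ into an endomorphism $F_{\lambda,g}$ of $\PaCD(\kk)$, which is invertible (its inverse corresponds to the pair obtained from the $\kk^\times$-rescaling action and the $\on{GRT}_1$ group law).

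Finally I would identify the multiplication. The one subtle point is how $F$ acts on the Kohno--Drinfeld generators of $\t_3(\kk)$ viewed as endomorphisms of the object $(12)3$: one has $F(t_{12})=\lambda\,t_{12}$, but $t_{23}$, being the transport along $a^{1,2,3}$ of the corresponding endomorphism of $1(23)$, satisfies $F(t_{23})=\lambda\,\on{Ad}\big(g(t_{12},t_{23})\big)(t_{23})$ — the extra $\on{Ad}(g)$ coming exactly from $F(a^{1,2,3})=g\cdot a^{1,2,3}$, while $a^{1,2,3}$ itself is the unit of $\widehat{\U}\big(\t_3(\kk)\big)$ under the fake pull-back. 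Writing the product of $\GRT(\kk)$ as $F_1F_2:=F_2\circ F_1$ (as in the discussion of Theorem~\ref{GTGT}), one then gets $(F_1F_2)(H^{1,2})=\lambda_1\lambda_2\,t_{12}$ and
\[
(F_1F_2)(a^{1,2,3})=g_1\!\big(\lambda_2 t_{12},\,\lambda_2\on{Ad}(g_2(t_{12},t_{23}))(t_{23})\big)\, g_2(t_{12},t_{23})\cdot a^{1,2,3},
\]
which is precisely the formula for the product $(\lambda_1,g_1)(\lambda_2,g_2)$ in $\kk^\times\rtimes\on{GRT}_1$, with $\kk^\times$ acting by $g(x,y)\mapsto g(\lambda x,\lambda y)$ and $\on{GRT}_1$ carrying Drinfeld's law $(g_1*g_2)(x,y)=g_1(x,\on{Ad}(g_2(x,y))(y))\,g_2(x,y)$. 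This gives the desired group isomorphism, which, as in the genus-$0$ case, recovers Drinfeld's original $\GRT$ from \cite{DrGal}.

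The main obstacle is the relation bookkeeping in the second step — checking that the permutation-type relations really cut out $\exp(\hat\f_2(\kk))$ with the stated symmetry relations, and that the $H$-hexagon collapses to the linear equation once the common factor $\lambda$ is removed — together with keeping careful track, in the computation of the group law, of at which parenthesization each chord $t_{ij}$ is evaluated, so that the $\on{Ad}(g)$ factors land exactly where the $\on{GRT}_1$ law requires.
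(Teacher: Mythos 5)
Your proposal is correct and follows essentially the same route as the paper, which delegates the argument to \cite[Theorem 10.3.10]{Fresse} but records exactly the correspondence you derive — $F(X^{1,2})=X^{1,2}$, $F(H^{1,2})=\lambda H^{1,2}$, $F(a^{1,2,3})=g(t_{12},t_{23})\cdot a^{1,2,3}$ — together with the same composition law, obtained as you do from the universal property of $\PaCD(\kk)$ with respect to the generators $H^{1,2},X^{1,2},a^{1,2,3}$ and the listed relations. (The paper's displayed assignment $G(H^{1,2})=e^{\lambda t_{12}}H^{1,2}$ is a typo for $\lambda H^{1,2}$, as its own formula $(G_1G_2)(H^{1,2})=\lambda\mu H^{1,2}$ and the proof of Proposition \ref{GRT:ell:cor} confirm, so your normalization is the intended one.)
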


This was first implicitely shown by Drinfeld in \cite{DrGal}. An explicit proof of this theorem 
can be found for example in \cite[Theorem 10.3.10]{Fresse}.
In particular, we obtain the couple $(\lambda,g)$ from an automorphism $G\in\GRT(\kk)$ by the assignment
\begin{itemize}
\item $G(X^{1,2})=X^{1,2}$,
\item $G(H^{1,2})=e^{\lambda t_{12}} H^{1,2}$,
\item $G(a^{1,2,3})= g(t_{12},t_{23}) \cdot a^{1,2,3}$.
\end{itemize}
The composition of automorphisms $G_1$ and $G_2$ in
$\on{Aut}_{\on{Op} \mathcal{\hat{G}}}^+(G\PaCD(\KK))$ is given as follows:  the associated 
couples $(\lambda,g_1)$ and $(\mu,g_2)$ in $\kk^\times \times \on{exp}(\hat{\bar{\t}}_{3}(\kk))$ satisfy 
\[
(G_1G_2)(H^{1,2})=(G_2 \circ G_1)(H^{1,2})=\lambda\mu H^{1,2}\,,
\]
\[
(G_1  G_2)(a^{1,2,3})=(G_2 \circ G_1)(a^{1,2,3})=
g_1\big(\mu t_{12}, g_2(t_{12},t_{23})(\mu t_{23})g_2(t_{12},t_{23})^{-1}\big) g_2(t_{12},t_{23})\cdot a^{1,2,3}\,.
\]

% 2.9

\section{Bitorsors}\label{torsors}

Recall first that there is a free and transitive left action of $\widehat{\on{GT}}(\kk)$
on $\on{Ass}(\kk)$, defined, for $(\lambda,f)\in\widehat{\on{GT}}(\kk)$ and $(\mu,\varphi)\in \on{Ass}(\kk)$, by
\[
((\lambda,f)*(\mu,\varphi))(t_{12},t_{23}):= (\lambda\mu,f(e^{\mu t_{12}},\on{Ad}(\varphi(t_{12},t_{23}))
(e^{\mu t_{23}}))\varphi(t_{12},t_{23})),
\]
where $\on{Ad}(f)(g):=fgf^{-1}$, for any symbols $f,g$.

Recall that there is also a free and transitive right action of $\on{GRT}(\kk)$ on $\on{Ass}(\kk)$ defined 
as follows: for $(\lambda,g)\in \on{GRT}(\kk)$ and $(\mu,\varphi)\in \on{Ass}(\kk)$, given by
\[
((\mu,\varphi)*(\lambda,g))(t_{12},t_{23}):= (\lambda\mu,\varphi(\lambda t_{12},\on{Ad}(g)(\lambda t_{23}))g(t_{12},t_{23}) ).
\]
The two actions commute making $\big(\widehat{\on{GT}}(\kk),\on{Ass}(\kk),\on{GRT}(\kk)\big)$ into a bitorsor.

\begin{theorem}
There is a torsor isomorphism
\begin{equation}\label{bitorsor:cl}
(\widehat{\GT}(\kk),\Assoc(\kk),\GRT(\kk)) \longrightarrow (\widehat{\on{GT}}(\kk),\on{Ass}(\kk),\on{GRT}(\kk))
\end{equation}
\end{theorem}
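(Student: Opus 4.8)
The plan is to build the claimed torsor isomorphism out of the three bijections we already have in hand: Theorem~\ref{GTGT} for the groups acting on the left, $\widehat{\GT}(\kk)\cong\widehat{\on{GT}}(\kk)$; Theorem~\ref{Th:Ass} for the underlying sets, $\Assoc(\kk)\cong\on{Ass}(\kk)$; and Theorem~\ref{GRTGRT} for the groups acting on the right, $\GRT(\kk)\cong\on{GRT}(\kk)$. Since each of these is already a bijection (and the first and third are group isomorphisms), the only thing left to prove is that the triple intertwines the two torsor structure maps --- the composition action on the operadic side and the explicit algebraic actions recalled in Subsection~\ref{torsors} on the Drinfeld side. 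Once that compatibility is checked, the triple is by definition a morphism of bitorsors, hence an isomorphism.

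First I would verify compatibility with the left action. Fix $F\in\on{Aut}^+_{\on{Op}\mathbf{Grpd}_\kk}(\widehat{\PaB}(\kk))$ with associated pair $(\lambda,f)$ via \eqref{FR}--\eqref{FPhi}, and an associator $\varphi\in\Assoc(\kk)$ with associated pair $(\mu,\varphi)$ via $R^{1,2}\mapsto e^{\mu t_{12}/2}\cdot X^{1,2}$, $\Phi^{1,2,3}\mapsto\varphi(t_{12},t_{23})\cdot a^{1,2,3}$. The composite $\varphi\circ F$ is again the identity on objects, so it corresponds to a pair $(\mu',\varphi')\in\on{Ass}(\kk)$, which I compute by evaluating $\varphi\circ F$ on the two generators $R^{1,2}$ and $\Phi^{1,2,3}$ of $\widehat{\PaB}(\kk)$ (Theorem~\ref{thm2.3}). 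On $R^{1,2}$ one gets $(e^{\mu t_{12}/2}X^{1,2})^{\lambda}$, which after using $X^{1,2}e^{\mu t_{12}/2}(X^{1,2})^{-1}=e^{\mu t_{12}/2}$ gives $\mu'=\lambda\mu$. On $\Phi^{1,2,3}$ one uses that $\varphi$ is an operad morphism, so it sends the grouplike element $f(x_{12},x_{23})$ to $f$ evaluated at the images of the elementary braids, namely $\varphi(x_{12})=e^{\mu t_{12}}$ and $\varphi(x_{23})$ a suitable conjugate of $e^{\mu t_{23}}$ by $\varphi(t_{12},t_{23})$; matching the outcome with the assignment formula yields $\varphi'=f\bigl(e^{\mu t_{12}},\on{Ad}(\varphi(t_{12},t_{23}))(e^{\mu t_{23}})\bigr)\varphi(t_{12},t_{23})$, exactly the formula for $(\lambda,f)*(\mu,\varphi)$ in Subsection~\ref{torsors}. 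The right action is handled the same way: composing $\varphi$ with $G\in\GRT(\kk)$ of associated pair $(\lambda,g)$ (using $X^{1,2}\mapsto X^{1,2}$, $H^{1,2}\mapsto e^{\lambda t_{12}}H^{1,2}$, $a^{1,2,3}\mapsto g(t_{12},t_{23})\cdot a^{1,2,3}$) and re-reading off the pair reproduces $(\mu,\varphi)*(\lambda,g)=\bigl(\lambda\mu,\varphi(\lambda t_{12},\on{Ad}(g)(\lambda t_{23}))g(t_{12},t_{23})\bigr)$.

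The main obstacle is bookkeeping rather than anything conceptual: one must keep strict track of the left-to-right composition convention for operad morphisms against the order in which the group laws of $\widehat{\on{GT}}(\kk)$ and $\on{GRT}(\kk)$ were written, and of the two normalizations on the de Rham side --- $e^{\mu t_{12}/2}\cdot X^{1,2}$ attached to the half-twist $R^{1,2}$ versus $e^{\mu t_{12}}$ attached to the full pure braid $P_{12}$ --- together with the conjugation by $\varphi$ (resp.\ by $a^{1,2,3}$) that relates the images of $P_{23}$ in the parenthesizations $(12)3$ and $1(23)$. All of these are exactly the conventions already fixed in the proofs of Theorems~\ref{GTGT}, \ref{Th:Ass} and~\ref{GRTGRT} in \cite{Fresse}; with them pinned down, the two compatibility checks are routine manipulations in $\on{exp}(\hat\f_2(\kk))$ and $\widehat{\on{PB}}_4(\kk)$, and the theorem follows.
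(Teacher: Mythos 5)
Your proposal is correct and follows essentially the same route as the paper: the theorem reduces to checking that the bijection of Theorem~\ref{Th:Ass} intertwines the left action of $\widehat{\GT}(\kk)$ (via Theorem~\ref{GTGT}) and the right action of $\GRT(\kk)$ (via Theorem~\ref{GRTGRT}) with the explicit formulas of Subsection~\ref{torsors}. The only difference is that the paper delegates both equivariance checks to \cite[Theorems 10.3.13 and 11.2.1]{Fresse}, whereas you carry out the (correct) generator-by-generator verification yourself.
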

\begin{proof}
On the one hand, in \cite[Theorem 10.3.13]{Fresse} it is shown that the natural free and transitive left action 
of $\widehat{\GT}(\kk)$ on $\Assoc(\kk)$ coincides with the action of $\on{GT}(\kk)$ on $\on{Ass}(\kk)$ via 
the correspondence of Theorem \ref{GTGT}. On the other hand, in \cite[Theorem 11.2.1]{Fresse}, it is shown that 
the natural free and transitive right action of $\GRT(\kk)$ on $\Assoc(\kk)$ coincides with the action of $\on{GRT}(\kk)$ 
over $\on{Ass}(\kk)$ via the correspondence of Theorem \ref{GRTGRT}. 
\end{proof}

\chapter{Modules associated with configuration spaces (elliptic associators)}
\label{Section3}

% 3.1

\section{Compactified configuration space of the torus}

Let $\mathbb{T}$ be the topological ($2$-)torus. To any finite set $I$ we associate a configuration space 
\[
\textrm{Conf}(\mathbb{T},I)=\{\mathbf{z}=(z_i)_{i\in I}\in \mathbb{T}^I|z_i\neq z_j\textrm{ if }i\neq j\}\,.
\]
We also consider its reduced version 
\[
\text{\gls{CTn}}:=\textrm{Conf}(\mathbb{T},I)/\mathbb{T}\,.
\]
We then consider the Axelrod--Singer--Fulton--MacPherson compactification \gls{FMTn} of $\textrm{C}(\mathbb{T},I)$. 
The boundary $\partial\overline{\textrm{C}}(\mathbb{T},I)=\overline{\textrm{C}}(\mathbb{T},I)-\textrm{C}(\mathbb{T},I)$ 
is made up of the following irreducible components: for any partition 
$I=J_1\coprod\cdots\coprod J_k$ there is a component 
\[
\partial_{J_1,\cdots,J_k}\overline{\textrm{C}}(\mathbb{T},I)
\cong\overline{\rm C}(\mathbb{T},k) \times \prod_{i=1}^k\overline{\rm C}(\mathbb{C},J_i)\,.
\]
The inclusion of boundary components provide $\overline{\rm C}(\mathbb{T},-)$ with the structure of a module over the 
operad $\overline{\rm C}(\mathbb{C},-)$ in topological spaces. 

\begin{center}
\includegraphics[scale=1]{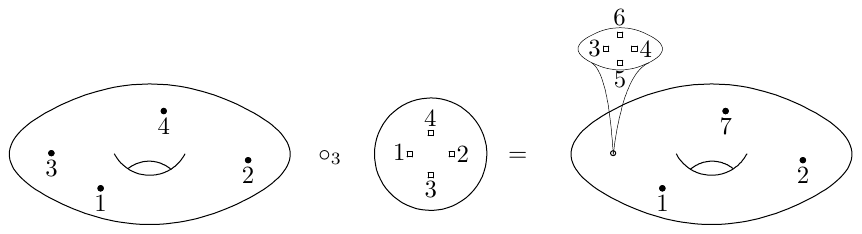}
\end{center}

% 3.2

\section{The pure braid group on the torus}

The reduced pure braid group \gls{PB1n} with $n$ strands on the torus (that 
is the fundamental group of $\textrm{C}(\mathbb{T},n)$) is generated by paths $X_i$'s and $Y_i$'s 
($i=1,\dots,n$), which can be represented as follows
\begin{center}
\includegraphics[scale=1]{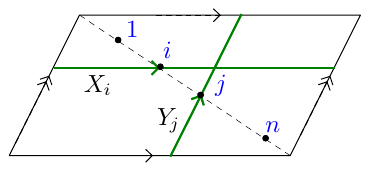}
\end{center}
Moreover, the following relations are satisfied in $\overline{\on{PB}}_{1,n}$: 
\begin{flalign}
& (X_i,X_j)=1=(Y_i,Y_j)\,,\quad\textrm{for }i< j\,, 							\tag{T1}\label{eqn:T1} \\
& (X_i,Y_j)=P_{ij}\textrm{ and }(X_j^{-1},Y_i^{-1})=\reflectbox{P}_{ij}\,,\quad\textrm{for }i< j\,,\tag{T2}\label{eqn:T2} \\ 
& (X_1,Y_1^{-1})=P_{1n}\cdots P_{12} \,,							\tag{T3}\label{eqn:T3} \\
& (X_i,P_{jk})=1=(Y_i,P_{jk})\,,\quad\textrm{for all }i, j< k\,,		\tag{T4}\label{eqn:T4} \\ 
& (X_iX_j,P_{ij})=1=(Y_iY_j,P_{ij})\,,\quad\textrm{for }i< j\,, 		\tag{T5}\label{eqn:T5} \\
& X_1\cdots X_n=1=Y_1\cdots Y_n \,, 	&		\tag{TR}\label{eqn:TR} 
\end{flalign}

There are also the following relations, satisfied in the fundamental group $\overline{\on{B}}_{1,n}$ of 
$\textrm{C}(\mathbb{T},n)/\mathfrak{S}_n$:
\begin{equation}\tag{N}\label{eqn:N}
X_{i+1}=\sigma^{-1}_i X_i \sigma^{-1}_i, \quad Y_{i+1}=\sigma^{-1}_i Y_i \sigma^{-1}_i\,,  
\end{equation}
where $\sigma_i$ are the generators of the braid group $\on{B}_n$ with geometric convention as follows:
\begin{center}
\includegraphics[scale=1]{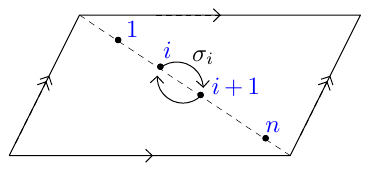}
\end{center}

% 3.3

\section{The $\mathbf{PaB}$-module $\mathbf{PaB}_{e\ell\ell}$ of parenthesized elliptic (or beak) braids}\label{sec-pabell}

In a similar manner as in \S\ref{sec-pab}, there are inclusions of topological 
modules\footnote{The second one depends on the choice 
of an embedding $\mathbb{S}^1\hookrightarrow\mathbb{T}$: we choose by 
convention the ``horizontal embedding'', which corresponds to 
$\mathbb{S}^1\times\{*\}$. }
\[
\mathbf{Pa}\,\subset\,\overline{\textrm{C}}(\mathbb{S}^1,-)\,\subset\,\overline{\textrm{C}}(\mathbb{T},-)\,.
\]
Then it makes sense to define 
\[
\text{\gls{PaBe}}:=\pi_1\left(\overline{\textrm{C}}(\mathbb{T},-),\mathbf{Pa}\right)\,,
\]
which is a $\mathbf{PaB}$-module in groupoids. 

As said in section \ref{sec-pointings}, there is a map of $\mathfrak{S}$-modules $\PaB \longrightarrow \PaB_{e\ell\ell}$ 
and we abusively denote $R^{1,2}$ and $\Phi^{1,2,3}$ the images in $\PaB_{e\ell\ell}$ of the corresponding arrows in $\PaB$. 

\begin{example}[Structure of $\mathbf{PaB}_{e\ell\ell}(2)$]\label{ex2.2}
As in Example \ref{ex2.1} there is an arrow $R^{1,2}$ going from $(12)$ to $(21)$. 
Additionnally, we also have two automorphisms of $(12)$, denoted $A^{1,2}$ and $B^{1,2}$, corresponding to the following 
loops on $\overline{\on{C}}(\mathbb{T},2)$:

\[
\includegraphics[width=90mm]{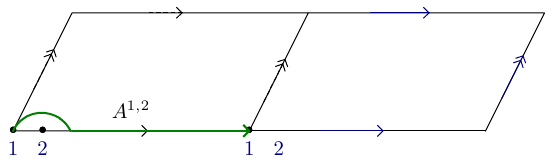} 
\qquad \includegraphics[width=50mm]{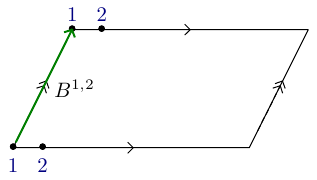}
\]
By global translation of the torus, these are the same loops as the following
\[
\includegraphics[width=90mm]{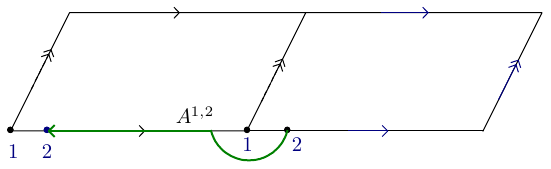} 
\qquad \includegraphics[width=53mm]{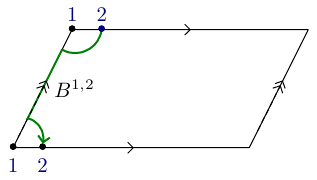}
\]
In particular, $A^{1,2}\tilde R^{1,2}$ and $B^{1,2}\tilde R^{1,2}$, which are morphisms from $(12)$ to $(21)$, 
correspond to the following paths $\overline{\on{C}}(\mathbb{T},2)$:
\[
\includegraphics[width=90mm]{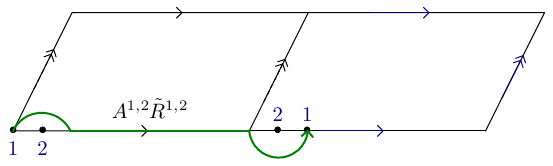} 
\qquad \includegraphics[width=50mm]{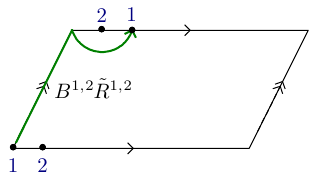}
\]
\end{example}
\begin{remark}
The arrows $A^{1,2}$ and $B^{1,2}$ correspond to $A^{\pm}_{1,2}$ in \cite[\S1.3]{En2}. 
Thus we will respectively depict $A^{1,2}$ and $B^{1,2}$ as 
\begin{center}
\begin{tikzpicture}[baseline=(current bounding box.center)]
\tikzstyle point=[circle, fill=black, inner sep=0.05cm]
 \node[point, label=above:$1$] at (1,1) {};
 \node[point, label=below:$1$] at (1,0) {};
 \node[point, label=above:$2$] at (2,1) {};
 \node[point, label=below:$2$] at (2,0) {};
 \draw[-,thick] (1,1) .. controls (1,0.5) and (1,0.5).. (1.5,0.5); 
 \draw[->,thick] (1.5,0.5) .. controls (1,0.5) and (1,0.5).. (1,0.05); 
\node[point, white, label=left:$+$] at (1,0.5) {};
 \draw[->,thick] (2,1) .. controls (2,0.5) and (2,0.5).. (2,0.05);
\end{tikzpicture}
\quad and \quad
\begin{tikzpicture}[baseline=(current bounding box.center)]
\tikzstyle point=[circle, fill=black, inner sep=0.05cm]
 \node[point, label=above:$1$] at (1,1) {};
 \node[point, label=below:$1$] at (1,0) {};
 \node[point, label=above:$2$] at (2,1) {};
 \node[point, label=below:$2$] at (2,0) {};
 \draw[-,thick] (1,1) .. controls (1,0.5) and (1,0.5).. (1.5,0.5); 
 \draw[->,thick] (1.5,0.5) .. controls (1,0.5) and (1,0.5).. (1,0.05); 
\node[point, white, label=left:$-$] at (1,0.5) {};
 \draw[->,thick] (2,1) .. controls (2,0.5) and (2,0.5).. (2,0.05);
\end{tikzpicture}
\end{center}
\end{remark}

The images of $R^{1,2}$ and $\Phi^{1,2,3}$ by the $\mathfrak{S}$--module morphism 
$\PaB \longrightarrow \PaB_{e\ell\ell}$ will still be denoted the same way. 
One can rephrase \cite[Proposition 1.3]{En2} in the following way: 
\begin{theorem}\label{PaBell}
As a $\mathbf{PaB}$-module in groupoids having $\mathbf{Pa}$ as $\mathbf{Pa}$-module 
of objects, $\mathbf{PaB}_{e\ell\ell}$ is freely 
generated by $A:=A^{1,2}$ and $B:=B^{1,2}$, together with the following relations: 
\begin{flalign}
& \Phi^{1,2,3}A^{1,23}\tilde R^{1,23}\Phi^{2,3,1}A^{2,31}\tilde R^{2,31}\Phi^{3,1,2}A^{3,12}\tilde R^{3,12}=\on{Id}_{(12)3}\,, 
\tag{N1}\label{eqn:N1} \\
& \Phi^{1,2,3}B^{1,23}\tilde R^{1,23}\Phi^{2,3,1}B^{2,31}\tilde R^{2,31}\Phi^{3,1,2}B^{3,12}\tilde R^{3,12}=\on{Id}_{(12)3}\,,
\tag{N2}\label{eqn:N2} \\ 
& R^{1,2} R^{2,1}=\left(\Phi^{1,2,3}A^{1,23}(\Phi^{1,2,3})^{-1}, \tilde R^{1,2}\Phi^{2,1,3}B^{2,13}(\Phi^{2,1,3})^{-1}\tilde R^{2,1}\right)\,. &
\tag{E}\label{eqn:E} 
\end{flalign}
All these relations hold in the automorphism group of $(12)3$ in $\PaB_{e\ell\ell}(3)$.
\end{theorem}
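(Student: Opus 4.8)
The plan is to deduce the statement from Enriquez's presentation \cite[Proposition 1.3]{En2} of the reduced braid groups of the torus, by packaging it as a universal property of $\mathbf{PaB}_{e\ell\ell}$ as a $\mathbf{PaB}$-module. First I would recall that $\overline{\textrm{C}}(\mathbb{T},I)$ is an aspherical manifold with corners whose interior $\textrm{C}(\mathbb{T},I)$ has fundamental group the reduced pure braid group $\overline{\on{PB}}_{1,|I|}$ of the torus, and that the inclusion $\mathbf{Pa}\subset\overline{\textrm{C}}(\mathbb{T},-)$ exhibits $\mathbf{Pa}$ as the $\mathbf{Pa}$-module of objects of $\mathbf{PaB}_{e\ell\ell}$. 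Thus, for each parenthesized word $\mathbf{p}$, the group $\on{Aut}_{\mathbf{PaB}_{e\ell\ell}(n)}(\mathbf{p})$ is a copy of $\overline{\on{PB}}_{1,n}$, and the whole module structure is determined by how these groups fit together under the $\mathfrak{S}_n$-actions, the partial insertions $\circ_i$ with $\mathbf{PaB}$, and the strand-deletion maps coming from the $Unit$-pointing of \S\ref{sec-pointings}.

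Next I would set up the dictionary between the two presentations. Via the chosen horizontal embedding $\mathbb{S}^1\hookrightarrow\mathbb{T}$, the automorphisms $A=A^{1,2}$ and $B=B^{1,2}$ of $(12)$ correspond to the loops $A^{\pm}_{1,2}$ of \cite[\S1.3]{En2}, i.e.\ to moving the first point around the two fundamental cycles of the torus. The general generators $X_i,Y_i$ of $\overline{\on{PB}}_{1,n}$ are then recovered from $A$ and $B$ by inserting a parenthesized word in the second slot ($A\circ_2\mathbf{r}$, $B\circ_2\mathbf{r}$ with $\mathbf{r}\in\mathbf{Pa}(n-1)$), acting by $\mathfrak{S}_n$, and conjugating by the genus-$0$ correctors $\Phi^{\bullet,\bullet,\bullet},R^{\bullet,\bullet}$; this shows that $A,B$ generate $\mathbf{PaB}_{e\ell\ell}$ as a $\mathbf{PaB}$-module over $\mathbf{Pa}$. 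I would then match the relations: \eqref{eqn:N1} and \eqref{eqn:N2} are exactly the relations $X_1\cdots X_n=1$ and $Y_1\cdots Y_n=1$ of \eqref{eqn:TR}, written in arity $3$ with the parenthesization changes absorbed into the $\Phi$'s and $\tilde R$'s (the arity-$n$ instances then following by operadic insertion and deletion); and \eqref{eqn:E} is the torus relation \eqref{eqn:T3}, here in arity $2$ with $R^{1,2}R^{2,1}$ playing the role of the relevant full twist. It remains to show that the other relations \eqref{eqn:T1}, \eqref{eqn:T2}, \eqref{eqn:T4}, \eqref{eqn:T5} and \eqref{eqn:N} hold automatically in any $\mathbf{PaB}$-module generated over $\mathbf{Pa}$ by elements $A,B$ satisfying \eqref{eqn:N1}, \eqref{eqn:N2}, \eqref{eqn:E}: relation \eqref{eqn:N} is built into the compatibility of the $\mathfrak{S}_n$-action with insertion of $R$, while each instance of the disjoint-support commutations \eqref{eqn:T1}, \eqref{eqn:T2}, \eqref{eqn:T4}, \eqref{eqn:T5} is reduced, using the interchange law for partial compositions together with the pentagon and hexagon relations of Theorem \ref{thm2.3}, to an identity in arity $\leq 3$.

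For the universal property itself I would argue as in the genus-$0$ case \cite[Chapter 6]{Fresse}: given any $\mathbf{PaB}$-module $\mathcal{M}$ with $\mathbf{Pa}$ as $\mathbf{Pa}$-module of objects and elements $a,b\in\mathcal{M}(2)$ satisfying the analogues of \eqref{eqn:N1}, \eqref{eqn:N2}, \eqref{eqn:E}, the generation statement above forces the value of the would-be module morphism $\mathbf{PaB}_{e\ell\ell}\to\mathcal{M}$ on every arrow, the relation-matching shows that this assignment is well defined, and uniqueness is immediate. Conversely one must verify that \eqref{eqn:N1}, \eqref{eqn:N2}, \eqref{eqn:E} genuinely hold in $\mathbf{PaB}_{e\ell\ell}$, which is a direct homotopy computation with the loops drawn in Example \ref{ex2.2} (sliding the configuration around the two cycles of the torus) and is already the content of \cite[Proposition 1.3]{En2}.

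The main obstacle is the last part of the second paragraph: establishing that \eqref{eqn:T1}, \eqref{eqn:T2}, \eqref{eqn:T4}, \eqref{eqn:T5} are consequences of the module structure and the arity-$\leq 3$ relations, rather than extra relations that must be imposed separately. This is the combinatorial heart of the reduction from Enriquez's list of relations to just the three above, and it is where one must be most careful about the bookkeeping of parenthesizations, of the $O_{ij}$ versus $P_{ij}$ conventions, and of the left-to-right composition convention for paths.
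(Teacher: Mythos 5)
Your overall strategy---identifying $\mathbf{PaB}_{e\ell\ell}$ with the module presented by $A,B$ and \eqref{eqn:N1}, \eqref{eqn:N2}, \eqref{eqn:E} by comparison with Enriquez's description of torus braid groups---is the right one, and the easy half (the topological verification that the three relations hold in $\mathbf{PaB}_{e\ell\ell}$, which produces a morphism $\mathcal Q_{e\ell\ell}\to\mathbf{PaB}_{e\ell\ell}$ out of the presented module) matches the paper. The two arguments diverge on the hard half, namely that this morphism is injective on automorphism groups, and this is where your proposal has a genuine gap. You propose to show directly that \eqref{eqn:T1}, \eqref{eqn:T2}, \eqref{eqn:T4}, \eqref{eqn:T5} and \eqref{eqn:N} are formal consequences of the $\mathbf{PaB}$-module structure together with \eqref{eqn:N1}, \eqref{eqn:N2}, \eqref{eqn:E}, and you yourself flag this as ``the combinatorial heart'' without carrying it out. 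This derivation is not a routine coherence argument: it is essentially the content of Enriquez's result, and asserting that it reduces to ``an identity in arity $\leq 3$ via the interchange law, pentagon and hexagon'' is not a proof (for instance, $(X_i,X_j)=1$ does not follow from the nonagon alone, which only yields $X_1\cdots X_n=1$). Moreover, your argument tacitly assumes that \eqref{eqn:T1}--\eqref{eqn:TR}, together with \eqref{eqn:N}, form a \emph{complete} presentation of $\overline{\on{PB}}_{1,n}$ and $\overline{\on{B}}_{1,n}$ as fundamental groups; the paper only asserts that these relations \emph{hold}, and the identification of the algebraic with the topological braid group of the torus is a theorem of Birman that must be invoked somewhere.

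The paper sidesteps your obstacle entirely. After constructing $\mathcal Q_{e\ell\ell}\to\mathbf{PaB}_{e\ell\ell}$ and reducing to automorphism groups of a single object per arity (all the groupoids being connected), it identifies $\on{Aut}_{\mathcal Q_{e\ell\ell}(n)}(p)$ with the kernel of $\on{Aut}_{\PaB^{En}_{e\ell\ell}}([p])\to\mathfrak S_n$, where $\PaB^{En}_{e\ell\ell}$ is Enriquez's universal elliptic structure from \cite[Proposition 1.4]{En2}, whose automorphism groups are by construction the \emph{algebraically presented} $\overline{\on{B}}_{1,n}$. A comparison of short exact sequences then reduces the whole theorem to the single statement that the algebraic $\overline{\on{B}}_{1,n}$ maps isomorphically onto $\pi_1\big(\mathrm{C}(\mathbb{T},n)/\mathfrak S_n,[p_{reg}]\big)$, which is \cite[Theorem 5]{Bir}. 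To make your proof complete you must either actually perform the derivations of \eqref{eqn:T1}, \eqref{eqn:T2}, \eqref{eqn:T4}, \eqref{eqn:T5}, \eqref{eqn:N} from the three operadic relations, or, as the paper does, quote Enriquez's Proposition 1.4 for exactly that step.
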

\begin{proof}
Let $\mathcal Q_{e\ell\ell}$ be the $\PaB$-module with the above presentation. 
We first show that there is a morphism of $\PaB$-modules $\mathcal Q_{e\ell\ell}\to\PaB_{e\ell\ell}$. 
We have already seen that there are two automorphisms $A,B$ of $(12)$ in $\mathbf{PaB}_{e\ell\ell}(2)$ 
(see Example \ref{ex2.2}). We have to prove that they indeed 
satisfy the relations \eqref{eqn:N1}, \eqref{eqn:N2} and \eqref{eqn:E}. 

\medskip

\noindent\underline{Relations \eqref{eqn:N1} and \eqref{eqn:N2} are satistfied:} the two \textit{nonagon relations} 
\eqref{eqn:N1} and \eqref{eqn:N2} can be depicted as 
\begin{center}
\begin{align}\tag{\ref{eqn:N1},\ref{eqn:N2}}
\begin{tikzpicture}[baseline=(current bounding box.center)]
\tikzstyle point=[circle, fill=black, inner sep=0.05cm] 
\node[point, label=above:$(1$] at (1,1) {};
 \node[point, label=below:$(1$] at (1,0) {};
 \node[point, label=above:$2)$] at (1.5,1) {};
 \node[point, label=below:$2)$] at (1.5,0) {};
 \node[point, label=above:$3$] at (3,1) {};
 \node[point, label=below:$3$] at (3,0) {};
  \draw[->,thick] (1.5,1) .. controls (1.5,0.5) and (1.5,0.05).. (1.5,0.05);
 \draw[->,thick] (1,1) .. controls (1,0.5) and (1,0.5).. (1,0.05);
 \draw[->,thick] (3,1) .. controls (3,0.5) and (3,0.5).. (3,0.05);
\end{tikzpicture} 
\qquad = \qquad
\begin{tikzpicture}[baseline=(current bounding box.center)] % pour avoir des fleches au milieu des courbes
\tikzstyle point=[circle, fill=black, inner sep=0.05cm]
 \node[point, label=above:$(1$] at (1,4) {};
 \node[point, label=below:$(1$] at (1,-2) {};
 \node[point, label=above:$2)$] at (1.5,4) {};
 \node[point, label=below:$2)$] at (1.5,-2) {};
 \node[point, label=above:$3$] at (3,4) {};
 \node[point, label=below:$3$] at (3,-2) {};
 \draw[-,thick] (1,4) .. controls (1,3.75) and (1,3.75).. (1,3.5);
 \draw[-,thick] (1.5,4) .. controls (1.5,3.75) and (2.5,3.75).. (2.5,3.5);
 \draw[-,thick] (1,3.5) .. controls (1,3.25) and (1,3.25).. (1.5,3.25); 
\node[point, white, label=left:$\pm$] at (1,3.25) {};
 \draw[-,thick] (1.5,3.25) .. controls (1,3.25) and (1,3.25).. (1,3); 
 \draw[-,thick] (3,4) .. controls (3,3.75) and (3,3.75).. (3,3);
 \draw[-,thick] (3,3) .. controls (3,3.75) and (3,3.75).. (3,3);
 \draw[-,thick] (2.5,3.5) .. controls (2.5,3.25) and (2.5,3.25).. (2.5,3); 
\draw[-,thick] (2.5,3) .. controls (2.5,2.5) and (1,2.5).. (1,2);
\draw[-,thick] (3,3) .. controls (3,2.5) and (1.5,2.5).. (1.5,2);
\node[point, ,white] at (2.15,2.45) {};
\node[point, ,white] at (1.85,2.55) {};
\draw[-,thick] (1,3) .. controls (1.1,2.5) and (2.9,2.5).. (3,2);
\draw[-,thick] (1,2) .. controls (1,1.75) and (1,1.75).. (1,1.5);
\draw[-,thick] (1.5,2) .. controls (1.5,1.75) and (2.5,1.75).. (2.5,1.5);
 \draw[-,thick] (1,1.5) .. controls (1,1.25) and (1,1.25).. (1.5,1.25); 
\node[point, white, label=left:$\pm$] at (1,1.25) {};
 \draw[-,thick] (1.5,1.25) .. controls (1,1.25) and (1,1.25).. (1,1); 
 \draw[-,thick] (3,2) .. controls (3,1.75) and (3,1.75).. (3,1);
 \draw[-,thick] (3,1) .. controls (3,1.75) and (3,1.75).. (3,1);
 \draw[-,thick] (2.5,1.5) .. controls (2.5,1.25) and (2.5,1.25).. (2.5,1); 
\draw[-,thick] (2.5,1) .. controls (2.5,0.5) and (1,0.5).. (1,0);
\draw[-,thick] (3,1) .. controls (3,0.5) and (1.5,0.5).. (1.5,0);
\node[point, ,white] at (2.15,0.45) {};
\node[point, ,white] at (1.85,0.55) {};
\draw[-,thick] (1,1) .. controls (1.1,0.5) and (2.9,0.5).. (3,0);
\draw[-,thick] (1,0) .. controls (1,-0.25) and (1,-0.25).. (1,-0.5);
\draw[-,thick] (1.5,0) .. controls (1.5,-0.25) and (2.5,-0.25).. (2.5,-0.5);
 \draw[-,thick] (1,-0.5) .. controls (1,-0.75) and (1,-0.75).. (1.5,-0.75); 
\node[point, white, label=left:$\pm$] at (1,-0.75) {};
 \draw[-,thick] (1.5,-0.75) .. controls (1,-0.75) and (1,-0.75).. (1,-1); 
 \draw[-,thick] (3,0) .. controls (3,-0.25) and (3,-0.25).. (3,-1);
 \draw[-,thick] (3,-1) .. controls (3,-0.25) and (3,-0.25).. (3,-1);
 \draw[-,thick] (2.5,-0.5) .. controls (2.5,-0.75) and (2.5,-0.75).. (2.5,-1); 
\draw[-,thick] (2.5,-1) .. controls (2.5,-1.5) and (1,-1.5).. (1,-2);
\draw[-,thick] (3,-1) .. controls (3,-1.5) and (1.5,-1.5).. (1.5,-2);
\node[point, ,white] at (2.15,-1.55) {};
\node[point, ,white] at (1.85,-1.45) {};
\draw[-,thick] (1,-1) .. controls (1.1,-1.5) and (2.9,-1.5).. (3,-2);
\end{tikzpicture} \end{align}
\end{center}
It is satisfied in $\PaB_{e\ell\ell}$, expressing the fact that when all (here, three) 
points move in the same direction on the torus, this corresponds to a constant 
path in the reduced configuration space of points on the torus. 
The same is true with the second nonagon relation \eqref{eqn:N2}. 

\medskip

\noindent\underline{Relation \eqref{eqn:E} is satisfied:} below one sees the path that is 
obtained from the right-hand-side of the \textit{mixed relation} \eqref{eqn:E}:
\begin{itemize}
\item $\Phi^{1,2,3}A^{1,23}(\Phi^{1,2,3})^{-1}$ is the path 
$$
\includegraphics[scale=1]{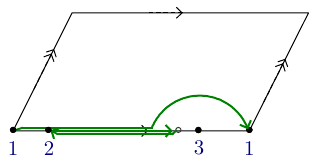}
$$
\item $\tilde R^{1,2}\Phi^{2,1,3}B^{2,13}(\Phi^{2,1,3})^{-1}\tilde R^{2,1}$ is the path
$$
\includegraphics[scale=1]{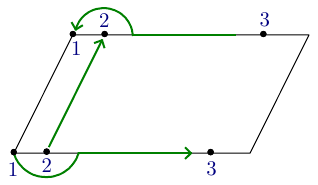}
$$
\end{itemize}

One can see that the commutator of these loops is homotopic to the pure braiding 
of the first two points in the clockwise direction, that is $R^{1,2} R^{2,1}$, by means of the following picture: 
$$
\includegraphics[scale=1]{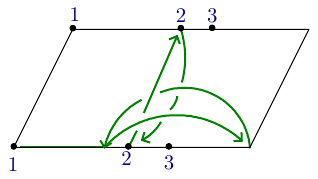}
$$

\medskip

Thus, by the universal property of $\mathcal Q_{e\ell\ell}$, there is a morphism of 
$\PaB$-modules $\mathcal Q_{e\ell\ell} \to \mathbf{PaB}_{e\ell\ell}$, which is the identity on objects.
To show that this map is in fact an isomorphism, it suffices to show that it is an 
isomorphism at the level of automorphism groups of objects arity-wise, as all groupoids are connected. 
Let $n\geq 0$, and $p$ be the object $(\cdots((12)3)\cdots\cdots)n$ of 
$\mathcal Q_{e\ell\ell}(n)$ and $\mathbf{PaB}_{e\ell\ell}(n)$. We want to show that the induced morphism 
\[
\on{Aut}_{\mathcal Q_{e\ell\ell}(n)}(p)  \longrightarrow 
\on{Aut}_{\mathbf{PaB}_{e\ell\ell}(n)}(p)=\pi_1\left(\overline{\textrm{C}}(\mathbb{T},n),p\right)
\]
is an isomorphism. 

On the one hand, as $\bar{\textrm{C}}(\mathbb{T},n)$ is a manifold with corners, we are allowed to move the basepoint $p$ 
to a point $p_{reg}$ which is included in the simply connected subset obtained as the image of\footnote{We have already 
done so for the proof of relation \eqref{eqn:E}. } 
\[
D_{n,\tau} \assign \left\{\mathbf{z} \in \mathbb{C}^n\,\big|\,
z_j = a_j + b_j \tau, a_j, b_j \in \mathbb{R}, 0 < a_1 < a_2 < ... <a_n < a_1 + 1, 0 < b_n  < ... < b_1 < b_n + 1 \right\}
\]
in $\textrm{C}(\mathbb{T},n)$, where $\mathbb{T}=\mathbb{C}/(\mathbb{Z}+\tau\mathbb{Z})$. 
We then have an isomorphism of fundamental groups $\pi_1(\bar{\textrm{C}}(\mathbb{T},n),p) \simeq \pi_1(\textrm{C}(\mathbb{T},n),p_{reg})$. 

\medskip

On the other hand, in \cite[Proposition 1.4]{En2}, Enriquez constructs a universal elliptic 
structure $\PaB_{e\ell\ell}^{En}$, that by definition carries an action of the (algebraic version of the) 
reduced braid group on the torus $\overline{\on{B}}_{1,n}$ in the following sense: 
\begin{itemize}
\item $\PaB_{e\ell\ell}^{En}$ is a category; 
\item for every object $p$ of $\mathbf{Pa}(n)$, there is a corresponding object $[p]$ 
in $\PaB_{e\ell\ell}^{En}$, and $[p]=[q]$ if $p$ and $q$ only differ by a permutation (but have the same underlying parenthesization); 
\item there are group morphisms $\overline{\on{B}}_{1,n}\tilde{\to} \on{Aut}_{\PaB_{e\ell\ell}^{En}}(p)\to\mathfrak S_n$.
\end{itemize}
Moreover, one has by constuction of $\PaB_{e\ell\ell}^{En}$ that 
$\on{Aut}_{\mathcal Q_{e\ell\ell}(n)}(p)$ is the kernel of the map $\on{Aut}_{\PaB^{En}_{e\ell\ell}}([p]) \to \mathfrak{S}_n$. 
One can actually show that there is a commuting diagram 
\[
\xymatrix{
 \on{\overline{PB}}_{1,n} \ar[r]^-{\simeq} \ar[d] &  \on{Aut}_{\mathcal Q_{e\ell\ell}(n)}(p) \ar[r] \ar[d] & 
\pi_1\left(\overline{\textrm{C}}(\mathbb{T},n),p\right) \ar[d] & \ar[l]_-{\simeq}
 \pi_1\left(\textrm{C}(\mathbb{T},n),p_{reg}\right)\ar[d] \\ 
 \on{\overline{B}}_{1,n} \ar[r]^-{\simeq} \ar[d] &  \on{Aut}_{\PaB^{En}_{e\ell\ell}}(p) \ar[r] \ar[d] & 
\pi_1\left(\overline{\textrm{C}}(\mathbb{T},n)/\mathfrak S_n,[p] \right)  \ar[d] & \ar[l]_-{\simeq}
\pi_1\left(\textrm{C}(\mathbb{T},n)/\mathfrak S_n,[p_{reg}]\right) \ar[d] \\
\mathfrak{S}_n \ar@{=}[r]& \mathfrak{S}_n\ar@{=}[r] & \mathfrak{S}_n  \ar@{=}[r]  & \mathfrak{S}_n 
}
\]
where all vertical sequences are short exact sequences. 
Thus, in order to show that the map $\on{Aut}_{\mathcal Q_{e\ell\ell}(n)}(p) \to 
\pi_1\left(\overline{\textrm{C}}(\mathbb{T},n),p\right)$ is an isomorphism, we are left to show that 
\[
\overline{ \on{B}}_{1,n} \longrightarrow \pi_1(\textrm{C}(\mathbb{T},n),p_{reg})
\]
is indeed an isomorphism. But this map is nothing else than a conjugate of the map 
constructed in \cite[Theorem 5]{Bir}, identifying the algebraic and topological versions of the braid group on the torus. 
\end{proof}

% 3.4

\section{The $\CD(\kk)$-module of elliptic chord diagrams}\label{sec-cdell}

For any $n\geq0$, recall that $\text{\gls{t1n}}$ is defined as the bigraded Lie $\KK$-algebra freely generated by 
$x_1,\dots,x_n$ in degree $(1,0)$, $y_1,\dots,y_n$ in degree $(0,1)$ (for $i=1,...,n$), and $t_{ij}$ in degree $(1,1)$ 
(for $1\leq i\neq j\leq n$), together with the relations \eqref{eqn:S}, \eqref{eqn:L}, \eqref{eqn:4T}, and the 
following additional elliptic relations as well:
\begin{flalign}
& [x_i,y_j]=t_{ij}\quad\textrm{for }i\neq j\,, 														\tag{S$_{e\ell\ell}$}\label{eqn:Sell} \\
& [x_i,x_j] = [y_i,y_j] = 0\quad\textrm{for }i\neq j\,, 									\tag{N$_{e\ell\ell}$}\label{eqn:Nell} \\ 
& [x_i,y_i] = -\sum_{j|j\neq i} t_{ij}\,,																	\tag{T$_{e\ell\ell}$}\label{eqn:Tell} \\
& [x_i,t_{jk}] = [y_i,t_{jk}] = 0\quad\textrm{if }\#\{i,j,k\}=3\,,				\tag{L$_{e\ell\ell}$}\label{eqn:Lell} \\ 
& [x_i+x_j,t_{ij}] = [y_i+y_j,t_{ij}]=0\quad\textrm{for }i\neq j\,.  	&		\tag{4T$_{e\ell\ell}$}\label{eqn:4Tell}
\end{flalign}

The $\sum_i x_i$ and $\sum_i y_i$ are central in $\t_{1,n}({\KK})$, and we also consider the quotient 
\[
\bar\t_{1,n}({\KK}):=\t_{1,n}({\KK})/(\sum_i x_i,\sum_i y_i)\,.
\] 
\begin{example}
$\bar\t_{1,2}(\KK)$ is equal to the free Lie $\kk$-algebra $\f_2(\kk)$ on two generators $x=x_1$ and $y=y_2$. 
\end{example}
Both $\t_{1,n}$ and $\bar\t_{1,n}$ are acted on by the symmetric group $\mathfrak{S}_n$, and one can show that the 
$\mathfrak{S}$-modules in $grLie_\kk$ 
\[
\t_{e\ell\ell}({\KK}):=\{\t_{1,n}(\KK) \}_{n \geq 0}\quad\mathrm{and}\quad\bar\t_{e\ell\ell}({\KK}):=\{\bar\t_{1,n}(\KK) \}_{n \geq 0}
\]
actually are $\t(\KK)$-modules in $grLie_{\KK}$. Partial compositions are defined as follows: for $I$ a finite set and $i\in I$, 
\[
\begin{array}{ccccccc}
\circ_k : & \t_{1,I}(\KK) \oplus \t_J(\KK)  & \longrightarrow & \t_{1,J\sqcup I-\{i\}}(\KK) \\
    & (0,t_{\alpha \beta}) & \longmapsto & t_{\alpha\beta} \\
 & (t_{ij},0) & \longmapsto & 
 \begin{cases}
  \begin{tabular}{ccccc}
  $t_{ij}$ & if & $ k\notin\{i,j\} $ \\
  $\sum\limits_{p\in J} t_{pj}$ & if & $k=i$ \\
  $\sum\limits_{p\in J} t_{ip}$ & if & $j=k$ 
  \end{tabular}
  \end{cases}\\
   & (x_i,0) & \longmapsto & 
 \begin{cases}
  \begin{tabular}{ccccc}
  $x_i$ & if & $ k \neq i $ \\
  $\sum\limits_{p\in J} x_{p}$ & if & $k=i$ 
  \end{tabular}
  \end{cases}\\
     & (y_i,0) & \longmapsto & 
 \begin{cases}
  \begin{tabular}{ccccc}
  $y_i$ & if & $ k \neq i $ \\
  $\sum\limits_{p\in J} y_{p}$ & if & $k=i$ 
  \end{tabular}
  \end{cases}
\end{array}
\]
We call $\t_{e\ell\ell}({\KK})$, resp.~$\bar{\t}_{e\ell\ell}({\KK})$, the module of \textit{infinitesimal elliptic braids}, 
resp.~of \textit{infinitesimal reduced elliptic braids}. 

\medskip

We finally define the $\CD(\kk)$-module $\CD_{e\ell\ell}(\kk) := \mathcal{\hat U}(\bar\t_{e\ell\ell}(\kk))$ of 
\textit{elliptic chord diagrams}. 
Similarly to the genus $0$ situation, morphisms in $\CD_{e\ell\ell}(\kk)(n)$ can be represented as chords on $n$ vertical 
strands, with extra chords correponding to the generators $x_i$ and $y_i$ as in the following picture: 
\begin{center}
\tik{
\tell{0}{1}{+}
\node[point, label=above:$i$] at (1,0) {};
\node[point, label=below:$i$] at (1,-1) {}; 
}
\text{ 	and }
\tik{
\tell{0}{1}{-}
\node[point, label=above:$i$] at (1,0) {};
\node[point, label=below:$i$] at (1,-1) {};
}
\end{center}
The  elliptic relations introduced above can be represented as follows, analogously as for the genus 0 case: 
\begin{center}

\begin{align}\tag{\ref{eqn:Sell}}
\tik{
\tell{0}{1}{{-}}
\tell[->]{1}{2}{{+}}
\straight[->]{1}{1}
\straight{2}{0}
\node[point, label=above:$i$] at (1,0) {};
\node[point, label=below:$i$] at (1,-2) {};
\node[point, label=above:$j$] at (2,0) {};
\node[point, label=below:$j$] at (2,-2) {};
}
-
\tik{\tell{0}{2}{{+}}
\tell[->]{1}{1}{{-}}
\straight[->]{2}{1}
\straight{1}{0}
\node[point, label=above:$i$] at (1,0) {};
\node[point, label=below:$i$] at (1,-2) {};
\node[point, label=above:$j$] at (2,0) {};
\node[point, label=below:$j$] at (2,-2) {};
}
=
\tik{\tell{0}{1}{{+}} \tell[->]{1}{2}{{-}}\straight[->]{1}{1}\straight{2}{0}
\node[point, label=above:$i$] at (1,0) {};\node[point, label=below:$i$] at (1,-2) {}; 
\node[point, label=above:$j$] at (2,0) {};\node[point, label=below:$j$] at (2,-2) {};}
-\tik{\tell{0}{2}{{-}} \tell[->]{1}{1}{{+}}\straight[->]{2}{1}\straight{1}{0}
\node[point, label=above:$i$] at (1,0) {};\node[point, label=below:$i$] at (1,-2) {}; 
\node[point, label=above:$j$] at (2,0) {};\node[point, label=below:$j$] at (2,-2) {};}
 =\ \tik{ \hori[->]{0}{1}{1}{}{}\node[point, label=above:$i$] at (0,-1) {};
 \node[point, label=below:$i$] at (0,-2) {}; \node[point, label=above:$j$] at (1,-1) {};
 \node[point, label=below:$j$] at (1,-2) {};}
\end{align}

\begin{align}\tag{\ref{eqn:Nell}}
\tik{\tell{0}{1}{{\pm}} \tell[->]{1}{2}{{\pm}}\straight[->]{1}{1}\straight{2}{0}
\node[point, label=above:$i$] at (1,0) {};\node[point, label=below:$i$] at (1,-2) {}; 
\node[point, label=above:$j$] at (2,0) {};\node[point, label=below:$j$] at (2,-2) {};}=
\tik{\tell{0}{2}{{\pm}} \tell[->]{1}{1}{{\pm}}\straight[->]{2}{1}\straight{1}{0}
\node[point, label=above:$i$] at (1,0) {};\node[point, label=below:$i$] at (1,-2) {}; 
\node[point, label=above:$j$] at (2,0) {};
\node[point, label=below:$j$] at (2,-2) {};}
\end{align}

\begin{align}\tag{\ref{eqn:Tell}}
\tik{\tell{0}{1}{{+}} \tell[->]{1}{1}{{-}}\straight[->]{1}{1} 
\node[point, label=above:$i$] at (1,0) {};\node[point, label=below:$i$] at (1,-2) {};}
-\tik{\tell{0}{1}{A^{-}} \tell[->]{1}{1}{{+}}\node[point, label=above:$i$] at (1,0) {};
\node[point, label=below:$i$] at (1,-2) {};}
=\ - \sum_{j;j\neq i} \tik{ \hori[->]{0}{1}{1} \node[point, label=above:$i$] at (0,-1) {}; 
\node[point, label=above:$j$] at (1,-1) {};\node[point, label=below:$i$] at (0,-2) {}; 
\node[point, label=below:$j$] at (1,-2) {};}
\end{align}

\begin{align}\tag{\ref{eqn:Lell}}
\tik{\tell{0}{1}{{\pm}}\draw[zell] (0,-1)--(0,-2); \hori[->]{2}{1}{1}\straight[->]{1}{1}
\straight{2}{0}\straight{3}{0}\node[point, label=above:$i$] at (1,0) {};
\node[point, label=below:$i$] at (1,-2) {}; \node[point, label=above:$j$] at (2,0) {};
\node[point, label=below:$j$] at (2,-2) {};\node[point, label=above:$k$] at (3,0) {};
\node[point, label=below:$k$] at (3,-2) {};}=\tik{\tell[->]{1}{1}{{\pm}}\draw[zell] (0,0)--(0,-1); 
\hori{2}{0}{1}\straight{1}{0}\straight[->]{2}{1}\straight[->]{3}{1}
\node[point, label=above:$i$] at (1,0) {};\node[point, label=below:$i$] at (1,-2) {}; 
\node[point, label=above:$j$] at (2,0) {};\node[point, label=below:$j$] at (2,-2) {};
\node[point, label=above:$k$] at (3,0) {};\node[point, label=below:$k$] at (3,-2) {};}
\end{align}

\begin{align}\tag{\ref{eqn:4Tell}}
\tik{\tell{0}{1}{{\pm}} \draw[zell] (0,-1)--(0,-2);\hori[->]{1}{1}{1}\straight[->]{1}{1}\straight{2}{0}
\node[point, label=above:$i$] at (1,0) {};\node[point, label=below:$i$] at (1,-2) {}; 
\node[point, label=above:$j$] at (2,0) {};\node[point, label=below:$j$] at (2,-2) {};}
+
\tik{\tell{0}{2}{{\pm}} \draw[zell] (0,-1)--(0,-2);\hori[->]{1}{1}{1}\straight[->]{1}{1}\straight{2}{0}\straight{1}{0}
\node[point, label=above:$i$] at (1,0) {};\node[point, label=below:$i$] at (1,-2) {}; 
\node[point, label=above:$j$] at (2,0) {};\node[point, label=below:$j$] at (2,-2) {};}
=
\tik{\tell[->]{1}{1}{{\pm}} \draw[zell] (0,0)--(0,-1);\hori{1}{0}{1} \straight[->]{2}{1}
\node[point, label=above:$i$] at (1,0) {};\node[point, label=below:$i$] at (1,-2) {}; 
\node[point, label=above:$j$] at (2,0) {};\node[point, label=below:$j$] at (2,-2) {};}
+
\tik{\tell[->]{1}{2}{{\pm}} \draw[zell] (0,0)--(0,-1);
\hori{1}{0}{1} \straight[->]{2}{1}\straight[->]{1}{1}\node[point, label=above:$i$] at (1,0) {};
\node[point, label=below:$i$] at (1,-2) {}; \node[point, label=above:$j$] at (2,0) {};
\node[point, label=below:$j$] at (2,-2) {};}
\end{align}

\end{center}

\begin{remark}
The relation between (a closely related version of) $\CD_{e\ell\ell}(\kk)$ and the elliptic 
Kontsevich integral was studied in Philippe Humbert's thesis \cite{HPh}.
\end{remark}

% 3.5

\section{The $\PaCD(\kk)$-module of parenthesized elliptic chord diagrams}\label{sec-pacdell}

As in the genus zero case, the module of objects $\on{Ob}(\CD_{e\ell\ell}(\kk))$ of $\CD_{e\ell\ell}(\kk)$ is terminal. 
Hence there is a morphism of modules 
$\omega_2:\Pa=\on{Ob}(\mathbf{Pa}(\kk))\to\on{Ob}(\CD_{e\ell\ell}(\kk))$ over the 
morphism of operads $\omega_1$ from \S\ref{sec-pacd}, and thus we can define 
the $\PaCD(\kk)$-module\footnote{Recall that $\PaCD(\kk)$ is defined as $\omega_1^\star \CD(\kk)$. }
\[
\text{\gls{PaCDe}}:=\omega_2^\star \CD_{e\ell\ell}(\kk)\,,
\]
in $\mathbf{Cat(CoAss_\KK)}$, of so-called \textit{parenthesized elliptic chord diagrams}. 

\begin{example}[Notable arrows in $\mathbf{PaCD}_{e\ell\ell}(\kk)(2)$]
There are the following arrows $X^{1,2}_{e\ell\ell}$, $Y^{1,2}_{e\ell\ell}$ in $\mathbf{PaCD}_{e\ell\ell}(\kk)(2)$: 
\begin{center}
$X^{1,2}_{e\ell\ell}=x_1 \cdot$
\begin{tikzpicture}[baseline=(current bounding box.center)]
\tikzstyle point=[circle, fill=black, inner sep=0.05cm]
 \node[point, label=above:$1$] at (1,1) {};
 \node[point, label=below:$1$] at (1,-0.25) {};
 \node[point, label=above:$2$] at (2,1) {};
 \node[point, label=below:$2$] at (2,-0.25) {};
 \draw[->,thick] (1,1) .. controls (1,0) and (1,0).. (1,-0.20); 
 \draw[->,thick] (2,1) .. controls (2,0.25) and (2,0.5).. (2,-0.20);
\end{tikzpicture}
$=$
\tik{
\draw[->,thick] (1,0) .. controls (1,0) and (1,-1).. (1,-0.20-1);
\tell{0.1}{1}{+}
\node[point, label=above:$1$] at (1,0) {};
\node[point, label=below:$1$] at (1,-0.25-1) {};
\node[point, label=above:$2$] at (2,1-1) {};
\node[point, label=below:$2$] at (2,-0.25-1) {};
\draw[->,thick] (2,1-1) .. controls (2,0.25-1) and (2,0.5-1).. (2,-0.20-1);
}
\qquad\qquad
$Y^{1,2}_{e\ell\ell}=y_1 \cdot$
\begin{tikzpicture}[baseline=(current bounding box.center)]
\tikzstyle point=[circle, fill=black, inner sep=0.05cm]
 \node[point, label=above:$1$] at (1,1) {};
 \node[point, label=below:$1$] at (1,-0.25) {};
 \node[point, label=above:$2$] at (2,1) {};
 \node[point, label=below:$2$] at (2,-0.25) {};
 \draw[->,thick] (1,1) .. controls (1,0) and (1,0).. (1,-0.20); 
 \draw[->,thick] (2,1) .. controls (2,0.25) and (2,0.5).. (2,-0.20);
\end{tikzpicture}
$=$
\tik{
\draw[->,thick] (1,0) .. controls (1,0) and (1,-1).. (1,-0.20-1);
\tell{0.1}{1}{-}
\node[point, label=above:$1$] at (1,0) {};
\node[point, label=below:$1$] at (1,-0.25-1) {};
\node[point, label=above:$2$] at (2,1-1) {};
\node[point, label=below:$2$] at (2,-0.25-1) {};
\draw[->,thick] (2,1-1) .. controls (2,0.25-1) and (2,0.5-1).. (2,-0.20-1);
}
\end{center}
\end{example}

\begin{remark}\label{PaCD:ell:rel} 
As said in section \ref{sec-pointings}, there is a map of $\mathfrak{S}$-modules 
$\PaCD(\kk) \longrightarrow \PaCD_{e\ell\ell}(\kk)$ and we abusively denote $X^{1,2}$, 
$H^{1,2}$ and $a^{1,2,3}$ the images in $\PaCD_{e\ell\ell}(\kk)$ of the corresponding arrows in $\PaCD(\kk)$. 
The elements $X^{1,2}_{e\ell\ell}, Y^{1,2}_{e\ell\ell}$ are generators of the ${\PaCD}(\kk)$-module 
$\mathbf{PaCD}_{e\ell\ell}(\kk)$ and satisfy the following relations in 
$\on{End}_{\mathbf{PaCD}_{e\ell\ell}(\kk)(2)}(12)$:
\begin{itemize}
\item $X_{e\ell\ell}^{1,2}+X^{1,2}X_{e\ell\ell}^{2,1}(X^{1,2})^{-1}=0$,
\item $Y_{e\ell\ell}^{1,2}+X^{1,2}Y_{e\ell\ell}^{2,1}(X^{1,2})^{-1}=0$.
\end{itemize}
They also satisfy the following relations in $\on{End}_{\mathbf{PaCD}_{e\ell\ell}(\kk)(3)}((12)3)$:
\begin{itemize}
\item $X_{e\ell\ell}^{12,3}+a^{1,2,3}X^{1,23}X_{e\ell\ell}^{23,1}(a^{1,2,3}X^{1,23})^{-1}
+X^{12,3}(a^{3,1,2})^{-1}X_{e\ell\ell}^{31,2}(X^{12,3}(a^{3,1,2})^{-1})^{-1}=0$,
\item $Y_{e\ell\ell}^{12,3}+a^{1,2,3}X^{1,23}Y_{e\ell\ell}^{23,1}(a^{1,2,3}X^{1,23})^{-1}
+X^{12,3}(a^{3,1,2})^{-1}Y_{e\ell\ell}^{31,2}(X^{12,3}(a^{3,1,2})^{-1})^{-1}=0$,
\item $H^{1,2}=[a^{1,2,3}X_{e\ell\ell}^{1,23}(a^{1,2,3})^{-1},
X^{1,2}a^{2,1,3}Y_{e\ell\ell}^{2,13}(a^{2,1,3})^{-1}(X^{1,2})^{-1}]$.
\end{itemize}
\end{remark}

% 3.6

\section{Elliptic associators}\label{sec:3.5ellasoc}

Let us introduce some terminology, complementing the one of \S\ref{sec:2.8assoc}. 
Let us write $\tmop{OpR}\mathcal C$ for the category of pairs $(\mathcal P,\mathcal M)$, where $\mathcal P$ 
is an operad and $\mathcal M$ is a right $\mathcal O$-module, in $\mathcal C$. 
A morphism $(\mathcal P,\mathcal M)\to (\mathcal Q,\mathcal N)$ is a pair $(f,g)$, where 
$f:\mathcal{P} \to \mathcal{Q}$ is a morphism between operads and $g:\mathcal{M} \to \mathcal{N}$ is a morphism 
of $\mathcal{P}$-modules. 

The superscript ``$+$'' still indicates that we consider morphisms that are the identity on objects. 

\begin{definition}
An elliptic associator over $\KK$ is a couple $(F,G)$ where $F$ is a $\kk$-associator and $G$ is an 
isomorphism between the $\widehat{\PaB}(\KK)$-module 
$\widehat{\PaB}_{e\ell\ell}(\KK)$ and the $G \PaCD(\KK)$-module $G \PaCD_{e\ell\ell}(\KK)$ which is the 
identity on objects and which is compatible with $F$:
\[
\text{\gls{Asse}} := \on{Iso}^+_{\tmop{OpR}\mathbf{Grpd}_\kk}
\Big(\big(\widehat{\PaB}(\KK),\widehat{\PaB}_{e\ell\ell}(\KK)\big),\big(G \PaCD(\KK),G \PaCD_{e\ell\ell}(\KK)\big)\Big).
\] 
\end{definition}

The following theorem identifies our definition of elliptic associators with the original one introduced by Enriquez in \cite{En2}.

\begin{theorem}\label{thm:bij-ell-assoc}
There is a one-to-one correspondence between the set $\Ell(\KK)$ and
the set $\text{\gls{bAsse}}$ of quadruples $(\mu,\varphi,A_+,A_{-})$, 
where $(\mu,\varphi)\in \on{Ass}(\KK)$ and $A_\pm\in 
\on{exp}(\hat{\bar\t}_{1,2}(\KK))$, such that: 
\begin{equation}\label{def:ell:ass:1}
\alpha_\pm^{1,2,3}\alpha_\pm^{2,3,1} \alpha_\pm^{3,1,2}= 1, 
\text{ where }\alpha_\pm =\varphi^{1,2,3}
A_\pm^{1,23} e^{-\mu(t_{12}+t_{13})/2}, 
\end{equation}
\begin{equation}\label{def:ell:ass:2}
e^{\mu t_{12}}= \left(\varphi A_+^{1,23}\varphi^{-1},
e^{-\mu t_{12}/2}\varphi^{2,1,3} A_-^{2,13}(\varphi^{2,1,3})^{-1}e^{-\mu t_{12}/2}
\right).  
\end{equation}
All these relations hold in the group $\on{exp}(\hat{\bar\t}_{1,3}(\KK))$.
\end{theorem}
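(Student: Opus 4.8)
The strategy is to combine the generators-and-relations presentation of $\PaB_{e\ell\ell}$ from Theorem \ref{PaBell} with the analogous universal property of $\PaCD_{e\ell\ell}(\kk)$ (the relations recorded in Remark \ref{PaCD:ell:rel}) and the already-established genus-$0$ correspondence (Theorems \ref{Th:Ass} and \ref{thm2.3}). Concretely, an element $(F,G)\in\Ell(\kk)$ consists of a Drinfeld associator $F$, which by Theorem \ref{Th:Ass} is encoded by a pair $(\mu,\varphi)\in\on{Ass}(\kk)$ via $F(R^{1,2})=e^{\mu t_{12}/2}\cdot X^{1,2}$ and $F(\Phi^{1,2,3})=\varphi\cdot a^{1,2,3}$, together with a compatible isomorphism $G$ of modules that is the identity on objects. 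Since $\PaB_{e\ell\ell}$ is freely generated over $\PaB$ by $A=A^{1,2}$ and $B=B^{1,2}$, the morphism $G$ is completely determined by the two elements $G(A^{1,2}),G(B^{1,2})\in\on{End}_{G\PaCD_{e\ell\ell}(\kk)(2)}(12)$; writing these as $A_+\cdot\on{Id}_{1,2}$ and $A_-\cdot\on{Id}_{1,2}$ with $A_\pm\in\on{exp}(\hat{\bar\t}_{1,2}(\kk))$ (here I use that the object set is terminal, so morphisms of $G\PaCD_{e\ell\ell}(\kk)(2)$ from $(12)$ to itself are exactly grouplike elements of $\hat{\mathcal U}(\hat{\bar\t}_{1,2})$) gives the quadruple $(\mu,\varphi,A_+,A_-)$.

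First I would spell out the bijection on the level of data: given $(\mu,\varphi,A_\pm)$, one defines $G$ on the generators $A^{1,2},B^{1,2}$ by the above formulas and extends by freeness, and conversely one reads off $A_\pm$ from $G$; the two assignments are manifestly inverse to each other once one checks they land in the right sets. The content of the theorem is then that $G$ so defined is a well-defined morphism of $\widehat{\PaB}(\kk)$-modules (equivalently, respects the defining relations \eqref{eqn:N1}, \eqref{eqn:N2}, \eqref{eqn:E} of $\PaB_{e\ell\ell}$) \emph{and} is an isomorphism, precisely when \eqref{def:ell:ass:1} and \eqref{def:ell:ass:2} hold. So the core of the proof is to translate each of the three relations of Theorem \ref{PaBell}, pushed forward through $G$ and $F$, into an identity in $\on{exp}(\hat{\bar\t}_{1,3}(\kk))$.

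The main computation — and the step I expect to be the real obstacle — is this translation. For the two nonagon relations \eqref{eqn:N1} and \eqref{eqn:N2} one applies $G$, uses $G(\Phi^{1,2,3})=F(\Phi^{1,2,3})=\varphi\cdot a^{1,2,3}$, $G(\tilde R^{1,2})=F(\tilde R^{1,2})$ expressed via $e^{\mu t_{12}/2}$ and $X^{1,2}$, and $G(A^{1,23})=A_+^{1,23}\cdot(\text{parenthesization arrow})$, then carefully conjugates everything so that all permutation/parenthesization arrows cancel and one is left with a relation purely among exponentials of elements of $\hat{\bar\t}_{1,3}$; the bookkeeping of the superscript substitution conventions (the $X^{i,jk}$, $a^{i,j,k}$ notation from Section 2) is where errors creep in, but after the dust settles each nonagon collapses to the single cyclic relation $\alpha_\pm^{1,2,3}\alpha_\pm^{2,3,1}\alpha_\pm^{3,1,2}=1$ with $\alpha_\pm=\varphi^{1,2,3}A_\pm^{1,23}e^{-\mu(t_{12}+t_{13})/2}$ — the factor $e^{-\mu(t_{12}+t_{13})/2}$ being exactly the contribution of the $\tilde R$'s rewritten through the associator. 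Similarly, applying $G$ to the mixed relation \eqref{eqn:E} and using $G(R^{1,2}R^{2,1})$, which by the genus-$0$ hexagon equals $e^{\mu t_{12}}$ up to the relevant conjugation, yields \eqref{def:ell:ass:2}. I would organize this as three lemmas, one per relation, each an explicit but routine rewriting.

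Finally, for the isomorphism claim: once $G$ is a well-defined morphism of modules that is the identity on objects and $F$ is an isomorphism, $G$ is automatically an isomorphism because $G\PaCD_{e\ell\ell}(\kk)$ is, like $\PaB_{e\ell\ell}$, freely generated over its base by arrows in arity $2$ lying over the terminal object, so a morphism sending generators to generators with invertible (grouplike) images and compatible with the defining relations admits an inverse built the same way — alternatively, one invokes that $A_\pm$ are grouplike hence invertible and that everything in sight is prounipotent, so bijectivity on each $\on{Hom}$ set follows from bijectivity of $F$ on the base together with the free generation. Comparing with Enriquez's original definition in \cite{En2}, one checks that \eqref{def:ell:ass:1}--\eqref{def:ell:ass:2} are literally his defining equations for $(A_+,A_-)$ (this is the rephrasing already alluded to after Theorem \ref{PaBell}), which closes the identification $\Ell(\kk)\cong\{(\mu,\varphi,A_+,A_-)\}$.
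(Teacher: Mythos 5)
Your proposal is correct and follows essentially the same route as the paper's (much terser) proof: read off $(\mu,\varphi)$ from $F$ via Theorem \ref{Th:Ass}, use the free generation of $\PaB_{e\ell\ell}$ over $\PaB$ by $A^{1,2},B^{1,2}$ (Theorem \ref{PaBell}) to encode $G$ by two grouplike elements $A_\pm\in\on{exp}(\hat{\bar\t}_{1,2}(\kk))$, and observe that the images of \eqref{eqn:N1}, \eqref{eqn:N2}, \eqref{eqn:E} are exactly \eqref{def:ell:ass:1}--\eqref{def:ell:ass:2}. Your extra discussion of why $G$ is automatically an isomorphism goes beyond what the paper records, and is welcome.
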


\begin{proof}
An associator $F$ corresponds uniquely to a couple $(\mu,\varphi)\in\on{Ass}(\kk)$ and an isomorphism 
$G$ between $\widehat{\PaB}_{e\ell\ell}(\KK)$ and $G \PaCD_{e\ell\ell}(\KK)$ sends the arrows $A^{1,2}$ 
and $B^{1,2}$ of $\on{End}_{\widehat{\mathbf{PaB}}_{e\ell\ell}(\kk)(2)}(12)$  to $A_+ \cdot X_{e\ell\ell}^{1,2}$ 
and $A_{-} \cdot Y_{e\ell\ell}^{1,2}$ with $A_{\pm}\in\on{exp}(\hat{\bar\t}_{1,2}(\kk))$ (recall that 
$\hat{\bar\t}_{1,2}(\kk)$ is the completed free Lie algebra over $\kk$ in two generators). The image of 
relations \eqref{eqn:N1}, \eqref{eqn:N2} and \eqref{eqn:E} are precisely the relations \eqref{def:ell:ass:1} 
and \eqref{def:ell:ass:2}.
\end{proof}

\begin{example}[Elliptic KZB Associators]\label{example-KZB-assoc}
Let us fix $\tau \in \h$. Recall that the Lie algebra $\bar\t_{1,2}(\C)$ is isomorphic to the free 
Lie algebra $\mathfrak{f}_2(\C)$ generated by two elements $x:=x_1$ and $y:=y_1$.
We define the elliptic KZB associators $\text{\gls{KZAsse}}:=(A(\tau),B(\tau))$ as the renormalized holonomies 
from $0$ to $1$ and $0$ to $\tau$ of the differential equation
 \begin{equation} \label{eq:dept2}
 G'(z) =  -{\frac{\theta_{\tau}(z+\on{ad}x)\on{ad}x}{\theta_{\tau}(z)
 \theta_{\tau}(\on{ad}x)}}(y)\cdot G(z), 
 \end{equation}
with values in the group $\on{exp}(\hat{\bar\t}_{1,2}(\C))$
More precisely, this equation has a unique solution $G(z)$ defined over $\{a+b\tau\,|\, a, b\in(0,1)\}$
such that $G(z)\simeq (-2\pi\on{i}z)^{-[x,y]}$ at $z\to 0$. In this case,
\[
A(\tau):= G(z)G(z+1)^{-1}, \quad  B(\tau):= 
G(z)G(z+\tau)^{-1}e^{-2\pi\on{i}x}. 
\]
These are elements of the group $\on{exp}(\hat{\bar\t}_{1,2}(\C))$. More precisely, 
Enriquez showed in \cite{En2} that the element $(2\pi\i,\Phi_{\on{KZ}}, A(\tau),B(\tau))$ is in $\on{Ell}(\C)$.
\end{example}

% 3.7

\section{Elliptic Grothendieck--Teichm\"uller group} 

\begin{definition}
The ($\kk$-prounipotent version of the) \textit{elliptic Grothendieck--Teichm\"uller group} is defined as the group 
\[
\text{\gls{kGTe}}:=\on{Aut}_{\tmop{OpR}\mathbf{Grpd}_\kk}^+
\big({\widehat\PaB(\KK)},\widehat\PaB_{e\ell\ell}(\KK)\big)\,.
\]
\end{definition}

Again, we now show that our definition coincides with the original one defined by Enriquez in \cite{En2}.
Recall that the set $\text{\gls{bkGTe}}$ is the set of tuples $(\lambda,f,g_\pm)$, where
$(\lambda,f)\in \widehat{\on{GT}}(\KK)$, $g_\pm\in \widehat{\on{F}}_2(\KK)$ such that, 
in $\widehat{\bar{\on{B}}}_{1,3}(\kk)$, 
\begin{equation} \label{def:GTell:1}
(f(\sigma_1^2,\sigma_2^2)
g_\pm(A,B) (\sigma_1\sigma_2^2\sigma_1)^{- \frac{\lambda-1}{2}}\sigma_1^{- 1}\sigma_2^{- 1})^3=1\,, 
\end{equation}
\begin{equation} \label{def:GTell:3}
u^2= (g_+,u^{-1}g_-u^{-1})\,,
\end{equation}
where $u=f(\sigma_1^2,\sigma_2^2)^{-1}\sigma_1^{\lambda} f(\sigma_1^2,\sigma_2^2)$ 
and $g_\pm=g_\pm(A,B)$. 

For $(\lambda,f,g_{\pm}),(\lambda',f',g'_{\pm})\in \widehat{\on{GT}}_{e\ell\ell}(\KK)$, 
we set 
\[
(\lambda,f,g_\pm)*(\lambda',f',g'_\pm):=(\lambda'',f'',g''_\pm)\,,
\]
where $g''_\pm(A,B) = g_\pm(g'_+(A,B),g'_-(A,B))$. 
This gives $\widehat{\on{GT}}_{e\ell\ell}(\KK)$ a group structure.
Moreover, for $(\lambda,f,g_+,g_{-})\in\widehat{\on{GT}}_{ell}(\kk)$ 
and $(\mu,\varphi,A_+,A_{-})\in\on{Ell}(\kk)$, we set 
\[
(\lambda,f,g_+,g_{-})*(\mu,\varphi,A_+,A_{-}):= (\mu',\varphi',A'_+,A'_{-})\,,
\]
where $A'_\pm := g_\pm(A_+,A_-)$. In \cite{En2}, it is shown that this defines a free and transitive 
left action of $\widehat{\on{GT}}_{e\ell\ell}(\KK)$ on $\on{Ell}(\kk)$. 

\begin{proposition}\label{Ell:GT}
There is an isomorphism $\widehat{\GT}_{e\ell\ell}(\KK)\longrightarrow\widehat{\on{GT}}_{e\ell\ell}(\KK)$ 
such that the bijection $\Ell(\KK)\tilde\longrightarrow\on{Ell}(\kk)$ becomes a torsor isomorphism. 
\end{proposition}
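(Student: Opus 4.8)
The plan is to transpose to the setting of operadic modules the proof of Theorem~\ref{GTGT}, using the generators-and-relations presentation of $\PaB_{e\ell\ell}$ from Theorem~\ref{PaBell} together with the already-established isomorphism $\widehat{\GT}(\KK)\cong\widehat{\on{GT}}(\KK)$. First I would unravel the definition: an element of $\widehat{\GT}_{e\ell\ell}(\KK)$ is a pair $(F,G)$, where $F\in\on{Aut}^+_{\tmop{Op}\mathbf{Grpd}_\kk}(\widehat{\PaB}(\KK))$ and $G$ is an automorphism of the $\widehat{\PaB}(\KK)$-module $\widehat{\PaB}_{e\ell\ell}(\KK)$ lying over $F$ and being the identity on objects. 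By Theorem~\ref{GTGT}, $F$ is encoded by a unique pair $(\lambda,f)\in\widehat{\on{GT}}(\KK)$, with $F(R^{1,2})=(R^{1,2})^{\lambda}$ and $F(\Phi^{1,2,3})=f(x_{12},x_{23})\cdot\Phi^{1,2,3}$. By Theorem~\ref{PaBell} (and its analogue after prounipotent completion, obtained as in the genus~$0$ treatment of \cite{Fresse}), $\widehat{\PaB}_{e\ell\ell}(\KK)$ is, as a module over $\widehat{\PaB}(\KK)$ with operad of objects $\Pa$, freely generated by $A=A^{1,2}$ and $B=B^{1,2}$ subject to the completions of \eqref{eqn:N1}, \eqref{eqn:N2} and \eqref{eqn:E}. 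Hence $G$ is determined by the two automorphisms $G(A^{1,2}),G(B^{1,2})$ of $(12)$ in $\widehat{\PaB}_{e\ell\ell}(\KK)(2)$. Since that automorphism group is the $\kk$-prounipotent completion of $\overline{\on{PB}}_{1,2}$, which is free of rank~$2$ on $A^{1,2},B^{1,2}$, there are unique $g_\pm\in\widehat{\on{F}}_2(\KK)$ with $G(A^{1,2})=g_+(A^{1,2},B^{1,2})$ and $G(B^{1,2})=g_-(A^{1,2},B^{1,2})$.

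Next I would show that such an assignment extends to a (necessarily unique) module automorphism over $F$ precisely when $(\lambda,f,g_+,g_-)$ satisfies the defining equations \eqref{def:GTell:1} and \eqref{def:GTell:3} of $\widehat{\on{GT}}_{e\ell\ell}(\KK)$. By the universal property, this amounts to checking that the images of the three relations \eqref{eqn:N1}, \eqref{eqn:N2} and \eqref{eqn:E} hold in $\on{Aut}_{\widehat{\PaB}_{e\ell\ell}(\KK)(3)}\big((12)3\big)$, which is the $\kk$-prounipotent completion of $\overline{\on{PB}}_{1,3}$ and, after passing to $\overline{\on{B}}_{1,3}$ as in the proof of Theorem~\ref{PaBell}, of $\widehat{\bar{\on{B}}}_{1,3}(\kk)$. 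Using the explicit dictionary relating the module expressions $A^{1,23}$, $A^{2,31}$, $A^{3,12}$ and their $B$-counterparts, together with $\tilde R^{i,j}$ and $\Phi^{i,j,k}$, to the generators $X_i$, $Y_i$, $\sigma_i$ of the braid group on the torus (via relations \eqref{eqn:N}, \eqref{eqn:T2}, \eqref{eqn:T3}), one finds that \eqref{eqn:N1} and \eqref{eqn:N2} together yield the cube-type identity \eqref{def:GTell:1} (for $g_+$, resp.\ for $g_-$), while the mixed relation \eqref{eqn:E} yields \eqref{def:GTell:3}; these are exactly Enriquez's equations in \cite{En2}. This produces a bijection $(F,G)\mapsto(\lambda,f,g_+,g_-)$ from $\widehat{\GT}_{e\ell\ell}(\KK)$ onto $\widehat{\on{GT}}_{e\ell\ell}(\KK)$: injectivity by the universal property, surjectivity since any solution of the equations yields a valid assignment.

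It remains to identify the group laws and the torsor structures. For the group law, if $(F_i,G_i)\leftrightarrow(\lambda_i,f_i,g^{(i)}_\pm)$, then, with the composition convention of \S\ref{sec2-gt}, $G_1G_2=G_2\circ G_1$ sends $A^{1,2}$ to $G_2\big(g^{(1)}_+(A^{1,2},B^{1,2})\big)=g^{(1)}_+\big(g^{(2)}_+(A^{1,2},B^{1,2}),g^{(2)}_-(A^{1,2},B^{1,2})\big)$, and similarly for $B^{1,2}$, so the elliptic component of the product is $g''_\pm(A,B)=g_\pm(g'_+,g'_-)$, while its $(\lambda,f)$-component is the product in $\widehat{\on{GT}}(\KK)$ by Theorem~\ref{GTGT}; this is exactly the multiplication of $\widehat{\on{GT}}_{e\ell\ell}(\KK)$. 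For the torsor isomorphism, I would invoke Theorem~\ref{thm:bij-ell-assoc}, under which an elliptic associator $(F,G)\in\Ell(\KK)$ corresponds to $(\mu,\varphi,A_+,A_-)$, the data $A_\pm\in\on{exp}(\hat{\bar\t}_{1,2}(\kk))$ recording $G(A^{1,2})$ and $G(B^{1,2})$ relative to the arrows $X^{1,2}_{e\ell\ell}$, $Y^{1,2}_{e\ell\ell}$. The left action $(F_0,G_0)\cdot(F,G):=(F\circ F_0,G\circ G_0)$ of $(\lambda,f,g_\pm)$ then sends $A^{1,2}$ to $G\big(g_+(A^{1,2},B^{1,2})\big)=g_+\big(G(A^{1,2}),G(B^{1,2})\big)$ by functoriality, and unwinding this inside $\on{Aut}_{G\PaCD_{e\ell\ell}(\KK)(2)}(12)$ by means of the elliptic relations of \S\ref{sec-pacdell} gives $A'_\pm=g_\pm(A_+,A_-)$, together with $(\mu',\varphi')$ the genus~$0$ action of $(\lambda,f)$, which is precisely Enriquez's formula in \cite{En2}. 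Hence the bijection of Theorem~\ref{thm:bij-ell-assoc} is equivariant and, with the group isomorphism above and the free transitivity of the $\widehat{\on{GT}}_{e\ell\ell}(\KK)$-action recalled from \cite{En2}, it is a torsor isomorphism.

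The main obstacle is the computation underlying the second paragraph: producing the explicit dictionary between the $\widehat{\PaB}(\KK)$-module expressions in \eqref{eqn:N1}, \eqref{eqn:N2} and \eqref{eqn:E} (with the cabling superscripts $1,23$, $2,31$, $3,12$ and the elements $\tilde R$, $\Phi$) and the standard generators $X_i$, $Y_i$, $\sigma_i$ of the reduced braid group on the torus, and then verifying that it turns the three geometric relations into \eqref{def:GTell:1} and \eqref{def:GTell:3}. This is the elliptic counterpart of the delicate identifications carried out for genus~$0$ in \cite{Fresse}; everything else is bookkeeping parallel to the classical case.
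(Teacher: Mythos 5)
Your proposal follows essentially the same route as the paper's proof: unwind $(F,G)$ via Theorem \ref{GTGT} and the presentation of Theorem \ref{PaBell} to get $(\lambda,f,g_\pm)$, observe that the relations \eqref{eqn:N1}, \eqref{eqn:N2}, \eqref{eqn:E} translate into \eqref{def:GTell:1} and \eqref{def:GTell:3}, and then verify compatibility with the group law and the torsor action by the same computation $(H\circ G)(A)=g_+(H(A),H(B))$. The "dictionary" step you flag as the main obstacle is exactly the point the paper also leaves implicit (deferring to Theorem \ref{PaBell} and to Enriquez), so your write-up is at the same level of detail as the paper's.
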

\begin{proof}
Suppose that we are given an automorphism $(F,G)$ of $\big(\widehat\PaB(\KK),\widehat\PaB_{e\ell\ell}(\KK)\big)$ 
which is the identity on objects. We already know (see \S\ref{sec2-gt}) that $F$ is determined by a pair 
$(\lambda, f)\in\widehat\GT(\kk)$, and that any such pair determines an $F$. Moreover, the images of the two 
generators $A^{1,2}, B^{1,2} \in \on{Aut}_{\widehat\PaB_{e\ell\ell}(\KK)(2)} (12)=\widehat{\bar{\PB}}_{1,2}(\KK)$ are 
\[
G(A^{1,2})=g_+(A^{1,2},B^{1,2})\qquad\textrm{and}\qquad G(B^{1,2})=g_-(A^{1,2},B^{1,2})\,,
\]
with $g_\pm\in\widehat{\on{F}}_2(\KK) \simeq \widehat{\bar{\PB}}_{1,2}(\KK)$. 
It therefore follows from Theorem \ref{PaBell} that $(\lambda,f,g_\pm)$ satisfies relations 
\eqref{def:GTell:1} and \eqref{def:GTell:3} if and only it determines an automorphism $(F,G)$. 

Let us then prove that the bijective assignement $(F,G)\mapsto (\lambda,f,g_\pm)$ that we just described is a group morphism. 
For this we show that the composition of automorphisms corresponds to the group law of $\on{GT}_{e\ell\ell}(\kk)$. 
We already know (see \S\ref{sec2-gt}) that the composition of automorphisms of $\widehat{\PaB}(\kk)$ corresponds 
to the group law in $\on{GT}(\kk)$. Now, given automorphisms $(F_1,G)$ and $(F_2,H)$, and there respective images 
$(\lambda_1,f_1,g_\pm)$ and $(\lambda_2,f_2,h_\pm)$, we have that 
\[
(H \circ G)(A)=H(g_+(A,B))=g_+(H(A), H(B))=g_+(h_+(A,B), h_-(A,B))\,,
\]
and, likewise, $(H \circ G)(B)=g_-(h_+(A,B), h_-(A,B))$. 

We finally prove the equivariance statement. Let $(F,G)\in\GT_{e\ell\ell}(\KK)$, with image 
$(\lambda,f,g_{\pm})\in\on{GT}_{e\ell\ell}(\kk)$, and let $(K,H)\in\Ell_{e\ell\ell}(\KK)$, with image 
$(\mu,\varphi,A_\pm)$. It is known (see \S\ref{torsors}) that $K\circ F$ is sent to $(\mu,\varpi)*(\lambda,f)$. 
It remains to compute: 
\[
(H\circ G)(A)=H(g_+(A,B))=g_+(H(A), H(B))=g_+(A_+, A_-)\,,
\]
and, similarly, $(H \circ G)(B)=g_-(A_+, A_-)$. 
\end{proof}

% 3.8

\section{Graded elliptic Grothendieck--Teichm\"uller group} 

\begin{definition}
The graded elliptic Grothendieck-Teichm\"uller group is the group 
\[
\text{\gls{GRTe}}:=\on{Aut}^+_{\tmop{OpR}\mathbf{Cat}(\mathbf{CoAlg}_\kk)}\big(\PaCD(\kk),\PaCD_{e\ell\ell}(\KK)\big)\,. 
\]
\end{definition}
Notice that there is an isomorphism 
\[
\on{Aut}^+_{\tmop{OpR}\mathbf{Cat}(\mathbf{CoAlg}_\kk)}\big(\PaCD(\kk),\PaCD_{e\ell\ell}(\KK)\big)\simeq 
\on{Aut}^+_{\tmop{OpR}\mathbf{Grpd}_\kk}\big(G\PaCD(\kk),G\PaCD_{e\ell\ell}(\KK)\big)\,.
\]

As before, our goal in this paragraph is to show that our definition coincides with the one of Enriquez \cite{En2}. 
Recall that he defines $\on{GRT}_1^{ell}(\kk)$ as the set of triples 
$(g,u_+,u_{-})\in\on{GRT}_1(\kk)\times\left(\hat{\bar{\t}}_{1,2}(\kk)\right)^{\times 2}$, 
satisfying  
\begin{equation} \label{def:grt:ell:1}
\on{Ad}(g^{1,2,3})(u_\pm^{1,23})+
\on{Ad}(g^{2,1,3})(u_\pm^{2,13})+u_\pm^{3,12}=0\,, 
\end{equation}
\begin{equation} \label{def:grt:ell:2}
[\on{Ad}(g^{1,2,3})(u_\pm^{1,23}),u_\pm^{3,12}]=0\,, 
\end{equation}
\begin{equation} \label{def:grt:ell:3}
[\on{Ad}(g^{1,2,3})(u_+^{1,23}),
\on{Ad}(g^{2,1,3})(u_-^{2,13})]=t_{12}\,,
\end{equation}
as relations in $\hat{\bar{\t}}_{1,3}({\kk})$. 
He defines a group structure as follows: 
\[
(g,u_+,u_{-})*(h,v_+,v_{-}):= (g*h,w_+,w_{-})
\,,\quad\textrm{where}\quad 
w_\pm(x_1,y_1):= u_\pm(v_+(x_1,y_1),v_-(x_1,y_1))\,.
\]
The group $\kk^\times$ acts on $\on{GRT}_1^{ell}(\kk)$ by rescaling: $c\cdot (g,u_\pm):= (c\cdot g,c\cdot u_\pm)$, 
where $c\cdot g$ is as before, and
\begin{itemize}
\item $(c\cdot u_+)(x_1,y_1):= u_+(x_1,c^{-1}y_1)$,
\item $(c\cdot u_-)(x_1,y_1):= cu_-(x_1,c^{-1}y_1)$.
\end{itemize}
We then set $\text{\gls{bGRTe}}:= \on{GRT}_1^{ell}(\kk)\rtimes \kk^\times$. 

Moreover, there is a right action of $\on{GRT}_1^{ell}(\kk)$ on $\on{Ell}(\kk)$: 
for $(g,u_\pm)\in\on{GRT}_1^{ell}(\kk)$ and $(\mu,\varphi,A_\pm)\in \on{Ell}(\kk)$, we set 
$(\mu,\varphi,A_\pm)*(g,u_\pm):= (\mu,\tilde\varphi,\tilde A_\pm)$, where 
\[
\tilde A_\pm(x_1,y_{1}):= A_\pm(u_+(x_1,y_1),u_-(x_1,y_1)) 
\]
and, for $c\in\kk^\times$, we set $(\mu,\varphi,A_\pm)*c:= (\mu,c*\varphi,c\sharp A_\pm)$, where
$(c\sharp A_\pm)(x_1,y_1):= A_\pm(x_1,cy_1)$. In \cite{En2} this action is shown to be free and transitive.
Notice that $\tilde A_\pm=\theta(A_\pm)$, where $\theta\in\on{Aut}(\hat{\bar{\t}}_{1,2}^\kk)$ is 
defined by $x_1\mapsto u_+(x_1,y_1)$ and $y_1\mapsto u_-(x_1,y_1)$.

\begin{proposition}\label{GRT:ell:cor}
There is an injective group morphism $\GRT_{e\ell\ell}(\KK)\to\on{GRT}_{e\ell\ell}(\kk)$.
Moreover, the bijection $\Ell(\KK) \to \on{Ell}(\kk)$ from Theorem \ref{thm:bij-ell-assoc} is equivariant along this morphism. 
\end{proposition}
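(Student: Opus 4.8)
The plan is to run the same argument as in the proof of Proposition~\ref{Ell:GT}, on the graded side and in linearized form. Via the isomorphism recalled just above the statement, I work with an automorphism $(G_0,G_1)$ in $\on{Aut}^+_{\tmop{OpR}\mathbf{Grpd}_\kk}\big(G\PaCD(\kk),G\PaCD_{e\ell\ell}(\KK)\big)$. Its first component $G_0\in\on{Aut}^+_{\on{Op}\mathbf{Grpd}_\kk}(G\PaCD(\kk))=\GRT(\kk)$ corresponds, by Theorem~\ref{GRTGRT}, to a unique pair $(\lambda,g)\in\kk^\times\rtimes\on{GRT}_1$, with $G_0(X^{1,2})=X^{1,2}$, $G_0(H^{1,2})=e^{\lambda t_{12}}H^{1,2}$ and $G_0(a^{1,2,3})=g(t_{12},t_{23})\cdot a^{1,2,3}$. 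By Remark~\ref{PaCD:ell:rel} the $\PaCD(\kk)$-module $\PaCD_{e\ell\ell}(\kk)$ is generated by $X^{1,2}_{e\ell\ell}$ and $Y^{1,2}_{e\ell\ell}$, so $G_1$ is determined by the two elements $G_1(X^{1,2}_{e\ell\ell}),\,G_1(Y^{1,2}_{e\ell\ell})$ of $\on{Aut}_{G\PaCD_{e\ell\ell}(\kk)(2)}(12)=\on{exp}\big(\hat{\bar\t}_{1,2}(\kk)\big)\cong\on{exp}\big(\hat{\f}_2(\kk)\big)$; being the identity on objects and compatible with the graded structure, these are encoded by uniquely determined $u_\pm\in\hat{\bar\t}_{1,2}(\kk)$, i.e.\ by the Lie algebra endomorphism $\theta\colon x_1\mapsto u_+,\ y_1\mapsto u_-$ (which is in fact an automorphism, being filtered with the expected leading terms).

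The next step is to translate the module-morphism property into constraints on the tuple. Since the relations of Remark~\ref{PaCD:ell:rel} (equivalently, the generating relations \eqref{eqn:Sell}--\eqref{eqn:4Tell}) hold in $\PaCD_{e\ell\ell}(\kk)$, and $G_1$ is a $\PaCD(\kk)$-module morphism — hence compatible with the partial compositions, so that $G_1(X^{1,23}_{e\ell\ell})=u_+^{1,23}$, $G_1(X^{12,3}_{e\ell\ell})=u_+^{12,3}$, etc., and with $G_0$ on the $a^{1,2,3}$'s — applying $G_1$ to these relations yields precisely Enriquez's relations \eqref{def:grt:ell:1}, \eqref{def:grt:ell:2}, \eqref{def:grt:ell:3} for $(g,u_+,u_-)$ (the scalar $\lambda$ entering through $G_0(H^{1,2})=e^{\lambda t_{12}}H^{1,2}$). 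This defines a map $\GRT_{e\ell\ell}(\KK)\to\on{GRT}_{e\ell\ell}(\kk)=\on{GRT}_1^{ell}(\kk)\rtimes\kk^\times$, $(G_0,G_1)\mapsto\big((g,u_+,u_-),\lambda\big)$. It is injective since the tuple recovers the automorphism: $G_0$ from $(\lambda,g)$ by Theorem~\ref{GRTGRT}, and $G_1$ from $u_\pm$ because $X^{1,2}_{e\ell\ell},Y^{1,2}_{e\ell\ell}$ generate the module. That it is a group morphism is checked as in Proposition~\ref{Ell:GT}: composing $(G_0,G_1)$ with $(G_0',G_1')$ (mapped to $((h,v_\pm),\lambda')$) and evaluating on $X^{1,2},H^{1,2},a^{1,2,3}$ reproduces the law of $\GRT(\kk)$, while evaluating on $X^{1,2}_{e\ell\ell},Y^{1,2}_{e\ell\ell}$ gives $u_\pm\big(v_+(x_1,y_1),v_-(x_1,y_1)\big)$, which is exactly Enriquez's law $w_\pm(x_1,y_1)=u_\pm(v_+,v_-)$; one checks the $\kk^\times$-components combine according to the rescaling action in the semidirect product.

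For equivariance, recall from Theorem~\ref{thm:bij-ell-assoc} that an element of $\Ell(\KK)$ is a pair $(F,F_{e\ell\ell})$ corresponding to $(\mu,\varphi,A_\pm)$, with $F_{e\ell\ell}(A^{1,2})=A_+\cdot X^{1,2}_{e\ell\ell}$ and $F_{e\ell\ell}(B^{1,2})=A_-\cdot Y^{1,2}_{e\ell\ell}$. I would compute the right-translate $(G_0\circ F,\,G_1\circ F_{e\ell\ell})$ on the generators $R^{1,2},\Phi^{1,2,3},A^{1,2},B^{1,2}$. On $R^{1,2},\Phi^{1,2,3}$ this reproduces, by §\ref{torsors}, the action $(\mu,\varphi)*(\lambda,g)$. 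On $A^{1,2}$ one gets $\big(G_1\circ F_{e\ell\ell}\big)(A^{1,2})=G_1\big(A_+\cdot X^{1,2}_{e\ell\ell}\big)$, whose ``series part'' is $A_+\big(u_+(x_1,y_1),u_-(x_1,y_1)\big)=\tilde A_+$ from the definition of the right $\on{GRT}_1^{ell}(\kk)$-action on $\on{Ell}(\kk)$ recalled above, and similarly $\big(G_1\circ F_{e\ell\ell}\big)(B^{1,2})$ gives $\tilde A_-$; the $\kk^\times$-part yields the $c\sharp A_\pm$ rescaling. Hence $(G_0\circ F,G_1\circ F_{e\ell\ell})$ corresponds to $(\mu,\varphi,A_\pm)*\big((g,u_\pm),\lambda\big)$, which is the asserted equivariance.

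The main obstacle — and the reason only injectivity, rather than an isomorphism, is claimed — is that $\PaCD_{e\ell\ell}(\kk)$ lacks a generators-and-relations presentation of the type available for $\PaCD(\kk)$ in \cite[Theorem~10.3.4]{Fresse}: here one only knows that $X^{1,2}_{e\ell\ell},Y^{1,2}_{e\ell\ell}$ generate and satisfy the relations of Remark~\ref{PaCD:ell:rel}. This yields the easy direction used above (an automorphism produces a tuple satisfying \eqref{def:grt:ell:1}--\eqref{def:grt:ell:3}), but surjectivity would require the converse — that every such tuple integrates to a module automorphism compatible in all arities — which is exactly the missing universal property. The remaining computations are routine linear bookkeeping in $\hat{\bar\t}_{1,3}(\kk)$, in fact lighter than their Betti counterparts in Proposition~\ref{Ell:GT} since, after passing to logarithms, the elliptic relations are linear or involve a single bracket.
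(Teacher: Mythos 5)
Your proposal is correct and follows essentially the same route as the paper: read off $(\lambda,g,u_\pm)$ from the images of $X^{1,2}$, $H^{1,2}$, $a^{1,2,3}$, $X^{1,2}_{e\ell\ell}$, $Y^{1,2}_{e\ell\ell}$, use the relations of Remark \ref{PaCD:ell:rel} to get \eqref{def:grt:ell:1}--\eqref{def:grt:ell:3}, deduce injectivity from generation, and check the group law and equivariance on generators exactly as in Proposition \ref{Ell:GT}. Your closing remark on why only injectivity (and not surjectivity) is available at this stage matches the paper's logic, which recovers surjectivity only later from the non-emptiness of $\on{Ell}(\kk)$.
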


\begin{proof}
For every $(G,U)\in\GRT_{e\ell\ell}(\KK)$, there are $(\lambda, g)\in\on{GRT}(\kk)$ and $u_\pm\in\hat{\bar{\t}}_{1,2}(\kk)$ such that 
\begin{itemize}
\item $G(X^{1,2})=X^{1,2}$,
\item $G(H^{1,2})=\lambda H^{1,2}$,
\item $G(a^{1,2,3})=g(t_{12},t_{23})a^{1,2,3}$,
\item $U(X^{1,2}_{e\ell\ell})=u_+(x,y)\on{Id}_{12}$,
\item $U(Y^{1,2}_{e\ell\ell})=u_-(x,y)\on{Id}_{12}$. 
\end{itemize}
In light of relations of Remark \ref{PaCD:ell:rel}, we obtain that $(\lambda, g, u_\pm)$ satisfies 
relations \eqref{def:grt:ell:1}, \eqref{def:grt:ell:2} and \eqref{def:grt:ell:3}. 
The assignment $(G,U)\mapsto (\lambda, g, u_\pm)$ defines an injective map $\GRT_{e\ell\ell}(\KK) \to \on{GRT}_{e\ell\ell}(\kk)$.

We now show that this map is a group morphism. The proof is the same as one of the analogous statement 
in Proposition \ref{Ell:GT}: for two automorphisms $(G_1,U)$ and $(G_2,V)$, we already know that the composition 
$G_2\circ G_1$ corresponds to the product in $\on{GRT}(\kk)$, and we compute: 
\[
(V\circ U)(X^{1,2}_{e\ell\ell})=V(u_+(x_1,y_1)\on{Id}_{12})=u_+\big(v_+(x_1,y_1),v_-(x_1,y_1)\big)\on{Id}_{12}\,,
\]
and, likewise, $(V\circ U)(X^{1,2}_{e\ell\ell})=u_-\big(v_+(x_1,y_1),v_-(x_1,y_1)\big)\on{Id}_{12}$. 

Finally, the equivariance of the bijection is proven in a similar way. 
\end{proof}

% 3.9

\section{Bitorsors}

Summarizing the results we have proven so far, we get that the bijection $\Ell(\KK) \longrightarrow \on{Ell}(\kk)$ 
from Theorem \ref{thm:bij-ell-assoc} has been promoted to a bitorsor isomorphism. Indeed, we know (by definition) that 
\[
\big(\widehat{\GT}_{e\ell\ell}(\kk),\Ell(\kk),\GRT_{e\ell\ell}(\kk)\big)
\]
is a bitorsor, and (from \cite{En2}) that 
\[
\big(\widehat{\on{GT}}_{e\ell\ell}(\kk),\on{Ell}(\kk),\on{GRT}_{e\ell\ell}(\kk)\big)
\]
is a bitorsor as well. 
\begin{theorem}\label{thm-intro1}
There is a bitorsor isomorphism
\begin{equation}\label{bitorsor:ell}
\big(\widehat{\GT}_{e\ell\ell}(\kk),\Ell(\kk),\GRT_{e\ell\ell}(\kk)\big) \tilde\longrightarrow 
\big(\widehat{\on{GT}}_{e\ell\ell}(\kk),\on{Ell}(\kk),\on{GRT}_{e\ell\ell}(\kk)\big)\,.
\end{equation}
\end{theorem}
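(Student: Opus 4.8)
The plan is to assemble Theorem \ref{thm-intro1} from the pieces already established, so that no new computation is required: the bijection of underlying sets is Theorem \ref{thm:bij-ell-assoc}, the left-equivariance along $\widehat{\GT}_{e\ell\ell}(\kk)\to\widehat{\on{GT}}_{e\ell\ell}(\kk)$ is Proposition \ref{Ell:GT}, and the right-equivariance along $\GRT_{e\ell\ell}(\kk)\to\on{GRT}_{e\ell\ell}(\kk)$ is Proposition \ref{GRT:ell:cor}. What remains to be checked is only that these structures fit together into \emph{bitorsors} on both sides, and that the isomorphism $\GRT_{e\ell\ell}(\KK)\to\on{GRT}_{e\ell\ell}(\kk)$, which Proposition \ref{GRT:ell:cor} only asserts to be an \emph{injective} group morphism, is in fact bijective. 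So the proof will have three movements: (i) record that the source triple is a bitorsor by construction; (ii) invoke \cite{En2} for the target triple; (iii) upgrade the right-hand vertical map to an isomorphism and conclude.

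For (i), I would argue as follows. On the operad-module level, $\widehat{\PaB}_{e\ell\ell}(\KK)$ and $G\PaCD_{e\ell\ell}(\KK)$ are rigid objects in $\tmop{OpR}\mathbf{Grpd}_\kk$ with the same object-operad-module $\Pa$, so the set $\Ell(\KK)=\on{Iso}^+_{\tmop{OpR}\mathbf{Grpd}_\kk}\big((\widehat{\PaB},\widehat{\PaB}_{e\ell\ell}),(G\PaCD,G\PaCD_{e\ell\ell})\big)$ carries the tautological commuting actions: $\GT_{e\ell\ell}(\KK)=\on{Aut}^+(\widehat{\PaB},\widehat{\PaB}_{e\ell\ell})$ acts on the left by precomposition and $\GRT_{e\ell\ell}(\KK)=\on{Aut}^+(G\PaCD,G\PaCD_{e\ell\ell})$ acts on the right by postcomposition; the two actions commute by associativity of composition, and each is free and transitive on $\Ell(\KK)$ as soon as $\Ell(\KK)$ is non-empty (any two isomorphisms differ, on either side, by a unique automorphism). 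Non-emptiness over $\C$ is provided by Example \ref{example-KZB-assoc} (the elliptic KZB associator), hence over any $\kk\supset\Q$ by base change, so $\big(\widehat{\GT}_{e\ell\ell}(\kk),\Ell(\kk),\GRT_{e\ell\ell}(\kk)\big)$ is a bitorsor. For (ii) there is nothing to prove: it is recalled verbatim from \cite{En2}.

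For (iii), the key point is surjectivity of $\GRT_{e\ell\ell}(\KK)\to\on{GRT}_{e\ell\ell}(\kk)$. I would deduce it from the bitorsor structure together with the already-known surjectivity of the other two vertical maps. Concretely: fix a basepoint, e.g.\ the KZB elliptic associator, which lies in $\Ell(\kk)$ and maps to the corresponding point of $\on{Ell}(\kk)$; then $\GRT_{e\ell\ell}$ is identified (as a set) with the stabilizer data of that basepoint under the right action, i.e.\ with $\Ell\times\Ell$ fibered suitably, and likewise on the $\on{GRT}$ side. Since $\Ell(\kk)\to\on{Ell}(\kk)$ is a bijection (Theorem \ref{thm:bij-ell-assoc}) and is left-equivariant along the \emph{isomorphism} $\widehat{\GT}_{e\ell\ell}(\kk)\xrightarrow{\sim}\widehat{\on{GT}}_{e\ell\ell}(\kk)$ (Proposition \ref{Ell:GT}), a diagram chase shows that for every $h\in\on{GRT}_{e\ell\ell}(\kk)$ the map $e\mapsto \psi^{-1}(\psi(e)\cdot h)$ (where $\psi$ is the bijection $\Ell(\kk)\to\on{Ell}(\kk)$ and $e$ ranges over $\Ell(\kk)$) is a bijection of $\Ell(\kk)$ commuting with the left $\widehat{\GT}_{e\ell\ell}(\kk)$-action, hence equals right-multiplication by a unique element of $\GRT_{e\ell\ell}(\kk)$ mapping to $h$; combined with the injectivity from Proposition \ref{GRT:ell:cor} this gives the isomorphism. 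Finally, having all three verticals be isomorphisms, with the squares commuting by Propositions \ref{Ell:GT} and \ref{GRT:ell:cor}, is exactly the assertion that \eqref{bitorsor:ell} is a bitorsor isomorphism. I expect the only genuinely delicate point to be the surjectivity argument in (iii): one has to be careful that the diagram chase producing the element of $\GRT_{e\ell\ell}(\kk)$ over a given $h$ really lands in the operadic automorphism group $\on{Aut}^+(G\PaCD,G\PaCD_{e\ell\ell})$ and not merely in an abstract permutation group of $\Ell(\kk)$ — but this is forced by transitivity of the right action and naturality, and can also be seen directly on generators using the explicit formulas $G(a^{1,2,3})=g(t_{12},t_{23})a^{1,2,3}$, $U(X^{1,2}_{e\ell\ell})=u_+(x,y)\on{Id}_{12}$, $U(Y^{1,2}_{e\ell\ell})=u_-(x,y)\on{Id}_{12}$ from the proof of Proposition \ref{GRT:ell:cor}.
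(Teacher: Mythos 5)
Your proposal is correct and follows essentially the same route as the paper: it assembles the bijection of Theorem \ref{thm:bij-ell-assoc} with Propositions \ref{Ell:GT} and \ref{GRT:ell:cor}, and uses non-emptiness of $\on{Ell}(\kk)$ together with the free and transitive right actions to upgrade the injective morphism $\GRT_{e\ell\ell}(\kk)\to\on{GRT}_{e\ell\ell}(\kk)$ to an isomorphism. Your step (iii) merely spells out (via the ``commutes with the left action, hence is right translation'' argument, which could be shortened further by comparing orbit maps at a fixed basepoint) what the paper leaves implicit in its final sentence.
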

\begin{proof}
This is a summary of most of the above results: 
\begin{itemize}
\item There is a group isomorphism between $\widehat{\GT}_{e\ell\ell}(\kk)$ and $\widehat{\on{GT}}_{e\ell\ell}(\kk)$ 
that is such that the bijection from Theorem \ref{thm:bij-ell-assoc} is a torsor isomorphism (Proposition \ref{Ell:GT}). 
\item There is an injective group morphism $\GRT_{e\ell\ell}(\kk)\to\on{GRT}_{e\ell\ell}(\kk)$ such that 
the bijection from Theorem \ref{thm:bij-ell-assoc} is equivariant (Proposition \ref{GRT:ell:cor}). 
\end{itemize}
Knowing from Example \ref{example-KZB-assoc} that $\on{Ell}(\kk)$ is non-empty, we obtain that 
$\GRT_{e\ell\ell}(\kk)\to\on{GRT}_{e\ell\ell}(\kk)$ is an isomorphism. 
\end{proof}

\chapter{The module of parenthesized ellipsitomic braids}
\label{Section4}

In this chapter, $\Gamma$ denotes the abelian group 
$\Gamma=\Z/M\Z \times \Z/N\Z$ where $M,N\geq 1$ are two integers. 
We also write $\0:=(\bar{0},\bar{0})$. 

% 4.1

\section{Compactified twisted configuration space of the torus} 

Let $\mathbb{T}$ be the topological torus, and consider the connected 
$\Gamma$-covering $p:\tilde{\mathbb{T}} \to \mathbb{T}$ corresponding to the 
canonical surjective group morphism $\rho:\pi_1(\mathbb{T})=\Z^2\to\Gamma$
 sending the generators of $\mathbb{Z}^2$ to their corresponding reduction in $\Gamma$. 
To any finite set $I$ with cardinality $n$ we associate the $\Gamma$-twisted configuration space
\[
\text{\gls{ConfTIG}}:=\left\{\mathbf{z}=(z_1,\dots,z_n)\in\tilde{\mathbb{T}}^I\,\big|\,p(z_i)\neq p(z_j)\textrm{ if }i\neq j\right\}\,,
\]
and let $\text{\gls{CTIG}}:=\textrm{Conf}(\mathbb{T},I,\Gamma)/\tilde{\mathbb{T}}$ be its reduced version.  

\medskip

The inclusion 
\begin{equation}\label{eqn:inclusion}
\textrm{Conf}(\mathbb{T},I,\Gamma) \hookrightarrow \textrm{Conf}(\tilde{\mathbb{T}},I\times\Gamma)
\end{equation}
sending $(z_i)_{i\in I}$ to $(\gamma\cdot z_i)_{(i,\gamma)\in I\times\Gamma}$ induces an inclusion
\[
\text{\gls{CTIG}}	\hookrightarrow \textrm{C}(\tilde{\mathbb{T}},I\times\Gamma) 
									\hookrightarrow \overline{\textrm{C}}(\tilde{\mathbb{T}},I\times\Gamma)\,,
\]
which allows us to define $\gls{FMTIG}$ as the closure of $\text{\gls{CTIG}}$ inside 
$\overline{\textrm{C}}(\tilde{\mathbb{T}},I\times\Gamma)$. 
The boundary 
$\partial\overline{\textrm{C}}(\mathbb{T},I,\Gamma)=\overline{\textrm{C}}(\mathbb{T},I,\Gamma)-\textrm{C}(\mathbb{T},I,\Gamma)$ 
is made up of the following irreducible components: for any partition $J_1\coprod\cdots\coprod J_k$ of $I$ there is a component 
\[
\partial_{J_1,\cdots,J_k}\overline{\textrm{C}}(\mathbb{T},I,\Gamma)\cong 
\prod_{i=1}^k(\overline{\rm C}(\mathbb{C},J_i))\times\overline{\rm C}(\mathbb{T},k,\Gamma)\,.
\]
The inclusion of boundary components provides $\overline{\textrm{C}}(\mathbb{T},-,\Gamma)$ with the structure of a 
module over the operad $\overline{\textrm{C}}(\mathbb{C},-)$ in topological spaces. 

\medskip

On the one hand, the left action of $\Gamma$ on $\tilde{\mathbb{T}}$ gives us an action of $\Gamma^I$, resp.~$\Gamma^I/\Gamma$, on 
$\textrm{Conf}(\tilde{\mathbb{T}},I\times\Gamma)$, resp.~$\textrm{C}(\tilde{\mathbb{T}},I\times\Gamma)$. On the other hand, 
$\Gamma^I$ also acts on $\textrm{Conf}(\tilde{\mathbb{T}},I\times\Gamma)$ and $\textrm{C}(\tilde{\mathbb{T}},I\times\Gamma)$ 
in the following way: 
\[
(\alpha\cdot\mathbf{z})_{(i,\gamma)}:=\mathbf{z}_{i,\gamma+\alpha}\,.
\]
The inclusion \eqref{eqn:inclusion} is $\Gamma^I$-equivariant, so that one gets a diagonally trivial $\Gamma$-action 
on $\overline{\textrm{C}}(\mathbb{C},-)$, in the sense of \S\ref{sec-gpaction}. 

% 4.2

\section{The $\mathbf{Pa}$-module of labelled parenthesized permutation}\label{sec4.2}

For every finite set $I$, there is a $\Gamma^I/\Gamma$-covering map 
\[
\phi_I\,:\,\textrm{C}(\mathbb{T},n,\Gamma) \longrightarrow  \textrm{C}(\mathbb{T},n)
\]
which extends to a continuous map 
\[
\bar\phi_I\,:\,\overline{\textrm{C}}(\mathbb{T},I,\Gamma) \longrightarrow  \overline{\textrm{C}}(\mathbb{T},I)\,,
\] 
everything being natural (with respective to bijections) in $I$. This defines a morphism $\bar\phi$ of 
$\overline{\textrm{C}}(\mathbb{C},-)$-modules from $\overline{\textrm{C}}(\mathbb{T},-,\Gamma)$ to 
$\overline{\textrm{C}}(\mathbb{T},-)$. 

\medskip

Recall from \S\ref{sec-pabell} that there are inclusions of topological operadic modules
$\mathbf{Pa}\subset\,\overline{\textrm{C}}(\mathbb{S}^1,-)\,\subset\,\overline{\textrm{C}}(\mathbb{T},-)$
over $\mathbf{Pa}\subset\,\overline{\textrm{C}}(\mathbb{R},-)\,\subset\,\overline{\textrm{C}}(\mathbb{C},-)$. 
We define the $\mathfrak{S}$-module $\mathbf{Pa^{\Gamma}}:=\bar\phi^{-1}\mathbf{Pa}$, which carries a 
$\mathbf{Pa}$-module structure. Indeed, it is a fiber product
\[
\mathbf{Pa^{\Gamma}}:=
\mathbf{Pa}\underset{\overline{\textrm{C}}(\mathbb{T},-)}{\times}\overline{\textrm{C}}(\mathbb{T},-,\Gamma)
\]
in the category of $\mathbf{Pa}$-modules in topological space. 

\medskip

The $\mathbf{Pa}$-module $\mathbf{Pa^\Gamma}$ admits the following algebraic description. First of all, it is discrete, 
in the sense that spaces of operations are discrete (i.e., they are just sets). 
Then, an element of $\mathbf{Pa^\Gamma}(n)$ is a parenthesized permutation of $1\dots n$ together with a 
label function $\{1,\dots,n\}\to\Gamma$ that is defined up to a global relabelling (i.e.~the labelling is an element of 
$\Gamma^n/\Gamma$). 
For instance, $2_\gamma1_\0=2_\01_{-\gamma}$ belongs to $\mathbf{Pa^\Gamma}(2)$ for every $\gamma\in\Gamma$. 
In geometric terms, having the label $[\gamma_1,\dots,\gamma_n]$ means that, in our configuration of points, the 
$(-\gamma_i)\cdot z_i$'s are on the same parallel of the torus. 
Here is a self-explanatory example of partial composition: 
\[
(3_{\0}2_\gamma)1_\delta\circ_2(12)3=(3_{\0}((2_\gamma3_\gamma)4_\gamma))1_\delta\,.
\]
Finally, $\mathbf{Pa^\Gamma}$ is acted on by $\Gamma$ in the following way: for $n\geq0$, $\Gamma^n$ 
only acts on the labellings, \textit{via} the group law of $\Gamma$. 
For instance, if $[\underline{\alpha}]\in\Gamma^n/\Gamma$ and $\underline{\gamma}\in\Gamma^n$, then 
$\underline{\gamma}\cdot[\underline{\alpha}]:=[\underline{\gamma}+\underline{\alpha}]$.

\medskip

In other words, according to the terminology of \S\ref{sec-gpaction} and \S\ref{sec-semifake}, 
$\mathbf{Pa^\Gamma}$ is identified with $\mathcal G(\Pa\to \overline{\Gamma})$. 

% 4.3

\section{The $\mathbf{PaB}$-module of parenthesized ellipsitomic braids}\label{sec-twpabell}

We define 
\[
\text{\gls{PaBeg}}:=\pi_1\left(\overline{\textrm{C}}(\mathbb{T},-,\Gamma),\mathbf{Pa^{\Gamma}}\right)\,,
\]
which is a $\PaB$-module (in groupoids), that also carries a diagonally trivial action of $\Gamma$. 
The morphism $\bar\phi$ induces a $\PaB$-module morphism 
$\mathbf{PaB}^{\mathbf{\Gamma}}_{e\ell\ell}\to \mathbf{PaB}_{e\ell\ell}$. 

\begin{example}[Notable arrows in $\mathbf{PaB}^{\mathbf{\Gamma}}_{e\ell\ell}$]
Recall the following notable arrows in $\mathbf{PaB}_{e\ell\ell}$: 
\begin{itemize}
\item $A^{1,2}$ and $B^{1,2}$ are automorphisms of $12$ in $\mathbf{PaB}_{e\ell\ell}(2)$. 
\item $R^{1,2}$ goes from $12$ to $21$ in $\mathbf{PaB}_{e\ell\ell}(2)$. 
\item $\Phi^{1,2,3}$ goes from $(12)3$ to $1(23)$ in $\mathbf{PaB}_{e\ell\ell}(2)$. 
\end{itemize}
All are represented by paths which, apart from the endpoints that are in $\mathbf{Pa}$, remain 
in the open part $\textrm{C}(\mathbb{T},n)$ of the configuration spaces ($n=2,3$). Let us set 
$\alpha:=(\bar{1},\bar{0})$ and $\beta:=(\bar{0},\bar{1})$. Since there are covering maps 
\[
\textrm{C}(\mathbb{T},n,\Gamma) \longrightarrow  \textrm{C}(\mathbb{T},n)\,,
\]
then these paths admits unique lifts, with starting point being the same parenthesized permutation 
with the trivial labelling (the one being constantly equal to $\mathbf{0}$). 
These lifts are denoted the same way:  
\begin{itemize}
\item The lift $A^{1,2}$ goes from $1_\02_\0$ to $1_{\alpha}2_{\0}=1_{\0}2_{-\alpha}$ 
in $\mathbf{PaB}^{\mathbf{\Gamma}}_{e\ell\ell}(2)$. 
\item The lift $B^{1,2}$ goes from $1_\02_\0$ to $1_{\beta}2_{\0}=1_{\0}2_{-\beta}$ 
in $\mathbf{PaB}^{\mathbf{\Gamma}}_{e\ell\ell}(2)$. 
\item etc... 
\end{itemize}
Here is a drawing of paths representing $A^{1,2}$ and $B^{1,2}$: 
\[
\includegraphics[scale=1]{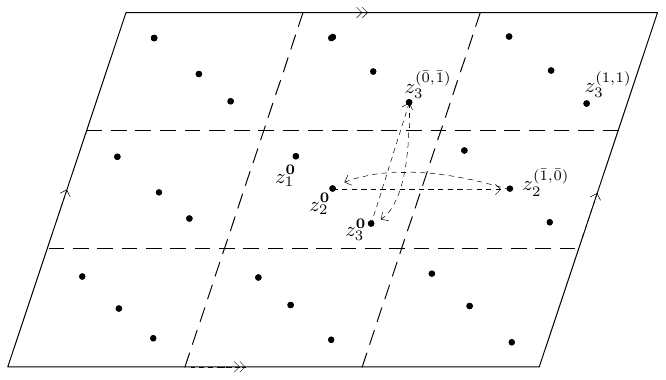}
\]
We may chose to alternatively depict them as diagrams representing elliptic pure braids (i.e.~loops in the base 
configuration space) together with appropriate base points (which uniquely determines the lift in the covering
twisted configuration space): 
\begin{center}
$A^{1,2}=$
\begin{tikzpicture}[baseline=(current bounding box.center)]
\tikzstyle point=[circle, fill=black, inner sep=0.05cm]
 \node[point, label=above:$1_\0$] at (1,1) {};
 \node[point, label=below:$1_\alpha$] at (1,0) {};
 \node[point, label=above:$2_\0$] at (2,1) {};
 \node[point, label=below:$2_\0$] at (2,0) {};
 \draw[-,thick] (1,1) .. controls (1,0.5) and (1,0.5).. (1.5,0.5); 
 \draw[->,thick] (1.5,0.5) .. controls (1,0.5) and (1,0.5).. (1,0.05); 
\node[point, white, label=left:$+$] at (1,0.5) {};
 \draw[->,thick] (2,1) .. controls (2,0.5) and (2,0.5).. (2,0.05);
\end{tikzpicture}
\quad and \quad $B^{1,2}=$
\begin{tikzpicture}[baseline=(current bounding box.center)] 
\tikzstyle point=[circle, fill=black, inner sep=0.05cm]
\tikzstyle point2=[circle, fill=black, inner sep=0.05cm]
 \node[point, label=above:$1_\0$] at (1,1) {};
 \node[point, label=below:$1_\beta$] at (1,0) {};
 \node[point, label=above:$2_\0$] at (2,1) {};
 \node[point, label=below:$2_\0$] at (2,0) {};
 \draw[-,thick] (1,1) .. controls (1,0.5) and (1,0.5).. (1.5,0.5); 
 \draw[->,thick] (1.5,0.5) .. controls (1,0.5) and (1,0.5).. (1,0.05); 
\node[point, white, label=left:$-$] at (1,0.5) {};
 \draw[->,thick] (2,1) .. controls (2,0.5) and (2,0.5).. (2,0.05);
\end{tikzpicture}
\end{center}
\begin{remark}
It is important to observe that, the action of $\Gamma$ being diagonally trivial, one can shift the global 
labelling of the indexed points, and thus $A^{1,2}$ and $B^{1,2}$ can also be represented as follows:
\begin{center}
$A^{1,2}=$
\begin{tikzpicture}[baseline=(current bounding box.center)]
\tikzstyle point=[circle, fill=black, inner sep=0.05cm]
 \node[point, label=above:$1_\0$] at (1,1) {};
 \node[point, label=below:$1_\0$] at (1,0) {};
 \node[point, label=above:$2_\0$] at (2,1) {};
 \node[point, label=below:$2_{-\alpha}$] at (2,0) {};
 \draw[-,thick] (1,1) .. controls (1,0.5) and (1,0.5).. (1.5,0.5); 
 \draw[->,thick] (1.5,0.5) .. controls (1,0.5) and (1,0.5).. (1,0.05); 
\node[point, white, label=left:$+$] at (1,0.5) {};
 \draw[->,thick] (2,1) .. controls (2,0.5) and (2,0.5).. (2,0.05);
\end{tikzpicture}
\quad and \quad $B^{1,2}=$
\begin{tikzpicture}[baseline=(current bounding box.center)] 
\tikzstyle point=[circle, fill=black, inner sep=0.05cm]
\tikzstyle point2=[circle, fill=black, inner sep=0.05cm]
 \node[point, label=above:$1_\0$] at (1,1) {};
 \node[point, label=below:$1_\0$] at (1,0) {};
 \node[point, label=above:$2_\0$] at (2,1) {};
 \node[point, label=below:$2_{-\beta}$] at (2,0) {};
 \draw[-,thick] (1,1) .. controls (1,0.5) and (1,0.5).. (1.5,0.5); 
 \draw[->,thick] (1.5,0.5) .. controls (1,0.5) and (1,0.5).. (1,0.05); 
\node[point, white, label=left:$-$] at (1,0.5) {};
 \draw[->,thick] (2,1) .. controls (2,0.5) and (2,0.5).. (2,0.05);
\end{tikzpicture}
\end{center}
\end{remark}
As for $R^{1,2}$ and $\Phi^{1,2,3}$, they are depicted in the usual way: 
\begin{center}
$R^{1,2}$=
\begin{tikzpicture}[baseline=(current bounding box.center)]
\tikzstyle point=[circle, fill=black, inner sep=0.05cm]
 \node[point, label=above:$1_\0$] at (1,1) {};
 \node[point, label=below:$2_\0$] at (1,-0.25) {};
 \node[point, label=above:$2_\0$] at (2,1) {};
 \node[point, label=below:$1_\0$] at (2,-0.25) {};
 \draw[->,thick] (2,1) .. controls (2,0.25) and (1,0.5).. (1,-0.20); 
 \node[point, ,white] at (1.5,0.4) {};
 \draw[->,thick] (1,1) .. controls (1,0.25) and (2,0.5).. (2,-0.20);
\end{tikzpicture}
\quad and \quad $\Phi^{1,2,3}$=
\begin{tikzpicture}[baseline=(current bounding box.center)]
\tikzstyle point=[circle, fill=black, inner sep=0.05cm]
 \node[point, label=above:$(1_\0$] at (1,1) {};
 \node[point, label=below:$1_\0$] at (1,-0.25) {};
 \node[point, label=above:$2_\0)$] at (1.5,1) {};
 \node[point, label=below:$(2_\0$] at (2,-0.25) {};
 \node[point, label=above:$3_\0$] at (2.5,1) {};
 \node[point, label=below:$3_\0)$] at (2.5,-0.25) {};
 \draw[->,thick] (1,1) .. controls (1,0) and (1,0).. (1,-0.20); 
 \draw[->,thick] (1.5,1) .. controls (1.5,0.25) and (2,0.5).. (2,-0.20);
 \draw[->,thick] (2.5,1) .. controls (2.5,0) and (2.5,0).. (2.5,-0.20);
\end{tikzpicture}
\end{center}
\end{example} 

\medskip

Actually, every morphism in $\PaB_{e\ell\ell}$ can be uniquely lifted to $\mathbf{PaB}^{\mathbf{\Gamma}}_{e\ell\ell}$, 
once the lift of the source object has been fixed; all other lifts are obtained by the $\overline{\Gamma}$-action. 
Moreover, all morphisms can be obtained like this. 
This shows that the $\PaB$-module $\mathbf{PaB}^{\mathbf{\Gamma}}_{e\ell\ell}$ has an alternative simple algebraic 
descrition that we explain now. 
First observe that the $\PaB$-module $\PaB_{e\ell\ell}$ comes with a morphism $\pi$ to the $*$-module 
$\overline{\Gamma}$, which is the composition of the abelianization morphism to $\overline{\mathbb{Z}^2}$ 
with the projection $\overline{\mathbb{Z}^2}\to \overline{\Gamma}$. 

In terms of the presentation from Theorem \ref{PaBell}, 
\[
\pi(A)=\alpha_1=[(\bar1,0),\0]\quad\textrm{and}\quad\pi(B)=\beta_1=[(0,\bar1),\0]\,,
\]
where we adopt the following notation: 
\begin{notation}
For $\gamma\in\Gamma$ and $1\leq i\leq n$, then we write 
\[
\gamma_i:=[\0,\dots,\0,\underset{i}{\gamma},\0,\dots,\0]\in\Gamma^n/\Gamma\,.
\]
\end{notation}

\begin{proposition}\label{proposition-gPeBell}
There is an isomorphism 
\[
\mathcal G\big(\PaB_{e\ell\ell}\to \overline{\Gamma}\big)\tilde\longrightarrow \PaB_{e\ell\ell}^{\mathbf{\Gamma}}
\]
of $\PaB$-modules with a $\overline{\Gamma}$-action, which is is the identity on objects.  
\end{proposition}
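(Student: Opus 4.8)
The plan is to identify both $\PaB$-modules explicitly and write down a comparison functor, checking it is an isomorphism arity by arity. Recall that by the general construction of \S\ref{sec-semifake}, the object $\mathcal G(\PaB_{e\ell\ell}\to\overline\Gamma)$ has as its $\overline\Gamma$-set of objects in arity $n$ the free $\Gamma^n/\Gamma$-set on $\on{Ob}(\Pa(n))$, which by \S\ref{sec4.2} is exactly $\on{Ob}(\Pa^\Gamma(n))$; and a morphism $(\underline\gamma,p)\to(\underline\delta,q)$ is a morphism $p\overset{f}\to q$ in $\PaB_{e\ell\ell}(n)$ with $\underline\gamma\cdot\pi(f)=\underline\delta$ in $\Gamma^n/\Gamma$. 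I would first record this as a lemma (or simply invoke the remark after Example \ref{Ex:cobraid}, together with Proposition \ref{proposition-gPeBell}'s analogue of the relative-completion remark is not needed since we are at the groupoid level).

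Next I would construct the functor $\Psi:\mathcal G(\PaB_{e\ell\ell}\to\overline\Gamma)\to\PaB_{e\ell\ell}^\Gamma$. On objects it is the identification above. On morphisms: given $f:p\to q$ in $\PaB_{e\ell\ell}(n)$ and a chosen lift $\underline\gamma$ of the source, the covering map $\textrm{C}(\mathbb T,n,\Gamma)\to\textrm{C}(\mathbb T,n)$ has the unique path lifting property (it is a $\Gamma^n/\Gamma$-covering, as established in \S\ref{sec4.2} via $\phi_n$), so $f$ lifts uniquely to a path $\tilde f$ in $\overline{\textrm C}(\mathbb T,n,\Gamma)$ starting at the point labelled by $\underline\gamma$; its endpoint is automatically the object of $\Pa^\Gamma$ with label $\underline\gamma\cdot\pi(f)$, because $\pi$ is precisely the monodromy of this covering (it factors the abelianization $\overline{\on{PB}}_{1,n}\to\overline{\mathbb Z^2}$ followed by $\overline{\mathbb Z^2}\to\overline\Gamma$, which is the deck-transformation action). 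Set $\Psi(f,\underline\gamma):=\tilde f$. Functoriality of $\Psi$ follows from uniqueness of lifts and concatenation of paths; $\overline\Gamma$-equivariance follows because the $\Gamma^n$-action on $\Pa^\Gamma$ is by shifting labels, which corresponds to the deck-transformation action on $\overline{\textrm C}(\mathbb T,n,\Gamma)$, and that commutes with path-lifting. It is the identity on objects by construction. Finally $\Psi$ is compatible with the $\PaB$-module structures because partial composition of paths is compatible with lifting along the compatible tower of coverings $\overline{\textrm C}(\mathbb T,-,\Gamma)\to\overline{\textrm C}(\mathbb T,-)$ (the inclusion of boundary components is $\bar\phi$-equivariant, as noted in \S4.1--4.2).

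To see $\Psi$ is an isomorphism it suffices, since it is the identity on objects and all the groupoids involved are connected in each arity (the configuration spaces are path-connected), to show it is a bijection on morphism sets; and by transitivity of the $\overline\Gamma$-action plus equivariance, it is enough to check bijectivity on the automorphism group of a single chosen object $\tilde p$ in each arity $n$, say the trivially-labelled standard parenthesization $p_{std}=1_\0(2_\0(\cdots)n_\0)$. On one side, $\on{Aut}_{\mathcal G(\PaB_{e\ell\ell}\to\overline\Gamma)}(\tilde p_{std})=\ker\big(\on{Aut}_{\PaB_{e\ell\ell}(n)}(p_{std})\to\overline\Gamma(n)\big)=\ker\big(\overline{\on{PB}}_{1,n}\to\Gamma^n/\Gamma\big)$, using the remark after Example \ref{Ex:cobraid} and Theorem \ref{PaBell}. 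On the other side, $\on{Aut}_{\PaB_{e\ell\ell}^\Gamma(n)}(\tilde p_{std})=\pi_1(\textrm{C}(\mathbb T,n,\Gamma),\tilde p_{std,reg})$ after moving the basepoint into the simply connected region $D_{n,\tau}$ exactly as in the proof of Theorem \ref{PaBell}, and this is the kernel of the monodromy map $\pi_1(\textrm{C}(\mathbb T,n))\to\Gamma^n/\Gamma$ because $\textrm{C}(\mathbb T,n,\Gamma)\to\textrm{C}(\mathbb T,n)$ is the corresponding covering space. Under the identification $\pi_1(\textrm{C}(\mathbb T,n))\simeq\overline{\on{PB}}_{1,n}$ these two kernels are the kernels of the same homomorphism, and $\Psi$ restricted to them is induced by path lifting, hence is the identity under these identifications; so it is bijective.

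The main obstacle is bookkeeping rather than conceptual: one must pin down that the monodromy homomorphism of the covering $\textrm{C}(\mathbb T,n,\Gamma)\to\textrm{C}(\mathbb T,n)$ really is the map $\pi$ described before Proposition \ref{proposition-gPeBell} (i.e. abelianization $\overline{\on{PB}}_{1,n}\to\overline{\mathbb Z^2}$ composed with reduction to $\overline\Gamma$), and that the $\overline\Gamma$-action on both sides agrees with the deck-transformation action; once this matching of structure maps is in place, everything reduces to the universal property of $\mathcal G(-)$ (left adjoint to the semidirect product, as in \S\ref{sec-semifake}) together with the discussion already carried out in the proof of Theorem \ref{PaBell}. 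I would in fact phrase the argument so as to reuse that proof essentially verbatim, replacing $\overline{\on{B}}_{1,n}$ by its finite-index-kernel analogue and $\PaB_{e\ell\ell}$ by $\PaB_{e\ell\ell}^\Gamma$, so that the whole thing becomes a short $\overline\Gamma$-equivariant refinement of Enriquez's presentation.
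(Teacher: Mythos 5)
Your proposal is correct and follows essentially the same route as the paper: define the comparison functor as the identity on objects and by unique path lifting on morphisms, then reduce the isomorphism claim to automorphism groups arity-wise and identify both sides with the kernel of $\overline{\on{PB}}_{1,n}\to\Gamma^n/\Gamma$. The paper's proof is just a terser version of this, leaving implicit the verifications of functoriality, equivariance, and compatibility with the module structure that you spell out.
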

\begin{proof}
We first describe the morphism: 
\begin{itemize}
\item It is the identity on objects; 
\item Given two labelled parenthesized permutations $(\mathbf{p},\gamma)$ and $(\mathbf{q},\delta)$, it sends 
a the class in $\PaB_{e\ell\ell}$ of a path $f:\mathbf{p}\to\mathbf{q}$ such that $\gamma+\pi(f)=\delta$ to the 
class of the unique lift of $f$ that starts at the base point determined by $(\mathbf{p},\gamma)$. 
\end{itemize}
As we have already seen, to show that this morphism is in fact an isomorphism, it suffices to show that it is an 
isomorphism at the level of automorphism groups of objects arity-wise. This is indeed the case, as on both sides, 
in arity $n$, the automorphism group of an object is the kernel of the morphism 
$\overline{\on{PB}}_{1,n}\to\Gamma^n/\Gamma$ sending $X_i$ to $(\bar1,\bar0)_i$ and $Y_j$ to ($\bar0,\bar1)_j$. 
\end{proof}

% 4.4

\section{The universal property of $\PaB_{e\ell\ell}^\Gamma$}

We are now ready to provide an explicit presentation for the $\PaB$-module $\PaB_{e\ell\ell}^{\mathbf{\Gamma}}$. 
As before, we keep the convention that $\alpha=(\bar1,\bar0)$ and $\beta=(\bar0,\bar1)$. 
\begin{theorem}\label{PaB:ell:G}
As a $\mathbf{PaB}$-module in groupoids with a diagonally trivial $\Gamma$-action and having 
$\mathbf{Pa^{\Gamma}}$ as $\mathbf{Pa}$-module of objects, $\PaB_{e\ell\ell}^{\mathbf{\Gamma}}$ 
is freely generated by $A:1_\02_\0\to1_\alpha2_\0$ and $B:1_\02_\0\to1_\beta2_\0$, together with 
the following relations, satisfied in 
$\on{Aut}_{\PaB_{e\ell\ell}^{\mathbf{\Gamma}}(3)\rtimes(\Gamma^3/\Gamma)}\big((1_\02_\0)3_\0\big)$: 
\begin{equation}\label{def:PaB:ell:G:1}\tag{tN1}
\Phi^{1,2,3} \underline{A}^{1,23} \tilde R^{1,23} 
\Phi^{2,3,1} \underline{A}^{2,31} \tilde R^{2,31} 
\Phi^{3,1,2} \underline{A}^{3,12} \tilde R^{3,12} 
=\on{Id}_{(1_\02_\0)3_\0}
\end{equation}
\begin{equation}\label{def:PaB:ell:G:2}\tag{tN2}
\Phi^{1,2,3} \underline{B}^{1,23} \tilde R^{1,23} 
\Phi^{2,3,1} \underline{B}^{2,31} \tilde R^{2,31} 
\Phi^{3,1,2} \underline{B}^{3,12} \tilde R^{3,12} 
=\on{Id}_{(1_\02_\0)3_\0}
\end{equation}
\begin{equation}
R^{1,2} R^{2, 1}=\big(\Phi^{1,2,3} \underline{A}^{1,23}(\Phi^{1,2,3})^{- 1}\,,\,
\tilde R^{1,2}\Phi^{2,1,3}\underline{B}^{2,13}(\Phi^{2,1,3})^{- 1} 
\tilde R^{2,1}\big)\label{def:PaB:ell:G:3}\tag{tE}
\end{equation}
where $\underline{A}:=A\alpha_1$ and $\underline{B}:=B\beta_1$. 
\end{theorem}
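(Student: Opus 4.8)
The plan is to derive the presentation from two facts already at our disposal: the identification $\PaB_{e\ell\ell}^{\mathbf{\Gamma}}\cong\mathcal G\big(\PaB_{e\ell\ell}\to\overline{\Gamma}\big)$ of Proposition \ref{proposition-gPeBell}, and the presentation of $\PaB_{e\ell\ell}$ from Theorem \ref{PaBell}. Write $\mathcal Q_{e\ell\ell}$ for the $\PaB$-module freely generated by $A,B$ subject to \eqref{eqn:N1}, \eqref{eqn:N2}, \eqref{eqn:E}; by the proof of Theorem \ref{PaBell} it is identified with $\PaB_{e\ell\ell}$, and under this identification the structural morphism of $*$-modules to $\overline{\Gamma}$ becomes $\pi$, with $\pi(A)=\alpha_1$ and $\pi(B)=\beta_1$. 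The crux is then the following general statement: \emph{if $\mathcal M$ is a $\PaB$-module which, as a $\PaB$-module, is freely generated by arrows $x_1,\dots,x_r$ (with prescribed sources and targets) subject to relations $\rho_1,\dots,\rho_s$, and $\varpi\colon\mathcal M\to\overline{\Gamma}$ is a morphism of $*$-modules, then $\mathcal G(\varpi)$ is freely generated, as a $\PaB$-module with diagonally trivial $\Gamma$-action, by lifts $\underline{x_1},\dots,\underline{x_r}$ of the $x_j$ — each $\underline{x_j}$ obtained from $x_j$ by post-composing, inside the relevant semidirect product, with the unique $\Gamma$-action arrow returning its target to the chosen lift of its source — subject to the correspondingly rewritten relations $\underline{\rho_1},\dots,\underline{\rho_s}$.} Applying this with $\mathcal M=\mathcal Q_{e\ell\ell}$ and $\varpi=\pi$ produces exactly the generators $\underline A=A\alpha_1$, $\underline B=B\beta_1$ and the relations \eqref{def:PaB:ell:G:1}, \eqref{def:PaB:ell:G:2}, \eqref{def:PaB:ell:G:3}.

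I would prove the general statement by Yoneda, using the adjunction of \S\ref{sec-semifake}. For a $\PaB$-module $\mathcal N$ with diagonally trivial $\Gamma$-action, a morphism $\mathcal G(\varpi)\to\mathcal N$ of such is, by adjunction, the same as a morphism $\mathcal M\to\mathcal N\rtimes\overline{\Gamma}$ of $\PaB$-modules over $\overline{\Gamma}$; by the presentation of $\mathcal M$ this is the data of arrows in $\mathcal N\rtimes\overline{\Gamma}$ representing the $x_j$, projecting to $\varpi(x_j)$ in $\overline{\Gamma}$, and satisfying the $\rho_k$. Now every arrow of $\mathcal N\rtimes\overline{\Gamma}$ factors uniquely as an arrow of $\mathcal N$ followed by a $\Gamma$-action arrow, the latter being read off from the projection to $\overline{\Gamma}$ (one pushes the $\Gamma$-part to the right using $g f g^{-1}=g\cdot f$). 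So this data is equivalent to the data of arrows $\underline{x_1},\dots,\underline{x_r}$ of $\mathcal N$ with the prescribed sources and targets satisfying the rewritten relations $\underline{\rho_k}$ — which is precisely the universal property attributed to the presented module in the statement of the theorem. Hence $\mathcal G(\varpi)$ and that presented module corepresent the same functor, and the resulting canonical isomorphism is the identity on objects and carries $A$ to $\underline A$ and $B$ to $\underline B$.

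As a sanity check, and as a more geometric alternative, one can instead follow the pattern of the proof of Theorem \ref{PaBell}: verify directly that the canonical lifts $A^{1,2},B^{1,2}$ of the arrows of Example \ref{ex2.2} satisfy \eqref{def:PaB:ell:G:1}, \eqref{def:PaB:ell:G:2}, \eqref{def:PaB:ell:G:3} in $\PaB_{e\ell\ell}^{\mathbf{\Gamma}}$ — each such relation being the unique lift, with source the standard object $(1_\02_\0)3_\0$, of the corresponding relation \eqref{eqn:N1}, \eqref{eqn:N2}, \eqref{eqn:E} of $\PaB_{e\ell\ell}$, hence the lift of a constant path, hence trivial — so that there is a morphism from the presented module to $\PaB_{e\ell\ell}^{\mathbf{\Gamma}}$ which is the identity on objects; one then shows it is an isomorphism by checking that it induces a bijection on connected components (both groupoids being connected in each arity) and an isomorphism on the automorphism group of the standard object in each arity $n$, both of which equal the kernel of $\overline{\on{PB}}_{1,n}\to\Gamma^n/\Gamma$, $X_i\mapsto\alpha_i$, $Y_j\mapsto\beta_j$ (for the target this is the unique-path-lifting computation already carried out in the proof of Proposition \ref{proposition-gPeBell}).

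The main obstacle I anticipate is organisational rather than conceptual: fixing the variance conventions in the semidirect product and checking that the rewritten relations $\underline{\rho_k}$ are \emph{verbatim} \eqref{def:PaB:ell:G:1}, \eqref{def:PaB:ell:G:2} and \eqref{def:PaB:ell:G:3} — in particular accounting for the substitution of $\underline A=A\alpha_1$ and $\underline B=B\beta_1$ for $A$ and $B$, and for the relabelled braiding $\tilde R^{2_\beta,1_\alpha}$ in place of $\tilde R^{2,1}$ in \eqref{def:PaB:ell:G:3}, all of which are forced by tracking how the $\Gamma$-labels of the endpoints are transported along the path underlying relation \eqref{eqn:E}.
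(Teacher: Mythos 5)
Your proposal is correct and follows essentially the same route as the paper: both reduce the statement to Proposition \ref{proposition-gPeBell} together with the presentation of $\PaB_{e\ell\ell}$ from Theorem \ref{PaBell}, and both transport that presentation across $\mathcal G(-)$ via the semidirect-product adjunction of \S\ref{sec-semifake}, with the relations \eqref{def:PaB:ell:G:1}--\eqref{def:PaB:ell:G:3} arising exactly as the rewrites of \eqref{eqn:N1}, \eqref{eqn:N2}, \eqref{eqn:E} under $A\mapsto\underline{A}$, $B\mapsto\underline{B}$. The only difference is the endgame: where you compare universal properties directly (via the unique factorization of an arrow of $\mathcal N\rtimes\overline{\Gamma}$ as an arrow of $\mathcal N$ followed by a $\Gamma$-arrow), the paper constructs the comparison morphism $\mathcal G(\mathcal Q_{e\ell\ell}\to\overline{\Gamma})\to\mathcal Q^{\Gamma}_{e\ell\ell}$ explicitly, observes it is surjective on morphisms, and checks injectivity on the automorphism group of a basepoint in each arity using the short exact sequence $1\to\on{Aut}_{\mathcal G(\mathcal Q_{e\ell\ell}(n)\to\Gamma^n/\Gamma)}(\tilde p)\to\on{Aut}_{\mathcal Q_{e\ell\ell}(n)}(p)\to\Gamma^n/\Gamma\to 1$ --- a minor repackaging of the same content.
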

\begin{remark}
The above relations are clearer when stated within the semidirect product, even though they can be written within 
$\PaB_{e\ell\ell}^{\mathbf{\Gamma}}$ itself. For instance, \eqref{def:PaB:ell:G:1} can be written as 
\[
\Phi^{1,2,3} A^{1,23} \alpha_1\cdot\Big(\tilde R^{1,23} 
\Phi^{2,3,1} A^{2,31} \alpha_2\cdot\big(\tilde R^{2,31} 
\Phi^{3,1,2} A^{3,12}\big)\Big) \tilde R^{3,12} 
=\on{Id}_{(1_\02_\0)3_\0}\,.
\]
The expression for \eqref{def:PaB:ell:G:3} becomes unpleasant to write. 
\end{remark}
\begin{proof}[Proof of the Theorem]
Let $\mathcal Q^{\Gamma}_{e\ell\ell}$ be the $\PaB$-module with the above presentation, and let 
$\mathcal Q_{e\ell\ell}$ be the $\PaB$-module with the presentation in Theorem \ref{PaBell}. 
Our goal is to prove that there is an isomorphism 
\[
\mathcal G(\mathcal Q_{e\ell\ell}\to\overline{\Gamma})\tilde\longrightarrow\mathcal Q^{\Gamma}_{e\ell\ell}
\]
of $\PaB$-modules with a $\overline{\Gamma}$-action, which is is the identity on objects. 
The result will then follow from Proposition \ref{proposition-gPeBell}. 

By definition there is a morphism 
$\mathcal Q_{e\ell\ell}\longrightarrow\mathcal Q^{\Gamma}_{e\ell\ell}\rtimes\overline{\Gamma}$, 
which sends $A$ to $\underline{A}$, and $B$ to $\underline{B}$. Moreover, when we compose this morphism with 
the projection $\mathcal Q^{\Gamma}_{e\ell\ell}\rtimes\overline{\Gamma}\to\overline{\Gamma}$, we get back the 
morphism $\pi:\mathcal Q_{e\ell\ell}\to \overline{\Gamma}$ from the previous chapter, that sends $A$ to 
$\alpha_1$ and $B$ to $\beta_1$. 

By the adjunction from \S\ref{sec-semifake}, we therefore get a morphism 
\[
\mathcal G(\mathcal Q_{e\ell\ell}\to\overline{\Gamma})\longrightarrow\mathcal Q^{\Gamma}_{e\ell\ell}\,.
\]
of $\PaB$-modules with a $\overline{\Gamma}$-action. It is surjective on morphisms, because both generators of 
$\mathcal Q^{\Gamma}_{e\ell\ell}$ have preimages. 
Finally, as we have already seen, to show that this is in fact an isomorphism, it suffices to show that it is an isomorphism 
at the level of automorphism groups of objects arity-wise, and it is sufficient to do it for a single object in every arity. 

Let $n\geq 1$ and let $\tilde p$ be the object $(\cdots((1_\02_\0)3_\0)\cdots )n_\0$ 
of $\mathcal Q^{\Gamma}_{e\ell\ell}(n)$ and $\mathcal G\big(\mathcal Q_{e\ell\ell}(n)\to\Gamma^n/\Gamma\big)$, 
which lifts $p=(\cdots((12)3)\cdots )n$ in $\mathcal Q_{e\ell\ell}(n)$. 
There is a commuting diagram 
\[
\xymatrix{
1\ar[r] & \ar[r] \on{Aut}_{\mathcal G\big(\mathcal Q_{e\ell\ell}(n)\to\Gamma^n/\Gamma\big)}(\tilde p) \ar[r]\ar[d] & 
\on{Aut}_{\mathcal Q_{e\ell\ell}(n)}(p)\ar[r] & \Gamma^n/\Gamma\ar[r] & 1 \\
& \on{Aut}_{\mathcal Q_{e\ell\ell}^\Gamma(n)}(\tilde p) \ar[ur]& & & 
}
\]
where the horizontal sequence is exact. Therefore the vertical morphism is injective, and we are done. 
\end{proof}

% 4.5

\section{Ellipsitomic Grothendieck--Teichm\"uller groups}

\begin{definition}
The ($\kk$-pro-unipotent version of the) 
\textit{ellipsitomic Grothendieck--Teichm\"uller group} is defined as  
\[
\text{\gls{kGTeg}}:=
\on{Aut}_{\tmop{OpR}\mathbf{Grpd}_\kk}^+\big(\widehat\PaB(\KK),\widehat\PaB_{e\ell\ell}^{\mathbf{\Gamma}}(\KK)\big)^{\mathbf{\Gamma}}\,,
\]
where, as usual, the superscript $\Gamma$ means that we 
are considering the subgroup of $\Gamma$-equivariant automorphisms. 
\end{definition}

Let $M',N'\geq 1$, and assume we are given a surjective group morphism 
\[
\rho:\Gamma\twoheadrightarrow\Gamma':=\mathbb{Z}/M'\mathbb{Z}\times \mathbb{Z}/N'\mathbb{Z}\,.
\]
This gives a (surjective) map between the corresponding covering 
spaces of the torus, which can be used to construct 
a morphism of $\overline{C}(\mathbb{C},-)$-modules 
\[
\overline{C}(\mathbb{T},-,\Gamma)\longrightarrow \overline{C}(\mathbb{T},-,\Gamma')\,.
\]
Following the construction of sections \ref{sec4.2} and \ref{sec-twpabell}, we get a morphism of $\PaB$-modules 
\[
\PaB^{\mathbf{\rho}}_{e\ell\ell}\,:\,\PaB^{\mathbf{\Gamma}}_{e\ell\ell}\longrightarrow \PaB^{\mathbf{\Gamma'}}_{e\ell\ell}\,.
\]
The morphism $\PaB^{\mathbf{\rho}}_{e\ell\ell}$ is $\Gamma$-equivariant, and has a straightforward algebraic description: 
\begin{itemize}
\item On objects, it consists in applying $\rho$ to the labelling, keeping the underlying parentheiszed permutation unchanged; 
\item It sends the generating morphisms $A^{1_{\0}2_{\0}}$ and $B^{1_{\0}2_{\0}}$ in $\PaB_{e\ell\ell}^\Gamma$ to their 
counterparts (which are denoted the same way) in $\PaB_{e\ell\ell}^{\mathbf{\Gamma'}}$. 
\end{itemize}
As a consequence, the $\PaB$-module $\PaB^{\mathbf{\Gamma'}}_{e\ell\ell}$ can be obtained as the quotient of 
$\PaB^{\mathbf{\Gamma}}_{e\ell\ell}$ by $\ker\rho$. 
We therefore obtain a group morphism 
$\widehat{\GT}_{e\ell\ell}^{\mathbf{\Gamma}}(\kk)\longrightarrow \widehat{\GT}_{e\ell\ell}^{\mathbf{\Gamma'}}(\kk)$. 

\chapter{Ellipsitomic chord diagrams and ellipsitomic associators}
\label{Section5}

\section{Infinitesimal ellipsitomic braids}\label{sec:deft1n}

In this paragraph and the next one, $(\Gamma,\mathbf{0},+)$ can be any finite abelian group. 

For any $n\geq 0$ we define \gls{t1nG} to be the bigraded $\kk$-Lie algebra with generators 
$x_i$ ($1\leq i \leq n$) in degree $(1,0)$, $y_i$ ($1\leq i \leq n$) in degree $(0,1)$, and $t^{\gamma}_{ij}$ 
($\gamma\in\Gamma$, $i\neq j$) in degree $(1,1)$, and relations 
\begin{flalign}
& t^{\gamma}_{ij} = t^{-\gamma}_{ji}\,, 									\tag{tS$_{e\ell\ell}1$}\label{eqn:etS} \\
& [x_i,y_j] = [x_j,y_i] = \sum_{\gamma\in\Gamma} t^{\gamma}_{ij}\,,	 		\tag{tS$_{e\ell\ell}2$}\label{eqn:etSbis} \\
& [x_i,x_j] = [y_i,y_j] = 0\,, 												\tag{tN$_{e\ell\ell}$}\label{eqn:etN} \\
& [x_i,y_i] = - \sum_{j:j\neq i} \sum_{\gamma\in\Gamma} t^{\gamma}_{ij},	\tag{tT$_{e\ell\ell}$}\label{eqn:etT} \\
& [t^{\gamma}_{ij},t^{\delta}_{kl}]=0\,,									\tag{tL$_{e\ell\ell}1$}\label{eqn:etL1} \\
& [x_i,t^{\gamma}_{jk}] = [y_i,t^{\gamma}_{jk}] = 0,  						\tag{tL$_{e\ell\ell}2$}\label{eqn:etL2}   \\
& [t^{\gamma}_{ij},t^{\gamma+\delta}_{ik}+t^{\delta}_{jk}] =0\,, 			\tag{t4T$_{e\ell\ell}1$}\label{eqn:et4T1} \\
& [x_i + x_j,t^{\gamma}_{ij}] = [y_i+y_j,t^{\gamma}_{ij}] = 0\,, &			\tag{t4T$_{e\ell\ell}2$}\label{eqn:et4T2} 
\end{flalign}
where $1\leq i,j,k,l\leq n$ are pairwise distinct and $\gamma, \delta \in\Gamma$. 
We will call $\t_{1,n}^\Gamma(\kk)$ the $\kk$-Lie algebra of \textit{infinitesimal ellipsitomic braids}.
Observe that $\sum_i x_i$ and $\sum_i y_i$ are central in $\t_{1,n}^\Gamma$. Then we denote by 
$\bar\t_{1,n}^\Gamma(\kk)$ the quotient of $\t_{1,n}^\Gamma(\kk)$ by $\sum_i x_i$ and $\sum_i y_i$, and the natural 
morphism $\t_{1,n}^\Gamma(\kk)\to\bar\t_{1,n}^\Gamma(\kk)\,;\,u\mapsto\bar u$.

\medskip

There is an alternative presentation of $\t_{1,n}^\Gamma(\kk)$ and $\bar\t_{1,n}^\Gamma(\kk)$: 
\begin{lemma}\label{lem:pres1}
The Lie $\kk$-algebra $\t_{1,n}^\Gamma(\kk)$ (resp.~$\bar\t_{1,n}^\Gamma(\kk)$) can equivalently be 
presented with the same generators, and the following relations: 
\eqref{eqn:etS}, \eqref{eqn:etSbis}, \eqref{eqn:etN}, \eqref{eqn:etL1}, \eqref{eqn:etL2}, \eqref{eqn:et4T1}, 
and, for every $i\in I$,
\begin{equation}\tag{tC$_{e\ell\ell}$}\label{eqn:etC} 
[\sum_jx_j,y_i]=[\sum_jy_j,x_i]=0
\end{equation}
(resp.~$\sum_jx_j=\sum_jy_j=0$). 
\end{lemma}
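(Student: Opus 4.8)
The plan is to show that the two presentations define isomorphic Lie algebras by exhibiting mutually inverse morphisms, or — more efficiently — by showing that each set of relations is a consequence of the other (together with the common relations). Concretely, I fix the common relations \eqref{eqn:etS}, \eqref{eqn:etSbis}, \eqref{eqn:etN}, \eqref{eqn:etL1}, \eqref{eqn:etL2}, \eqref{eqn:et4T1}, and I must prove that, in the presence of these, the pair \{\eqref{eqn:etT}, \eqref{eqn:et4T2}\} is equivalent to the pair \{\eqref{eqn:etC}, \eqref{eqn:etT}\}; once the unreduced case is done, the reduced case is immediate since imposing $\sum_jx_j=\sum_jy_j=0$ makes \eqref{eqn:etC} trivially true and both versions collapse to the same quotient.

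First I would observe that \eqref{eqn:etT} is present in both presentations, so the real content is: assuming the common relations plus \eqref{eqn:etT}, the relation \eqref{eqn:et4T2} holds if and only if \eqref{eqn:etC} holds. For one direction, assume \eqref{eqn:et4T2}, i.e.\ $[x_i+x_j,t^\gamma_{ij}]=0$ for all $i\neq j$ and all $\gamma$. I want $[\sum_j x_j, y_i]=0$. Write $[\sum_j x_j,y_i]=[x_i,y_i]+\sum_{j\neq i}[x_j,y_i]$. By \eqref{eqn:etT}, $[x_i,y_i]=-\sum_{j\neq i}\sum_{\gamma}t^\gamma_{ij}$, and by \eqref{eqn:etSbis}, $[x_j,y_i]=\sum_\gamma t^\gamma_{ij}$ for $j\neq i$. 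These cancel termwise, giving $[\sum_j x_j,y_i]=0$; the computation for $[\sum_j y_j,x_i]$ is symmetric. Notice this direction does not even use \eqref{eqn:et4T2} — it follows from \eqref{eqn:etSbis} and \eqref{eqn:etT} alone. So \eqref{eqn:etC} is automatic in both presentations, and the lemma reduces to showing that, given the common relations, \eqref{eqn:etT}, and the now-established \eqref{eqn:etC}, the relation \eqref{eqn:et4T2} follows.

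For that implication — which I expect to be the main obstacle — I would compute $[x_i+x_j,t^\gamma_{ij}]$ by relating $t^\gamma_{ij}$ to brackets of the $x$'s and $y$'s. The natural handle is \eqref{eqn:etSbis}: $\sum_{\gamma}t^\gamma_{ij}=[x_i,y_j]$, so $[x_i+x_j,\sum_\gamma t^\gamma_{ij}]=[x_i+x_j,[x_i,y_j]]$, and by the Jacobi identity together with \eqref{eqn:etN} (which gives $[x_i,x_j]=0$, hence $[x_i+x_j,x_i]=0$) this equals $[x_i,[x_i+x_j,y_j]]$, which vanishes by \eqref{eqn:etC}. This shows $[x_i+x_j,\sum_\gamma t^\gamma_{ij}]=0$, i.e.\ the ``total'' relation. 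To upgrade this to the per-$\gamma$ statement \eqref{eqn:et4T2}, I would use \eqref{eqn:etL2}, which says $[x_k,t^\gamma_{ij}]=0$ whenever $k\notin\{i,j\}$, but here $k\in\{i,j\}$, so that alone is not enough; instead I would need to extract the individual $t^\gamma_{ij}$ by pairing with a third strand. The key trick: introduce an auxiliary index $k$, use \eqref{eqn:et4T1} $[t^\gamma_{ij},t^{\gamma+\delta}_{ik}+t^\delta_{jk}]=0$ together with \eqref{eqn:etL1} and \eqref{eqn:etL2}, and track how the operator $\mathrm{ad}(x_i+x_j)$ acts compatibly with the $\Gamma$-grading refinement, so that the vanishing of the sum over $\gamma$ forces the vanishing of each graded piece. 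Alternatively, and perhaps more cleanly, I would argue that $t^\gamma_{ij}$ can be isolated using the covering-space/grading structure: the $\Gamma$-action (which acts on the $t^\gamma_{ij}$ by shifting $\gamma$ and fixes the $x$'s and $y$'s) is by Lie algebra automorphisms, and $[x_i+x_j,-]$ is $\Gamma$-equivariant; since the sum $\sum_\gamma t^\gamma_{ij}$ lies in the $\Gamma$-invariants and is killed, and the individual $t^\gamma_{ij}$ span a $\Gamma$-orbit, a short averaging/equivariance argument combined with \eqref{eqn:et4T1} pins down $[x_i+x_j,t^\gamma_{ij}]$ in terms of quantities already known to vanish. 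I expect the bookkeeping with the $\Gamma$-grading and the auxiliary strand to be the only genuinely delicate point; everything else is Jacobi-identity manipulation using the common relations. Finally, the $y$-version of \eqref{eqn:et4T2}, namely $[y_i+y_j,t^\gamma_{ij}]=0$, follows by the same argument with the roles of $x$ and $y$ exchanged, using the second half of \eqref{eqn:etSbis} and \eqref{eqn:etC}. This establishes that the two presentations yield the same Lie algebra, and passing to the quotient by $\sum_j x_j,\sum_j y_j$ (which is central by the remark preceding the lemma) gives the statement for $\bar\t_{1,n}^\Gamma(\kk)$ as well.
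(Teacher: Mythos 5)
Your overall route is the paper's: derive \eqref{eqn:etC} from \eqref{eqn:etSbis} and \eqref{eqn:etT} (your first computation is exactly the one in the paper), then go back and recover \eqref{eqn:etT} and \eqref{eqn:et4T2} from the new relations. Two preliminary remarks. First, you misread the statement: \eqref{eqn:etT} is \emph{not} among the relations of the new presentation, so it must also be re-derived; this is harmless, since $[x_i,y_i]=[\sum_j x_j,y_i]-\sum_{j\neq i}[x_j,y_i]=-\sum_{j\neq i}\sum_{\gamma}t^{\gamma}_{ij}$ by \eqref{eqn:etC} and \eqref{eqn:etSbis}, which is precisely how the paper does it. Second, your Jacobi computation giving $[x_i+x_j,\sum_{\gamma}t^{\gamma}_{ij}]=0$ is correct and again matches the paper.

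The genuine gap is the step you yourself flag: passing from $[x_i+x_j,\sum_{\gamma}t^{\gamma}_{ij}]=0$ to the per-$\gamma$ relation \eqref{eqn:et4T2}. Neither of your two proposed strategies is carried out, and the equivariance/averaging one cannot work: the $\Gamma^n$-action fixes the $x_i,y_i$ and permutes the $t^{\gamma}_{ij}$ transitively in $\gamma$, so the equivariance of $\operatorname{ad}(x_i+x_j)$ only reproduces the invariant (summed) identity and cannot isolate a single orbit element. The auxiliary-strand idea fares no better: summing \eqref{eqn:et4T1} over $\delta$ and using \eqref{eqn:etL2} yields only that $[x_i+x_j,t^{\gamma}_{ij}]$ commutes with $x_k$ and $y_k$ for $k\notin\{i,j\}$, not that it vanishes. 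Indeed, already for $n=2$ (where \eqref{eqn:etL1}, \eqref{eqn:etL2} and \eqref{eqn:et4T1} are vacuous) the only relation in which an individual $t^{\gamma}_{12}$ appears is \eqref{eqn:etSbis}, and only through the sum $\sum_{\gamma}t^{\gamma}_{12}=[x_1,y_2]$; projecting the bidegree-$(2,1)$ part of the relation ideal onto the span of the $[x_a,t^{\gamma}_{12}]$ produces only the combinations $[x_a,\sum_{\gamma}t^{\gamma}_{12}]$, which do not contain $[x_1+x_2,t^{\gamma}_{12}]$ for a single $\gamma$ once $|\Gamma|>1$. To be fair, the paper's own proof is silent at exactly this point — it derives the summed identity and then simply asserts \eqref{eqn:et4T2} — so you have correctly located the one unjustified step of the published argument; but flagging it is not closing it, and your proposal does not close it. Note that the reduced statement is unproblematic, since $\sum_j x_j=\sum_j y_j=0$ makes \eqref{eqn:et4T2} trivially true there.
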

\begin{proof}
If $x_i,y_i$ and $t^{\alpha}_{ij}$ satisfy the initial relations, then 
\[
[\underset{j}{\sum} x_j, y_i]=[ x_i, y_i]+[\underset{j\neq i}{\sum} x_j, y_i]
=- \sum_{j:j\neq i} \sum_{\gamma\in\Gamma} t^{\gamma}_{ij}+ \sum_{j:j\neq i} \sum_{\gamma\in\Gamma} t^{\gamma}_{ij}=0\,.
\] 
Now, if $x_i,y_i$ and $t^{\alpha}_{ij}$ satisfy the above relations, then relations 
$[\underset{j}{\sum} x_j, y_i]=0$ and $[x_j,y_i] = \sum_{\gamma\in\Gamma} t^{\gamma}_{ij}$, 
for $i\neq j$, imply that $[ x_i, y_i]=- \sum_{j:j\neq i} \sum_{\gamma\in\Gamma} t^{\gamma}_{ij}$. 
Now, relations $[\underset{k}{\sum} x_k, y_j]=0$ and $[\underset{k}{\sum} x_k, x_i]=0$ 
imply that $[\underset{k}{\sum} x_k, \sum_{\gamma\in\Gamma} t^{\gamma}_{ij}]=0$. 
Thus, as $[x_i,t^{\gamma}_{jk}] = 0 $ if $ \on{card}\{i,j,k\}=3, $ we obtain relation 
$[x_i + x_j,t^{\gamma}_{ij}] = 0 $, for $i\neq j$. 
In the same way we obtain $[y_i+y_j,t^{\gamma}_{ij}] = 0$, for $i\neq j$.
\end{proof}

Both  $\t^\Gamma_{1,n}(\KK)$ and $\bar\t^\Gamma_{1,n}(\KK)$ are acted on 
by the symmetric group $\mathfrak{S}_n$, we get that 
\[
\t_{e\ell\ell}^{\Gamma}(\kk):=\{\t^\Gamma_{1,n}(\KK) \}_{n\geq 0}\quad\mathrm{and}\quad
\bar\t^\Gamma_{e\ell\ell}({\KK}):=\{\bar\t^\Gamma_{1,n}(\KK) \}_{n\geq 0}
\]
define $\mathfrak{S}$-modules in $grLie_\kk$. They are actually $\t(\kk)$-module in 
$grLie_\kk$, where partial compositions are defined as follows\footnote{We give the 
formul\ae~for $\t_{e\ell\ell}^{\Gamma}(\kk)$. Formul\ae~for $\bar\t^\Gamma_{e\ell\ell}({\KK})$ 
are the exact same. }: for $I$ a finite set and $k\in I$, 
\[
\begin{array}{cccl}
\circ_k : & \t^{\Gamma}_{1,I}(\KK) \oplus \t_J(\KK)  & \longrightarrow & \t^{\Gamma}_{1,J\sqcup I-\{k\}}(\KK) \\
    & (0,t_{uv}) & \longmapsto & \quad t_{uv}^{\0} \\
 & (t^{\gamma}_{ij},0) & \longmapsto & 
 \begin{cases}
  \begin{tabular}{lll}
  $t^{\gamma}_{ij}$ & if & $ k\notin\{i,j\} $ \\
  $\sum\limits_{p\in J} t^{\gamma}_{pj}$ & if & $k=i$ \\
  $\sum\limits_{p\in J} t^{\gamma}_{ip}$ & if & $j=k$ 
  \end{tabular}
  \end{cases}\\
   & (x_i,0) & \longmapsto & 
 \begin{cases}
  \begin{tabular}{lll}
  $x_i$ & if & $ k \neq i $ \\
  $\sum\limits_{p\in J} x_{p}$ & if & $k=i$ 
  \end{tabular}
  \end{cases}\\
     & (y_i,0) & \longmapsto & 
 \begin{cases}
  \begin{tabular}{lll}
  $y_i$ & if & $ k \neq i $ \\
  $\sum\limits_{p\in J} y_{p}$ & if & $k=i$ 
  \end{tabular}
  \end{cases}
\end{array}
\]
We call $\t^\Gamma_{e\ell\ell}({\KK})$, resp.~$\bar{\t}^\Gamma_{e\ell\ell}({\KK})$, the module of \textit{infinitesimal 
ellipsitomic braids}, resp.~of \textit{infinitesimal reduced ellipsitomic braids}. 
When $\kk=\C$ we write $\t_{1,n}^\Gamma:=\t_{1,n}^\Gamma(\C)$ and $\bar\t_{1,n}^\Gamma:=\bar\t_{1,n}^\Gamma(\C)$.

\medskip

Both $\t(\KK)$-modules are acted on by $\Gamma$: 
any element $\underline{\gamma}=(\gamma_i)_{i\in I}\in\Gamma^I$ acts as 
\begin{eqnarray*}
& \underline{\gamma} \cdot x_i=x_i 											& (i\in I)\,, 			\\
& \underline{\gamma} \cdot y_i=y_i 											& (i\in I)\,, 			\\
& \underline{\gamma}\cdot t^\delta_{ij}=t^{\delta+\gamma_i-\gamma_j}_{ij}	& (\delta\in\Gamma\textrm{ and }i\neq i)\,.
\end{eqnarray*}

We also have the following functoriality in $\Gamma$, with respect to surjections: 
\begin{proposition}\label{prop:comparison}
Let $\rho:\Gamma_1\twoheadrightarrow\Gamma_2$ be a surjective group morphism, and  let $a,b,c,d\in\kk$ 
such that $ad-bc=|\ker\rho|$. There are $\Gamma_1^I$-equivariant surjective \textit{comparison morphisms} 
$\t_{1,I}^{\Gamma_1}(\kk)\to\t_{1,I}^{\Gamma_2}(\kk)$ and 
$\bar\t_{1,I}^{\Gamma_1}(\kk)\to\bar\t_{1,I}^{\Gamma_2}(\kk)$, defined by 
\[
x_i\mapsto ax_i+by_i\,,\quad y_i\mapsto cx_i+dy_i\,,\quad t_{ij}^\gamma\mapsto t_{ij}^{\rho(\gamma)}\,.
\]
These are morphisms of $\t(\kk)$-modules in $grLie_\kk$. 
\end{proposition}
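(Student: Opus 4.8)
The plan is to verify directly that the proposed assignment $x_i\mapsto ax_i+by_i$, $y_i\mapsto cx_i+dy_i$, $t_{ij}^\gamma\mapsto t_{ij}^{\rho(\gamma)}$ respects all the defining relations of $\t_{1,I}^{\Gamma_1}(\kk)$ (in the presentation of Lemma \ref{lem:pres1}, which is the most economical), that it is surjective, that it is $\Gamma_1^I$-equivariant, and finally that it is compatible with the partial compositions. I would treat the reduced case $\bar\t$ in parallel, since the only extra relations $\sum_j x_j=\sum_j y_j=0$ are visibly preserved by any linear substitution in the $x$'s and $y$'s.

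First I would check well-definedness. The relations \eqref{eqn:etN}, \eqref{eqn:etL1}, \eqref{eqn:etL2}, \eqref{eqn:etC} only involve brackets of $x$'s among themselves, $y$'s among themselves, or a single $x_i$ (resp.\ $y_i$, resp.\ $\sum_j x_j$, $\sum_j y_j$) with a $t$ with disjoint index: these are immediately preserved because the image of any $x_i$ or $y_i$ is an $\kk$-linear combination of $x_i,y_i$ with the \emph{same} index $i$, and because $t_{ij}^\gamma\mapsto t_{ij}^{\rho(\gamma)}$ keeps the index set $\{i,j\}$. The relation \eqref{eqn:et4T1}, $[t_{ij}^\gamma,t_{ik}^{\gamma+\delta}+t_{jk}^\delta]=0$, is preserved because $\rho$ is a group homomorphism, so it sends the identity $\rho(\gamma+\delta)=\rho(\gamma)+\rho(\delta)$ to the corresponding $\Gamma_2$-relation; similarly \eqref{eqn:etS} is preserved since $\rho(-\gamma)=-\rho(\gamma)$. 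The only relation where the numerical condition $ad-bc=|\ker\rho|$ enters is \eqref{eqn:etSbis} (equivalently \eqref{eqn:etT}): computing
\[
[ax_i+by_i,\,cx_j+dy_j]=(ad-bc)[x_i,y_j]=(ad-bc)\sum_{\gamma\in\Gamma_1}t_{ij}^\gamma,
\]
using \eqref{eqn:etN} to kill $[x_i,x_j]$ and $[y_i,y_j]$, and using $[x_i,y_j]=[x_j,y_i]$; this must equal $\sum_{\gamma'\in\Gamma_2}t_{ij}^{\gamma'}$ under the target's relation. Since each fiber of $\rho$ has exactly $|\ker\rho|$ elements, $\sum_{\gamma\in\Gamma_1}t_{ij}^{\rho(\gamma)}=|\ker\rho|\sum_{\gamma'\in\Gamma_2}t_{ij}^{\gamma'}$, so the two sides match precisely when $ad-bc=|\ker\rho|$. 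I expect this matching of multiplicities to be the conceptual heart of the statement, and the main (mild) obstacle is just getting the bracket bookkeeping in \eqref{eqn:etSbis} right on both $[x_i,y_j]$ and $[x_j,y_i]$.

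Next, surjectivity: the $t_{ij}^{\gamma'}$ for $\gamma'\in\Gamma_2$ are hit because $\rho$ is surjective; and since $ad-bc=|\ker\rho|\neq 0$ is invertible in the $\Q$-algebra $\kk$, the $2\times2$ matrix $\begin{pmatrix}a&b\\c&d\end{pmatrix}$ is invertible over $\kk$, so $x_i,y_i$ lie in the image. Hence the comparison morphism is surjective.

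For $\Gamma_1^I$-equivariance I would simply compare the two actions: for $\underline{\gamma}\in\Gamma_1^I$, the source acts by $x_i\mapsto x_i$, $y_i\mapsto y_i$, $t_{ij}^\delta\mapsto t_{ij}^{\delta+\gamma_i-\gamma_j}$, and on the target $\Gamma_1^I$ acts through $\rho$, i.e.\ $t_{ij}^{\delta'}\mapsto t_{ij}^{\delta'+\rho(\gamma_i)-\rho(\gamma_j)}$; applying the comparison map before or after the action both give $t_{ij}^{\rho(\delta)+\rho(\gamma_i)-\rho(\gamma_j)}=t_{ij}^{\rho(\delta+\gamma_i-\gamma_j)}$, and the $x,y$-part is untouched, so the square commutes. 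Finally, to see these are morphisms of $\t(\kk)$-modules in $grLie_\kk$, I would check compatibility with each partial composition $\circ_k$ on generators: on $t_{uv}^{\0}$ coming from $\t_J(\kk)$ the source sends it to $t_{uv}^{\0}$ and then to $t_{uv}^{\rho(\0)}=t_{uv}^{\0}$, matching the target's $\circ_k$; on $x_i,y_i$ the formula either leaves the generator alone (for $k\neq i$) or replaces it by $\sum_{p\in J}x_p$ (resp.\ $y_p$), and since the comparison map is $\kk$-linear in $x,y$ with index-preserving behaviour it commutes with this cabling; on $t_{ij}^\gamma$ the cabling replaces the strand index $k$ by a sum over $p\in J$ while keeping the label $\gamma$, and $\rho$ is applied to $\gamma$ regardless, so the two routes agree. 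Since all structure maps are determined on generators and the module axioms are relations among composites of these, compatibility on generators suffices. This completes the verification; I would present the relation-checking for \eqref{eqn:etSbis} as the one displayed computation and dispatch the rest as routine.
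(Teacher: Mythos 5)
Your proof is correct and follows the same route as the paper, whose proof is simply ``this follows from direct computations'' --- you have supplied exactly those computations, correctly identifying \eqref{eqn:etSbis} as the only relation where $ad-bc=|\ker\rho|$ is needed and matching $(ad-bc)\sum_{\gamma'\in\Gamma_2}t_{ij}^{\gamma'}$ against $|\ker\rho|\sum_{\gamma'\in\Gamma_2}t_{ij}^{\gamma'}$. The only blemish is that your displayed equation writes the bracket of the images as $(ad-bc)\sum_{\gamma\in\Gamma_1}t_{ij}^{\gamma}$, mixing source and target labels, but the surrounding prose makes the intended target-side computation unambiguous.
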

\begin{proof}
This follows from direct computations. 
\end{proof}
Actually, these morphisms exhibit $\t_{1,I}^{\Gamma_2}(\kk)$, resp.~$\bar\t_{1,I}^{\Gamma_2}(\kk)$, 
as the quotient of $\t_{1,I}^{\Gamma_1}(\kk)$, resp.~$\bar\t_{1,I}^{\Gamma_1}(\kk)$, by $(\ker\rho)^I$. 
\begin{remark}\label{rem:comparison}
Whenever $\Gamma_i=\mathbb{Z}/M_i\mathbb{Z}\times \mathbb{Z}/N_i\mathbb{Z}$, there is a natural choice for the scalars $a,b,c,d$. 
Indeed, if $\rho:\Gamma_1\to\Gamma_2$ is surjective, then there exists elements $(a,b)$ and $(c,d)$ in the lattice 
$M_2\mathbb{Z}\times N_2\mathbb{Z}$ that generate the sublattice $M_1\mathbb{Z}\times N_1\mathbb{Z}$. Hence, in particular, 
the determinant $ad-bc$ equals $\frac{M_1N_1}{M_2N_2}=|\ker\rho|$. 
\end{remark}

% 5.2

\section{Horizontal ellipsitomic chord diagrams}

In this paragraph we define the $\CD(\kk)$-module $\CD^{\mathbf{\Gamma}}_{e\ell\ell}(\kk)$ of 
\textit{ellipsitomic chord diagrams}. 

We first consider the $\CD(\kk)$-module $\mathcal{\hat U}(\bar\t^{\Gamma}_{e\ell\ell}(\kk))$. 
Morphisms in $\mathcal{\hat U}(\bar\t^{\Gamma}_{e\ell\ell}(\kk))$ can be given a pictorial description, 
which mixes the features of the horizontal $N$-chord diagrams from \cite{Adrien} (see also \cite{CG-cyclo}) 
together with the elliptic chord diagrams from \S\ref{sec-cdell}. Diagrams corresponding to 
$x_i$ and $y_j$ are, respectively, 
\begin{center}
\tik{
\tell{0}{1}{+}
\node[point, label=above:$i$] at (1,0) {};
\node[point, label=below:$i$] at (1,-1) {};
%\node[diam, label=right:$\gamma$] at (1,-0.35) {};
%\node[diam, label=right:$-\gamma$] at (1,-0.65) {}; 
}
=
\tik{
\tell{0}{1}{+}
\node[point, label=above:$i$] at (1,0) {};
\node[point, label=below:$i$] at (1,-1) {};
\node[diam, label=right:$\gamma$] at (1,-0.35) {};
\node[diam, label=right:$-\gamma$] at (1,-0.65) {}; 
}
\text{ 	and }
\tik{
\tell{0}{1}{-}
\node[point, label=above:$j$] at (1,0) {};
\node[point, label=below:$j$] at (1,-1) {};
%\node[diam, label=right:$\gamma$] at (1,-0.35) {};
%\node[diam, label=right:$-\gamma$] at (1,-0.65) {};
}
=
\tik{
\tell{0}{1}{-} \node[point, label=above:$j$] at (1,0) {};
\node[point, label=below:$j$] at (1,-1) {};
\node[diam, label=right:$\gamma$] at (1,-0.35) {};
\node[diam, label=right:$-\gamma$] at (1,-0.65) {};
}
\end{center}
and the one corresponding to $t_{ij}^\gamma=t_{ji}^{-\gamma}$ is 
\begin{center}
\tik{
\hori[->]{0}{0}{1}	\node[point, label=above:$i$] at (0,0) {};
\node[point, label=above:$j$] at (1,0) {};
\node[point, label=below:$i$] at (0,-1) {};
\node[point, label=below:$j$] at (1,-1) {};
\node[diam, label=left:$\gamma$] at (0,-0.35) {};
\node[diam, label=left:$-\gamma$] at (0,-0.65) {};}
=
\tik{
\hori[->]{0}{0}{1} \node[point, label=above:$i$] at (0,0) {};
\node[point, label=above:$j$] at (1,0) {};
\node[point, label=below:$i$] at (0,-1) {};
\node[point, label=below:$j$] at (1,-1) {};
\node[diam, label=right:$\gamma$] at (1,-0.65) {};
\node[diam, label=right:$-\gamma$] at (1,-0.35) {};}
\end{center}
Relations can be depicted as follows:
\begin{equation}\label{eq:labEll1t}\tag{tS$_{e\ell\ell}2$}
\tik{
\tell{0}{1}{\mp} 
\tell[->]{1}{2}{\pm}
\straight[->]{1}{1}
\straight{2}{0}
\node[point, label=above:$i$] at (1,0) {};
\node[point, label=below:$i$] at (1,-2) {}; 
\node[point, label=above:$j$] at (2,0) {};
\node[point, label=below:$j$] at (2,-2) {};
%\node[diam, label=right:$-\alpha$] at (1,-0.65) {};
%\node[diam, label=right:$\alpha$] at (1,-0.35) {};
%\node[diam, label=right:$-\alpha$] at (2,-0.65-1) {};
%\node[diam, label=right:$\alpha$] at (2,-0.35-1) {};
}
-
\tik{
\tell{0}{2}{\pm}
\tell[->]{1}{1}{\mp}
\straight[->]{2}{1}
\straight{1}{0}
\node[point, label=above:$i$] at (1,0) {};
\node[point, label=below:$i$] at (1,-2) {};
\node[point, label=above:$j$] at (2,0) {};
\node[point, label=below:$j$] at (2,-2) {};
%\node[diam, label=right:$-\alpha$] at (1,-0.65-1) {};
%\node[diam, label=right:$\alpha$] at (1,-0.35-1) {};
%\node[diam, label=right:$-\alpha$] at (2,-0.65) {};
%\node[diam, label=right:$\alpha$] at (2,-0.35) {};
}
=\sum_{\gamma\in\Gamma}
\tik{
\hori[->]{0}{1}{1}
\node[point, label=above:$i$] at (0,-1) {};
\node[point, label=below:$i$] at (0,-2) {};
\node[point, label=above:$j$] at (1,-1) {};
\node[point, label=below:$j$] at (1,-2) {};	
\node[diam, label=left:$\gamma$] at (0,-1.35) {};
\node[diam, label=left:$-\gamma$] at (0,-1.65) {};
}
\end{equation}
\begin{equation}\label{eq:labEll2t}\tag{tN$_{e\ell\ell}$}
\tik{
\tell{0}{1}{\pm}
\tell[->]{1}{2}{\pm}
\straight[->]{1}{1}
\straight{2}{0}
\node[point, label=above:$i$] at (1,0) {};
\node[point, label=below:$i$] at (1,-2) {};
\node[point, label=above:$j$] at (2,0) {};
\node[point, label=below:$j$] at (2,-2) {};
%\node[diam, label=right:$-\alpha$] at (1,-0.65) {};
%\node[diam, label=right:$\alpha$] at (1,-0.35) {};
%\node[diam, label=right:$-\alpha$] at (2,-0.65-1) {};
%\node[diam, label=right:$\alpha$] at (2,-0.35-1) {};
}
=
\tik{
\tell{0}{2}{\pm}
\tell[->]{1}{1}{\pm}
\straight[->]{2}{1}
\straight{1}{0}
\node[point, label=above:$i$] at (1,0) {};
\node[point, label=below:$i$] at (1,-2) {};
\node[point, label=above:$j$] at (2,0) {};
\node[point, label=below:$j$] at (2,-2) {};
%\node[diam, label=right:$-\alpha$] at (1,-0.65-1) {};
%\node[diam, label=right:$\alpha$] at (1,-0.35-1) {};
%\node[diam, label=right:$-\alpha$] at (2,-0.65) {};
%\node[diam, label=right:$\alpha$] at (2,-0.35) {};
}
\end{equation}

\begin{equation}\label{eq:labNEll1G}\tag{tT$_{e\ell\ell}$}
\tik{
\tell{0}{1}{+}
\tell[->]{1}{1}{-}
\straight[->]{1}{1}
\node[point, label=above:$i$] at (1,0) {};
\node[point, label=below:$i$] at (1,-2) {};
%\node[diam, label=right:$-\alpha$] at (1,-0.65-1) {};
%\node[diam, label=right:$\alpha$] at (1,-0.35-1) {};
%\node[diam, label=right:$-\alpha$] at (1,-0.65) {};
%\node[diam, label=right:$\alpha$] at (1,-0.35) {};
}
-
\tik{
\tell{0}{1}{-}
\tell[->]{1}{1}{+}
\node[point, label=above:$i$] at (1,0) {};
\node[point, label=above:$i$] at (1,0) {};
\node[point, label=below:$i$] at (1,-2) {}; 
%\node[diam, label=right:$-\alpha$] at (1,-0.65-1) {};
%\node[diam, label=right:$\alpha$] at (1,-0.35-1) {};
%\node[diam, label=right:$-\alpha$] at (1,-0.65) {};
%\node[diam, label=right:$\alpha$] at (1,-0.35) {};
}
=
\ - \sum_{j;j\neq i}\sum_{\gamma\in\Gamma} 
\tik{
\hori[->]{0}{1}{1}
\node[point, label=above:$i$] at (0,-1) {};
\node[point, label=below:$i$] at (0,-2) {};
\node[point, label=above:$j$] at (1,-1) {};
\node[point, label=below:$j$] at (1,-2) {};	
\node[diam, label=left:$\gamma$] at (0,-1.35) {};
\node[diam, label=left:$-\gamma$] at (0,-1.65) {};
}
\end{equation}

\begin{equation}\label{eq:labNEll1G1}\tag{tL$_{e\ell\ell}1$}
\tik{ 
\hori{0}{0}{1}\straight[->]{0}{1};
 \hori[->]{2}{1}{1}\straight[->]{1}{1}
\straight{2}{0}\straight{3}{0}
	\node[point, label=above:$j$] at (1,0) {}; 
	\node[point, label=above:$k$] at (2,0) {};
	\node[point, label=above:$i$] at (0,0) {}; 
	\node[point, label=above:$l$] at (3,0) {};
	\node[point, label=below:$i$] at (0,-2) {}; 
	\node[point, label=below:$l$] at (3,-2) {};
	\node[point, label=below:$j$] at (1,-2) {}; 
	\node[point, label=below:$k$] at (2,-2) {};
	\node[diam, label=left:$\gamma$] at (0,-0.35) {};
	\node[diam, label=left:$-\gamma$] at (0,-0.65) {};
	\node[diam, label=left:$\delta$] at (2,-1.35) {};
	\node[diam, label=left:$-\delta$] at (2,-1.65) {};
}
= \tik{
\hori[->]{0}{1}{1}\straight{0}{0}; 
\hori{2}{0}{1}\straight{1}{0}\straight[->]{2}{1}\straight[->]{3}{1}
\node[point, label=above:$j$] at (1,0) {}; 
\node[point, label=above:$k$] at (2,0) {};
\node[point, label=above:$i$] at (0,0) {}; 
\node[point, label=above:$l$] at (3,0) {};
\node[point, label=below:$i$] at (0,-2) {}; 
\node[point, label=below:$l$] at (3,-2) {};
\node[point, label=below:$j$] at (1,-2) {};
\node[point, label=below:$k$] at (2,-2) {};
	\node[diam, label=left:$\gamma$] at (0,-1.35) {};
	\node[diam, label=left:$-\gamma$] at (0,-1.65) {};
	\node[diam, label=left:$\delta$] at (2,-0.35) {};
	\node[diam, label=left:$-\delta$] at (2,-0.65) {};
}
\end{equation}

\begin{equation}\label{eq:labNEll2G2}\tag{tL$_{e\ell\ell}2$}
\tik{
\tell{0}{1}{\pm}
\draw[zell] (0,-1)--(0,-2);
\hori[->]{2}{1}{1}
\straight[->]{1}{1}
\straight{2}{0}
\straight{3}{0}
\node[point, label=above:$i$] at (1,0) {};
\node[point, label=below:$i$] at (1,-2) {};
\node[point, label=above:$j$] at (2,0) {};
\node[point, label=below:$j$] at (2,-2) {};
\node[point, label=above:$k$] at (3,0) {};
\node[point, label=below:$k$] at (3,-2) {};	
\node[diam, label=left:$\gamma$] at (2,-1.35) {};
\node[diam, label=left:$-\gamma$] at (2,-1.65) {};
%\node[diam, label=right:$-\alpha$] at (1,-0.65) {};
%\node[diam, label=right:$\alpha$] at (1,-0.35) {};
}
=
\tik{
\tell[->]{1}{1}{\pm}
\draw[zell] (0,0)--(0,-1);
\hori{2}{0}{1}
\straight{1}{0}
\straight[->]{2}{1}
\straight[->]{3}{1}
\node[point, label=above:$i$] at (1,0) {};
\node[point, label=below:$i$] at (1,-2) {};
\node[point, label=above:$j$] at (2,0) {};
\node[point, label=below:$j$] at (2,-2) {};
\node[diam, label=left:$\gamma$] at (2,-0.35) {};
\node[diam, label=left:$-\gamma$] at (2,-0.65) {};
\node[point, label=above:$k$] at (3,0) {};
\node[point, label=below:$k$] at (3,-2) {};
%\node[diam, label=right:$-\alpha$] at (1,-0.65-1) {};
%\node[diam, label=right:$\alpha$] at (1,-0.35-1) {};
}
\end{equation}

\begin{equation}\label{eq:labNEll2G}\tag{t4T$_{e\ell\ell}1$}
		\tik{
		\hori{0}{0}{1}\straight{2}{0}\hori[->]{0}{1}{2}\straight[->]{1}{1}
			\node[point, label=above:$i$] at (0,0) {};
			\node[point, label=above:$j$] at (1,0) {};
			\node[point, label=above:$k$] at (2,0) {};
			\node[point, label=below:$i$] at (0,-2) {};
			\node[point, label=below:$j$] at (1,-2) {};
			\node[point, label=below:$k$] at (2,-2) {};
			\node[diam, label=left:$\gamma$] at (0,-0.35) {};
			\node[diam, label=left:$\delta$] at (0,-1) {};
			\node[diam, label=left:$-\gamma-\delta$] at (0,-1.65) {};
		}
	+	\tik{
	\hori{0}{0}{1} \straight{2}{0}\hori[->]{1}{1}{1}\straight[->]{0}{1}
		\node[point, label=above:$i$] at (0,0) {};
			\node[point, label=above:$j$] at (1,0) {};
			\node[point, label=above:$k$] at (2,0) {};
			\node[point, label=below:$i$] at (0,-2) {};
			\node[point, label=below:$j$] at (1,-2) {};
			\node[point, label=below:$k$] at (2,-2) {};
			\node[diam, label=left:$\gamma$] at (0,-0.35) {};
			\node[diam, label=left:$-\gamma$] at (0,-1) {};
			\node[diam, label=left:$\delta$] at (1,-1) {};
			\node[diam, label=left:$-\delta$] at (1,-1.65) {};
	}
	=	\tik{
	\hori[->]{0}{1}{1}\straight[->]{2}{1}\hori{0}{0}{2}\straight{1}{0}
	\node[point, label=above:$i$] at (0,0) {};
			\node[point, label=above:$j$] at (1,0) {};
			\node[point, label=above:$k$] at (2,0) {};
			\node[point, label=below:$i$] at (0,-2) {};
			\node[point, label=below:$j$] at (1,-2) {};
			\node[point, label=below:$k$] at (2,-2) {};	
			\node[diam, label=left:$\gamma+\delta$] at (0,-0.35) {};
			\node[diam, label=left:$-\delta$] at (0,-1) {};
			\node[diam, label=left:$-\gamma$] at (0,-1.65) {};
	}
	+	\tik{
	\hori[->]{0}{1}{1} \straight[->]{2}{1}\hori{1}{0}{1}\straight{0}{0}
	\node[point, label=above:$i$] at (0,0) {};
			\node[point, label=above:$j$] at (1,0) {};
			\node[point, label=above:$k$] at (2,0) {};
			\node[point, label=below:$i$] at (0,-2) {};
			\node[point, label=below:$j$] at (1,-2) {};
			\node[point, label=below:$k$] at (2,-2) {};	
				\node[diam, label=left:$\delta$] at (1,-0.35) {};
			\node[diam, label=left:$-\delta$] at (1,-1) {};
			\node[diam, label=left:$\gamma$] at (0,-1) {};
			\node[diam, label=left:$-\gamma$] at (0,-1.65) {};
	}
\end{equation}

\begin{equation}\label{eq:labNEll3G}\tag{t4T$_{e\ell\ell}2$}
\tik{
\tell{0}{1}{\pm}
\draw[zell] (0,-1)--(0,-2);
\hori[->]{1}{1}{1}
\straight[->]{1}{1}
\straight{2}{0}\node[point, label=above:$i$] at (1,0) {};
\node[point, label=below:$i$] at (1,-2) {};
\node[point, label=above:$j$] at (2,0) {};
\node[point, label=below:$j$] at (2,-2) {};
\node[diam, label=left:$\gamma$] at (1,-1.35) {};
\node[diam, label=left:$-\gamma$] at (1,-1.65) {};
%\node[diam, label=right:$-\gamma$] at (1,-0.65) {};
%\node[diam, label=right:$\gamma$] at (1,-0.35) {};
}
+
\tik{
\tell{0}{2}{\pm}
\draw[zell] (0,-1)--(0,-2);
\hori[->]{1}{1}{1}
\straight[->]{1}{1}
\straight{2}{0}
\straight{1}{0}
\node[point, label=above:$i$] at (1,0) {};
\node[point, label=below:$i$] at (1,-2) {};
\node[point, label=above:$j$] at (2,0) {};
\node[point, label=below:$j$] at (2,-2) {};
\node[diam, label=left:$\gamma$] at (1,-1.35) {};
\node[diam, label=left:$-\gamma$] at (1,-1.65) {};
%\node[diam, label=right:$-\gamma$] at (2,-0.65) {};
%\node[diam, label=right:$\gamma$] at (2,-0.35) {};
}
=
\tik{
\tell[->]{1}{1}{\pm}
\draw[zell] (0,0)--(0,-1);
\hori{1}{0}{1}
\straight[->]{2}{1}
\node[point, label=above:$i$] at (1,0) {};
\node[point, label=below:$i$] at (1,-2) {};
\node[point, label=above:$j$] at (2,0) {};
\node[point, label=below:$j$] at (2,-2) {};
\node[diam, label=left:$\gamma$] at (1,-0.35) {};
	\node[diam, label=left:$-\gamma$] at (1,-0.65) {};
%\node[diam, label=right:$-\gamma$] at (1,-0.65-1) {};
%\node[diam, label=right:$\gamma$] at (1,-0.35-1) {};
}
+
\tik{
\tell[->]{1}{2}{\pm}
\draw[zell] (0,0)--(0,-1);
\hori{1}{0}{1}
\straight[->]{2}{1}
\straight[->]{1}{1}
\node[point, label=above:$i$] at (1,0) {};
\node[point, label=below:$i$] at (1,-2) {};
\node[point, label=above:$j$] at (2,0) {};
\node[point, label=below:$j$] at (2,-2) {};
\node[diam, label=left:$\gamma$] at (1,-0.35) {};
\node[diam, label=left:$-\gamma$] at (1,-0.65) {};
%\node[diam, label=right:$-\gamma$] at (2,-0.65-1) {};
%\node[diam, label=right:$\gamma$] at (2,-0.35-1) {};
}
\end{equation}

\begin{equation}\label{eq:labCt}\tag{tC$_{e\ell\ell}$}
\sum_i
\tik{
\tell{0}{1}{\pm}
\node[point, label=above:$i$] at (1,0) {};
\node[point, label=below:$i$] at (1,-1) {};
}
=
0
\end{equation}

One can notice that labels sum to $\mathbf{0}$ on each strand of all the above diagrams. 

\medskip

We are now ready to define the $\CD(\KK)$-module $\CD_{e\ell\ell}^{\mathbf{\Gamma}}(\KK)$. 
\begin{itemize}
\item In arity $n$, objects of $\CD_{e\ell\ell}^{\mathbf{\Gamma}}(\KK)$ are just labellings $\{1,\dots,n\}\to\Gamma$ 
up to a global shift: $\on{Ob}(\CD_{e\ell\ell}^{\mathbf{\Gamma}}(\KK))(n)=\Gamma^n/\Gamma$; 
\item The $*$-module structure is given as follows on objects: for every $i$, 
$\circ_i:\Gamma^n\to\Gamma^{n+m-1}$ is the partial diagonal 
\[
(\alpha_1,\dots,\alpha_n)\longmapsto (\alpha_1,\dots,\alpha_{i-1},
\underbrace{\alpha_i,\dots,\alpha_i}_{m~\textrm{times}},\alpha_{i+1},\dots,\alpha_n)\,;
\]
\item Given two objects $[\underline{\alpha}]=[\alpha_1,\dots,\alpha_n]$ and 
$[\underline{\beta}]=[\beta_1,\dots,\beta_n]$ in arity $n$, the $\kk$-vector space of morphisms 
from $[\underline{\alpha}]$ to $[\underline{\beta}]$ in $\CD_{e\ell\ell}^{\mathbf{\Gamma}}(\KK)$ is 
the vector space of horizontal $\Gamma$-chord diagrams such that, for every $i$, 
the sum of labels on the $i$-th strand is $\beta_i-\alpha_i$; 
\item The $\CD(\kk)$-module structure on morphisms is the exact same as the one for 
$\mathcal{\hat U}(\bar\t^{\Gamma}_{e\ell\ell}(\kk))$. 
\end{itemize}
Moreover, $\CD_{e\ell\ell}^{\mathbf{\Gamma}}(\kk)$ carries an action of $\overline{\Gamma}$, by translation 
on the labelling of objects.

\medskip

For every surjective group morphism $\rho:\Gamma\twoheadrightarrow\Gamma'$, 
the morphism of Proposition \ref{prop:comparison} 
gives rise to a $\Gamma$-equivariant surjective morphism 
\[
\CD_{e\ell\ell}^{\mathbf{\Gamma}}(\kk)\to\mathbf{CD^{\Gamma'}_{e\ell\ell}}(\kk)
\]
which exhibits $\mathbf{CD^{\Gamma'}_{e\ell\ell}}(\kk)$ as the quotient 
$\overline{\ker(\rho)}\backslash\CD_{e\ell\ell}^{\mathbf{\Gamma}}(\kk)$. 

\begin{example}[Notable arrows in $\CD_{e\ell\ell}^{\mathbf{\Gamma}}(\kk)(2)$]
Assume that $\Gamma=\Z/M\Z \times \Z/N\Z$. 
In addition to the arrows of $\mathcal{\hat U}(\bar\t_{1,2}^{\Gamma}(\kk))$, we also have, 
in $\CD_{e\ell\ell}^{\mathbf{\Gamma}}(\kk)(2)$, 
\begin{center}
$I^{1,2}_{e\ell\ell}=$
\begin{tikzpicture}[baseline=(current bounding box.center)]
\tikzstyle point=[circle, fill=black, inner sep=0.05cm]
 \node[point, label=above:$1_\0$] at (1,1) {};
 \node[point, label=below:$1_\alpha$] at (1,-0.25) {};
 \node[point, label=above:$2_\0$] at (2.5,1) {};
 \node[point, label=below:$2_\0$] at (2.5,-0.25) {};
 \node[diam, label=right:$\alpha$] at (1,0.5) {};
 \draw[->,thick] (1,1) .. controls (1,0) and (1,0).. (1,-0.20); 
 \draw[->,thick] (2.5,1) .. controls (2.5,0.25) and (2.5,0.5).. (2.5,-0.20);
\end{tikzpicture}
\qquad and \qquad
$J^{1,2}_{e\ell\ell}=$
\begin{tikzpicture}[baseline=(current bounding box.center)]
\tikzstyle point=[circle, fill=black, inner sep=0.05cm]
 \node[point, label=above:$1_\0$] at (1,1) {};
 \node[point, label=below:$1_{\beta}$] at (1,-0.25) {};
 \node[point, label=above:$2_\0$] at (2.5,1) {};
 \node[point, label=below:$2_\0$] at (2.5,-0.25) {};
  \node[diam, label=right:$\beta$] at (1,0.5) {};
 \draw[->,thick] (1,1) .. controls (1,0) and (1,0).. (1,-0.20); 
 \draw[->,thick] (2.5,1) .. controls (2.5,0.25) and (2.5,0.5).. (2.5,-0.20);
\end{tikzpicture}
\end{center}
recalling that $\alpha=(\bar1,\bar0)$ and $\beta=(\bar0,\bar1)$. 
\end{example}
Let us introduce $\underline{I}^{1,2}_{e\ell\ell}:=I^{1,2}_{e\ell\ell}\alpha_1$ and 
$\underline{J}^{1,2}_{e\ell\ell}:=J^{1,2}_{e\ell\ell}\beta_1$, that are automorphisms of $1_\02_\0$ in the semi-direct product groupoid 
$\mathbf{CD}_{e\ell\ell}^{{\mathbf{\Gamma}}}(\kk)(2)\rtimes(\Gamma^2/\Gamma)$. Then, by definition, for every $\gamma=(\bar{p},\bar{q})\in\Gamma$,
\[
\on{Ad}\big((\underline{I}^{1,2}_{e\ell\ell})^p(\underline{J}^{1,2}_{e\ell\ell})^q\big)(t_{12}^\0)=t_{12}^\gamma\,.
\]
\begin{notation}
For later purposes, we also introduce the notation 
\begin{center}
$X^{1,2}_{e\ell\ell}=x_1 \cdot\mathrm{Id}_{1_\02_\0}=$
\begin{tikzpicture}[baseline=(current bounding box.center)]
\tikzstyle point=[circle, fill=black, inner sep=0.05cm]
 \node[point, label=above:$1_\0$] at (1,0) {};
 \node[point, label=below:$1_\0$] at (1,-1) {};
 \node[point, label=above:$2_\0$] at (2,0) {};
 \node[point, label=below:$2_\0$] at (2,-1) {};
 \tell{0}{1}{+}
 \draw[->,thick] (1,0) .. controls (1,-0.5) and (1,-0.5).. (1,-0.95); 
 \draw[->,thick] (2,0) .. controls (2,-0.5) and (2,-0.5).. (2,-0.95);
\end{tikzpicture}
\qquad and\qquad 
$Y^{1,2}_{e\ell\ell}=y_1 \cdot\mathrm{Id}_{1_\02_\0}=$
\begin{tikzpicture}[baseline=(current bounding box.center)]
\tikzstyle point=[circle, fill=black, inner sep=0.05cm]
 \node[point, label=above:$1_\0$] at (1,0) {};
 \node[point, label=below:$1_\0$] at (1,-1) {};
 \node[point, label=above:$2_\0$] at (2,0) {};
 \node[point, label=below:$2_\0$] at (2,-1) {};
 \tell{0}{1}{-}
 \draw[->,thick] (1,0) .. controls (1,-0.5) and (1,-0.5).. (1,-0.95); 
 \draw[->,thick] (2,0) .. controls (2,-0.5) and (2,-0.5).. (2,-0.95);
\end{tikzpicture}
\end{center}
\end{notation}

% 5.3

\section{Parenthesized ellipsitomic chord diagrams}

There is a $\Gamma$-equivariant morphism of modules $\omega_3:\mathbf{Pa^\Gamma}\to\on{Ob}(\CD_{e\ell\ell}^{\mathbf{\Gamma}}(\kk))$, 
which forgets the parenthesized permutation (and only remembers the labelling), over the terminal operad morphism 
$\omega_1:\Pa\to*=\on{Ob}(\CD(\kk))$ from \S\ref{sec-pacd}. Hence we can consider the fake pull-back $\PaCD(\KK)$-module
\[
\text{\gls{PaCDeg}}:=\omega_3^\star \CD_{e\ell\ell}^{\mathbf{\Gamma}}(\kk)
\]
of \textit{parenthesized ellipsitomic chord diagrams}, which is still acted on by $\overline{\Gamma}$. 

\begin{remark}\label{PaCD:cyc:rel}
As explained in section \ref{sec-pointings}, there is a map of $\mathfrak{S}$-modules 
$\PaCD(\KK) \longrightarrow \PaCD_{e\ell\ell}^{\mathbf{\Gamma}}(\KK)$ and we keep the same symbol 
for the image in $\PaCD_{e\ell\ell}^{\mathbf{\Gamma}}(\KK)$ an arrows in $\PaCD(\KK)$.  
\begin{center}
$X^{1,2}=1\cdot$ 
\begin{tikzpicture}[baseline=(current bounding box.center)]
\tikzstyle point=[circle, fill=black, inner sep=0.05cm]
 \node[point, label=above:$1_\0$] at (1,1) {};
 \node[point, label=below:$2_\0$] at (1,-0.25) {};
 \node[point, label=above:$2_\0$] at (2,1) {};
 \node[point, label=below:$1_\0$] at (2,-0.25) {};
 \draw[->,thick] (2,1) .. controls (2,1) and (1,-0.20).. (1,-0.20); 
 \draw[->,thick] (1,1) .. controls (1,1) and (2,-0.20).. (2,-0.20);
\end{tikzpicture} \qquad
$H^{1,2}=t_{12}^{\0}\cdot$
\begin{tikzpicture}[baseline=(current bounding box.center)]
\tikzstyle point=[circle, fill=black, inner sep=0.05cm]
 \node[point, label=above:$1_\0$] at (1,1) {};
 \node[point, label=below:$1_\0$] at (1,-0.25) {};
 \node[point, label=above:$2_\0$] at (2,1) {};
 \node[point, label=below:$2_\0$] at (2,-0.25) {};
 \draw[->,thick] (2,1) .. controls (2,0.25) and (2,0.5).. (2,-0.20); 
 \draw[->,thick] (1,1) .. controls (1,0.25) and (1,0.5).. (1,-0.20);
\end{tikzpicture}
\qquad $a^{1,2,3}=1\cdot$
\begin{tikzpicture}[baseline=(current bounding box.center)]
\tikzstyle point=[circle, fill=black, inner sep=0.05cm]
 \node[point, label=above:$(1_\0$] at (1,1) {};
 \node[point, label=below:$1_\0$] at (1,-0.25) {};
 \node[point, label=above:$2_\0)$] at (1.5,1) {};
 \node[point, label=below:$(2_\0$] at (3.5,-0.25) {};
 \node[point, label=above:$3_\0$] at (4,1) {};
 \node[point, label=below:$3_\0)$] at (4,-0.25) {};
 \draw[->,thick] (1,1) .. controls (1,0) and (1,0).. (1,-0.20); 
 \draw[->,thick] (1.5,1) .. controls (1.5,1) and (3.5,-0.20).. (3.5,-0.20);
 \draw[->,thick] (4,1) .. controls (4,0) and (4,0).. (4,-0.20);
\end{tikzpicture}
\end{center}
Assuming again that $\Gamma=\mathbb{Z}/M\mathbb{Z}\times\mathbb{Z}/N\mathbb{Z}$, the following relations hold 
in $\on{End}_{\PaCD^{\mathbf{\Gamma}}_{e\ell\ell}(\kk)(2)\rtimes(\Gamma^2/\Gamma)}(1_\02_\0)$: 
\begin{itemize}
\item $(\underline{I}_{e\ell\ell}^{1,2})^M=\on{Id}_{1_\02_\0}$, 
\item $(\underline{J}_{e\ell\ell}^{1,2})^N=\on{Id}_{1_\02_\0}$, 
\item $\big(\underline{I}_{e\ell\ell}^{1,2}\,,\,\underline{J}_{e\ell\ell}^{1,2}\big)=\on{Id}_{1_\02_\0}$,  
\end{itemize}
These relations allow to unambiguously define, for every $\gamma=(\bar{p},\bar{q})\in\Gamma$, a morphism 
$K_\gamma^{1,2}:1_\02_\0\to 1_\gamma2_\0$ by 
\[
\underline{K}_\gamma^{1,2}:=K_\gamma\gamma_1=(\underline{I}_{e\ell\ell}^{1,2})^p(\underline{J}_{e\ell\ell}^{1,2})^q\,,
\]
so that the assignement $\gamma\mapsto\underline{K}_\gamma$ is multiplicative. 

We also have the following relations in 
$\on{End}_{\PaCD^{\mathbf{\Gamma}}_{e\ell\ell}(\kk)(3)\rtimes(\Gamma^3/\Gamma)}\big((1_\02_\0)3_\0\big)$:
\begin{eqnarray*}
0 & = & X_{e\ell\ell}^{12,3}+\on{Ad}\left(a^{1,2,3}X^{1,23}\right)(X_{e\ell\ell}^{23,1})
		+\on{Ad}\left(X^{12,3}(a^{3,1,2})^{-1}\right)(X_{e\ell\ell}^{31,2})\,, \\
0 & = & Y_{e\ell\ell}^{12,3}+\on{Ad}\left(a^{1,2,3}X^{1,23}\right)(Y_{e\ell\ell}^{23,1})
		+\on{Ad}\left(X^{12,3}(a^{3,1,2})^{-1}\right)(Y_{e\ell\ell}^{31,2})\,, \\
0 & = & \underline{I}_{e\ell\ell}^{12,3}+\on{Ad}\left(a^{1,2,3}X^{1,23}\right)(\underline{I}_{e\ell\ell}^{23,1})
		+\on{Ad}\left(X^{12,3}(a^{3,1,2})^{-1}\right)(\underline{I}_{e\ell\ell}^{31,2})\,, \\
0 & = & \underline{J}_{e\ell\ell}^{12,3}+\on{Ad}\left(a^{1,2,3}X^{1,23}\right)(\underline{J}_{e\ell\ell}^{23,1})
		+\on{Ad}\left(X^{12,3}(a^{3,1,2})^{-1}\right)(\underline{J}_{e\ell\ell}^{31,2})\,, \\
\sum_{\gamma\in\Gamma} \on{Ad}(\underline{K}^{1,2}_{\gamma})(H^{1,2}) & = &
\big[\on{Ad}\left(a^{1,2,3}\right)(X_{e\ell\ell}^{1,23}),\on{Ad}\left(X^{1,2}a^{2,1,3}\right)(Y_{e\ell\ell}^{2,13})\big]\,.
\end{eqnarray*}
\end{remark}

\begin{definition}
The \textit{graded ellipsitomic Grothendieck-Teichm\"uller group} is defined as
\[
\text{\gls{GRTeg}}:=\on{Aut}^+_{\tmop{OpR}\mathbf{Cat}(\mathbf{CoAlg}_\kk)}
\big(\PaCD(\KK),\PaCD^{\mathbf{\Gamma}}_{e\ell\ell}(\KK))\big)^{\mathbf{\Gamma}}
\]
\end{definition}
Recall that there is an isomorphism 
\[
\on{Aut}^+_{\tmop{OpR}\mathbf{Cat}(\mathbf{CoAlg}_\kk)}
\big(\PaCD(\KK),\PaCD_{e\ell\ell}^{\mathbf{\Gamma}}(\KK))\big)^{\mathbf{\Gamma}}
\simeq
\on{Aut}^+_{\tmop{OpR}\mathbf{Grpd}_\kk}\big(G\PaCD(\KK),
G\PaCD_{e\ell\ell}^{\mathbf{\Gamma}}(\KK))\big)^{\mathbf{\Gamma}}\,.
\]
For every group surjective morphism $\rho:\Gamma\to\Gamma'$, and every $a,b,c,d\in\kk$ such that 
$ad-bc=|\ker(\rho)|$, using the fact that 
$\overline{\ker(\rho)}\backslash\PaCD_{e\ell\ell}^{\mathbf{\Gamma}}(\KK)
\simeq \PaCD_{e\ell\ell}^{\mathbf{\Gamma'}}(\KK)$, 
we obtain a group morphism 
\[
\GRT_{e\ell\ell}^{\mathbf{\Gamma}}(\kk)\longrightarrow \GRT_{e\ell\ell}^{\mathbf{\Gamma'}}(\kk)\,.
\]

% 5.4

\section{Ellipsitomic associators}

We now fix $\Gamma:=\Z/M\Z \times \Z/N\Z$.
\begin{definition}
The set of \textit{ellipsitomic $\kk$-associators} is  
\[
\text{\gls{Asseg}}:=\on{Iso}^+_{\tmop{OpR}\mathbf{Grpd}_\kk}\Big(
\big(\widehat{\PaB}(\KK),\widehat{\PaB}_{e\ell\ell}^{\mathbf{\Gamma}}(\KK)\big)\,,\,
\big(G\PaCD(\KK),G\PaCD_{e\ell\ell}^{\mathbf{\Gamma}}(\KK)\big)\Big)^{\mathbf{\Gamma}}\,.
\]
\end{definition}

\begin{theorem} \label{th:ass:tell}
There is a one-to-one correspondence between the set 
$\Ell^{\mathbf{\Gamma}}(\KK)$ and the set \gls{bAsseg} consisting 
of quadruples $(\mu,\varphi,A,B)\in\on{Ass}(\KK)\times\on{exp}\left(\hat{\bar{\t}}^{\Gamma}_{1,2}(\KK)\right)$ such that, 
for $\underline{A}:=A\alpha_1$ and $\underline{B}:=B\beta_1$, 
the following relations hold in $\on{exp}(\hat{\bar{\t}}^{\Gamma}_{1,3}(\KK))\rtimes(\Gamma^3/\Gamma)$: 
\begin{equation}\label{def:tell:ass:G:1}
1=\varphi^{1, 2, 3}\underline{A}^{1, 23} e^{-\mu(\bar{t}^\0_{12} + \bar{t}^\0_{13})/2}
\varphi^{2, 3, 1}\underline{A}^{2, 31}e^{-\mu(\bar{t}^\0_{23} + \bar{t}^\0_{12})/2}
\varphi^{3, 1, 2}\underline{A}^{3, 12}e^{-\mu (\bar{t}^\0_{31} + \bar{t}^\0_{32})/2} \,,
\end{equation}
\begin{equation}\label{def:tell:ass:G:2}
1=\varphi^{1, 2, 3}\underline{B}^{1, 23} e^{-\mu(\bar{t}^\0_{12} + \bar{t}^\0_{13})/2}
\varphi^{2, 3, 1}\underline{B}^{2, 31}e^{-\mu(\bar{t}^\0_{23} + \bar{t}^\0_{12})/2}
\varphi^{3, 1, 2}\underline{B}^{3, 12}e^{-\mu (\bar{t}^\0_{31} + \bar{t}^\0_{32})/2} \,,
\end{equation}
\begin{equation}\label{def:tell:ass:G:3}
e^{\mu \bar{t}^\0_{12}}= 
\big(\varphi^{1, 2, 3}\underline{A}^{1, 23}\varphi^{3,2,1}\,,\,
e^{- \mu \bar{t}^\0_{12} / 2}\varphi^{2,1,3}\underline{B}^{2,13}\varphi^{3,2,1}e^{-\mu \bar{t}^\0_{12} / 2}\big)\,.
\end{equation}
\end{theorem}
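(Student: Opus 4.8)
The strategy is to mirror the proof of Theorem~\ref{thm:bij-ell-assoc}, using the generators-and-relations presentation of $\PaB_{e\ell\ell}^{\mathbf{\Gamma}}$ from Theorem~\ref{PaB:ell:G} in place of the one from Theorem~\ref{PaBell}. First I would unwind the definition of $\Ell^{\mathbf{\Gamma}}(\KK)$: an element is a pair $(F,G)$ consisting of a Drinfeld associator $F:\widehat{\PaB}(\KK)\to G\PaCD(\KK)$ and a $\Gamma$-equivariant isomorphism $G:\widehat{\PaB}_{e\ell\ell}^{\mathbf{\Gamma}}(\KK)\to G\PaCD_{e\ell\ell}^{\mathbf{\Gamma}}(\KK)$ of $\widehat{\PaB}(\KK)$-modules compatible with $F$, which is the identity on objects. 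By Theorem~\ref{Th:Ass}, $F$ corresponds uniquely to $(\mu,\varphi)\in\on{Ass}(\KK)$. Since $\PaB_{e\ell\ell}^{\mathbf{\Gamma}}$ is freely generated over $\PaB$ (with its $\overline{\Gamma}$-action) by the two arrows $A:1_\02_\0\to1_\alpha2_\0$ and $B:1_\02_\0\to1_\beta2_\0$, a $\Gamma$-equivariant module morphism $G$ compatible with $F$ is the same datum as the images $G(A)$ and $G(B)$, subject only to the requirement that they be arrows of the right source/target in $G\PaCD_{e\ell\ell}^{\mathbf{\Gamma}}(\KK)$ satisfying the images of relations \eqref{def:PaB:ell:G:1}, \eqref{def:PaB:ell:G:2}, \eqref{def:PaB:ell:G:3}.

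Next I would identify what an arrow $1_\02_\0\to1_\alpha2_\0$ in $G\PaCD_{e\ell\ell}^{\mathbf{\Gamma}}(\KK)$ looks like. By the structure of $\PaCD_{e\ell\ell}^{\mathbf{\Gamma}}$ as a fake pull-back of $\CD_{e\ell\ell}^{\mathbf{\Gamma}}$ along $\omega_3$, and by connectedness of the relevant hom-objects, such an arrow can be written uniquely as $\underline{A}\cdot X_{e\ell\ell}^{1,2}$-type data; more precisely, composing with the distinguished object-shifting arrow $\underline{K}_\alpha^{1,2}=\underline{I}^{1,2}_{e\ell\ell}$ reduces it to a grouplike element of $\on{End}(1_\02_\0)$, i.e.\ an element of $\on{exp}(\hat{\bar\t}_{1,2}^\Gamma(\KK))$. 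Thus $G(A)=A\cdot\underline{I}^{1,2}_{e\ell\ell}$ and $G(B)=B\cdot\underline{J}^{1,2}_{e\ell\ell}$ for uniquely determined $A,B\in\on{exp}(\hat{\bar\t}_{1,2}^\Gamma(\KK))$, and with the normalization $\underline{A}:=A\alpha_1$, $\underline{B}:=B\beta_1$ one gets automorphisms of $1_\02_\0$ in the semidirect product. The $\Gamma$-equivariance of $G$ is automatic here since the $\overline{\Gamma}$-action only shifts labellings and both sides are built from the same $\overline{\Gamma}$-torsor of objects — this is the one point I would state carefully but expect to be routine.

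The core of the proof is then the translation of the three defining relations. Relations \eqref{def:PaB:ell:G:1} and \eqref{def:PaB:ell:G:2} (the ``twisted nonagons'') are equalities in $\on{Aut}_{\PaB_{e\ell\ell}^{\mathbf{\Gamma}}(3)\rtimes(\Gamma^3/\Gamma)}\big((1_\02_\0)3_\0\big)$; applying $G$ and using $F(R^{1,2})=e^{\mu t_{12}/2}X^{1,2}$, $F(\Phi^{1,2,3})=\varphi(t_{12},t_{23})a^{1,2,3}$, together with the computation (noted in Remark~\ref{PaCD:cyc:rel}) of how $X^{1,2}$, $a^{1,2,3}$ conjugate the elliptic generators, one expands everything in $\on{exp}(\hat{\bar\t}_{1,3}^\Gamma(\KK))\rtimes(\Gamma^3/\Gamma)$. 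After collecting the $X$'s and cancelling the permutation/parenthesization bookkeeping against the $\underline{A}^{i,jk}$ superscripts, the identities \eqref{def:tell:ass:G:1} and \eqref{def:tell:ass:G:2} should drop out; here $\underline{A}^{1,23}$ etc.\ denote the images under the module partial compositions, and $\bar t^\0_{ij}$ arise as the images of $t_{ij}$ under $F$ on the chord-diagram side. Similarly the ``twisted mixed relation'' \eqref{def:PaB:ell:G:3} becomes \eqref{def:tell:ass:G:3}, where the appearance of $\tilde R^{2_\beta,1_\alpha}$ on the braid side accounts for the labelling-shifted crossing, whose image under $G$ is $e^{-\mu\bar t^\0_{12}/2}X^{2,1}$ composed with the relevant $\underline{K}$, and this is precisely what produces $\varphi^{3,2,1}=(\varphi^{1,2,3})^{-1}$ and the symmetric placement of the $e^{-\mu\bar t^\0_{12}/2}$ factors in \eqref{def:tell:ass:G:3}. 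Conversely, any quadruple $(\mu,\varphi,A,B)$ satisfying \eqref{def:tell:ass:G:1}--\eqref{def:tell:ass:G:3} yields, via the universal property of Theorem~\ref{PaB:ell:G}, a well-defined $G$; that $G$ is an isomorphism (not merely a morphism) follows, exactly as in the proof of Theorem~\ref{PaBell}, by checking it arity-wise on automorphism groups of objects, where it is a filtered isomorphism because it is the identity modulo higher-degree terms. The main obstacle I anticipate is purely bookkeeping: carefully matching the superscript conventions $X^{i,jk}$, $\underline{A}^{i,jk}$ and the semidirect-product factors $\alpha_i$, $\beta_i$ so that the braid-side relations map exactly onto the printed Lie-algebraic relations, without sign or index errors — conceptually everything is forced once Theorem~\ref{PaB:ell:G} and Remark~\ref{PaCD:cyc:rel} are in hand.
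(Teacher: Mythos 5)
Your proposal is correct and follows essentially the same route as the paper's own (much terser) proof: identify $F$ with $(\mu,\varphi)$ via Theorem \ref{Th:Ass}, use the presentation of $\PaB_{e\ell\ell}^{\mathbf{\Gamma}}$ from Theorem \ref{PaB:ell:G} to reduce $G$ to the images $G(A)=A\cdot I_{e\ell\ell}^{1,2}$, $G(B)=B\cdot J_{e\ell\ell}^{1,2}$, and observe that relations \eqref{def:tell:ass:G:1}--\eqref{def:tell:ass:G:3} are exactly the images of \eqref{def:PaB:ell:G:1}--\eqref{def:PaB:ell:G:3}. The extra care you take with $\Gamma$-equivariance and with the converse direction is sound and, if anything, more explicit than what the paper records.
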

\begin{proof}
Let $(F,G)$ be an ellipsitomic associator. 
We have already seen that the choice of the operad isomorphism $F$ corresponds bijectively to the choice of an 
element $(\mu,\varphi)\in \on{Ass}(\KK)$. 
From the presentation of $\PaB_{e\ell\ell}^{\mathbf{\Gamma}}$, we know that $G$ is uniquely determined by the images 
of $A^{1,2}\in \on{Hom}_{\PaB_{e\ell\ell}^{\mathbf{\Gamma}}(\KK)(2)}(1_\02_\0,1_\alpha2_\0)$ 
and $B^{1,2}\in \on{Hom}_{\PaB_{e\ell\ell}^{\mathbf{\Gamma}}(\KK)(2)}(1_\02_\0,1_\beta2_\0)$.
There are elements $A,B\in \exp(\hat{\bar{\t}}_{1,2}^\Gamma(\kk))$ such that 
\begin{itemize}
\item $G(A^{1,2})=A \cdot I_{e\ell\ell}^{1,2}$;
\item $G(B^{11,2})=B \cdot J_{e\ell\ell}^{1,2}$. 
\end{itemize}
These elements must satisfy relations \eqref{def:tell:ass:G:1}, \eqref{def:tell:ass:G:2} and \eqref{def:tell:ass:G:3}, 
that are images of \eqref{def:PaB:ell:G:1}, \eqref{def:PaB:ell:G:2} and \eqref{def:PaB:ell:G:3}. Conversely, if 
\eqref{def:tell:ass:G:1}, \eqref{def:tell:ass:G:2} and \eqref{def:tell:ass:G:3} are satisfied, then $G$ is well-defined. 
\end{proof} 
\begin{remark}\label{rem:ass:tell}
It follows from the alternative presentation of $\PaB_{e\ell\ell}^\Gamma$ (see Theorem \ref{PaB:ell:G:bis}) that 
$\Ell^\Gamma(\KK)$ is also in bijection with the set of 
$(\mu,\varphi,A,B)\in \on{Ass}(\KK)\times\Big(\on{exp}\big(\hat{\bar{\t}}^{\Gamma}_{1,2}(\KK)\big)\Big)^{\times 2}$ 
satisfying 
\begin{equation}\label{def:tell:ass:G:1bis}
\underline{A}^{12, 3} =  
  \varphi^{1, 2, 3}  \underline{A}^{1, 23}\varphi^{3,2,1} e^{-\mu \bar{t}_{12}^{\0}/2}
  \varphi^{2, 1, 3}  \underline{A}^{2, 13}\varphi^{3,1,2}e^{-\mu \bar{t}_{12}^{\0}/2}
\end{equation}
\begin{equation}\label{def:tell:ass:G:2bis}
\underline{B}^{12, 3} =  
  \varphi^{1, 2, 3}  \underline{B}^{1, 23} \varphi^{3,2,1} e^{-\mu \bar{t}_{12}^{\0}/2}
  \varphi^{2, 1, 3}  \underline{B}^{2, 13} \varphi^{3,1,2}e^{-\mu \bar{t}_{12}^{\0}/2}
\end{equation}
\begin{equation}\label{def:tell:ass:G:3bis}
\varphi^{1,2,3} e^{\mu \bar{t}_{23}^\0} \varphi^{3,2,1} = 
\big(\underline{A}^{12, 3}\varphi^{1,2,3}(\underline{A}^{1, 23})^{- 1}\varphi^{3,2,1}\,,\,(\underline{B}^{12, 3})^{-1}\big)
\end{equation}
\end{remark}

\medskip

As before, if we are given a surjective group morphism 
\[
\rho\,:\,\Gamma\twoheadrightarrow\Gamma':=\mathbb{Z}/M'\mathbb{Z}\times\mathbb{Z}/N'\mathbb{Z}\,,
\]
then, given $a,b,c,d\in\kk$ as in Remark \ref{rem:comparison}, there is a bitorsor morphism 
\[
\big(\widehat{\GT}^{\mathbf{\Gamma}}(\kk),\Ell^{\mathbf{\Gamma}}(\KK),\GRT^{\mathbf{\Gamma}}(\kk)\big)
\longrightarrow
\big(\widehat{\GT}^{\mathbf{\Gamma'}}(\kk),\Ell^{\mathbf{\Gamma'}}(\KK),\GRT^{\mathbf{\Gamma'}}(\kk)\big)\,.
\]

\medskip

In chapter \ref{Section6} we prove that ellipsitomic associators (with complex coefficients) do exist. 
\begin{remark}
Drinfeld's argument in \cite{DrGal} (see also \cite{BN}) that shows how to deduce the existence of an associator over 
$\mathbb{Q}$ from the existence of an associator over $\mathbb{C}$ can be repeated \textit{verbatim} for 
ellipsitomic associators. We leave the details for future work. 
\end{remark}

\chapter{The KZB ellipsitomic associator}
\label{Section6}

% Intro du chapitre 6

In this chapter, $\Gamma$ still denotes the abelian group 
$\Gamma=\Z/M\Z \times \Z/N\Z$ where $M,N\geq 1$ are two integers. 

\medskip

Recall from Theorem \ref{th:ass:tell} that the set of ellipsitomic associators can 
be regarded, either as the set of $\overline{\Gamma}$-equivariant 
$\widehat\PaB(\kk)$-module isomorphisms 
$\widehat\PaB_{e\ell\ell}^{\mathbf{\Gamma}} (\kk) \longrightarrow 
G\PaCD_{e\ell\ell}^{\mathbf{\Gamma}}(\kk)$ which are the identity on objects, 
or as quadruples $(\lambda,\Phi,A_+,A_-)$, where $(\lambda,\Phi)\in \on{Ass}(\KK)$ 
and $A_{\pm}\in \on{exp}(\hat{\bar{\t}}^{\Gamma}_{1,2}(\KK))$, satisfying 
relations \eqref{def:tell:ass:G:1}, \eqref{def:tell:ass:G:2}, \eqref{def:tell:ass:G:3}. 
The following result tells us that the set $\on{Ell}^\Gamma(\mathbb{C})$ is not empty. 
We write $\on{Ell}^\Gamma_{\on{KZB}}:=\on{Ell}^\Gamma(\mathbb{C})
\times_{\on{Ass}(\mathbb{C})} \{ 2\pi \i, \Phi_{\on{KZ}}\}$.

\begin{theorem}\label{theorem:twistedKZBass}
There is an analytic map 
\begin{eqnarray*}
  \mathfrak{H} & \longrightarrow & \on{Ell}^\Gamma_{\on{KZB}} \\
  \tau & \longmapsto & e^{\Gamma} (\tau)=(A^{\Gamma}(\tau),B^{\Gamma}(\tau)).
\end{eqnarray*}
In particular, for each $\tau \in \mathfrak{H}$, where $\mathfrak{H}$ is the upper half-plane, 
the element $(2\pi \i,\Phi_{\on{KZ}},A^{\Gamma}(\tau),B^{\Gamma}(\tau))$ is 
an ellipsitomic $\C$-associator (i.e. it belongs to $\on{Ell}^\Gamma(\mathbb{C})$).
\end{theorem}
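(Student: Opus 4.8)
The plan is to construct the ellipsitomic KZB associator $e^\Gamma(\tau) = (A^\Gamma(\tau), B^\Gamma(\tau))$ by taking renormalized holonomies of the universal ellipsitomic KZB connection introduced in our previous work \cite{CaGo}, and then to verify that the resulting quadruple $(2\pi\i, \Phi_{\on{KZ}}, A^\Gamma(\tau), B^\Gamma(\tau))$ satisfies the defining relations \eqref{def:tell:ass:G:1}, \eqref{def:tell:ass:G:2}, \eqref{def:tell:ass:G:3} of Theorem \ref{th:ass:tell}. Since the genus-zero component of an ellipsitomic associator is just a genuine Drinfeld associator, and since we already know from Drinfeld (see the example after Theorem \ref{Th:Ass}) that $(2\pi\i, \Phi_{\on{KZ}}) \in \on{Ass}(\C)$, the entire task reduces to producing $A^\Gamma(\tau), B^\Gamma(\tau) \in \on{exp}(\hat{\bar\t}^\Gamma_{1,2}(\C))$ with the required properties. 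The KZB connection specializes in arity $2$ to an explicit meromorphic differential equation on $\tilde{\mathbb{T}} = \C/(\Z+\tau\Z)$ with values in $\hat{\bar\t}^\Gamma_{1,2}(\C)$, built from Kronecker-type theta functions twisted by $\Gamma$; this is the $\Gamma$-analogue of equation \eqref{eq:dept2}.

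\textbf{Key steps.} First I would recall the explicit form of the universal ellipsitomic KZB connection from \cite{CaGo} on the twisted configuration space $\on{C}(\mathbb{T}, n, \Gamma)$, and its compatibility with the operadic module structure on $\overline{\on{C}}(\mathbb{T}, -, \Gamma)$: this compatibility is what guarantees that holonomies satisfy the relations dictated by the presentation of $\PaB^\Gamma_{e\ell\ell}$ (Theorem \ref{PaB:ell:G}). Second, I would specialize to arity $2$ and $3$ and write down the corresponding ODEs, identify the singularities (at the punctures and near the boundary divisors of $\overline{\on{C}}(\mathbb{T}, -, \Gamma)$), and determine the leading asymptotics of the flat sections near these singularities — these asymptotics fix the normalization. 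Third, I would define $A^\Gamma(\tau)$ and $B^\Gamma(\tau)$ as the renormalized holonomies along the two fundamental cycles (the lifts of the loops $A^{1,2}$ and $B^{1,2}$ from $1_\0 2_\0$), exactly as in Example \ref{example-KZB-assoc} but now tracking the $\Gamma$-labels: the endpoint of the $A$-path is $1_\alpha 2_\0$ and of the $B$-path is $1_\beta 2_\0$, matching the target objects in $\PaB^\Gamma_{e\ell\ell}(2)$. Fourth, I would verify the three relations: \eqref{def:tell:ass:G:1} and \eqref{def:tell:ass:G:2} (the nonagon relations) come from the fact that three points moving together on the torus is a constant path in the reduced configuration space — a monodromy computation in arity $3$, where one checks that the composition of holonomies around the corresponding loop is trivial; relation \eqref{def:tell:ass:G:3} (the mixed relation) comes from expressing the pure braiding $R^{1,2}R^{2,1}$ as a commutator of the two translation loops, again via a monodromy identity in arity $3$. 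Fifth, analyticity in $\tau$ follows from analyticity of the KZB connection in $\tau$ (the theta functions depend holomorphically on $\tau \in \h$) together with standard results on analytic dependence of solutions of linear ODEs on parameters.

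\textbf{Main obstacle.} The hard part will be the careful analysis of the monodromy near the boundary strata of $\overline{\on{C}}(\mathbb{T}, -, \Gamma)$ — that is, controlling the renormalization/regularization of divergent holonomies and showing that the regularized quantities indeed satisfy the algebraic relations \emph{exactly} (not merely up to the expected correction terms). This requires knowing that the KZB connection, restricted to a neighborhood of each boundary divisor $\partial_{J_1,\dots,J_k}\overline{\on{C}}(\mathbb{T},I,\Gamma) \cong \prod_i \overline{\on{C}}(\C,J_i) \times \overline{\on{C}}(\mathbb{T},k,\Gamma)$, degenerates precisely to the product of the genus-zero KZ connection on the $\overline{\on{C}}(\C,J_i)$ factors and the lower-arity KZB connection on the $\overline{\on{C}}(\mathbb{T},k,\Gamma)$ factor; this ``factorization at the boundary'' is the geometric input that makes the regularized monodromy operadic. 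Much of this is the $\Gamma$-twisted counterpart of the analysis carried out by Enriquez in \cite{En2} for the elliptic case, and of \cite{CaGo} for the connection itself, so the plan is to reduce to those results by the functoriality/comparison morphisms of Proposition \ref{prop:comparison} wherever possible, and to redo from scratch only the genuinely new twisted computations in arities $2$ and $3$.
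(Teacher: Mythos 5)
Your proposal follows essentially the same route as the paper: define $A^{\Gamma}(\tau),B^{\Gamma}(\tau)$ as renormalized holonomies of the arity-$2$ ellipsitomic KZB equation, establish the boundary factorization of a normalized arity-$n$ solution (the paper does this via a solution $F_{\Gamma}^{(n)}$ with prescribed asymptotics in the region $z_{21}\ll\cdots\ll z_{n1}\ll 1$, related to degenerate solutions $G_i^{\Gamma}$ by $F^{\Gamma}=\Phi_i G_i^{\Gamma}$ with $\Phi_i$ built from $\Phi_{\on{KZ}}$), and then push braid-group relations through the arity-$3$ monodromy morphism. The only cosmetic difference is that the paper verifies the relations in the alternative form \eqref{def:tell:ass:G:1bis}--\eqref{def:tell:ass:G:3bis} of Remark \ref{rem:ass:tell} (images of $A_3^{-1}A_2=\sigma_1A_2^{-1}\sigma_1$, etc.), rather than the nonagon relations directly, but these are equivalent by the appendix.
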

The rest of this chapter is devoted to the proof of the above theorem. 

% 6.1

\section{The pair $e^{\Gamma} (\tau)$}\label{section:pair}

We adopt the convention for monodromy actions of \cite[Appendix A]{CaGo}.
First of all, recall that $\bar{\mathfrak{t}}_{1,2}^{\Gamma}$ is the Lie $\C$-algebra generated by 
$x:=\bar x_1$, $y:=\bar y_2$ and $t^\alpha:=\bar t_{12}^\alpha$, for $\alpha\in\Gamma$, such that 
$[x,y]=\sum_{\alpha\in\Gamma} t^\alpha$. 
We define the \textit{KZB ellipsitomic associator} as the couple 
\[
e^\Gamma(\tau):=\big(A^{\Gamma}(\tau),B^{\Gamma}(\tau)\big)\in 
\on{exp}(\hat{\bar{\mathfrak{t}}}_{1,2}^{\Gamma})\times \on{exp}(\hat{\bar{\mathfrak{t}}}_{1,2}^{\Gamma})
\]
consisting in the renormalized holonomies along the straight paths from $0$ to $1/M$ and from $0$ to $\tau/N$, 
respectively, of the differential equation
\begin{equation}\label{equationKZBonevariable}
 J'(z) =  \left(y + \sum\limits_{\alpha \in \Gamma}\left(e^{-\frac{2\pi\i v_\alpha}{N}\on{ad}(x)}{\frac{\theta(z-\tilde\alpha+\on{ad}(x)|\tau)}{\theta(z-\tilde\alpha|\tau)\theta(\on{ad}(x)|\tau)}}-\frac{1}{\on{ad}(x)}\right)(t^{\alpha})\right)\cdot J(z)\,, 
\end{equation}
with values in the group $\on{exp}(\hat{\bar\t}_{1,2}^{\Gamma})\rtimes \Gamma$ 
(here, $v_\alpha$ is any integer such that $\alpha=(\bar{u_\alpha},\bar{v_\alpha})\in\Gamma$). 

More precisely, 
%for all $\alpha \in \Gamma$ and $\tilde \alpha=( a_0, a)\in\Lambda_{\tau,\Gamma}$ a lift of $\alpha$, 
equation \eqref{equationKZBonevariable} has a unique solution $J(z)$ defined over 
%$\{\tilde\alpha + \frac{s_1}{M}+\frac{s_2}{N}\tau\,|\,s_1, s_2\in\mathbb{R},\,s_1\textrm{ or }s_2\in(0,1)\}$
$\{\frac{s_1}{M}+\frac{s_2}{N}\tau\,|\,s_1, s_2\in\mathbb{R},\,s_1\textrm{ or }s_2\in(0,1)\}$ 
and such that 
\[
%J_{\alpha}(z)\simeq (z-\tilde\alpha)^{ e^{-2\pi\i a \on{ad}(x)}t^{\alpha}}
J(z)\simeq (-z)^{t^{\0}}
\]
at $z\to 0$. 
\begin{remark}\label{branchremark}
We always consider a branch of $\log$ that is defined outside the half line $\mathbb{R}_+\tau$, and we always make sure that the domains of definition 
never contain this half-line. Above, we indeed have that every $z$ in the domain of definition satisfies $-z\notin\mathbb{R}_+\tau$.  
\end{remark}
%By denoting $J(z):=J_\mathbf{0}(z)$
We define
\[
\underline{A}^{\Gamma}(\tau):= J(z+\frac{1}{M})^{-1}(\bar1,\bar0)J(z)
\in\on{exp}(\hat{\bar\t}_{1,2}^{\Gamma}) \rtimes \Gamma\,.
\]
Then the $A$-ellipsitomic KZB associator \gls{Ag} is the $\on{exp}(\hat{\bar\t}_{1,2}^{\Gamma})$-component 
of $\underline{A}^{\Gamma}(\tau)$: 
\[
A^{\Gamma}(\tau):=\underline{A}^{\Gamma}(\tau)(-\bar1,\bar0)
								=J(z+\frac{1}{M})^{-1}(\bar1,\bar0)\cdot J(z)\in \on{exp}(\hat{\bar\t}_{1,2}^{\Gamma})\,.
\]
In the same way, we define
\[
\underline{B}^{\Gamma}(\tau):= J(z+\frac{\tau}{N})^{-1}e^{-\frac{2\pi\on{i}x}{N}}(\bar0,\bar1)J(z)\,,
\]
and the $B$-ellipsitomic KZB associator \gls{Bg} is then its $\on{exp}(\hat{\bar\t}_{1,2}^{\Gamma})$-component: 
\[
B^{\Gamma}(\tau):=\underline{B}^{\Gamma}(\tau)(\bar0,-\bar1)
								=J(z+\frac{\tau}{N})^{-1}e^{-\frac{2\pi\on{i}x}{N}}(\bar0,\bar1)\cdot J(z) \in \on{exp}(\hat{\bar\t}_{1,2}^{\Gamma})\,.
\]

% 6.2

\section{The ellipsitomic KZB system}\label{sec:twistedsol}

Recall from \cite{CaGo} the ellipsitomic KZB system, that is a several variables version of the differential equation from the previous subsection: 
\begin{equation}\label{equation-system}
\begin{cases}
\partial_{z_i} F (\mathbf{z} | \tau) = K_i (\mathbf{z} | \tau) F (\mathbf{z} | \tau) & (i=1,\dots,n) \\
\partial_{\tau} F (\mathbf{z} | \tau) =  \Delta (\mathbf{z} | \tau) F (\mathbf{z} | \tau) & 
\end{cases}
\end{equation}
Here $F(\mathbf{z} | \tau)$ is a holomorphic function 
$(\mathbb{C}^n \times \mathfrak{H})- \on{Diag}_{n, \Gamma} \supset U \longrightarrow \mathbf{G}^{\Gamma}_n$, 
\[
\on{Diag}_{n, \Gamma}=
\left\{(\mathbf{z}|\tau)\in\mathbb{C}^n\times\mathfrak{H}\,|\,z_i-z_j\in \frac1M\mathbb{Z}+\frac{\tau}{N}\mathbb{Z}\textrm{ for }i\neq j\right\}\,,
\] 
and the $\mathbb{C}$-group $\mathbf{G}^{\Gamma}_n$, 
$K_i (\mathbf{z} | \tau)\in\exp(\hat{\t}_{1,n}^\Gamma)\subset\mathbf{G}^{\Gamma}_n$, and $ \Delta (\mathbf{z} | \tau)\in\mathbf{G}^{\Gamma}_n$, 
are defined in \cite{CaGo}. 

If one denotes $z_{ij}=z_i-z_j$, then
\begin{eqnarray*}
  K_i (\mathbf{z} | \tau) & = & - y_i + \underset{j ;
  j \neq i}{\sum}  \underset{\alpha \in \Gamma}{\sum} \left( e^{- 2 \pi\i a
  \tmop{ad} (x_i)} \frac{\theta (z_{ij} - \tilde{\alpha} + \tmop{ad}
  (x_i) | \tau)}{\theta (z_{ij} - \tilde{\alpha} | \tau) \theta
  (\tmop{ad} (x_i) | \tau)} - \frac{1}{\tmop{ad} (x_i)} \right)
  (t^{\alpha}_{i j})\\
  & = & \underset{j ; j \neq i}{\sum}  \underset{\alpha \in \Gamma}{\sum}
  \left( \frac{1}{\tmop{ad} (x_i)} + \frac{t^{\alpha}_{i j}}{z_{ij} - \tilde{\alpha}} - \frac{1}{\tmop{ad} (x_i)} \right)
  (t^{\alpha}_{i j}) + O (1)\\
  & = & \underset{j ; j \neq i}{\sum}  \underset{\alpha \in \Gamma}{\sum}
  \frac{t^{\alpha}_{i j}}{z_{ij} - \tilde{\alpha}} + O (1) =
  \underset{j ; j \neq i}{\sum}  \underset{\alpha \in \Gamma}{\sum}
  \frac{t^{\alpha}_{i j}}{z_{ij} - \frac{a_0}{M}} + O (1)\,,
\end{eqnarray*}
where $O(1)$ stands for a holomorphic function on $\mathbb{C}^n \times \mathfrak{H}$. 
Then it follows directly from the definition of $\Delta (\mathbf{z} | \tau)$ in \cite[\S3.3]{CaGo} that, for $|z_{ij}|\ll 1$, 
\begin{eqnarray*}
  \Delta (\mathbf{z} | \tau) & = & -\frac{1}{2\pi\on{i}}
  \left( \Delta_0 + \frac{1}{2} \underset{s \geqslant 0}{\sum}
  \underset{\gamma\in \Gamma}{\sum} A_{s, \gamma} (\tau) \left( \delta_{s,
  \gamma} - 2\underset{i < j}{\sum} \tmop{ad} (x_i)^s
  (t^{-\gamma}_{i j})  \right)  \right) + o (1)\,,
\end{eqnarray*}
where $o(1)$ denotes a function of the form $\sum_{ij}z_{ij}f_{ij}(\mathbf{z}|\tau)$, 
with $f_{ij}$'s being holomorphic on $\mathbb{C}^n \times \mathfrak{H}$. 

\begin{remark}
In chapter \ref{Section7} we study the modularity properties of the coefficients $A_{s, \gamma} (\tau)$. 
%to Eisenstein--Hurwitz series which have a $q_N$-expansion, 
%where $q_N= e^{2\pi\i\tau/N}$ and we define the normalized version $\tilde{A}_{s,\gamma}(\tau) $ of the twisted Eisenstein 
%series $A_{s,\gamma}(\tau)$ such that 
%\[
%A_{s,\gamma}(\tau)=a_{s,\gamma}\tilde{A}_{s,\gamma}(\tau)\,,
%\]
%and such that we have  an expansion 
%$\tilde{A}_{s,\gamma}(\tau) = 1 + \sum_{l>0} a_{kl,\gamma}e^{2\pi\i l\tau/N}$ 
%as $\tau/N\to\i\infty$. 
\end{remark}

We now determine a particular solution $F_{\tau_0,n,\Gamma}$ of the ellipsitomic KZB system \eqref{equation-system}, associated with every $\tau_0\in\mathfrak{H}$. 

Let $D_{n,\Gamma} \subset (\mathbb{C}^n \times \mathfrak{H}) - \on{Diag}_{n,\Gamma}$ be defined as 
\[
\left\{(\mathbf{z}, \tau) \in \mathbb{C}^n \times
\mathfrak{H} |z_i = a_i + b_i \tau, a_i, b_i \in \mathbb{R}, a_1< a_2< \cdots  <
a_n < a_1 + \frac{1}{M}, b_n< \cdots  <
b_1 < b_n + \frac{1}{N} \right\}\,,
\]
which is simply connected. 
A solution of the ellipsitomic KZB system on this domain is then unique, up to right multiplication by a constant element in $\mathbf{G}^{\Gamma}_n$. 
Then, by applying \cite[Appendix A, Proposition 85]{CEE}
with $u_{n-1} = z_{n1}$, $u_{n-2} = z_{(n-1)1}/z_{n1}$,..., $u_{1}=z_{21}/z_{31}$, 
we obtain a unique solution $F_{\tau_0,n,\Gamma}$ with the expansion 
\[
F_{\tau_0,n,\Gamma}(\zz|\tau_0) \simeq z_{21}^{t^\mathbf{0}_{12}}z_{31}^{t^\mathbf{0}_{13} + t^\mathbf{0}_{23}}
\cdots  z_{n1}^{t^\mathbf{0}_{1n} + \cdots  + t^\mathbf{0}_{n-1,n}}
\]
%\on{exp}\left(-{\frac{\tau}{2\pi\i}}\left(\Delta_0 + \frac{1}{2}\sum_{s\geq 0,\gamma\in\Gamma}
%a_{s,\gamma}\left( \delta_{s,\gamma} - 2\sum_{i<j}\on{ad}^{s}(x_i)(t_{ij}^{-\gamma})\right)
%\right)\right) 
in the region $|z_{21}| \ll |z_{31}| \ll \cdots \ll |z_{n1}| \ll 1$, $(\zz,\tau_0)\in D_{n,\Gamma}$. 
The sign $\simeq$ means here that any of the ratios of both sides is of the form 
\[
1+\sum_{k>0} \sum_{i,a_{1},\dots,a_{n-1}}r_{k}^{i,a_{1},\dots,a_{n-1}}(u_{1},\dots,u_{n-1}|\tau_0)\,,
\]
where the second sum is finite with $a_{i}\geq 0$, $i\in \{1,\dots,n-1\}$, $r_{k}^{i,a_{1},\dots,a_{n-1}}(u_{1},\dots,u_{n-1}|\tau_0)$ has degree $k$, 
and is $O(u_{i}(\on{log}u_{1})^{a_{1}}\dots(\on{log}u_{n-1})^{a_{n-1}})$.

\medskip

In the remainder of this chapter, we keep $\tau$ fixed and consider $F_{\tau,n}(\zz):=F_{\tau,n,\Gamma}(\zz|\tau)$, which is a solution of the first line of 
the ellipsitomic KZB system \eqref{equation-system}, defined on 
\[
D_{\tau,n,\Gamma}:=\{\zz\in\mathbb{C}^n|(\zz,\tau)\in D_{n,\Gamma}\}\,,
\]
and taking its values in $\exp(\hat{\t}_{1,n}^\Gamma)\subset\mathbf{G}^{\Gamma}_n$. 

%By \cite[Proposition 3.14]{CaGo}, $F_{\Gamma}^{(n)}$ induces a unique solution 
%$F_{\Gamma}^{([n])}$ with values in $\mathbf{G}^\Gamma_n \rtimes (\Gamma^n \rtimes \mathfrak{S}_n)$. 

% 6.3

\section{Generators for the group $\on{B}_{1,n}^\Gamma$}

Let us define, for $(\mathbf{z}_0,\tau)\in D_{n,\Gamma} $, the group 
\gls{B1nG}$:=\pi_1\left(\textrm{Conf}(E_{\tau,\Gamma},n,\Gamma)/\mathfrak{S}_n,[\zz_0]\right)$, 
and recall that $\on{B}_{1,n}=\pi_1\left(\textrm{Conf}(E_{\tau,\Gamma},n)/\mathfrak{S}_n,[\zz_0]\right)$. Now, since the canonical surjective map 
\[
\textrm{Conf}(E_{\tau,\Gamma},n,\Gamma)/\mathfrak{S}_n\twoheadrightarrow \textrm{Conf}(E_{\tau,\Gamma},n)/\mathfrak{S}_n
\]
defines a $\Gamma$-covering, then  $\on{B}_{1,n}^\Gamma=\ker(\rho)$, where $\rho: \on{B}_{1,n}\to\Gamma$ sends $\sigma_i$ to 
$\mathbf{0}=(\bar{0},\bar{0})$, $X_i$ to $(\bar{1},\bar{0})$ and $Y_i$ to $(\bar{0},\bar{1})$. We let $A_i$ (resp.~$B_i$) be the class of the path 
given by $[0,1]\ni t\mapsto\zz_0-\frac{t}{M}\sum_{j =i}^n \delta_j$ (resp.~$[0,1]\ni t\mapsto\zz_0-\frac{t}{N}\tau\sum_{j =i}^n \delta_j$), so that 
$X_i=A_i^{-1}A_{i+1}$ (resp.~$Y_i=B_i^{-1}B_{i+1}$). It then follows from the geometric description of $\on{B}_{1,n}^\Gamma$ that 
$A_i^M$, $B_i^N$ ($i=1,\dots,n$) and 
\[
\sigma_{i}^{(\bar p,\bar q)}:=X_{i}^{p}Y_i^{q}\sigma_{i}Y_{i+1}^{-q}X_{i+1}^{-p}\qquad \big(1\leq p\leq M,~1\leq q\leq N\big)
\]
are generators of $\on{B}_{1,n}^\Gamma$. Similarly, $A_i^M$, $B_i^N$ ($i=1,\dots,n$) and  
\[
P_{ij}^{(\bar p,\bar q)}:=X_i^{p}Y_i^{q}P_{ij}Y_i^{-q}X_i^{-p}\qquad \big(i<j,~1\leq p\leq M,~1\leq q\leq N\big)
\]
generate $\on{PB}_{1,n}^\Gamma$. 

We denote with the same symbols $A_i^M$, $B_i^N$, $\sigma_{i}^\alpha$ and $P_{ij}^\alpha$ ($\alpha\in\Gamma, i=1,...,n$) for the projections 
of these elements to $\overline{\on{B}}_{1,n}^\Gamma:=\pi_1\left(\textrm{C}(E_{\tau,\Gamma},n,\Gamma)/\mathfrak{S}_n,[\zz_0]\right)$.
%\textcolor{red}{This definition for $P_{ij}^\alpha$ differs a bit from our KZB paper, but I think this does not matter. }

% 6.4

\section{The monodromy morphism $\mu_n: \on{B}_{1,n} \to \exp(\hat{\t}_{1,n}^\Gamma)\rtimes(\Gamma^n \rtimes \mathfrak{S}_n)$}\label{sec:twistedsol2}

Recall from \cite[\S3.1]{CaGo} the moduli space 
\[
\mathcal{M}_{1,n}^\Gamma:=(\mathbb{Z}^n)^2\rtimes \on{SL}_2^\Gamma \backslash \big((\mathbb{C}^n\times\mathfrak{H})-\on{Diag}_{n,\Gamma}\big)
\]
of $\Gamma$-structured elliptic curves with $n$ ordered marked points, where 
\[
\on{SL}_2^\Gamma:=
\left\{\begin{pmatrix}a&b\\c&d\end{pmatrix}\in\on{SL}_2(\Z)\,\big|\,
a \equiv 1~\mathrm{mod}~M,d \equiv 1~\mathrm{mod}~N, b\equiv0~\mathrm{mod}~N~\mathrm{and}~c\equiv0~\mathrm{mod}~M\right\}\,. 
\]

The ellipsitomic KZB system \eqref{equation-system} can be used to define a flat ${\mathbf G}^\Gamma_n$-bundle 
$(\mathcal P_{n,\Gamma},\nabla_{n,\Gamma})$ on $\mathcal{M}_{1,n}^\Gamma$ (see \cite[Theorem 3.9 \& Theorem 3.12]{CaGo}), that descends to 
a flat ${\mathbf G}^\Gamma_n\rtimes(\Gamma^n\rtimes\mathfrak{S}_n)$-bundle
$(\mathcal P_{(\Gamma),[n]},\nabla_{(\Gamma),[n]})$ on $(\Gamma^n\rtimes\mathfrak{S}_n)\backslash\mathcal{M}_{1,n}^\Gamma$ (see \cite[\S3.5]{CaGo}). 
For every $\tau\in\mathfrak{H}$, $(\mathcal P_{n,\Gamma},\nabla_{n,\Gamma})$ restricts to a flat $\exp(\hat{\t}_{1,n}^\Gamma)$-bundle 
$(\mathcal P_{\tau,n,\Gamma},\nabla_{\tau,n,\Gamma})$ on $\textrm{Conf}(E_{\tau,\Gamma},n,\Gamma)$ (see \cite[Theorem 1.11]{CaGo}), that descends 
to a flat $\exp(\hat{\t}_{1,n}^\Gamma)\rtimes(\Gamma^n\rtimes\mathfrak{S}_n)$-bundle 
$(\mathcal P_{(\tau,\Gamma),[n]},\nabla_{(\tau,\Gamma),[n]})$ 
on $\Gamma^n\rtimes\mathfrak{S}_n\backslash\textrm{Conf}(E_{\tau,\Gamma},n,\Gamma)=\mathfrak{S}_n\backslash\textrm{Conf}(E_{\tau,\Gamma},n)$. 

\medskip

This flat bundle determines a monodromy morphism 
\[
\mu_n^{\zz_0}=\mu^{\zz_0}_{(\tau,\Gamma),[n]}\,:\,\on{B}_{1,n}\longrightarrow\exp(\hat{\t}_{1,n}^\Gamma)\rtimes(\Gamma^n\rtimes\mathfrak{S}_n)\,.
\]
whose restriction to $\on{PB}_{1, n}^{\Gamma}$ takes values in $\exp(\hat{\t}_{1,n}^\Gamma)$. In other words, there is a morphism of short exact sequences
\[
\xymatrix{
	1 \ar[r] &
		\on{PB}_{1, n}^{\Gamma} \ar[r] \ar[d] &
		\on{B}_{1,n} \ar[r] \ar[d]^{\mu_n^{\zz_0}} &
		\Gamma^n \rtimes \mathfrak{S}_n \ar[r] \ar@{=}[d] &
		1 \\
		1 \ar[r] &
		\exp(\hat{\t}_{1,n}^\Gamma) \ar[r] &
		\exp(\hat{\t}_{1,n}^\Gamma)\rtimes(\Gamma^n \rtimes \mathfrak{S}_n) \ar[r] &
		\Gamma^n \rtimes \mathfrak{S}_n \ar[r] &
		1
}
\]
where the first vertical morphism is the monodromy morphism of $\nabla_{\tau,n,\Gamma}$. 
It is important to keep in mind that the monodromy morphism depends on the base point $[\zz_0]$, e.g. for $\zz_0\in D_{\tau,n,\Gamma}$. 
%\[
%\mu_{\tau,n,\Gamma}\,:\,\pi_1(\mathcal{M}_{1,n}^\Gamma)\longrightarrow{\mathbf G}^\Gamma_n
%\]
%associated with the flat ${\mathbf G}^\Gamma_n$-bundle $(\mathcal P_{n,\Gamma},\nabla_{n,\Gamma})$ on $\mathcal{M}_{1,n}^\Gamma$. 

\medskip

Note that every local solution $F$ of (the first line of) the ellipsitomic KZB system \eqref{equation-system} around $\zz_0\in D_{\tau,n,\Gamma}$ 
determines a local $\nabla_{(\tau,\Gamma),[n]}$-flat 
section of $\mathcal P_{(\tau,\Gamma),[n]}$, and thus can be used to compute the monodromy in a way that we explain now (we refer to \cite[Appendix A]{CaGo} 
for more details on our conventions). 

%Let us now make explicit the construction of the monodromy morphism associated with a solution $F$ of (the first line of) the ellipsitomic KZB system 
%defined on $D_{\tau,n,\Gamma}$ (we refer to \cite[Appendix A]{CaGo} for more details on our conventions). 
For every loop $\gamma$ based at $[\zz_0]$ in $\textrm{Conf}(E_{\tau,\Gamma},n)/\mathfrak{S}_n$, we consider its unique lift $\tilde\gamma$ 
starting at $\zz_0\in D_{\tau,n,\Gamma}$, and choose a simply connected open neighbourhood $U$ of $\tilde\gamma$ that contains $D_{\tau,n,\Gamma}$. 
Then the solution $F$ extends uniquely to $U$, and we define 
\[
\mu_n^{\zz_0}([\gamma]):=F\big(\zz_0\big)F\big(h_\gamma\cdot\zz_0\big)^{-1}c_{h_\gamma}\,,
\]
where $h_\gamma\in \Gamma^n \rtimes \mathfrak{S}_n$ is such that $\tilde\gamma(1)=h_\gamma\tilde\gamma(0)$, and $c$ is the 
non-abelian $1$-cocycle from \cite{CaGo} defining the underlying principal bundle of the flat connection. 

\medskip

Recall that for any other solution $G$ defined on $U$, there exists $g\in \exp(\hat{\t}_{1,n}^\Gamma)\rtimes(\Gamma^n \rtimes \mathfrak{S}_n)$ such that 
$G(\zz)=F(\zz)g$ for every $\zz\in U$. Hence $\mu_n^{\zz_0}([\gamma]):=G\big(\zz_0\big)G\big(h_\gamma\cdot\zz_0\big)^{-1}c_{h_\gamma}$, and the 
monodromy does not depend on the choice of local solution. 

\begin{example}
Let us consider the domains 
\[
H_{\tau,n,\Gamma} \assign\left\{\mathbf{z}\in \mathbb{C}^n\, |\, 
z_i = a_i + b_i \tau, \, a_i, b_i \in \mathbb{R}, \, b_n< \cdots <b_1 < b_n + \frac{1}{N} \right\} 
\] 
and 
\[
V_{\tau,n,\Gamma} \assign \left\{\mathbf{z} \in \mathbb{C}^n \,|\,
z_i = a_i + b_i \tau, \, a_i, b_i \in\mathbb{R}, \, a_1< a_2< \cdots <a_n < a_1 + \frac{1}{M}\right\}.
\]
Both of these domains are simply connected, and contain $D_{\tau,n,\Gamma}$. We denote $F_H (\mathbf{z})$, resp.~$F_V (\mathbf{z})$, 
the prolongation to $H_{\tau,n,\Gamma}$, resp.~to $V_{\tau,n,\Gamma}$, of a given local solution $F (\mathbf{z})$ defined on $D_{\tau,n,\Gamma}$. 
We then consider 
\[
\underline{A}_i^{\zz_0}:=\mu_n^{\zz_0}(A_i)
=F_H(\zz_0)F_H\left(\zz_0 - \sum_{j=i}^n \frac{\delta_j}{M}\right)^{-1}(-\bar{1},\bar{0})_{i,\ldots,n}
\in \exp(\hat{\t}_{1,n}^\Gamma)\rtimes\Gamma^n
\]
and 
\[
\underline{B}_i^{\zz_0}:=\mu_n^{\zz_0}(B_i)
=F_V(\zz_0)F_V\left(\zz_0 - \tau\,\sum_{j=i}^n \frac{\delta_j}{N}\right)^{-1}e^{\frac{2\pi\i}{N}( x_i + ... + x_n)}(\bar{0},-\bar{1})_{i,\ldots,n}
\in \exp(\hat{\t}_{1,n}^\Gamma)\rtimes\Gamma^n\,.
\]
We also consider the projections of these elements on the first factor $\exp(\hat{\t}_{1,n}^\Gamma)$:
\[
A_i^{\zz_0}:=\underline{A}_i^{\zz_0}(\bar{1},\bar{0})_{i,\ldots,n}
=F_H(\zz_0)F_H\left(\zz_0 - \sum_{j=i}^n \frac{\delta_j}{M}\right)^{-1}\in \exp(\hat{\t}_{1,n}^\Gamma)
\]
and 
\[
B_i^{\zz_0}:=\underline{B}_i^{\zz_0}(\bar{0},\bar{1})_{i,\ldots,n}=F_V(\zz_0)F_V\left(\zz_0 - \tau\,\sum_{j=i}^n \frac{\delta_j}{N}\right)^{-1}
e^{\frac{2\pi\i}{N}( x_i + \cdots + x_n)}\in \exp(\hat{\t}_{1,n}^\Gamma)\,.
\]

We finally introduce the simply connected domain $S_{\tau,n,\Gamma}$ consisting of $\zz\in\mathbb{C}^n$, with $z_i=a_i+b_i\tau$ ($a_i,b_i\in\mathbb{R}$) satisfying 
the following conditions: 
\begin{itemize}
\item for every $i<j$, $|a_i-a_j|<\frac1M$ and $|b_i-b_j|<\frac1N$;
\item for every $i<j$, $z_{ji}\notin\mathbb{R}_+\tau$. 
\end{itemize}
Note that $\mathfrak{S}_n(D_{\tau,n,\Gamma})\subset S_{\tau,n,\Gamma}$. We denote $F_S(\zz)$ the prolongation to $S_{\tau,n,\Gamma}$ 
of a given local solution $F(\zz)$ defined on $D_{\tau,n,\Gamma}$, and then consider for every $\sigma\in\mathfrak{S}_n$, 
\[
\underline{\sigma}^{\zz_0}:=F_S(\zz_0)F_S(\sigma\cdot\zz_0)^{-1}\sigma\in \exp(\hat{\t}_{1,n}^\Gamma)\rtimes\mathfrak{S}_n\,.
\]
Observe that the (unique) homotopy class of a path going from $\zz_0$ to $\sigma\cdot\zz_0$ represents the unique braid with underlying permutation 
$\sigma$ such that for every $i<j$, the $i$-th strand passes under the $j$-th strand whenever they cross (this is just a translation, in terms of braids, of the condition 
that $z_{ji}\notin\mathbb{R}_+\tau$). In other words, denoting this braid $\tilde{\sigma}$, $\underline{\sigma}^F=\mu_n^F(\tilde\sigma)$. 
As before, we also consider the projection of $\underline{\sigma}^{\zz_0}\in \exp(\hat{\t}_{1,n}^\Gamma)\rtimes\mathfrak{S}_n$ on the first factor: 
\[
\sigma^{\zz_0}:=\underline{\sigma}^{\zz_0}\sigma\in\exp(\hat{\t}_{1,n}^\Gamma)\,.
\]
\end{example}

Even though $\mu_n^{\zz_0}$ does not depend on the choice of local solution $F$, it is conjugated to a morphism that does depend on $F$. Indeed, one can define 
\[
\mu_n^F([\gamma]):=F(\zz_0)^{-1}\mu_n^{\zz_0}([\gamma])F(\zz_0)=F(h_\gamma\cdot\zz_0)^{-1}c_{h_\gamma} F(\zz_0)\,. 
\]
The resulting monodromy morphism $\mu_n^F$ does not depend on $\zz_0$ (because it is a ratio of two solutions of the ellipsitomic KZB system), but does depend on $F$. 
Whenever $F(\zz_0)=1$, we obvisouly have $\mu_n^F=\mu_n^{\zz_0}$. 

In what follows, we considering the monodromy morphism $\mu_n:=\mu_n^F$ associated with the particular solution $F=F_{\tau,n}$ from Section \ref{sec:twistedsol}. 

% 6.5

\section{Formul\ae~for $\mu_n: \on{B}_{1,n} \to \exp(\hat{\t}_{1,n}^\Gamma)\rtimes(\Gamma^n \rtimes \mathfrak{S}_n)$}\label{sec:twistedsol3}

\begin{lemma} \label{lemma:tholonomy}
If $\sigma=(12)\in\mathfrak{S}_n$ then $\tilde\sigma=\sigma_1$, and $\mu_n(\tilde\sigma)=e^{\pi\i t_{12}^\mathbf{0}}\sigma$. 
\end{lemma}
\begin{proof}
Only the last claim requires a proof. 
Let us consider $\zz$ such that $a_1<a_2$ (e.g.~$\zz\in D_{\tau,n,\Gamma}$), guaranteeing that $\sigma\zz=(z_2,z_1,z_3,\dots,z_n)\in S_{\tau,n,\Gamma}$. 
Recall the expansion 
\[
F(\zz)\simeq z_{21}^{t^\mathbf{0}_{12}}z_{31}^{t^\mathbf{0}_{13} + t^\mathbf{0}_{23}}
\cdots  z_{n1}^{t^\mathbf{0}_{1n} + \cdots  + t^\mathbf{0}_{n-1,n}}
\]
in the region $|z_{21}| \ll |z_{31}| \ll \cdots \ll |z_{n1}| \ll 1$. Hence $\sigma\cdot F(\zz)$ has a similar expansion, and 
\[
F(\sigma\zz)\simeq z_{12}^{t_{12}^\mathbf{0}}z_{32}^{t^\mathbf{0}_{13} + t^\mathbf{0}_{23}}
\cdots  z_{n2}^{t^\mathbf{0}_{1n} + \cdots  + t^\mathbf{0}_{n-1,n}}
\]
in the same region. 
With our choice of branch of log (see Remark \ref{branchremark}, and the definition of the domain $S_{\tau,n,\Gamma}$), one gets that 
$\log(z_{12})=\log(z_{21})-\pi\i$. Therefore 
\[
\mu_n(\tilde\sigma)=F(\sigma\zz)^{-1}\sigma\cdot F(\zz)\sigma \simeq e^{\pi\i t_{12}^\mathbf{0}}\sigma\,.
\]
The last equivalence is an equality, as $\mu_n(\tilde\sigma)$ is constant. 
\end{proof}

%\begin{lemma}
%$\tilde\mu_2(A^{M}_2)$, $\tilde\mu_2(B^{N}_2)$ and $\tilde\mu_2(P_{12}^{\alpha})$ belong to 
%$\on{exp}(\hat{\t}_{1,2}^\Gamma) \subset {\mathbf G}^\Gamma_2$. 
%\end{lemma}
%\begin{proof}
%If $F_\Gamma(\zz|\tau) : H^\Gamma_2 \to {\mathbf G}_2^\Gamma$ is a solution of the 
%ellipsitomic KZB equation for $n=2$, then $A_2^F = 
%F_\Gamma^H(\zz|\tau)F_\Gamma^H(\zz - \delta_2|\tau)^{-1}$ is the iterated integral, from 
%$\zz_0\in D^\Gamma_n$ to $\zz_0 - \delta_2$, of $ K_2(\zz|\tau)\in
%\hat{\t}_{1,2}^\Gamma$. Thus, $A_2^F\in \on{exp}(\hat{\t}_{1,2}^\Gamma)$. 
%Then, as $\tilde\gamma_2(A^M_2)$ is a conjugate of $(A^F_2)^M$, it belongs to 
%$\on{exp}(\hat{\t}_{1,2}^\Gamma)$ as $\on{exp}(\hat{\t}_{1,2}^\Gamma)\subset
%{\mathbf G}_2^\Gamma\rtimes \mathfrak{S}_2$ is normal. 
%One proves in the same way that $\tilde\gamma_2(B^N_2) 
%\in \on{exp}(\hat{\t}_{1,2}^\Gamma)$. Finally, as $\tilde\gamma_2(P_{12}^{\alpha})$ 
%is a conjugate of $\underline{\tilde{P}}_{12}^\alpha $ and thus also belongs to $\on{exp}(\hat{\t}_{1,2}^\Gamma)$.
%\end{proof}

Let $\Phi=\Phi_{\0}^{1,2,3}$ be the image in $\on{exp}(\hat\t_{1,3}^\Gamma)$ of the 
KZ associator $\Phi_{\on{KZ}}^{1,2,3}$ from Example \ref{exampleKZassoc} along the map $\on{exp}(\hat\t_{3})\to\on{exp}(\hat\t_{1,3}^\Gamma)$ 
given by $t_{ij}\mapsto  t_{ij}^\mathbf{0}$. Define
$$
\Phi_i:= \Phi_{\0}^{1\dots i-1,i,i+1\dots n}\cdots\Phi_{\0}^{1\dots n-2,n-1,n}\in \on{exp}(\hat\t_{1,n}^\Gamma).
$$

\begin{proposition}\label{proposition:abholonomy}
For every $n\geq3$, and every $i=2,\dots,n$, 
\[
\mu_n(A_i) = \Phi_i\mu_2(A_2)^{1\dots i-1,i\dots n}\Phi_i^{-1}
\quad\textrm{and}\quad
\mu_n(B_i)= \Phi_i\mu_2(B_2)^{1\dots i-1,i\dots n}\Phi_i^{-1}\,. 
\]
\end{proposition}

\begin{proof}
We first compute the monodromy $\mu_{i,n}:=\mu_n^G$ associated with another solution $G$ of the (first line of the) ellipsitomic KZB system: the one (for a fixed $\tau$) 
having the expansion
\[
G(\zz)\simeq z_{21}^{ t^\mathbf{0}_{12}}\cdots z_{i-1,1}^{ t^\mathbf{0}_{12}+\cdots+ t^\mathbf{0}_{1,i-1}}
z_{n,i}^{ t^\mathbf{0}_{i,n}+\cdots+ t^\mathbf{0}_{n-1,n}}\cdots z_{n,n-1}^{ t^\mathbf{0}_{n-1,n}}
\]
in the region where $|z_{21}|\ll \cdots \ll |z_{i-1,1}|\ll 1$ and $|z_{n,n-1}|\ll \cdots \ll |z_{n,i}|\ll 1$. 

We claim that 
\[
\mu_{i,n}(A_i) = \mu_2(A_2)^{1\dots i-1,i\dots n}
\quad\textrm{and}\quad
\mu_{i,n}(B_i)= \mu_2(B_2)^{1\dots i-1,i\dots n}\,,
\]
and only give the proof for $A_i$, as the proof for $B_i$ is the exact same. 
The element $A_i$ can be represented by a path inside the region $|z_{21}|\ll \cdots \ll |z_{i-1,1}|\ll 1$ and $|z_{n,n-1}|\ll \cdots \ll |z_{n,i}|\ll 1$, 
that keeps the coordinates $z_1,\dots,z_{i-1}$, as well as the differences between the remaining coordinates, fixed. 
Hence, computing $\mu_n^G(A_i)$ amounts to compute the monodromy along the same path 
for the differential equation
\[
\partial_{z_n}G(\zz)=\sum_{j=i}^nK_j(\zz|\tau)G(\zz)\,, 
\]
where $\zz=(z_1,\dots,z_{i-1},z_n+s_i,z_n+s_{i+1},\dots,z_n+s_{n-1},z_n)$. 
Now observe that  the difference $\sum_{j=i}^nK_i(\zz|\tau)-K_2(z_1,z_n|\tau)^{1\dots i-1,i\dots n}$ 
(where $K_2$ is from the $n=2$ points system) tends to $0$ whenever $z_j\to z_n$ for $j\geq i$ and $z_\ell\to z_1$ for $\ell<i$.
\footnote{
More precisely, 
\[
K_j(\zz|\tau)=-y_j+\sum_{\ell:\ell\neq j}\sum_{\alpha\in \Gamma}k_\alpha\big(\on{ad}(x_j),z_{j\ell}|\tau\big)(t_{j\ell}^\alpha)\,,
\]
where $k_\alpha(u,v|\tau)$ are formal power series in $u$ with coefficient being meromorphic functions in $v$, and satisfying the identity 
\[
k_\alpha(-u,-v|\tau)+k_{-\alpha}(u,v|\tau)=0\,.
\]
We refer to \cite{CaGo} for more details (see also the next chapter). In particular 
\[
k_\alpha\big(\on{ad}(x_\ell),z_{\ell j}|\tau\big)(t_{\ell j}^\alpha)+k_{-\alpha}\big(\on{ad}(x_j),z_{j\ell}|\tau\big)(t_{j\ell}^\alpha)=0\,,
\]
and thus 
\[
\sum_{j=i}^nK_j(\zz|\tau)=-\sum_{j=i}^ny_j+\sum_{j=i}^n\sum_{\ell=1}^{i-1}\sum_{\alpha\in\Gamma}k_\alpha\big(\on{ad}(x_j),z_{j\ell}|\tau\big)(t_{j\ell}^\alpha)\,.
\]
On the other hand, 
\[
K_2(z_1,z_n|\tau)=-\sum_{j=i}^ny_j+\sum_{\ell=1}^{i-1}\sum_{j=i}^n\sum_{\alpha\in\Gamma}k_\alpha\big(\on{ad}(x_j),z_{n1}|\tau\big)(t_{j\ell}^\alpha)\,.
\]
Therefore their difference indeed tends to $0$ whenever $z_j\to z_n$ for $j\geq i$ and $z_\ell\to z_1$ for $\ell<i$.
} 
Hence $\mu_{i,n}(A_i)=\mu_2(A_2)^{1\dots i-1,i\dots n}$. 

Finally, the two monodromy representations $\mu_n=\mu_n^F$ and $\mu_{i,n}=\mu_n^G$ are conjugated. Indeed, 
\[
\mu_n^F([\gamma])=\Phi_{F,G}(h_\gamma\cdot\zz)\mu_n^G([\gamma])\Phi_{F,G}(\zz)^{-1}\,,
\]
with $\Phi_{F,G}(\zz):=F(\zz)^{-1}G(\zz)$ being constant as it is a ratio of two solutions of the ellipsitomic KZB system. 
To conclude, we prove that $\Phi_{F,G}=\Phi_i$. For this we consider the rational universal KZ system from \cite[(2.2)]{DrGal} (with $\hbar=1$, and $t^{ij}=t_{ij}^\0$), 
and denote by $\tilde{F}$ (resp.~$\tilde{G}$) the solution of this KZ system that have the same expansion as $F$ (resp.~$G$). 
In the whole region where $|z_{ij}| \ll 1$ for every $i\neq j$, the ellipsitomic KZB system and the rational KZ system only differ by a holomorphic part, 
therefore $F\simeq\tilde{F}$ and $G\simeq\tilde{G}$ in this region. Therefore, as they are constant, $\Phi_{F,G}=\Phi_{\tilde{F},\tilde{G}}$. 
Finally, it is a standard fact that $\Phi_{\tilde{F},\tilde{G}}=\Phi_i$ (see again \cite{DrGal}). 
\end{proof}

Using similar techniques, one can actually prove that the restriction of $\mu_n$ on $\on{B}_n\subset \overline{\on{B}}_{1,n}$ coincides with the monodromy morphism for 
the rational KZ system from \cite[(2.2)]{DrGal} associated with the solution $\tilde{F}$ having the same expansion as $F$. 
In particular, $\mu_3(\sigma_2)=\Phi e^{\pi\i\bar{\t}_{23}^{\mathbf{0}}}(23)\Phi^{-1}$. 

% 6.6

\section{Algebraic relations for the ellipsitomic KZB associator}

We now finish the proof of Theorem \ref{theorem:twistedKZBass}. 
\begin{remark}
The results of Sections \ref{sec:twistedsol}, \ref{sec:twistedsol2} and \ref{sec:twistedsol3} remain true in the reduced case, and we will make use of 
the same notation as in the previous sections.
\end{remark}

Let us set $\underline{A}:=\mu_2(A_2)$ and  $\underline{B}:=\mu_2(B_2)$, both viewed in $\exp(\hat{\bar{\t}}_{1,2}^\Gamma)\rtimes\Gamma^2/\Gamma$. 
In other words, 
\[
\underline{A}=F_{\tau,2}(z,-\frac1M)^{-1}(-\overline{1},\overline{0})_2F_{\tau,2}(z,0)=F_{\tau,2}(z+\frac1M,0)^{-1}(\overline{1},\overline{0})_1F_{\tau,2}(z,0)
\]
and 
\[
\underline{B}=F_{\tau,2}(z,-\frac{\tau}{N})^{-1}(\overline{0},-\overline{1})_2F_{\tau,2}(z,0)=F_{\tau,2}(z+\frac{\tau}{N},0)^{-1}(\overline{0},\overline{1})_1F_{\tau,2}(z,0)\,.
\]
One can easily check that the pair $(\underline{A},\underline{B})$ coincides with the pair $\big(\underline{A}^\Gamma(\tau),\underline{B}^\Gamma(\tau)\big)$ from 
Section \ref{section:pair}. Indeed, if $F_{\tau,2}(z_1,z_2)$ is the solution of the ellipsitomic KZB system defined on $D_{\tau,2,\Gamma}$ with expansion 
$F_{\tau,2}(z_1,z_2)\simeq z_{21}^{t_{12}^{\mathbf{0}}}$ whenever $|z_{21}| \ll 1$, then $J(z)=F_{\tau,2}(z,0)$ is the solution 
of the differential equation \eqref{equationKZBonevariable} with expanson $J(z)\simeq (-z)^{t^{\mathbf{0}}}$ whenever $z\to 0$ from Section \ref{section:pair}. 

\medskip

The identity $A_{3}^{-1}A_{2} = \sigma_{1}A_{2}^{-1}\sigma_{1}$ obviously holding in $\overline{\on{B}}_{1,3}$, is equivalent to the identity 
$A_3=A_2\sigma_1^{-1}A_2\sigma_1^{-1}$. Applying the monodromy morphism $\mu_3$ therefore yields  
\begin{equation}\label{eqn:560bis}
\underline{A}^{12, 3} =  
  \Phi^{1, 2, 3}  \underline{A}^{1, 23} (\Phi^{1, 2, 3})^{- 1} e^{-\pi\i \bar{t}_{12}^{\0}}
  \Phi^{2, 1, 3}  \underline{A}^{2, 13}   (\Phi^{2, 1, 3})^{- 1} e^{-\pi\i \bar{t}_{12}^{\0}},
\end{equation}
that is \eqref{def:tell:ass:G:1bis}. 
Similarly, the identity $B_{3}=B_{2}\sigma_{1}^{-1}B_{2}\sigma_{1}^{-1}$ yields  
\begin{equation}\label{eqn:561}
\underline{B}^{12, 3} =  
  \Phi^{1, 2, 3}  \underline{B}^{1, 23} (\Phi^{1, 2, 3})^{- 1} e^{-\pi\i \bar{t}_{12}^{\0}}
  \Phi^{2, 1, 3}  \underline{B}^{2, 13}   (\Phi^{2, 1, 3})^{- 1} e^{-\pi\i \bar{t}_{12}^{\0}},
\end{equation}
that is \eqref{def:tell:ass:G:2bis}. 

In $\overline{\on{B}}_{1,3}$, on also has $(X_2,Y_3)=P_{23}$. Recalling that $X_2=A_3A_2^{-1}$ and $Y_3=B_3^{-1}$, one gets $P_{23}=(A_3A_2^{-1},B_3^{-1})$ which, 
after applying the monodromy morphism $\mu_3$, yields 
\begin{eqnarray}\label{eqn:562bis}
\Phi e^{2 \pi \i \bar{t}_{23}^{\tmmathbf{0}}} \Phi^{- 1} & = & \underline{{A}}^{12, 3}
  \Phi (\underline{{A}}^{1, 23})^{- 1}\Phi^{- 1} (\underline{{B}}^{12, 3})^{-
  1} \Phi \underline{{A}}^{1, 23}\Phi^{- 1} (\underline{{A}}^{12,
  3})^{- 1}  \underline{{B}}^{12, 3},
\end{eqnarray}
which is  \eqref{def:tell:ass:G:3bis}

%One can simply draw the r.h.s. of the first part of this double equation as follows: 
%we simplify the paths by just neglecting the associators and we suppose that the 
%central portion of the torus corresponds to the $\mathbf{0}$-labelled region 
%with respect to the sublattice $\Lambda_{\tau,\Gamma}$, where $\mathbf{0}=(\bar 0, \bar 0)$. 
%Then we enumerate the different movements 
%(read from left to right in the l.h.s of the equation) of the marked points in the twisted configuration space:
%$$
%\includegraphics[width=120mm]{EllipsitomicAssoc-drawer-14}
%$$
%We can see that the $z^\0_2$ is only braided with $z^\0_3$ since the $\tilde A$ and $\tilde B$ 
%moves don't change the labelling of the point that moves. 

\medskip

This proves that the pair $\big(\underline{A}^\Gamma(\tau),\underline{B}^\Gamma(\tau)\big)=(\underline{A},\underline{B})$ satisfies \eqref{def:tell:ass:G:1bis}, \eqref{def:tell:ass:G:2bis} 
and \eqref{def:tell:ass:G:3bis}. Hence, according to Remark \ref{rem:ass:tell} it satisfies \eqref{def:tell:ass:G:1}, \eqref{def:tell:ass:G:2} and \eqref{def:tell:ass:G:3}, and thus 
$e^\Gamma(\tau)=(A^\Gamma(\tau),B^\Gamma(\tau))$ defines an element in $\on{Ell}^\Gamma_{\on{KZB}}$.

This concludes the proof of Theorem \ref{theorem:twistedKZBass}.  \hfill$\Box$

\begin{remark}
If $\Gamma$ is trivial, we retrieve relations (22), (23), (25) and (26) from \cite{CEE}, up to some changes of convention 
(for the monodromy action, and for the open subset of ``base configurations'' of marked points). 
%The modularity relations of $e^\Gamma(\tau)$, depending on the chosen 
%congruence subgroup of $\on{SL}_2(\Z)$, will be investigated in 
%forthcoming works by the second author.
\end{remark}

\chapter{Number theoretic aspects: Eisenstein series}
\label{Section7}

In the previous chapter we studied (the first line of) the ellipsitomic KZB system \eqref{equation-system} of differential equations and deduced from it an element in the 
set of ellipsitomic associators over $\C$. One of the main ingredients defining this differential system is given by
\begin{equation}\label{k-gamma}
k_\gamma(x,z|\tau):=e^{-\frac{2\pi\i v}{N}x}{{\theta(z-\tilde\gamma+x|\tau)}
\over{\theta(z-\tilde\gamma|\tau)\theta(x|\tau)}}-{1\over x}\,,
\end{equation}
where $\tau \in \h$, $\gamma=(\bar{u},\bar{v}) \in \Gamma:=\mathbb{Z}/M\mathbb{Z}\times \mathbb{Z}/N\mathbb{Z}$ and 
$\tilde{\gamma} = \frac{u}{M}+\frac{v}{N}\tau  \in \Lambda_{\tau, \Gamma}:=\frac1M\mathbb{Z}+\frac{\tau}N\mathbb{Z}$ is any lift of $\gamma$.  
Here we implicitely used the canonical identification 
$\Gamma\simeq \Lambda_{\tau,\Gamma}/\Lambda_{\tau}$, where $\Lambda_{\tau}:=\mathbb{Z}+\tau\mathbb{Z}$. 

\medskip

Denote by $g_{\gamma} (x,z| \tau) \assign \partial_x k_{\gamma} (x,z| \tau)$ its partial derivative with respect to $x$.
In this chapter we take a closer look at the functions $A_{s,\gamma}(\tau)$, defined in \cite[Subsection 3.3]{CaGo} as the Taylor 
coefficients of $g_{- \gamma} (x,0| \tau)$: 
\[
 g_{- \gamma} (x,0| \tau) =  \sum_{s \geq 0} A_{s, \gamma} (\tau) x^s.
\]
After a brief account on Eisenstein series for congruence subgroups, we express $A_{s,\gamma}(\tau)$ 
in terms of these Eisenstein series, giving evidence that they should be quasi-modular forms for the group $\on{SL}_2^\Gamma$. 
%We also determine their normalized variants $\tilde{A}_{s,\gamma}(\tau)$ whose constant term on their $q_N$-expansion, 
%where $q_N=e^{2\pi\i \tau/N}$, is equal to 1.

\medskip

We end the chapter with some perspectives about ellipsitomic Grothendieck--Teichm\"uller theory and twisted elliptic multiple zeta values. 

% 7.1

\section{Eisenstein series for $\on{SL}^{\Gamma}_2$}

We refer to \cite{Diamond-Shurman,Zagier} for generalities about modular forms and Eisenstein series. 

Recall the \textit{Eisenstein series} $G_{s}$, defined for all integers $s\geq 2$ by
\[
G_{s}(\tau) := \sum_{n=-\infty}^{+\infty} \left( \sum_{\underset{m\neq0 \text{ if } n=0}{m=-\infty}}^{+\infty} \frac{1}{(m+n\tau)^s}\right)\,.
\]
The Eisenstein series $G_s$ are modular forms for $\on{SL}_2(\Z)$ of weight $s$ for $s\geq3$, and $G_2$ is quasimodular (in the sense of \cite{KaZa}). 
One easily sees that $G_{s}(\tau)=0$ whenever $s$ is odd, and that the value of $G_s$ at the cusp $\i\infty$ for an even $s=2n$ is 
$2\zeta(s)=(-1)^{n+1}\frac{(2\pi)^{2n}B_{2n}}{(2n)!}$, where $B_s$ are Bernoulli numbers and $\zeta$ is the Riemann zeta function.  
Hence, for every integer $s \geq 2$, one defines the \textit{normalized Eisenstein series} $E_s (\tau) \assign \frac{G_s (\tau)}{2 \zeta (s)}$. 

Let us now introduce the functions $G_{s}(z|\tau)$ defined for $(z|\tau)\in\C\times\mathfrak{h}$ such that $z\notin\Lambda_\tau$, and for every integer $s\geq 2$ as 
\[ 
G_{s}(z|\tau) := \sum_{n=-\infty}^{+\infty}\sum_{m=-\infty}^{+\infty}\frac{1}{(m + n \tau + z)^s}\,.
\]
For $s \geq 3$, the series  $G_{s}(z|\tau)$ is absolutely and locally uniformly convergent, and defines a holomorphic function on $\mathfrak{h}$ 
for every $z\in\C-\Lambda_\tau$. 
For $s=2$, it is still locally uniformly convergent, and thus still holomorphic, but is no longer absolutely convergent (so that we are not allowed to 
re-order terms in the series). 

For a fixed $\tau\in\mathfrak{h}$, one can see that $G_s(-|\tau)$ is $\Lambda_\tau$-periodic, so that $G_{s}(z|\tau)$ only depends on the class 
$[z]\in E_\tau^\times=(\C-\Lambda_\tau)/\Lambda_\tau$. 
Hence, for $\gamma=(\bar{u},\bar{v})\in\Gamma\simeq \Lambda_{\tau,\Gamma}/\Lambda_\tau\subset E_\tau$, we can define 
\[
\text{\gls{Gsg}}\assign
\begin{cases}
G_{s}(z|\tau)~\textrm{ with }~z=\frac{u}{M}+\frac{v}{N}\tau & \textrm{ if }\gamma\neq\mathbf{0}\,, \\
G_s(\tau) & \textrm{ else}\,.
\end{cases}
\]

\begin{proposition}\label{modularity}
Let $s\geq 3$ and $\gamma\in\Gamma$. 
The function $G_{s,\gamma}$ is a modular form of weight 
$s$ with respect to $\on{SL}_2^\Gamma$.  
\end{proposition}

\begin{proof}
This is a classical fact for $\gamma=\0$ (see \cite{Diamond-Shurman,Zagier}). The proof is probably standard and known to experts even in the case 
$\gamma\neq\mathbf{0}$, but we provide it here as we could not find a reference for it. We will first prove weak modularity, and then holomorphy at the cusps. 

\begin{lemma}[Weak modularity]\label{lemma:weakmodularity}
For every $s\geq 3$, $\tau\in\mathfrak{h}$, $z\in\C-\Lambda_\tau$, and $\alpha=\begin{pmatrix}a&b\\c&d\end{pmatrix}\in\on{SL}_2(\Z)$, 
\[
G_{s}\big(\alpha \cdot(z|\tau)\big) =(c\tau+d)^s G_{s}(z|\tau)\,.
\]
In particular, if $s\geq 3$ and $\gamma\in\Gamma-\{\mathbf{0}\}$, $G_{s,\gamma}$ is weakly modular of weight $s$ with respect to $\on{SL}_2^\Gamma$. 
\end{lemma}
Recall that $\alpha \cdot(z|\tau):=\left(\frac{z}{c\tau+d}|\frac{a\tau+b}{c\tau+d}\right)$. 
\begin{proof}
For $z\notin\Lambda_\tau$, we compute:  
\begin{eqnarray*}
G_{s}\big(\alpha \cdot(z|\tau)\big)
& = & \underset{(m, n) \in \mathbb{Z}^2}{\sum} \frac{1}{\big(m + n(\frac{a\tau+b}{c\tau+d})+\frac{z}{c\tau+d}\big)^{s}} \\
& = & (c\tau+d)^s \underset{(m, n) \in \mathbb{Z}^2}{\sum} \frac{1}{\big(m(c\tau+d)+n(a\tau+b)+z\big)^{s}} \\
& = & (c\tau+d)^s \underset{(m, n) \in \mathbb{Z}^2}{\sum} \frac{1}{(m+n\tau+z)^{s}} = G_s(z|\tau)\,.
\end{eqnarray*}
Observe that if $(z'|\tau')=\alpha\cdot(z|\tau)$ with $z'=x+\tau' y$, then $z=(c\tau+d)x+(a\tau+b)y$. 
Hence the inverse of the action of $\alpha$ gives an isomorphism $E_{\tau'}\to  E_{\tau}$ that is precisely given by 
$x+\tau' y\mapsto (c\tau+d)x+(a\tau+b)y$. 

Now assume that $\alpha\in\on{SL}_2^\Gamma$, which means that $a \equiv 1$ mod $M$, $d \equiv 1$ mod $N$, $b\equiv0$ mod $N$ 
and $c\equiv0$ mod $M$. In particular, $z'\in\Lambda_{\tau',\Gamma}$ if and only if $z\in\Lambda_{\tau,\Gamma}$, and moreover the induced 
composed isomorphism 
\[
\Gamma\simeq\Lambda_{\tau',\Gamma}/\Lambda_{\tau'} \, \tilde\longrightarrow \, \Lambda_{\tau,\Gamma}/\Lambda_\tau\simeq\Gamma
\]
is the identity. Indeed, $z'=\frac{u}{M}+\frac{v}{N}\tau'\in\Lambda_{\tau',\Gamma}$ is sent to 
$\frac{u}{M}d+\frac{v}{N}b+\big(\frac{v}{N}a+\frac{u}{M}c\big)\tau\in \frac{u}{M}+\frac{v}{N}\tau+\Lambda_\tau$. 
Therefore, if $\gamma\in\Gamma-\{\mathbf{0}\}$, 
\[
G_{s,\gamma}(\alpha\cdot\tau)	= G_s\Big(\alpha\cdot\big(\frac{u}{M}+\frac{v}{N}\tau|\tau\big)\Big)
											= (c\tau+d)^sG_s\big(\frac{u}{M}+\frac{v}{N}\tau|\tau\big)
											= G_{s,\gamma}(\tau)\,.
\]
This ends the proof of the Lemma. 
\end{proof}
\begin{remark}\label{remark:G2periodic}
The proof does not work in the case $s=2$ because we need to reorder the terms of the series to prove the required identity. 
Nevertheless, for elements of the form $\alpha=\begin{pmatrix}1 & H \\ 0 & 1\end{pmatrix}$, we can keep $n$ fixed and apply a shift by $nH$ in 
the internal series (the one running over $m$). Hence, for these $\alpha$'s, the required identity is true even in the case $s=2$. 
\end{remark}
As the function $G_{s,\gamma}$ is holomorphic on $\h$, it remains to show that it is also holomorphic at all cusps for $\on{SL}_2^\Gamma$. 
Recall that these cusps are orbits of the action of $\on{SL}_2^\Gamma$ on $\mathbb{P}^1(\mathbb{Q})=\mathbb{Q}\cup\{\infty\}$. 
\begin{lemma}\label{lemma:holomorphyatcusps}
For every $s\geq 3$ and $\gamma\in\Gamma-\{\mathbf{0}\}$, the function $G_{s,\gamma}$ is holomorphic at all cusps for $\on{SL}_2^\Gamma$.
\end{lemma}
\begin{proof}
Recall that for every $\alpha=\begin{pmatrix}a&b\\c&d\end{pmatrix}\in\on{SL}_2(\Z)$, 
the width of the cusp $[\alpha(\infty)]$ is the smallest positive integer $H$ such that $\alpha\begin{pmatrix}1&H\\0&1\end{pmatrix}\alpha^{-1}\in \on{SL}_2^\Gamma$. 
This condition is equivalent to the following requirements: $M\big|acH$, $M\big|c^2H$, $N\big|a^2H$, and $N\big|acH$. Since $a$ and $c$ are relatively prime, 
this in turn boils down to the condition that $M\big|cH$ and $N\big|aH$. 

Now observe that, in order to prove that $G_{s,\gamma}$ is holomorphic at this cusp, one is reduced to prove that the function 
\[
G_{s,\gamma,\alpha}:\tau\longmapsto (c\tau+d)^{-s}G_{s,\gamma}(\alpha\cdot\tau)
\]
is holomorphic at $\infty$. From the modularity property of $G_{s,\gamma}$, we know that $G_{s,\gamma,\alpha}$ is $H$-periodic, 
and thus descends to a holomorphic function $\hat{G}_{s,\gamma,\alpha}(q)=G_{s,\gamma,\alpha}(\tau)$ defined on the punctured unit disk, 
with $q=e^{\frac{2\pi\i\tau}{H}}$. 
Hence it remains to show that $G_{s,\gamma,\alpha}$ has a $q$-expansion with non-negative Fourier coefficients. Note furthermore that, according to 
Lemma \ref{lemma:weakmodularity}, $G_{s,\gamma,\alpha}(\tau)=\tilde{G}_{s}(z|\tau)$ with 
\[
z=(c\tau+d)\frac{u}{M}+(a\tau+b)\frac{v}{N}=\frac{u}{M}d+\frac{v}{N}b+(\frac{v}{N}a+\frac{u}{M}c)\tau=x+\frac{K}{H}\tau\,,
\] 
$K=u\frac{aH}{N}+v\frac{cH}{M}\in\mathbb{Z}$ (and $x=\frac{u}{M}d+\frac{v}{N}b\in\mathbb{Q}$). 
We let $K=QH+R$ be the euclidean division of $K$ by $H$, define $w:=x+\frac{R}{H}\tau$, and compute: 
\begin{eqnarray*}
G_{s,\gamma,\alpha}(\tau)	& = & \sum_{(m,n)\in\Z^2}\frac{1}{(m+n\tau+z)^s} = \sum_{(m,n)\in\Z^2}\frac{1}{(m+n\tau+w)^s} \\
										& = & \sum_{m\in\Z}\frac{1}{(m+w)^s}+\sum_{n\geq 1}\sum_{m\in\Z}\frac{1}{(m+n\tau+w)^s}
																								+\sum_{n\geq1}\sum_{m\in\Z}\frac{1}{(m-n\tau+w)^s}
\end{eqnarray*}
Let us show that the three series in the last expression have a $q$-expansion with non-negative coefficients, and start with $\sum_{m\in\Z}(m+w)^{-s}$. 
If $w\in\mathbb{R}$ then it is constant in $\tau$, and we are done. If $w\notin\mathbb{R}$, a standard calculation shows that 
\[
\sum_{m\in\Z}\frac{1}{(m+w)^s}	=  \frac{(-2\pi\i)^s}{(s-1)!}\sum_{r=1}^{+\infty}r^{s-1}e^{2\pi\i rw} 
												= \frac{(-2\pi\i)^s}{(s-1)!}\sum_{r=1}^{+\infty}r^{s-1}e^{2\pi i rx}q^{Rr}\,.
\]
The proofs for both double series are identical, hence we restrict ourselves to the first one, and compute: 
\[
\sum_{n\geq 1}\sum_{m\in\Z}\frac{1}{(m+n\tau+w)^s}=\sum_{n\geq1}\frac{(-2\pi\i)^s}{(s-1)!}\sum_{r=1}^{+\infty}r^{s-1}e^{2\pi i rx}q^{(nH+R)r}
=\sum_{k\geq1}\left(\sum_{\underset{\frac{k}{r}\equiv R\textrm{ mod }H}{r|k}}r^{s-1}e^{2\pi i rx}\right)q^k\,.
\]
This ends the proof of the Lemma. 
\end{proof}
The proof of Proposition \ref{modularity} is now completed.
\end{proof}
\begin{remark}
It follows from the proof of Lemma \ref{lemma:holomorphyatcusps} that in many cases (i.e.~whenever $w\notin\mathbb{R}$), 
$G_{s,\gamma}$ actually vanishes at the corresponding cusp. In the case of the cusp $[\infty]$, $G_{s,\gamma}$ does not vanish at the cusp only if 
$\gamma=(\bar{u},\bar{0})$, $u\in\{1,\dots, M-1\}$. In this case, the value at the cusp is $\zeta(s,u/M)+(-1)^s\zeta(s,-u/M)-\big(M/u\big)^s$, 
where
\[
\zeta(s,z)  \assign \underset{m\geq0}{\sum} \frac{1}{(m + z)^{s}}
\]
is the Hurwitz zeta function. 
\end{remark}

\begin{remark}\label{conjectural-remark}
It is likely that, using a variation on Hecke's trick (see e.g.~\cite[Proposition 6]{Zagier}), one could prove that 
$G_{2,\gamma}$ is quasi-modular with respect to $\on{SL}^{\Gamma}_2$. 
\end{remark}

% 7.2

\section{The coefficients $A_{s,\gamma}(\tau)$}

Let us recall some standard properties of the Weierstrass function $\wp : \C\times\mathfrak{h}\longrightarrow \C$ given by
\[
\wp (z|\tau) = \frac{1}{z^2}+\sum_{(m,n)\in\Z^2-\{(0,0)\}}\left(\frac{1}{(z+m+n\tau)^2}-\frac{1}{(m+n\tau)^2}\right)=G_2(z|\tau)-G_2(\tau).
\]
In the variable $z$, it is even, periodic with respect to $\Lambda_\tau$, and meromorphic with poles of order two in $\Lambda_\tau$. 
There exists a constant $c\in \C$ such that 
\[
\wp (z | \tau) = - \partial_z^2 \log \big(\theta (z| \tau)\big) + c\,.
\]
In a suitable punctured neighbourhood of $0$ (e.g.~the maximal punctured open disk centred at $0$ which does not contain 
any lattice point), there is a Laurent expansion
\[
\wp (z | \tau) = \frac{1}{z^2} + \underset{k = 0}{\overset{\infty}{\sum}}
   \frac{f^{(2 k)} (0)}{(2 k) !} z^{2 k} = \frac{1}{z^2} + \underset{k = 1}{\overset{\infty}{\sum}}
   (2 k + 1) G_{2 k + 2} (\tau) z^{2 k}, \]
where  $f (z) = \wp (z|\tau) - \frac{1}{z^2}$. 

\begin{proposition}
For every $s\geq0$, $A_{s,\0}(\tau)=-(s+1)G_{s+2}(\tau)$. 
\end{proposition}
Therefore $A_{s,\0}(\tau)$ is a modular form of weight $s+2$ for $\on{SL}_2^\Gamma$ whenever $s>0$, while $A_{0,\0}(\tau)$ is only quasi-modular (of weight $^2$). 
\begin{proof}
This is proven in \cite{CEE}. Roughly, one first sees that $g_{\0}(x,0|\tau)=\partial_x^2\log \big(\theta (x| \tau)\big)+1/x^2$, 
which proves the required equality for $s\geq1$. Then a specific analysis of the constant term (see e.g.~\cite[\S 4.1]{CEE}) 
tells us that $g_{\0}(0,0|\tau)=4\pi\i\partial_\tau \log\eta(\tau)=-G_2(\tau)$, where $\eta$ is the Dedekind eta function. 
\end{proof}

Let now $\gamma=(\bar{u},\bar{v})\in\Gamma-\{\0\}$, and $\tilde\gamma=\frac{u}{M}+\frac{v}{N}\tau\in\Lambda_{\tau,\Gamma}-\Lambda_\tau$ 
be a lift of $\gamma$. Recall that we are interested in the Taylor coefficients $A_{s,\gamma}(\tau)$ of 
\[
g_{-\gamma} (x,0| \tau)=\partial_x\left(e^{\frac{2\pi\i v}{N}x}\frac{\theta(\tilde\gamma+x|\tau)}{\theta(\tilde\gamma|\tau)\theta(x|\tau)}-\frac{1}{x}\right)\,.
\]
We define 
$F_\gamma(x|\tau):=e^{\frac{2\pi\i v}{N}x}\frac{\theta(\tilde\gamma+x|\tau)}{\theta(\tilde\gamma|\tau)\theta(x|\tau)}$, 
so that $g_{-\gamma} (x,0| \tau)=\partial_xF_\gamma(x|\tau)+\frac{1}{x^2}$. 
\begin{lemma}
If we define
\[
a_{1,\gamma}(\tau):=\frac{2\pi\i v}{N}+\frac{\partial_x\theta(\tilde\gamma|\tau)}{\theta(\tilde\gamma|\tau)}\quad\textrm{and}\quad
a_{s,\gamma}(\tau):=(-1)^s\frac{G_{s}(\tau)-G_{s,\gamma}(\tau)}{s}\quad (s\geq2)\,,
\]
then $F_\gamma(x|\tau)=\frac{1}{x}\exp\left(\underset{s\geq 1}{\sum}a_{s,\gamma}(\tau)x^s\right)$. 
\end{lemma}
\begin{proof}
We first compute: 
\[
\partial_x^2\log\big(F_\gamma(x|\tau)\big)
=\partial_x^2\log\big(\theta(\tilde\gamma+x|\tau)\big)-\partial_x^2\log\big(\theta(x|\tau)\big)
=G_2(x|\tau)-G_2(\tilde\gamma+x|\tau)
\]
A simple calculation shows that
\[
G_2(x|\tau)=\frac{1}{x^2}+\sum_{s\geq0}(-1)^s(s+1)G_{s+2}(\tau)x^s
\]
and that 
\[
G_2(\tilde\gamma+x|\tau)=\sum_{s\geq0}(-1)^s(s+1)G_{s+2,\gamma}(\tau)x^s\,.
\]
Hence 
\[
\partial_x^2\log\big(F_\gamma(x|\tau)\big)-\frac{1}{x^2}=\sum_{s\geq0}(-1)^s(s+1)\big(G_{s+2}(\tau)-G_{s+2,\gamma}(\tau)\big)x^s\,.
\]
Knowing that $\left(\frac{1}{x}-\frac{\partial_x\theta(x|\tau)}{\theta(x|\tau)}\right)_{x=0}=0$, we obtain 
\[
\left(\partial_x\log\big(F_\gamma(x|\tau)\big)+\frac{1}{x}\right)_{x=0}
=\frac{2\pi\i v}{N}+\frac{\partial_x\theta(\tilde\gamma|\tau)}{\theta(\tilde\gamma|\tau)}=a_{1,\gamma}(\tau)\,,
\]
and thus 
\[
\partial_x\log\big(F_\gamma(x|\tau)\big)+\frac{1}{x}=a_{1,\gamma}(\tau)+\sum_{s\geq0}(-1)^s\big(G_{s+2}(\tau)-G_{s+2,\gamma}(\tau)\big)x^{s+1}\,.
\]
Finally, since $\Big(\log(x)-\log\big(\theta(x|\tau)\big)\Big)_{x=0}=0$, we get that 
$\left(\log\big(F_\gamma(x|\tau)\big)+\log(x)\right)_{x=0}=0$, 
and thus 
\[
\log\big(F_\gamma(x|\tau)\big)+\log(x)=a_{1,\gamma}(\tau) x+\sum_{s\geq0}(-1)^s\frac{G_{s+2}(\tau)-G_{s+2,\gamma}(\tau)}{s+2}x^{s+2}
=\sum_{s\geq 1}a_{s,\gamma}(\tau)x^s\,.
\]
This shows that $F_\gamma(x|\tau)=\frac{1}{x}\exp\left(\underset{s\geq 1}{\sum}a_{s,\gamma}(\tau)x^s\right)$. 
\end{proof}
As a consequence, we get 
\[
F_\gamma(x|\tau)-\frac{1}{x}=\frac{\exp\left(\underset{s\geq 1}{\sum}a_{s,\gamma}(\tau)x^s\right)-1}{x}\,, 
\]
so that 
\[
\sum_{s \geq 0} A_{s, \gamma}(\tau):=g_{-\gamma} (x,0| \tau)=\partial_x\left(\frac{\exp\left(\underset{s\geq 1}{\sum}a_{s,\gamma}(\tau)x^s\right)-1}{x}\right)\,.
\]
Hence $A_{s,\gamma}$ can be expressed as an explicit linear combination of products of the form 
$a_{s_1,\gamma}\cdots a_{s_k,\gamma}$, with $s_1+\cdots+s_k=s+2$, 
and we expect $A_{s,\gamma}$ to be quasi-modular of weight $s+2$ with respect to $\on{SL}_2^\Gamma$ 
(as hinted from Remark \ref{conjectural-remark} and Proposition \ref{modularity}).

\section{Concluding remarks and outlook}

Observe that $\bar{\mathfrak{t}}_{1,2}^{\Gamma}$ is the free Lie algebra generated by $x:=\bar x_1$, $y:=\bar y_2$ and $t^\alpha:=\bar t_{12}^\alpha$, 
for $\alpha\in\Gamma-\{\0\}$. Moreover, the element defining the differential equation \eqref{equationKZBonevariable} lies in the (degree completion of the) 
ideal generated by $y$ and $t^\alpha$, $\alpha\in\Gamma-\{\0\}$. 
By Lazard's elimination theorem (see \cite[Theorem 1]{Laz}), as a Lie algebra this ideal is isomorphic to the free Lie algebra 
generated by $y_n$ and $t^\alpha_n$, $n\geq0$ and $\alpha\in\Gamma-\{\0\}$; the isomorphism sends $y_n$ to $\on{ad}(x)^n(y)$ and 
$t^\alpha$ to $\on{ad}(x)^n(t^\alpha)$. 

As a consequence, $A^\Gamma(\tau)$ and $B^\Gamma(\tau)$ can be seen as elements of the formal power series algebra 
$\C\langle \langle y_n, t_n^\alpha\,|\, n\geq0,\alpha\in\Gamma-{\textbf 0} \rangle \rangle$. 
The coefficients of these series can be computed as iterated integrals, and are $\Gamma$-twisted versions of Enriquez's elliptic analogs of 
multiple zeta values \cite{En3}. 

This approach to elliptic multiple zeta values at torsion points seems different to that of the work of 
Broedel--Matthes--Richter--Schlotterer \cite{M2}. The relation between the twisted elliptic multiple zeta values obtained in this paper and 
that in \cite{M2} deserves further investigations. A comparison with the values at torsion points of Goncharov's multiple elliptic polylogarithms 
\cite[Section 8]{Go} would also be interesting. 

\bigskip

Finally, in addition to the agebraic properties of $e^\Gamma(\tau)$, that are essentially given by Theorem \ref{theorem:twistedKZBass}, it would be 
interesting to study its analytic and modularity properties. 
In the elliptic case, when $\Gamma$ is the trivial group, this was done in \cite[\S 5.4 \& \S 5.5]{En2}, and we expect something similar in the more general 
ellipsitomic case. 

For the analytic properties of the ellipsitomic associator, it amounts to understanding how $e^\Gamma(\tau)$ depends on small variations of the modulus $\tau$. 
For that, one can use the second line of the ellipsitomic KZB system \eqref{equation-system}, and compute $\partial_\tau e^\Gamma(\tau)$. 
Indeed, recall from \cite[Subsection 2.3]{CaGo} that $\delta_{s, \gamma}$ acts as a derivation on 
$\bar{\t}_{1,2}^\Gamma$. We can modify it in the following way, by introducing a new derivation 
\[
\varepsilon_{s, \gamma} \assign \delta_{s, \gamma} - 2[(\tmop{ad} x)^s t^{- \gamma}, -]\,.
\]
Then the second line of the ellipsitomic KZB system \eqref{equation-system} for $n=2$ reads as 
\[
2\pi\on{i}\partial_\tau F(z|\tau)
=-\left(\Delta_0+\frac12\sum_{s\geq0}\sum_{\gamma\in\Gamma}A_{s,\gamma}(\tau)\varepsilon_{s, \gamma}+O(z)\right)\cdot F(z|\tau)\,,
\]
where $z=z_{12}$ and $O(z)$ denotes a term of the form $zf(z|\tau)$, with $f$ being holomorphic on $\C\times\mathfrak{H}$. 
Hence, going along the lines of \cite[\S 5.4]{En2} on can prove that 
\[
2 \pi \i\partial_\tau A^\Gamma(\tau)=
-\left(\Delta_0 + \frac12\underset{\gamma \in \Gamma}{\sum} \underset{s \geqslant 0}{\sum}A_{s, \gamma}(\tau) \varepsilon_{s, \gamma} \right)
\cdot A^\Gamma(\tau)
\]
and 
\[
2 \pi \i\partial_\tau B^\Gamma(\tau)=
-\left(\Delta_0 + \frac12\underset{\gamma \in \Gamma}{\sum} \underset{s \geqslant 0}{\sum}A_{s, \gamma}(\tau) \varepsilon_{s, \gamma} \right)
\cdot B^\Gamma(\tau)\,.
\]
The derivation $\varepsilon_{s, \gamma}$ shall be relevant for the study of the ellipsitomic Grothendieck-Teichm\"uller 
group, as well as of a yet to be defined analog of Tsunogai's special derivation algebra from \cite{Ts} in the ellipsitomic case.

\appendix
%    Include appendix "chapters" here.
\chapter{An alternative presentation for $\PaB_{e\ell\ell}^\Gamma$}

In this appendix, we provide an alternative presentation for $\PaB_{e\ell\ell}^\Gamma$, 
that we use in chapter \ref{Section6} when proving that the monodromy of the ellipsitomic KZB 
connection indeed gives rise to an ellipsitomic associator. 

\section{An alternative presentation for $\PaB_{e\ell\ell}$}

We first state and prove the result when the group $\Gamma$ is trivial. 

\medskip

\paragraph{\underline{The relations \eqref{eqn:N1bis} and \eqref{eqn:N2bis}}}
We introduce three new relations, which are satisfied in the automorphism group 
of the object $(12)3$ in $\PaB_{e\ell\ell}$ (this can be seen topologically): 
\begin{flalign}
& A^{12,3} = \Phi^{1,2,3}A^{1,23}(\Phi^{1,2,3})^{-1}\tilde R^{1,2}\Phi^{2,1,3}A^{2,13} (\Phi^{2,1,3})^{-1}\tilde R^{2,1}\,, 
\tag{N1bis}\label{eqn:N1bis} \\
& B^{12,3} = \Phi^{1,2,3}B^{1,23}(\Phi^{1,2,3})^{-1}\tilde R^{1,2}\Phi^{2,1,3}B^{2,13} (\Phi^{2,1,3})^{-1}\tilde R^{2,1} \,,
\tag{N2bis}\label{eqn:N2bis} \\ 
& \Phi^{1,2,3}R^{2,3}R^{3,2}(\Phi^{1,2,3})^{-1} =\big(A^{12,3}\Phi^{1,2,3}(A^{1,23})^{-1}(\Phi^{1,2,3})^{-1}\,,\,(B^{12,3})^{-1}\big)\,.
\tag{Ebis}\label{eqn:Ebis}
\end{flalign}
For instance, equations \eqref{eqn:N1bis} and \eqref{eqn:N2bis} can be depicted as
\begin{center}
\begin{align}\tag{\ref{eqn:N1bis},\ref{eqn:N2bis}}
\begin{tikzpicture}[baseline=(current bounding box.center)]
\tikzstyle point=[circle, fill=black, inner sep=0.05cm] 
\node[point, label=above:$(1$] at (1,1) {};
 \node[point, label=below:$(1$] at (1,0) {};
 \node[point, label=above:$2)$] at (1.5,1) {};
 \node[point, label=below:$2)$] at (1.5,0) {};
 \node[point, label=above:$3$] at (3,1) {};
 \node[point, label=below:$3$] at (3,0) {};
 \draw[-,thick] (1,1) .. controls (1,0.5) and (1,0.5).. (1.5,0.5); 
 \draw[->,thick] (1.5,0.5) .. controls (1,0.5) and (1,0.5).. (1,0.05); 
 \draw[-,thick] (1.5,1) .. controls (1.5,0.5) and (1.5,0.5).. (2,0.5); 
 \draw[->,thick] (2,0.5) .. controls (1.5,0.5) and (1.5,0.5).. (1.5,0.05); 
\node[point, white, label=left:$\pm$] at (1,0.5) {};
 \draw[->,thick] (3,1) .. controls (3,0.5) and (3,0.5).. (3,0.05);
\end{tikzpicture} 
\qquad = \qquad
\begin{tikzpicture}[baseline=(current bounding box.center)] % pour avoir des fleches au milieu des courbes
\tikzstyle point=[circle, fill=black, inner sep=0.05cm]
 \node[point, label=above:$(1$] at (1,4) {};
 \node[point, label=below:$(1$] at (1,0) {};
 \node[point, label=above:$2)$] at (1.5,4) {};
 \node[point, label=below:$2)$] at (1.5,0) {};
 \node[point, label=above:$3$] at (3,4) {};
 \node[point, label=below:$3$] at (3,0) {};
 \draw[-,thick] (1,4) .. controls (1,3.75) and (1,3.75).. (1,3.5);
 \draw[-,thick] (1.5,4) .. controls (1.5,3.75) and (2.5,3.75).. (2.5,3.5);
 \draw[-,thick] (1,3.5) .. controls (1,3.25) and (1,3.25).. (1.5,3.25); 
 \draw[-,thick] (1.5,3.25) .. controls (1,3.25) and (1,3.25).. (1,3); 
\node[point, white, label=left:$\pm$] at (1,3.25) {};
 \draw[-,thick] (2.5,3.5) .. controls (2.5,3.25) and (2.5,3.25).. (2.5,3);
 \draw[-,thick] (1,3) .. controls (1,2.75) and (1,2.75).. (1,2.5);
 \draw[-,thick] (2.5,3) .. controls (2.5,2.75) and (1.5,2.75).. (1.5,2.5);
\draw[-,thick] (1.5,2.5) .. controls (1.5,2.25) and (1,2.25).. (1,2); 
 \node[point, ,white] at (1.25,2.25) {};
 \draw[-,thick] (1,2.5) .. controls (1,2.25) and (1.5,2.25).. (1.5,2);
 \draw[-,thick] (1,2) .. controls (1,1.75) and (1,1.75).. (1,1.5);
 \draw[-,thick] (1.5,2) .. controls (1.5,1.75) and (2.5,1.75).. (2.5,1.5);
 \draw[-,thick] (1,1.5) .. controls (1,1.25) and (1,1.25).. (1.5,1.25); 
 \draw[-,thick] (1.5,1.25) .. controls (1,1.25) and (1,1.25).. (1,1); 
\node[point, white, label=left:$\pm$] at (1,1.25) {};
 \draw[-,thick] (2.5,1.5) .. controls (2.5,1.25) and (2.5,1.25).. (2.5,1);
 \draw[-,thick] (1,1) .. controls (1,0.85) and (1,0.85).. (1,0.5);
 \draw[-,thick] (2.5,1) .. controls (2.5,0.85) and (1.5,0.85).. (1.5,0.5);
 \draw[->,thick] (1.5,0.5) .. controls (1.5,0.25) and (1,0.25).. (1,0.05); 
 \node[point, ,white] at (1.25,0.25) {};
 \draw[->,thick] (1,0.5) .. controls (1,0.25) and (1.5,0.25).. (1.5,0.05);
 \draw[->,thick] (3,4) .. controls (3,0.75) and (3,0.75).. (3,0.05);
\end{tikzpicture}
\end{align}
\end{center}
%It expresses a kind of ribbon property of $A^{12,3}$. Indeed, the element $A^{12,3}$ can be understood as a ribbon traveling on the surface of a cylinder (made with an $A$-cycle):
%$$
%  \includegraphics[height=8cm]{A-0-12-3.pdf}
%%  \caption{$A^{12, 3}$}
%$$

\medskip

\paragraph{\underline{The statement}}

\begin{theorem}\label{alternativePaBell}
As a $\mathbf{PaB}$-module in groupoids having $\mathbf{Pa}$ as $\mathbf{Pa}$-module of objects, $\mathbf{PaB}_{e\ell\ell}$ is freely 
generated by $A:=A^{1,2}$ and $B:=B^{1,2}$, together with the relations \eqref{eqn:N1bis}, \eqref{eqn:N2bis}, and \eqref{eqn:Ebis}. 
\end{theorem}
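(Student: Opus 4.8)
The plan is to reduce Theorem \ref{alternativePaBell} to the already-established presentation of Theorem \ref{PaBell}. Since both presentations have the same generators $A=A^{1,2}$ and $B=B^{1,2}$ and the same underlying $\PaB$-module of objects $\Pa$, it suffices to show that the two sets of relations generate the same normal sub-$\PaB$-module of the free $\PaB$-module on $A,B$ (equivalently, that each presentation's relations are consequences of the other's). Concretely, letting $\mathcal Q_{e\ell\ell}$ be the $\PaB$-module of Theorem \ref{PaBell} and $\mathcal Q'_{e\ell\ell}$ the one with the alternative relations \eqref{eqn:N1bis}, \eqref{eqn:N2bis}, \eqref{eqn:Ebis}, I would construct mutually inverse morphisms of $\PaB$-modules $\mathcal Q_{e\ell\ell}\to\mathcal Q'_{e\ell\ell}$ and $\mathcal Q'_{e\ell\ell}\to\mathcal Q_{e\ell\ell}$, both identity on objects and sending $A\mapsto A$, $B\mapsto B$; by the respective universal properties this amounts to checking that relations \eqref{eqn:N1}, \eqref{eqn:N2}, \eqref{eqn:E} hold in $\mathcal Q'_{e\ell\ell}$ and that relations \eqref{eqn:N1bis}, \eqref{eqn:N2bis}, \eqref{eqn:Ebis} hold in $\mathcal Q_{e\ell\ell}$. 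Alternatively — and this is cleaner — I would simply observe that \eqref{eqn:N1bis}, \eqref{eqn:N2bis}, \eqref{eqn:Ebis} all hold topologically in $\PaB_{e\ell\ell}$ (they express elementary isotopies of beak braids on the torus, just as the relations of Theorem \ref{PaBell} do), so there is a morphism $\mathcal Q'_{e\ell\ell}\to\PaB_{e\ell\ell}$; then it remains to show this morphism is an isomorphism, and for that the same argument as in the proof of Theorem \ref{PaBell} applies verbatim once we know the two relation sets are equivalent as abstract presentations.

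The technical heart is therefore a purely algebraic computation inside the free $\PaB$-module on $A,B$: deriving \eqref{eqn:N1bis} from \eqref{eqn:N1} (and the $\PaB$-relations of Theorem \ref{thm2.3}), and conversely. The key manipulation is the interplay between the ``cabled'' elements $A^{12,3}$, $A^{1,23}$ and the hexagon/pentagon relations. First I would note that \eqref{eqn:N1} is a nonagon (nine-term) relation among the arrows $\Phi^{i,j,k}$, $A^{i,jk}$, $\tilde R^{i,jk}$, while \eqref{eqn:N1bis} expresses $A^{12,3}$ as a product of $A^{1,23}$, $A^{2,13}$ conjugated by associators and $\tilde R$'s — a five-term relation. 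The passage between them uses the operadic identity relating $A^{12,3}$ to the composite of the generating arrows obtained by decomposing the ``move all three points'' loop in two different ways: one way gives the nonagon, the other, after inserting the pentagon relation to re-associate, gives the $A^{12,3}$-expression. Symmetrically for $B$ and \eqref{eqn:N2}/\eqref{eqn:N2bis}. For the mixed relations, I would rewrite \eqref{eqn:E} and \eqref{eqn:Ebis} and use that $\Phi^{1,2,3}R^{2,3}R^{3,2}(\Phi^{1,2,3})^{-1}=R^{1,2}R^{2,1}\cdot(\text{something expressible via }\Phi)$ — more precisely, one uses that $R^{23,1}R^{1,23}$ and $R^{1,2}R^{2,1}$ are related by the hexagon relations \eqref{eqn:H1}, \eqref{eqn:H2}, together with the fact that in the reduced configuration space the ``total rotation'' is trivial.

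I expect the main obstacle to be bookkeeping: correctly tracking the superscripts under partial composition and the symmetric-group actions (the $X^{\sigma_1,\dots,\sigma_n}$ notation), and making sure the conjugating elements match up on the nose rather than merely up to some central or inner ambiguity. Concretely, expressing \eqref{eqn:N1bis} as a consequence of \eqref{eqn:N1} plus the pentagon requires choosing the right insertion points and the right order of re-bracketing; getting a sign or an orientation of $\tilde R$ versus $R$ wrong would break the argument. A clean way to organize this is to work inside the groupoid $\PaB_{e\ell\ell}(3)$ and draw (or carefully describe) the relevant loops based at $(12)3$, exactly as in the proof of Theorem \ref{PaBell} and in Figure-type arguments already present in Section \ref{sec-pabell}; since \cite{En2} and the body of the paper already contain all the needed topological pictures, I would present the equivalence of the two relation sets as a short sequence of substitutions, citing the pentagon \eqref{eqn:P} and hexagons \eqref{eqn:H1}, \eqref{eqn:H2} at each step, and then conclude by invoking the proof of Theorem \ref{PaBell} mutatis mutandis for the isomorphism statement.
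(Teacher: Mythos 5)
Your plan matches the paper's proof: Theorem \ref{alternativePaBell} is deduced from Theorem \ref{PaBell} by showing (this is Proposition \ref{greatprop}) that each relation set implies the other, the equivalence of the nonagon relations \eqref{eqn:N1}, \eqref{eqn:N2} with the five-term relations \eqref{eqn:N1bis}, \eqref{eqn:N2bis} being a short chain of hexagon substitutions, and the equivalence of \eqref{eqn:E} with \eqref{eqn:Ebis} (assuming the nonagons) a longer rewriting. The one ingredient you should flag explicitly is the paper's ``useful observation'': applying the strand-deletion operation $(-)^{1,2,\emptyset}$ to either nonagon yields $A^{1,2}\tilde R^{1,2}A^{2,1}\tilde R^{2,1}=\on{Id}_{12}$, whose cabled form $\tilde R^{12,3}A^{3,12}\tilde R^{3,12}=(A^{12,3})^{-1}$ is exactly what closes the hexagon computation at its final step.
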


The above theorem is a direct consequence of Theorem \ref{PaBell} together with the following 
\begin{proposition}\label{greatprop}
Let us consider a $\PaB$-module in groupoids $\mathbf{PaM}$, having $\mathbf{Pa}$ as $\mathbf{Pa}$-module of objects, 
and let $A,B$ be a automorphisms of $12$. Then 
\begin{itemize}
\item[(i)] Equations \eqref{eqn:N1} and \eqref{eqn:N1bis} are equivalent; 
\item[(ii)] Equations \eqref{eqn:N2} and \eqref{eqn:N2bis} are equivalent; 
\item[(iii)] If \eqref{eqn:N1} and \eqref{eqn:N2} are satisfied, then equations \eqref{eqn:E} and \eqref{eqn:Ebis} are equivalent. 
\end{itemize}
\end{proposition}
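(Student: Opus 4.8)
Proposal for the proof of Proposition \ref{greatprop}.

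The plan is to prove the three equivalences by purely algebraic manipulations inside the automorphism group of $(12)3$ in $\mathbf{PaM}(3)$, using the $\PaB$-module structure together with the relations \eqref{eqn:U}, \eqref{eqn:H1}, \eqref{eqn:H2} and \eqref{eqn:P} that hold in $\PaB$ (hence act on $\mathbf{PaM}$). The key point is that all of \eqref{eqn:N1}, \eqref{eqn:N1bis}, \eqref{eqn:N2}, \eqref{eqn:N2bis}, \eqref{eqn:E}, \eqref{eqn:Ebis} live in the same group $\on{Aut}_{\mathbf{PaM}(3)}\big((12)3\big)$, so the equivalences are identities between words in $A^{\ldots}$, $B^{\ldots}$, $R^{\ldots}$, $\tilde R^{\ldots}$ and $\Phi^{\ldots}$ that one obtains by conjugating and rearranging. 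For (i), I would start from \eqref{eqn:N1bis}, namely
\[
A^{12,3} = \Phi^{1,2,3}A^{1,23}(\Phi^{1,2,3})^{-1}\tilde R^{1,2}\Phi^{2,1,3}A^{2,13}(\Phi^{2,1,3})^{-1}\tilde R^{2,1}\,,
\]
and rewrite the left-hand side $A^{12,3}$ using the nonagon-type identity: applying the first braid-deleting operation and the module axioms, $A^{12,3}$ can be expressed through $\Phi^{3,1,2}A^{3,12}\tilde R^{3,12}$ and its cyclic companions. Concretely, I expect that \eqref{eqn:N1} is exactly the relation obtained by moving every term of \eqref{eqn:N1bis} to one side and recognizing the resulting cyclic word of length nine; the translation uses $\tilde R^{2,31}=\tilde R^{1,2}\,(\text{something})$ type rewritings coming from the hexagon relations \eqref{eqn:H1}–\eqref{eqn:H2}. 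The proof of (ii) is formally identical to (i) with $A$ replaced by $B$ everywhere, since the hexagon and pentagon relations do not involve $A$ or $B$; I would simply say so rather than repeat the computation.

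For (iii), the plan is to assume \eqref{eqn:N1} and \eqref{eqn:N2} (equivalently \eqref{eqn:N1bis} and \eqref{eqn:N2bis}, by parts (i) and (ii)) and then transform \eqref{eqn:E} into \eqref{eqn:Ebis}. Relation \eqref{eqn:E} reads
\[
R^{1,2}R^{2,1}=\big(\Phi^{1,2,3}A^{1,23}(\Phi^{1,2,3})^{-1}\,,\,\tilde R^{1,2}\Phi^{2,1,3}B^{2,13}(\Phi^{2,1,3})^{-1}\tilde R^{2,1}\big)\,,
\]
while \eqref{eqn:Ebis} reads
\[
\Phi^{1,2,3}R^{2,3}R^{3,2}(\Phi^{1,2,3})^{-1}=\big(A^{12,3}\Phi^{1,2,3}(A^{1,23})^{-1}(\Phi^{1,2,3})^{-1}\,,\,(B^{12,3})^{-1}\big)\,.
\]
Here I would use \eqref{eqn:N1bis} to substitute $A^{12,3}\Phi^{1,2,3}(A^{1,23})^{-1}(\Phi^{1,2,3})^{-1}=\Phi^{1,2,3}A^{1,23}(\Phi^{1,2,3})^{-1}\tilde R^{1,2}\Phi^{2,1,3}A^{2,13}(\Phi^{2,1,3})^{-1}\tilde R^{2,1}\Phi^{1,2,3}(A^{1,23})^{-1}(\Phi^{1,2,3})^{-1}$, and similarly \eqref{eqn:N2bis} for $(B^{12,3})^{-1}$; after these substitutions both commutators should be conjugates, by a common element built from $\Phi$'s, of the commutator appearing in \eqref{eqn:E}, and the left-hand sides $R^{1,2}R^{2,1}$ and $\Phi^{1,2,3}R^{2,3}R^{3,2}(\Phi^{1,2,3})^{-1}$ are identified by the same conjugation once one uses that $R^{1,2}R^{2,1}$ is central-like with respect to the third strand (it commutes with everything supported on $\{3\}$, by \eqref{eqn:PB4}-type non-crossing relations encoded in \eqref{eqn:P}). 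The cleanest way to organize this is: first prove the auxiliary identity that conjugating \eqref{eqn:E} by $\Phi^{1,2,3}$ and inserting \eqref{eqn:N1bis}, \eqref{eqn:N2bis} gives \eqref{eqn:Ebis}, and then note the reverse implication is obtained by running the same manipulations backwards, which is legitimate since each step is an equivalence.

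The main obstacle I anticipate is bookkeeping the superscript/cabling notation $X^{\sigma_1,\dots,\sigma_n}$, $X^{\mathbf r,k+1,\dots}$ correctly — in particular getting the right permutation decorations on the $\tilde R$'s (e.g.\ distinguishing $\tilde R^{1,23}$, $\tilde R^{2,31}$, $\tilde R^{2,13}$, $\tilde R^{2,1}$) and making sure the partial compositions $\circ_i$ are applied in the order dictated by the module axioms of \S\ref{Section1}. There is no conceptual difficulty: every step is either a definition-unwinding, an application of one of \eqref{eqn:U}, \eqref{eqn:H1}, \eqref{eqn:H2}, \eqref{eqn:P}, or a conjugation. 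I would therefore present the argument by first fixing notation for the relevant cabled generators, then giving the chain of equalities for (i) with each equality annotated by the relation used, remark that (ii) is identical, and finally give the chain for (iii), emphasizing where \eqref{eqn:N1bis} and \eqref{eqn:N2bis} are invoked. A topological sanity check — that \eqref{eqn:N1bis}, \eqref{eqn:N2bis}, \eqref{eqn:Ebis} indeed hold in $\PaB_{e\ell\ell}$, as pictured in the excerpt — can be cited to reassure the reader, but the actual proof of the Proposition is the algebraic one just sketched.
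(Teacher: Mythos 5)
Your overall strategy---treating all six relations as identities in $\on{Aut}_{\mathbf{PaM}(3)}\big((12)3\big)$ and passing between them by hexagon manipulations---is exactly the paper's. However, as written the sketch omits the one ingredient that is \emph{not} a formal consequence of \eqref{eqn:U}, \eqref{eqn:H1}, \eqref{eqn:H2}, \eqref{eqn:P}: the two-strand identity $A^{1,2}\tilde R^{1,2}A^{2,1}\tilde R^{2,1}=\on{Id}_{12}$ (and its $B$-analogue), which is extracted from either \eqref{eqn:N1} or \eqref{eqn:N1bis} by applying the strand-deletion operation $(-)^{1,2,\emptyset}$ coming from the pointing convention of \S\ref{sec-pointings}. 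In the paper's computation for (i), the hexagon relations alone carry the nine-letter word of \eqref{eqn:N1} to
\[
\Phi^{1,2,3}A^{1,23}(\Phi^{1,2,3})^{-1}\tilde R^{1,2}\Phi^{2,1,3}A^{2,13}(\Phi^{2,1,3})^{-1}\tilde R^{2,1}\cdot\big(\tilde R^{12,3}A^{3,12}\tilde R^{3,12}\big)\,,
\]
and it is precisely the deleted-strand identity, cabled to $(12,3)$, that converts the trailing factor into $(A^{12,3})^{-1}$ and hence matches \eqref{eqn:N1bis}. You do mention the braid-deleting operation, but you assign it the wrong role (``expressing $A^{12,3}$ through its cyclic companions''); without the auxiliary identity above, the passage between the two relations does not close. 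You should state and prove this identity first, as the paper does.

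For (iii) your picture is also too optimistic in one respect: after substituting \eqref{eqn:N1bis} and \eqref{eqn:N2bis}, the two sides of \eqref{eqn:Ebis} are \emph{not} identified with those of \eqref{eqn:E} by a single common conjugation built from $\Phi$'s. The paper's derivation needs, in addition, the $\mathfrak{S}_3$-equivariance rewritings such as $\tilde R^{2,1}B^{12,3}=B^{21,3}\tilde R^{2,1}$ and $A^{2,13}\tilde R^{1,3}=\tilde R^{1,3}A^{2,31}$, a cancellation of the resulting $\tilde R^{1,3}$ factors on both sides, and a final relabelling by the cyclic permutation $(123)\mapsto(312)$ before the commutator form of \eqref{eqn:E} appears. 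These are more than bookkeeping: they are where the hypothesis that \eqref{eqn:N1} and \eqref{eqn:N2} hold is actually consumed. Finally, note that the relation label you invoke for the ``non-crossing'' behaviour of $R^{1,2}R^{2,1}$ does not exist in the paper (only \eqref{eqn:PB1}--\eqref{eqn:PB3} are stated); the centrality you want is not needed once the argument is organised as above.
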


\medskip

\paragraph{\underline{A useful observation}}
Both \eqref{eqn:N1} and \eqref{eqn:N1bis} imply 
\[
A^{1,2}\tilde R^{1,2}A^{2,1}\tilde R^{2,1}=\mathrm{Id}_{12}\,.
\]
For both, this is obtained by applying $(-)^{1,2,\emptyset}$. 
Similarly, both \eqref{eqn:N1} and \eqref{eqn:N1bis} imply 
\[
B^{1,2}\tilde R^{1,2}B^{2,1}\tilde R^{2,1}=\mathrm{Id}_{12}\,.
\]

\medskip

\paragraph{\underline{Proof of (i) and (ii) in Proposition \ref{greatprop}}}

The following calculation takes place in $\mathbf{PaM}(3)$. 
For ease of comprehension, we put a brace under a sequence of symbols where we use a relation in order to pass to the next step. 
\begin{align*} 
& \Phi^{1,2,3}A^{1,23}\underbrace{\tilde R^{1,23}\Phi^{2,3,1}}A^{2,31}\tilde R^{2,31}\Phi^{3,1,2}A^{3,12}\tilde R^{3,12} \\
= & \Phi^{1,2,3}A^{1,23}(\Phi^{1,2,3})^{-1}\tilde R^{1,2}\Phi^{2,1,3}\underbrace{\tilde R^{1,3}A^{2,31}}
\tilde R^{2,31}\Phi^{3,1,2}A^{3,12}\tilde R^{3,12} \\
= & \Phi^{1,2,3}A^{1,23}(\Phi^{1,2,3})^{-1}\tilde R^{1,2}\Phi^{2,1,3}A^{2,13}
\underbrace{\tilde R^{1,3}\tilde R^{2,31}\Phi^{3,1,2}}A^{3,12}\tilde R^{3,12} \\
= & \Phi^{1,2,3}A^{1,23}(\Phi^{1,2,3})^{-1}\tilde R^{1,2}\Phi^{2,1,3}A^{2,13}(\Phi^{2,1,3})^{-1}\tilde R^{2,1}
\underbrace{\tilde R^{12,3}A^{3,12}\tilde R^{3,12}}\\
= & \Phi^{1,2,3}A^{1,23}(\Phi^{1,2,3})^{-1}\tilde R^{1,2}\Phi^{2,1,3}A^{2,13} (\Phi^{2,1,3})^{-1}\tilde R^{2,1}(A^{12,3})^{-1}
\end{align*}
We repeatedly used (various forms of) the hexagon equation, and only at the last step we used the useful 
observation from the previous paragraph. This gives that \eqref{eqn:N1} and \eqref{eqn:N1bis} are both a 
consequence of each other. The proof that \eqref{eqn:N2} and \eqref{eqn:N2bis} are equivalent is the same. \hfill$\Box$

\medskip

\paragraph{\underline{Another useful fact}}
One can also show that \eqref{eqn:N1} and \eqref{eqn:N1bis} are equivalent to 
\begin{equation}
A^{12,3}\Phi^{1,2,3}\tilde R^{1,23}A^{23,1}\Phi^{2,3,1}\tilde R^{2,31}A^{31,2}\Phi^{3,1,2}\tilde R^{3,12}=\mathrm{Id}_{(12)3}\,.
\end{equation}

\medskip

\paragraph{\underline{Proof of (iii) in Proposition \ref{greatprop}}}

Relation \eqref{eqn:N1bis} is equivalent to
\[ \Phi^{1, 2, 3}  (A^{1, 23})^{- 1}
   (\Phi^{1, 2, 3})^{- 1} A^{12, 3} = \tilde{R}^{1,2}\Phi^{2, 1, 3}  A^{2, 13} 
   (\Phi^{2, 1, 3})^{- 1} \tilde{R}^{2,1}\,. \]
Thus, \eqref{eqn:Ebis} is equivalent to 
\begin{eqnarray*}
\Phi R^{2,3}R^{3,2} \Phi^{- 1} & = & \left(\tilde{R}^{1,2}\Phi^{2, 1, 3}  A^{2, 13} 
   (\Phi^{2, 1, 3})^{- 1} \tilde{R}^{2,1} ,(B^{12,3})^{-1} \right).
\end{eqnarray*}
Using $\tilde{R}^{2,1}B^{12,3}=B^{21, 3} \tilde{R}^{2,1}$, 
we deduce that \eqref{eqn:Ebis} is equivalent to
\[
\Phi R^{2,3}R^{3,2} \Phi^{- 1}  =  \tilde{R}^{1,2}\Phi^{2, 1, 3}  A^{2, 13} 
   (\Phi^{2, 1, 3})^{- 1} (B^{21,3})^{-1}\Phi^{2, 1, 3}  (A^{2, 13})^{-1} 
   (\Phi^{2, 1, 3})^{- 1}B^{21,3}(\tilde{R}^{1,2})^{-1}\,,
\]
which is equivalent to
\[
 (\Phi^{2, 1, 3})^{- 1}(\tilde{R}^{1,2})^{-1}\Phi R^{2,3}R^{3,2} \Phi^{- 1}\tilde{R}^{1,2}(B^{21,3})^{-1}
 \Phi^{2, 1, 3} = A^{2, 13} (\Phi^{2, 1, 3})^{- 1} B^{21,3}\Phi^{2, 1, 3}   (A^{2, 13} )^{-1}\,.
\]
Now, by using
\begin{itemize}
\item $(\Phi^{2, 1, 3})^{- 1}(\tilde{R}^{1,2})^{-1}\Phi R^{2,3}R^{3,2} 
\Phi^{- 1}\tilde{R}^{1,2}=\tilde{R}^{1,3}(\Phi^{2,3,1})^{-1}R^{2,3} \Phi^{3,2,1}R^{3,21}$
\item $ (B^{21,3})^{-1}=(R^{3,21})^{-1}B^{3,21}(R^{21,3})^{-1}$
\item $ \Phi^{2, 1, 3}  =  R^{21,3}(\Phi^{3,2,1})^{-1}
\tilde R^{3,2} \Phi^{2, 3, 1} \tilde R^{1,3} $,
\item $(\Phi^{2, 1, 3})^{- 1}=\tilde R^{1,3}(\Phi^{2, 3, 1})^{-1} \tilde R^{2,3}\Phi^{3,2,1}R^{3,21}$
\end{itemize}
we obtain
\[
\tilde R^{1,3}(\Phi^{2, 3, 1})^{-1}R^{2,3}\Phi^{3,2,1}B^{3,21}(\Phi^{3,2,1})^{-1}
\tilde R^{3,2} \Phi^{2, 3, 1}\tilde R^{1,3}=A^{2,13}\tilde R^{1,3}(\Phi^{2, 3, 1})^{-1}
\]
\[
\tilde R^{2,3}\Phi^{3,2,1}
B^{3,21}(\Phi^{3,2,1})^{-1}
\tilde R^{3,2} \Phi^{2, 3, 1} \tilde R^{1,3} (A^{2,13})^{-1}\,.
\]
After performing $A^{2,13}\tilde R^{1,3}=\tilde R^{1,3}A^{2,31}$ in the r.h.s. of the above equation, 
one can cancel out the $\tilde R^{1,3}$ terms in both sides of the equation.
We obtain, by performing the permutation $(123) \mapsto (312)$ that
\[
(\Phi^{1,2,3})^{-1}R^{1,2}\Phi^{2,1,3}B^{2,13}(\Phi^{2,1,3})^{-1}
\tilde R^{2,1} \Phi^{1,2,3} 
\]
\[
=A^{1,23}(\Phi^{1,2,3})^{-1}\tilde R^{1,2}\Phi^{2,1,3}
B^{2,13}(\Phi^{2,1,3})^{-1}
\tilde{R}^{2,1} \Phi^{1,2,3}(A^{1,23})^{-1}\,.
\]
This is equivalent to 
\[
\Phi^{1,2,3} A^{1,23}(\Phi^{1,2,3})^{-1}\tilde R^{1,2}\Phi^{2,1,3}
B^{2,13}(\Phi^{2,1,3})^{-1}
\tilde{R}^{2,1} \Phi^{1,2,3}(A^{1,23})^{-1}(\Phi^{1,2,3})^{-1}
\]
\[
=R^{1,2}\Phi^{2,1,3}B^{2,13}(\Phi^{2,1,3})^{-1}\tilde R^{2,1}\,,
\]
which is equivalent to 
\[
\Phi^{1,2,3} A^{1,23}(\Phi^{1,2,3})^{-1}\tilde R^{1,2}\Phi^{2,1,3}
B^{2,13}(\Phi^{2,1,3})^{-1}
\tilde{R}^{2,1} \Phi^{1,2,3}(A^{1,23})^{-1}(\Phi^{1,2,3})^{-1}
\]
\[
(\tilde R^{2,1})^{-1}\Phi^{2,1,3}(B^{2,13})^{-1}(\Phi^{2,1,3})^{-1} (R^{1,2})^{-1}
=\on{Id}_{(12)3}\,.
\]
As $(R^{1,2})^{-1}R^{1,2}R^{2,1}=R^{2,1}=(\tilde R^{1,2})^{-1}$, we obtain 
\[
R^{1,2} R^{2,1}=\left(\Phi^{1,2,3}A^{1,23}(\Phi^{1,2,3})^{-1}, \tilde R^{1,2}\Phi^{2,1,3}B^{2,13}(\Phi^{2,1,3})^{-1}\tilde R^{2,1}\right)\,.
\]

\section{An alternative presentation for $\PaB_{e\ell\ell}^\Gamma$}
Below, we borrow the notation from Theorem \ref{PaB:ell:G}. 
\begin{theorem}\label{PaB:ell:G:bis}
As a $\mathbf{PaB}$-module in groupoids with a diagonally trivial $\Gamma$-action and having 
$\mathbf{Pa}^{\Gamma}$ as $\mathbf{Pa}$-module of objects, $\mathbf{PaB}_{e\ell\ell}^{\Gamma}$ 
is freely generated by $A$ and $B$ together with the relations 
\begin{flalign}
& \underline{A}^{12,3} = \Phi^{1,2,3}\underline{A}^{1,23}(\Phi^{1,2,3})^{-1}\tilde R^{1,2}\Phi^{2,1,3}
\underline{A}^{2,13} (\Phi^{2,1,3})^{-1}\tilde R^{2,1}\,, 
\label{def:PaB:ell:G:1bis}\tag{tN1bis} \\
& \underline{B}^{12,3} = \Phi^{1,2,3}\underline{B}^{1,23}(\Phi^{1,2,3})^{-1}\tilde R^{1,2}\Phi^{2,1,3}
\underline{B}^{2,13} (\Phi^{2,1,3})^{-1}\tilde R^{2,1} \,,
\label{def:PaB:ell:G:2bis}\tag{tN2bis} \\ 
& \Phi^{1,2,3}R^{2,3}R^{3,2}(\Phi^{1,2,3})^{-1} =
\big(\underline{A}^{12,3}\Phi^{1,2,3}(\underline{A}^{1,23})^{-1}(\Phi^{1,2,3})^{-1}\,,\,(\underline{B}^{12,3})^{-1}\big)\,.
\label{def:PaB:ell:G:Ebis}\tag{tEbis}
\end{flalign}
\end{theorem}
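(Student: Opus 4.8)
The statement to prove, Theorem~\ref{PaB:ell:G:bis}, gives an alternative presentation of $\PaB_{e\ell\ell}^{\mathbf{\Gamma}}$ in terms of the ``bis'' relations. The natural strategy is to mimic exactly the argument used in the (untwisted) Section~A.1: deduce Theorem~\ref{PaB:ell:G:bis} from Theorem~\ref{PaB:ell:G} by showing that, in any $\PaB$-module in groupoids with diagonally trivial $\Gamma$-action and with $\mathbf{Pa}^\Gamma$ as module of objects, equipped with two automorphisms $A,B$ of $1_\0 2_\0$ (hence with $\underline{A}=A\alpha_1$, $\underline{B}=B\beta_1$), the relation set $\{\eqref{def:PaB:ell:G:1},\eqref{def:PaB:ell:G:2},\eqref{def:PaB:ell:G:3}\}$ and the relation set $\{\eqref{def:PaB:ell:G:1bis},\eqref{def:PaB:ell:G:2bis},\eqref{def:PaB:ell:G:Ebis}\}$ are equivalent. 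In other words, the plan is to establish the twisted analogue of Proposition~\ref{greatprop}: (i) \eqref{def:PaB:ell:G:1}$\Leftrightarrow$\eqref{def:PaB:ell:G:1bis}; (ii) \eqref{def:PaB:ell:G:2}$\Leftrightarrow$\eqref{def:PaB:ell:G:2bis}; (iii) assuming \eqref{def:PaB:ell:G:1} and \eqref{def:PaB:ell:G:2}, then \eqref{def:PaB:ell:G:3}$\Leftrightarrow$\eqref{def:PaB:ell:G:Ebis}. Once this equivalence is in hand, Theorem~\ref{PaB:ell:G:bis} follows formally from Theorem~\ref{PaB:ell:G}, since both present the same $\PaB$-module by generators $A,B$ with equivalent relations.

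\textbf{Key steps.} First I would set up the computations inside $\PaB_{e\ell\ell}^{\mathbf{\Gamma}}(3)\rtimes(\Gamma^3/\Gamma)$ (or the abstract $\PaB$-module $\mathbf{PaM}$ with its $\overline{\Gamma}$-action), carefully recording how the labels $\alpha_i,\beta_i$ transport: the crucial point is that $\underline{A}^{1,23}$, $\underline{A}^{2,31}$, $\underline{A}^{3,12}$ etc.\ carry the labels $\alpha_1$, $\alpha_2$, $\alpha_3$ respectively (and similarly for $\underline{B}$ with $\beta$), and that conjugating $\underline{A}$, $\underline{B}$ or composing with $\tilde R$, $\Phi$ permutes these labels according to the $\overline{\Gamma}$-action — this is exactly the content of the Remark after Theorem~\ref{PaB:ell:G} on writing the relations inside the semidirect product. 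For (i) and (ii), I would run the same chain of hexagon manipulations as in the untwisted ``Proof of (i) and (ii)'', but with $A$ replaced by $\underline{A}$ throughout: at each step where one rewrites $\tilde R^{i,j}\Phi^{k,l,m}$ using the hexagon, one also checks that the attached $\Gamma$-labels match on both sides (they do, because the hexagon relation for $\PaB$ is $\Gamma$-label-free and the labelling is transported functorially). The ``useful observation'' $\underline{A}^{1,2}\tilde R^{1,2}\underline{A}^{2,1}\tilde R^{2,1}=\mathrm{Id}_{1_\0 2_\0}$ — obtained by applying $(-)^{1,2,\emptyset}$ to \eqref{def:PaB:ell:G:1} or \eqref{def:PaB:ell:G:1bis} — again goes through verbatim, noting that $(-)^{1,2,\emptyset}$ sends $\alpha_3\mapsto 0$ and $\alpha_1,\alpha_2$ to the two labels on the two remaining strands. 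For (iii), I would follow the long string of equivalences in the untwisted ``Proof of (iii)'', substituting $\underline{A},\underline{B}$ for $A,B$; the commutation identities $\tilde R^{2,1}\underline{B}^{12,3}=\underline{B}^{21,3}\tilde R^{2,1}$, $\underline{B}^{21,3}=(R^{3,21})^{-1}\underline{B}^{3,21}(R^{21,3})^{-1}$, and $\underline{A}^{2,13}\tilde R^{1,3}=\tilde R^{1,3}\underline{A}^{2,31}$ are the twisted versions of the ones used there, and each is a $\PaB$-module relation together with a straightforward label bookkeeping. At the end one recovers exactly \eqref{def:PaB:ell:G:3}, which closes the loop.

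\textbf{Main obstacle.} The genuine mathematical content is identical to the untwisted case — it is just repeated applications of the hexagon and pentagon relations of $\PaB$ together with the ``useful observation'' — so the only real difficulty is the bookkeeping: one must verify at every rewriting step that the $\Gamma^3/\Gamma$-labels decorating the various symbols $\underline{A}^{?,?}$, $\underline{B}^{?,?}$ are transported correctly under the $\overline{\Gamma}$-action, i.e.\ that the identities are valid not merely in $\PaB_{e\ell\ell}^{\mathbf{\Gamma}}(3)$ but in the semidirect product $\PaB_{e\ell\ell}^{\mathbf{\Gamma}}(3)\rtimes(\Gamma^3/\Gamma)$. I expect this to be entirely mechanical once one adopts the convention $\underline{A}:=A\alpha_1$, $\underline{B}:=B\beta_1$ and remembers that the $\Gamma$-action is diagonally trivial (so global relabellings are free), but it is the step where sign/index errors are most likely. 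I would therefore either (a) present the proof as ``the same computation as in the proof of Proposition~\ref{greatprop}, now inside $\PaB_{e\ell\ell}^{\mathbf{\Gamma}}(3)\rtimes(\Gamma^3/\Gamma)$, replacing $A$ by $\underline{A}$ and $B$ by $\underline{B}$ and checking that labels transport correctly'', or (b) reproduce one or two of the key rewriting lines with the labels displayed explicitly, to reassure the reader, and leave the rest as a routine verification.
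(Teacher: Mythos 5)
Your proposal is exactly route (i) of the two strategies the paper itself offers for this theorem (deduce it from Theorem \ref{PaB:ell:G} by establishing a twisted analogue of Proposition \ref{greatprop}, i.e.\ the equivalence of the two relation sets inside $\PaB_{e\ell\ell}^{\mathbf{\Gamma}}(3)\rtimes(\Gamma^3/\Gamma)$), and your attention to the $\overline{\Gamma}$-label bookkeeping is precisely the only nontrivial point. The paper leaves the verification to the reader, so your plan matches its intended proof.
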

In order to prove Theorem \ref{PaB:ell:G:bis}, one can
\begin{itemize}
\item[(i)] Either deduce it from Theorem \ref{PaB:ell:G} in a similar manner as we deduced 
Theorem \ref{alternativePaBell} from Theorem \ref{PaBell}; 
\item[(ii)] Or deduce it from Theorem \ref{alternativePaBell} in a similar manner as we deduced 
Theorem \ref{PaB:ell:G} from Theorem \ref{PaBell}. 
\end{itemize}
Both strategies are straightforward to implement; this is left to the reader.  

\backmatter
%    Bibliography styles amsplain or author-year (using natbib) are
%    also acceptable.
%-----------------------------------------------------------------------------
% Beginning of biblio.tex
%-----------------------------------------------------------------------------

\bibliographystyle{amsalpha}

%-----------------------------------------------------------------------------
% End of biblio.tex
%-----------------------------------------------------------------------------

%    See note above about multiple indexes.

\chapter*{Glossary}
\printnoidxglossaries

\end{document}